\documentclass[%a4paper, 
11pt,  reqno]{amsart}
\usepackage[margin=1.2in,marginparwidth=1.5cm, marginparsep=0.5cm]{geometry}

\usepackage{amsmath,amssymb,amscd,amsthm,amsxtra, esint}

\usepackage{mathabx}

 \usepackage{slashed}
 \usepackage{float}

\usepackage{booktabs} % nice tables
\usepackage{microtype}
\usepackage{amssymb}
\usepackage{mathrsfs} 

\usepackage{upgreek} 

\usepackage{color}
\usepackage{hyperref}

\usepackage{xsavebox}
\usepackage{orcidlink}

\usepackage{cases}%%%

\definecolor{gr}{rgb}   {0.,   0.69,   0.23 }
\definecolor{bl}{rgb}   {0.,   0.5,   1. }
\definecolor{mg}{rgb}   {0.85,  0.,    0.85}
\definecolor{yl}{rgb}   {0.8,  0.7,   0.}
\definecolor{or}{rgb}  {0.7,0.2,0.2}

\usepackage{tikz} 
\usetikzlibrary {positioning}
\usepackage{marginnote}
\usepackage{scalerel} % to rescale the paraproduct symbols

\usetikzlibrary{shapes.misc}
\usetikzlibrary{shapes.symbols}
\usetikzlibrary{decorations}
\usetikzlibrary{decorations.markings}

\tikzset{
	dot/.style={circle,fill=black,draw=black,inner sep=0pt,minimum size=0.5mm},
	>=stealth,
	}

\tikzset{
	dot2/.style={circle,fill=black,draw=black,inner sep=0pt,minimum size=0.2mm},
	>=stealth,
	}

\tikzset{
	ddot/.style={circle,fill=black,draw=black,inner sep=0pt,minimum size=0.8mm},
	>=stealth,
	}

\tikzset{decision/.style={ % requires library shapes.geometric
        draw,
        diamond,
        aspect=1.5
    }}

\tikzset{dia2/.style
={diamond,fill=white,draw=black,inner sep=0pt,minimum size=1mm},
	>=stealth,
	}

\tikzset{dia/.style
={star,fill=black,draw=black,inner sep=0pt,minimum size=1mm},
	>=stealth,
	}

\tikzset{dia/.style
={diamond,fill=black,draw=black,inner sep=0pt,minimum size=1.3mm},
	>=stealth,
	}

\makeatletter
\def\DeclareSymbol#1#2#3{\xsavebox{#1}{\tikz[baseline=#2,scale=0.15]{#3}}}
\def\<#1>{\xusebox{#1}}
\makeatother

\newsavebox{\peA}
\newsavebox{\pneA}
\newsavebox{\plA}
\newsavebox{\pgA}
\newsavebox{\pleA}
\newsavebox{\pgeA}
\newsavebox{\pezA}

\savebox{\peA}{\tikz \draw (0,0) node[shape=circle,draw,inner sep=0pt,minimum size=8.5pt] {\scriptsize  $=$};}
\savebox{\pneA}{\tikz \draw (0,0) node[shape=circle,draw,inner sep=0pt,minimum size=8.5pt] {\footnotesize $\neq$};}
\savebox{\plA}{\tikz \draw (0,0) node[shape=circle,draw,inner sep=0pt,minimum size=8.5pt] {\scriptsize $<$};}
\savebox{\pgA}{\tikz \draw (0,0) node[shape=circle,draw,inner sep=0pt,minimum size=8.5pt] {\scriptsize $>$};}
\savebox{\pleA}{\tikz \draw (0,0) node[shape=circle,draw,inner sep=0pt,minimum size=8.5pt] {\scriptsize $\leqslant$};}
\savebox{\pgeA}{\tikz \draw (0,0) node[shape=circle,draw,inner sep=0pt,minimum size=8.5pt] {\scriptsize $\geqslant$};}
\savebox{\pezA}{\tikz \draw (0,0) node[shape=circle,draw,
fill=white, % color = white, 
inner sep=0pt,minimum size=8.5pt]{} ;}

\def \peB{\mathchoice
{\scalebox{.7}{{\usebox{\peA}}}}
{\scalebox{.7}{{\usebox{\peA}}}}
{\scalebox{.7}{{\usebox{\peA}}}}
{}
}

\def \pezB{\mathchoice
{\scalebox{.7}{{\usebox{\pezA}}}}
{\scalebox{.7}{{\usebox{\pezA}}}}
{\scalebox{.7}{{\usebox{\pezA}}}}
{}
}

\newcommand{\pe}{\mathbin{{\peB}}}

\newcommand{\pez}{\mathbin{{\pezB}}}

\usetikzlibrary{decorations.pathmorphing, patterns,shapes}

\DeclareSymbol{dot}{-2.7}{\draw (0,0) node[dot]{};}

\DeclareSymbol{nx}{0}{\draw (0, 0.85) node[]{$\nabla$}; \draw[line width=0.7pt] (-0.7,-0.45) -- (0.5, 1.85);}

\DeclareSymbol{1}{0}{\draw[white] (-.4,0) -- (.4,0); \draw (0,0)  -- (0,1.2) node[dot] {};}
\DeclareSymbol{2}{0}{\draw (-0.5,1.2) node[dot] {} -- (0,0) -- (0.5,1.2) node[dot] {};}
\DeclareSymbol{3}{0}{\draw (0,0) -- (0,1.2) node[dot] {}; \draw (-.7,1) node[dot] {} -- (0,0) -- (.7,1) node[dot] {};}
\DeclareSymbol{30}{-3}{\draw (0,0) -- (0,-1); \draw (0,0) -- (0,1.2) node[dot] {}; \draw (-.7,1) node[dot] {} -- (0,0) -- (.7,1) node[dot] {};}
\DeclareSymbol{31}{-3}{\draw (0,0) -- (0,-1) -- (1,0) node[dot] {}; \draw (0,0) -- (0,1.2) node[dot] {}; \draw (-.7,1) node[dot] {} -- (0,0) -- (.7,1) node[dot] {};}
\DeclareSymbol{32}{-3}{\draw (0,0) -- (0,-1) -- (1,0) node[dot] {}; \draw (0,0) -- (0,-1) -- (-1,0) node[dot] {}; \draw (0,0) -- (0,1.2) node[dot] {}; \draw (-.7,1) node[dot] {} -- (0,0) -- (.7,1) node[dot] {};}
\DeclareSymbol{20}{-3}{\draw (0,0) -- (0,-1);\draw (-.7,1) node[dot] {} -- (0,0) -- (.7,1) node[dot] {};}
\DeclareSymbol{22}{-3}{\draw (0,0.3) -- (0,-1) -- (1,0) node[dot] {}; \draw (0,0.3) -- (0,-1) -- (-1,0) node[dot] {};\draw (-.7,1) node[dot] {} -- (0,0.3) -- (.7,1) node[dot] {};}
\DeclareSymbol{31p}{-3}{\draw (0,0) -- (0,-1) -- (1,0) node[dot] {}; \draw (0,0) -- (0,1.2) node[dot] {}; \draw (-.7,1) node[dot] {} -- (0,0) -- (.7,1) node[dot] {}; 
\draw (0,-1) node{\scaleobj{0.5}{\pez}}; 
\draw (0,-1) node{\scaleobj{0.5}{\pe}}; }
\DeclareSymbol{32p}{-3}{\draw (0,0) -- (0,-1) -- (1,0) node[dot] {}; \draw (0,0) -- (0,-1) -- (-1,0) node[dot] {}; \draw (0,0) -- (0,1.2) node[dot] {}; \draw (-.7,1) node[dot] {} -- (0,0) -- (.7,1) node[dot] {}; 
\draw (0,-1) node{\scaleobj{0.5}{\pez}};
\draw (0,-1) node{\scaleobj{0.5}{\pe}};}
\DeclareSymbol{22p}{-3}{\draw (0,0.3) -- (0,-1) -- (1,0) node[dot] {}; \draw (0,0.3) -- (0,-1) -- (-1,0) node[dot] {};\draw (-.7,1) node[dot] {} -- (0,0.3) -- (.7,1) node[dot] {}; 
\draw (0,-1) node{\scaleobj{0.5}{\pez}};
\draw (0,-1) node{\scaleobj{0.5}{\pe}};}
\DeclareSymbol{21p}{-3}{\draw (0,0.3) -- (0,-1) -- (1,0) node[dot] {}; 
\draw (-.7,1) node[dot] {} -- (0,0.3) -- (.7,1) node[dot] {}; 
\draw (0,-1) node{\scaleobj{0.5}{\pez}};
\draw (0,-1) node{\scaleobj{0.5}{\pe}};}

\DeclareSymbol{21}{-3}{\draw (0,0.3) -- (0,-1) -- (1,0) node[dot] {}; 
\draw (-.7,1) node[dot] {} -- (0,0.3) -- (.7,1) node[dot] {}; 
}

\DeclareSymbol{11p}{-3}{\draw (0,0) node[dot2] {} -- (0,-1) -- (1,0) node[dot] {}; 
\draw (0,0) -- (0,1.2) node[dot] {};  %\draw (-.7,1) node[dot] {} -- (0,0) -- (.7,1) node[dot] {}; 
\draw (0,-1) node{\scaleobj{0.5}{\pez}}; 
\draw (0,-1) node{\scaleobj{0.5}{\pe}}; }
\DeclareSymbol{100}{-3}
{\draw[white] (-.4,0) -- (.4,0); 
\draw (0,0) node[dot2] {} -- (0,-1)  ; 
\draw (0,0) -- (0,1.2) node[dot] {};}  %\draw (-.7,1) node[dot] {} -- (0,0) -- (.7,1) node[dot] {}; 
\usetikzlibrary{mindmap,backgrounds,trees,arrows,decorations.pathmorphing,decorations.pathreplacing,positioning,shapes.geometric,shapes.misc,decorations.markings,decorations.fractals,calc,patterns}

\tikzset{>=stealth',
         cvertex/.style={circle,draw=black,inner sep=1pt,outer sep=3pt},
         vertex/.style={circle,fill=black,inner sep=1pt,outer sep=3pt},
         star/.style={circle,fill=yellow,inner sep=0.75pt,outer sep=0.75pt},
         tvertex/.style={inner sep=1pt,font=\scriptsize},
         gap/.style={inner sep=0.5pt,fill=white}}
\tikzstyle{mybox} = [draw=black, fill=blue!10, very thick,
    rectangle, rounded corners, inner sep=10pt, inner ysep=20pt]
\tikzstyle{boxtitle} =[fill=blue!50, text=white,rectangle,rounded corners]

\tikzstyle{decision} = [diamond, draw, fill=blue!20,
    text width=4.5em, text badly centered, node distance=2.5cm, inner sep=0pt]
\tikzstyle{block} = [rectangle, draw, fill=blue!20,
    text width=2.7cm, text centered, rounded corners, minimum height=3em]

\tikzstyle{line} = [draw, very thick, color=black!50, -latex']

\tikzstyle{cloud} = [draw, ellipse,fill=red!40, 
node distance=2.5cm,
    minimum height=1em]

\tikzstyle{cloud2} = [draw, ellipse,fill=red!30, text=white,text width=10em, node distance=2.5cm, text centered, minimum height=4em]

\tikzstyle{cloud3} = [draw, ellipse, fill=cyan!30, 
node distance=2.5cm,
    minimum height=3em, 
    text width=1.7cm]

\tikzstyle{cloud4} = [draw, ellipse,fill=orange!70, node distance=2.5cm,
   text width=2.1cm,  
           minimum height=3em]

\tikzstyle{cloud5} = [draw, ellipse,fill=red!20, node distance=2.5cm,
    text centered, 
    text width=3.0cm, 
    minimum height=3em]

\tikzstyle{cloud6} = [draw, ellipse,fill=red!20, node distance=2.5cm,
    text width=1.5cm, 
    text centered, 
    minimum height=3em]

\tikzset{
    position/.style args={#1:#2 from #3}{
        at=(#3.#1), anchor=#1+180, shift=(#1:#2)
    }
}
\newtheorem{theorem}{Theorem} [section]

\newtheorem{lemma}[theorem]{Lemma}
\newtheorem{proposition}[theorem]{Proposition}
\newtheorem{remark}[theorem]{Remark}

\newtheorem{definition}[theorem]{Definition}

\DeclareMathOperator*{\supp}{supp}

\newcommand{\1}{\hspace{0.2mm}\text{I}\hspace{0.2mm}}
\newcommand{\II}{\text{I \hspace{-2.8mm} I} }
\newcommand{\noi}{\noindent}
\newcommand{\Z}{\mathbb{Z}}
\newcommand{\R}{\mathbb{R}}
\newcommand{\T}{\mathbb{T}}
\newcommand{\bul}{\bullet}

\let\P= \undefined
\newcommand{\P}{\mathbf{P}}

\newcommand{\E}{\mathbb{E}}

\newcommand{\K}{\mathcal{K}}

\newcommand{\F}{\mathcal{F}}

\newcommand{\al}{\alpha}
\newcommand{\be}{\beta}
\newcommand{\dl}{\delta}

\newcommand{\nb}{\nabla}

\newcommand{\Dl}{\Delta}
\newcommand{\eps}{\varepsilon}
\newcommand{\kk}{\kappa}
\newcommand{\g}{\gamma}
\newcommand{\G}{\Gamma}
\newcommand{\ld}{\lambda}
\newcommand{\Ld}{\Lambda}
\newcommand{\s}{\sigma}

\newcommand{\ft}{\widehat}
\newcommand{\Ft}{{\mathcal{F}}}
\newcommand{\wt}{\widetilde}
\newcommand{\cj}{\overline}
\newcommand{\dx}{\partial_x}
\newcommand{\dt}{\partial_t}

\newcommand{\embeds}{\hookrightarrow}
\newcommand{\LRA}{\Longrightarrow}

\newcommand{\ta}{\theta}

\renewcommand{\l}{\ell}
\renewcommand{\o}{\omega}
\renewcommand{\O}{\Omega}

\newcommand{\les}{\lesssim}
\newcommand{\ges}{\gtrsim}

\newcommand{\jb}[1]
{\langle #1 \rangle}

\newcommand{\jbb}[1]{\bigl\langle #1 \bigr\rangle}

\newcommand{\fbb}[1]
{[\hspace{-0.6mm}[ #1 ]\hspace{-0.6mm}]}

\newcommand{\ind}{\mathbf 1}

\renewcommand{\S}{\mathcal{S}}

\newcommand{\N}{\mathbb{N}}

\renewcommand{\H}{\mathcal{H}}

\newtheorem*{ackno}{Acknowledgements}

\newcommand{\I}{\mathcal{I}}

\newcommand{\B}{\mathcal{B}}

\numberwithin{equation}{section}
\numberwithin{theorem}{section}

\newcommand{\justin}[1]{\marginpar{\color{blue} $\Leftarrow\Leftarrow\Leftarrow$}{\smallskip \noi\color{blue}Justin: #1}\smallskip}

\newcommand{\younes}[1]{\marginpar{\color{red} $\Leftarrow\Leftarrow\Leftarrow$}{\smallskip \noi\color{red}Younes: #1}\smallskip}

\newcommand{\Q}{\mathbf{Q}}
\newcommand{\PP}{\mathbb{P}}

\DeclareMathOperator{\Law}{Law}

\newcommand{\muu}{\vec{\mu}}

\newcommand{\rhoo}{\vec{\rho}}

\newcommand{\W}{\mathcal{W}}

\newcommand{\Dr}{\Theta}

\newcommand{\D}{\mathcal{D}}

\newcommand{\Pii}{\mathbf{\Pi}}

\usepackage{comment}

\newcommand{\Ta}{\Theta}

\newcommand{\mc}[1]
{\mathcal{#1}}

\newcommand{\Prob}{\mathbb{P}}

\newcommand{\vu}{\vec{u}}

\makeatletter
\@namedef{subjclassname@2020}{%
  \textup{2020} Mathematics Subject Classification}
\makeatother

\begin{document}

\baselineskip = 14pt

\title[Sinh-Gordon model]
{Invariant Gibbs dynamics for the\\
hyperbolic sinh-Gordon model}

%\author[T.~Oh, T.~Robert, and Y.~Zine]
%{Tadahiro Oh, Tristan Robert, and Younes Zine}

\author[J.~Forlano and Y.~Zine]
{Justin Forlano\orcidlink{0000-0002-8118-9911} and Younes Zine\orcidlink{0009-0001-7752-1205}}

\address{
Justin Forlano, School of Mathematics\\
Monash University\\
VIC 3800\\
Australia}

\email{justin.forlano@monash.edu}

%\address{
%Tristan Robert\\
%IECL - Site de Nancy\\
%Facult\'e des sciences et Technologies\\
%Campus, Boulevard des Aiguillettes\\
%54506 Vand{\oe}uvre-l\`es-Nancy\\
%France}
%
%\email{tristan.robert@univ-lorraine.fr}

\address{
Younes Zine\\
 \'Ecole Polytechnique F\'ed\'erale de Lausanne\\
1015 Lausanne\\ Switzerland}

\email{younes.zine@epfl.ch}

\subjclass[2020]{Primary 35L71; Secondary 60H15, 81T08}

\keywords{hyperbolic sinh-Gordon equation, hyperbolic Liouville equation, Gibbs measure, stochastic quantization, stochastic wave equation, Gaussian multiplicative chaos, physical space approach}

\begin{abstract}
We study the hyperbolic defocusing sinh-Gordon model with parameter $\be^2>0$ and its associated Gibbs dynamics on the two-dimensional torus. We establish global well-posedness of the model for a certain range of parameters $\beta^2>0$ with the corresponding Gibbs measure initial data and prove invariance of the Gibbs measure under the flow, thereby resolving a question posed by Oh, Robert, and Wang (2019). Our physical space approach hinges on developing a novel $L^\infty$-based well-posedness theory for wave equations with exponential-type nonlinearities, going beyond the classical $L^2$-based framework. This refinement allows us to fully leverage structural properties of Gaussian multiplicative chaos. As a by-product of our method, we also obtain an improved well-posedness theory for the hyperbolic Liouville model.
\end{abstract}

\maketitle
\tableofcontents
\section{Introduction}
\label{sec:intro}

\subsection{The sinh-Gordon model}\label{sec:liouville}
We consider
the stochastic damped sinh-Gordon equation (SdSG) on 
$\T^2 = (\R/2\pi\Z)^2$:
\begin{align}
\begin{cases}
\dt^2 u + \dt u + (1- \Dl)  u   + \iota \be \sinh(\be u) = \sqrt{2}\xi,\\
(u, \dt u) |_{t = 0} = (u_0, u_1) , 
\end{cases}
\qquad (t, x) \in \R_+\times\T^2,
\label{liouville}
\end{align}
where $\be, \iota \in \R\setminus \{0\}$, $\sinh(x) = \frac 12 (e^x -e^{-x})$ for $x\in \R$, and $\xi$ denotes space-time white noise on $\R_+ \times \T^2$. When $\iota>0$, we say that \eqref{liouville}
is defocusing, while if $\iota<0$, we say that it is focusing.

The stochastic partial differential equation (PDE)~\eqref{liouville} is the so-called canonical stochastic quantization equation~\cite{RSS} for the sinh-Gordon model with charge parameter $\be$:
\begin{align} d\rho_\be (u) \propto \exp\Big( - \iota \int_{\T^2} \cosh(\be u) dx - \int_{\T^2} \big(|\nb u|^2 + |u|^2\big) dx \Big) du
\label{mes_sinh}
\end{align}
Here, $du$ denotes the non-existent Lebesgue measure on $\D'(\T^2)$, rendering the expression~\eqref{mes_sinh} purely formal. The probability measure~\eqref{mes_sinh} is an instance of an Euclidean quantum field theory (QFT) and has been studied extensively in the mathematical physics community; we refer the interested reader to~\cite{FMS, KM, Lukyanov, BLeC, BDV, GTV} and the references therein for further discussion. The central idea of the stochastic quantization programme, initiated by Parisi and Wu~\cite{PW}, is to study the measure~\eqref{mes_sinh} through the analysis of the dynamics~\eqref{liouville}, to which PDE techniques may be employed.\footnote{In the original formulation of Parisi and Wu, stochastic quantization was introduced as a method to represent Euclidean gauge theories as equilibrium limits of stochastic dynamics, with the long-time behavior of the Langevin equation formally sampling the underlying field theory measure.} In this paper, we rigorously define the dynamics~\eqref{liouville} on the support of a rigorously defined version of the measure $\rho_\be \otimes \mu_0$, where $\mu_0$ denotes the white noise measure on $\D'(\T^2)$, for a suitable range of the parameter $\be$. We further prove invariance of $\rho_\be \otimes \mu_0$ under the resulting flow. This constitutes a {\it first step} toward completing the canonical stochastic quantization programme for~\eqref{mes_sinh}.

The standard stochastic quantization programme (that is, the study of QFTs of the form~\eqref{mes_sinh} via the analysis of a parabolic problem) has been highly successful in recent years, due to the introduction of Hairer’s theory of regularity structures~\cite{Hairer}, followed by the theory of paracontrolled distributions developed by Gubinelli, Imkeller, and Perkowski~\cite{GIP}. See also~\cite{Kupiainen, Duch, DGR} for works employing renormalization group ideas originating in mathematical physics. In particular, the well-posedness theory for stochastic quantization equations with both polynomial nonlinearities~\cite{DPD, Hairer, Kupiainen, MW1, MW2, CC, TW, GH2,  GH, Duch, DDJ, DGR, DHYZ} and non-polynomial nonlinearities~\cite{HS, CHS, HKK1, HKK2, BC, GHOZ} is now well understood. In the current dispersive context, however, the picture is different. The well-posedness theory in the polynomial setting is relatively well understood for both wave and Schr\"odinger models; see, for instance,~\cite{BO94, BO96, BT1, BT2, GKO, GKO2, GKOT, Tolo, Bring2, BDNY, OTh2, OTWZ, OWZ, DNY1, DNY2, LTW}. By contrast, the well-posedness theory of random dispersive equations with non-polynomial nonlinearities, such as $\sin(\be u)$, $e^{\be u}$, or $\sinh(\be u)$, remains much less developed, despite a few attempts~\cite{ORSW1, ORSW2, ORW, STz, ORTzW, Robert, Zine3}. In particular, in~\cite{Zine3}, the second author introduced a physical-space framework to analyze the hyperbolic sine-Gordon model. The present work seeks to extend these physical-space methods to equations with exponential-type nonlinearities such as \eqref{liouville}.

The main difficulty in studying \eqref{liouville} is due to the {\it singular} nature of \eqref{liouville}, that is, the roughness of the noise $\xi$ renders the solution $u$ merely a distribution a priori, and a renormalization process is needed.
As a result, their analysis falls outside the scope of standard wave PDE techniques.

We also point out that, unlike the situation for polynomial nonlinearities, the difficulty of the problems, both parabolic and hyperbolic, with non-polynomial nonlinearities depends on the parameters in the model. 
For the case of the sine nonlinearity on $\T^2$, we have the parabolic sine-Gordon model 
\begin{align}
\dt u +  (1-\Dl)u + \sin(\be u) = \sqrt 2 \xi \label{parasine}
\end{align}
and the hyperbolic sine-Gordon model
\begin{align}
\dt^2 u + \dt u + (1-\Dl)u + \sin(\be u) = \sqrt{2}\xi. \label{hypsine}
\end{align}
In the parabolic case, a first order expansion and Wick renormalization establishes local well-posedness for any $0<\be^2<4\pi$. Beyond that, there are an infinite number of thresholds $\be_{j}^{2}= \frac{j}{j+1}8\pi$, $j\in \N$, in which further renormalizations are required. Using the theory of regularity structures, Hairer-Shen \cite{HS} and Chandra-Hairer-Shen~\cite{CHS} proved the local well-posedness in the entire sub-critical regime $0<\be^2 <8\pi$. When $\be^2>8\pi$, \eqref{parasine} becomes super-critical and non-trivial dynamics is no longer expected to exist. 
Due to the lack of strong parabolic smoothing in the hyperbolic problem \eqref{hypsine}, the potential range of $\be^2$ is lower. In \cite{ORSW2}, local well-posedness of \eqref{hypsine} was established for $0<\be^2<2\pi$. By introducing a novel physical-space framework, the second author~\cite{Zine3} extended the first threshold for~\eqref{hypsine} to $\be^2 < 2\pi (1+\frac{3\sqrt{241}-41}{122})\approx 2.046\pi$. Moreover, due to a “variance blow-up” phenomenon, the second author identified $\be^{2}=6\pi$ as a critical threshold for~\eqref{hypsine}, which is perhaps surprisingly lower than the corresponding threshold $\be^2=8\pi$ for the parabolic problem~\eqref{parasine}.

We now return to discuss the setting of exponential-type nonlinearities. 
For the parabolic sinh-Gordon model
\begin{align}
\dt u + (1-\Dl)u +\iota \be \sinh(\be u) = \sqrt 2\xi, \label{parasinh}
\end{align}
Garban \cite{Garban} and Oh, Robert, and Wang \cite{ORW} proved local well-posedness for $0<\be^{2} <\frac{8\pi}{3+2\sqrt{2}} \approx 1.37\pi$. There are currently \textit{no} results for the hyperbolic sinh-Gordon model \eqref{liouville} and the main purpose of our work is to rectify this gap.

Another model of interest here is the parabolic Liouville model,
\begin{align}
\dt u+ (1- \Dl)  u   + \iota \be e^{\be u} = \sqrt 2 \xi, \label{paraliouville}
\end{align}
and the hyperbolic Liouville model 
\begin{align}
\dt^2 u+\dt u + (1- \Dl)  u   + \iota \be e^{\be u} = \sqrt{2}\xi. \label{hypliouville}
\end{align}
While any result for the sinh-Gordon model would be expected to apply for the Liouville model, the reverse is not necessarily true, since the defocusing Liouville model ($\iota >0$) benefits from having a sign-definite nonlinearity. This feature plays a key role, and it is now known that \eqref{paraliouville} is locally well-posed in the entire sub-critical regime $0<\be^{2} <8\pi$ \cite{HKK2}; see \cite{HKK1, ORW} for priori works. We point out that both the strong parabolic smoothing (as measured in $L^1$-based Besov spaces) and the maximum principle were exploited in \cite{HKK2}. The absence of both of these properties makes the hyperbolic case significantly more analytically challenging. To date, the only result for the defocusing equation \eqref{hypliouville} is due to~\cite{ORW}, which holds in the range
\begin{align}
0<\be^2 < \tfrac{32-16\sqrt{3}}{5}\pi \approx 0.86\pi. \label{ORWbe}
\end{align}
 We will discuss the strategy of \cite{ORW} and our new ideas in more depth in Section~\ref{SEC:proof}.

Lastly, we summarize that the above well-posedness theories for the sine-Gordon and sinh-Gordon/Liouville models exploit different structural features of their respective nonlinearities. At the analytical level, the sine nonlinearity may appear simpler to handle, since the function $\sin(x)$ is bounded, in contrast to the unbounded growth of $\sinh(x)$ or $e^{x}$. However, the singular nature of \eqref{parasine} and \eqref{hypsine} means that renormalization is required. Whilst renormalizaton destroys this boundedness property, after a first-order expansion, the contributions from the (smoother) remainder are, morally, straightforward to bound. For the exponential nonlinearities, there are still difficulties in controlling the contributions from the remainder  $e^{\pm \be v}$ and these are overcome if there is enough smoothing to ensure that the remainder belongs to $L^{\infty}_{t,x}$ (or that there is a sign-definite structure as for the defocusing Liouville models).

At the probabilistic level, both types of nonlinearities do not belong to any finite order Wiener chaos, which rules out the Wiener chaos estimate and so the main stochastic objects introduced require different strategies to study as compared to the polynomial setting. 
In the sine-Gordon case, the so-called imaginary Gaussian multiplicative chaos plays a fundamental role, and oscillations induce ``charge cancellations" \cite{HS, CHS} which allow to obtain high-moment estimates without altering the spatial-regularity. 
In contrast, in the exponential case, the Gaussian multiplicative chaos (GMC) (see \eqref{GMC}) does not exhibit such cancellations and instead displays the so-called \textit{intermittency} phenomenon: the spatial regularity depends both on the value of $\be^2$ and on the integrability exponent, and moreover, the GMC admits finite $L^p(\O)$-moments only for $1 \leq p < \frac{8\pi}{\be^2}$. See \cite{Garban} for further discussion of intermittency. This renders the well-posedness analysis particularly delicate. On the other hand, the GMC possesses a sign-definite structure, being a positive distribution, which can be exploited in the well-posedness theory, even at the local-in-time level.

\subsection{Setup and main result}\label{subsec:main}
In order to define \eqref{mes_sinh} rigorously, we first make sense of the quadratic part in \eqref{mes_sinh} as a Gaussian measure on distributions.
Given $ s \in \R$, 
let $\mu_s$ denote
a Gaussian measure, formally given by
\begin{align*}
 d \mu_s 
   =  Z_s^{-1}e^{-\frac 12 \| u\|_{{H}^{s}}^2} du
& = Z_s^{-1} \prod_{n \in \Z^2}   e^{-\frac 12 \jb{n}^{2s} |\ft u_n|^2}   d\ft u_n , 
 %\label{gauss0}
\end{align*}
where 
  $\jb{\,\cdot\,} =  \big(1+ |\,\cdot\,|^2\big)^{\frac{1}{2}}$ and $\ft u_n$ denotes the Fourier coefficient of $u$ at the frequency $n \in \Z^2$. We then define $\muu_s = \mu_s \otimes \mu_{s-1}.$
In particular, when $s = 1$, the measure $\muu_1$ is defined as the joint law of the random Fourier series:
\begin{align} \label{series}
\muu_1 = \Law(u_0,u_1)
\qquad \text{with} \qquad
u_0^\o  = \sum_{n \in \Z^2} \frac{g_n(\o)}{\jb{n}} e_n
\qquad \text{and} \qquad
u_1^\o  =  \sum_{n \in \Z^2} h_n(\o)e_n.
\end{align}
Here, 
 $e_n=(2\pi)^{-1}e^{i n\cdot x}$
 and $\{g_n,h_n\}_{n\in\Z^2}$ denotes  a family of independent, identically distributed, standard 
 complex-valued  Gaussian random variables such that $\cj{g_n}=g_{-n}$ and $\cj{h_n}=h_{-n}$, 
 $n \in \Z^2$.
It is easy to see that $\muu_1 = \mu_1\otimes\mu_0$ is supported on
\begin{align*}
\H^{s}(\T^2) := H^{s}(\T^2)\times H^{s - 1}(\T^2)
\end{align*}
\noi
for $s < 0$ but not for $s \geq 0$.

We now proceed to make sense of the $\cosh$ interaction appearing in \eqref{mes_sinh}, and we will do this through Fourier truncations.
Let $\chi\in C^{\infty}_{c}(\R^2)$ be a smooth, non-negative even function with $\supp \chi \subset \{ \xi\in \R^2 \, : \, |\xi|\leq 1\}$ and $\chi =1$ on $\{ \xi\in \R^2 \, :\, |\xi|\leq \frac 12\}$. Given $N\in \N$, we define 
\begin{align*}
\chi_{N}(n) = \chi(\tfrac{n}{N})
\end{align*}
and let $\Pii_{N}$ denote the smooth frequency projector onto the spatial frequencies $\{ n\in \Z^2 \, :\, |n|\leq N\}$ associated with the Fourier multiplier $\chi_{N}$. 
Given  $u_0 = u_0^\o$ as in \eqref{series}, 
i.e. $\operatorname{law}(u_0) = \mu_1$ and for $N\in \N$, 
set $\s_N$ as follows
 \begin{align}
 \label{sN}
 \s_N =  \E\Big[\big(\Pii_{N}u_0(x)\big)^2\Big] =\sum_{n\in\Z^2}\frac{\chi_N(n)^2}{\jb{n}^2}
 = \frac1{2\pi}\log N + o(1),
 \end{align}
\noi
as $N \to \infty$, 
uniformly in $x\in\T^2$. The truncated renormalized Gibbs measure is defined by
\begin{align}\label{gibbsN}
d \rhoo_N(u, v) = Z_{N}^{-1} \exp\Big( -\iota \g_N \int_{\T^2}\cosh(\be \Pii_{N}u) dx \Big) d \muu_1(u,v),
\end{align}
where $\g_N = \g_N(\be)$ is given by 
 \begin{align}
 \label{gN}
 \g_N(\be) = e^{-\frac{\be^2}{2}\s_N}
 \end{align}
 and satisfies $\g_{N}(-\be)=\g_{N}(\be)$ and $Z_{N}$ is a normalization constant.
In the defocusing case $\iota>0$, one then proves  the  existence of a  measure  $\rhoo$ such  that
\begin{align}
\lim_{N \to \infty} \rhoo_N = \rhoo,
\label{gibbs10}
\end{align}
in  the sense  of  total  variation. See Lemma \ref{lem:gibbs} below. The main point is that introducing the multiplicative constant $\g_N$ in \eqref{gibbsN} renormalizes the term $\cosh(\be \Pii_{N}u)$ since $\g_{N}\to 0$ as $N\to \infty$. As to how $\g_N$ appears here, we discuss that in the next section; see for instance \eqref{GMC}.

Next, we define the corresponding renormalized truncated sinh-Gordon dynamics:
\begin{align}
\dt^2 u_N   + \dt u_N  +(1-\Dl)  u_N 
+ \iota \be \g_N \Pii_{ N}\sinh\big(\be \Pii_{ N} u_N \big)  = \sqrt{2} \xi , 
\label{liouville2}
\end{align} 
with the Gibbs measure initial data $\rhoo_N$ \eqref{gibbsN}. Our main result below proves that, for some range of parameters $\be$, the sequence $(u_N, \dt u_N)_{N \in \N}$ converges to a non-trivial stochastic process $(u, \dt u)$ whose law is given by $\rhoo$ \eqref{gibbs10} at every time marginal. This process $u$ is hence formally interpreted as the solution to the following renormalized SdSG equation
\noi
\begin{align}
\dt^2 u + \dt u  +(1-\Dl)  u
+ \iota  \be 0 \cdot \sinh(\be u)  = \sqrt{2} \xi , 
\label{liouville3}
\end{align}

\smallskip

We first state a local well-posedness for~\eqref{liouville3} on the support of the Gaussian free field $\muu_1$.

\begin{theorem}\label{thm:0}
Let  $0 < \be^2 < \frac{6\pi}{5}$ and $\iota \neq 0$. Then,
the stochastic damped sinh-Gordon equation~\eqref{liouville3} is almost  surely locally  well-posed with respect to the Gaussian free field $\muu_1$ defined in~\eqref{series}. More  precisely, there exists an $\muu_1$-almost surely positive time $T>0$ and a process $(u,\dt u) \in C([0,T]; \mc H^{-\eps}(\T^2))$ for any  small $\eps > 0$ such  that the solution $(u_N, \dt u_N)$ to \eqref{liouville2} converges to $(u, \dt u)$ in $C([0,T]; \mc H^{-\eps}(\T^2))$ $\muu_1$-almost  surely as $N \to \infty$.
\end{theorem}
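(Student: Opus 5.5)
We argue by a first-order (Da Prato--Debussche type) expansion $u_N = \Psi_N + v_N$. Here $\Psi$ solves the linear damped wave equation $\dt^2\Psi + \dt\Psi + (1-\Dl)\Psi = \sqrt 2\xi$ with initial data $(u_0,u_1)$ distributed by $\muu_1$, so that $\Psi$ is stationary with law $\mu_1$ at each time, and $\Psi_N = \Pii_N\Psi$. The truncated Gaussian multiplicative chaos is
\[
\Ta_N^{\pm} = \g_N e^{\pm\be\Psi_N} = e^{\pm\be\Psi_N - \frac{\be^2}{2}\s_N},
\]
and the remainder $v_N$ then satisfies
\begin{align*}
\dt^2 v_N + \dt v_N + (1-\Dl)v_N + \tfrac{\iota\be}{2}\Pii_N\big( e^{\be\Pii_N v_N}\Ta_N^{+} - e^{-\be\Pii_N v_N}\Ta_N^{-}\big) = 0, \qquad (v_N,\dt v_N)|_{t=0} = 0.
\end{align*}
The proof splits into a stochastic part (convergence of $\Ta_N^\pm$) and a deterministic part (a contraction argument for $v$ that is continuous in the enhanced data $(\Psi,\Ta^+,\Ta^-)$). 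Since $\Psi_N\to\Psi$ in $C([0,T];\mc H^{-\eps})$ a.s. by standard arguments, the theorem then follows.

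For the stochastic part, we show that $\Ta_N^\pm$ converges almost surely, and in $L^p(\O)$, in a space of the form $L^q([0,T]; W^{-\al,r}(\T^2))$. We compute moments in physical space using the covariance $\E[\Psi_N(t,x)\Psi_N(t,y)] \approx -\frac{1}{2\pi}\log|x-y|$. For integer $p$, this gives
\[
\E|\jb{\nb}^{-\al}\Ta_N(t,x)|^{p} \lesssim \int \prod_j J_\al(x-y_j) \prod_{i<j}|y_i-y_j|^{-\be^2/2\pi}\,dy,
\]
with Bessel kernel $J_\al$. Onsager-type/intermittency bounds then yield regularity $\al > \frac{(p-1)\be^2}{4\pi}$ in $L^p$, which is finite for $p < \frac{8\pi}{\be^2}$. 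We take $q=r=p$ and use Kolmogorov/Cauchy arguments in $N$, as in Garban and ORW. Positivity of $\Ta_N^\pm$ is recorded as well, since it will be used below.

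For the deterministic part, we place $v$ in an $L^\infty$-based space: $v \in C([0,T]; W^{s,\infty})$ for some small $s>0$, or at least in $L^\infty_{t,x}$ together with some H\"older regularity. Then $e^{\pm\be v}$ is bounded and positive, and it is continuous in $v$. The key product estimate controls $e^{\be v}\Ta^+$ in $W^{-\al,p}$ using the positivity of $\Ta$: for positive measures,
\[
|\langle f\Ta, \phi\rangle| \le \|f\|_{L^\infty}\langle \Ta,|\phi|\rangle,
\]
combined with a paraproduct/commutator estimate in which $\|e^{\be v}\|_{C^{s}}$ with $s>\al$ handles the negative regularity. We then apply the wave Duhamel operator in physical space. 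The kernel of $\frac{\sin(t\sqrt{-\Dl})}{\sqrt{-\Dl}}$ in 2D is essentially $(t^2-|x|^2)_+^{-1/2}$, and it maps $L^1_tW^{-\al,p}_x$ into $L^\infty_tC^{s}_x$ provided $1-\al-\frac{2}{p}$ exceeds $s$ up to losses. Together with the stochastic regularity $\al\approx\frac{(p-1)\be^2}{4\pi}$, this fixes the admissible range of $\be^2$; optimizing over $p$ should produce the threshold $\frac{6\pi}{5}$, possibly with a refined estimate on the wave kernel's singularity along the light cone. A contraction on a short random time interval, with $T$ depending on the norms of $\Ta^\pm$, gives local well-posedness. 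Continuity of the solution map in $(\Ta_N^\pm)\to(\Ta^\pm)$ and in the smooth projections $\Pii_N$ then gives almost sure convergence of $u_N$.

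The main obstacle is the deterministic $L^\infty$ estimate. The wave propagator does not gain a full derivative in $L^\infty$-based spaces, and the GMC has limited integrability. Balancing the negative regularity $\al(p)$ against the smoothing of the wave kernel from $L^p$ to $C^s$, while keeping $p<8\pi/\be^2$, is delicate. I would first try to estimate the Duhamel integral directly as a space-time convolution of the positive measure $\Ta$ with the kernel $(t^2-|x|^2)_+^{-1/2}$, bounding the space-time moments of $\Ta$ against this kernel and its H\"older differences. This step is where the numerology $\be^2<6\pi/5$ should emerge.
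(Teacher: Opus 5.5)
The first-order expansion and the construction of $\Ta^\pm_N$ match the paper. However, the deterministic step you actually describe cannot prove the theorem, and it is exactly where the threshold is decided. The bound $\I_{\textup{wave}}: L^1_tW^{-\al,p}_x\to L^\infty_tC^s_x$ under $1-\al-\frac2p>s$ fails for $p\neq 2$. At fixed time the wave propagator loses $|\frac12-\frac1p|$ derivatives on $L^p(\T^2)$, so at best you need $\al+s<\frac12-\frac1p$. Even granting the lossless version, combining $1-\al-\frac2p>0$ with $\al>\frac{(p-1)\be^2}{4\pi}$ gives $\be^2<\frac{4\pi(p-2)}{p(p-1)}$. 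Its maximum over $p$, attained at $p=2+\sqrt2$, is $\frac{4\pi}{3+2\sqrt2}\approx 0.69\pi$; with the true loss you only get $\frac{2\pi}{3+2\sqrt2}\approx 0.34\pi$. So no optimization in $p$ produces $\frac{6\pi}{5}$. Any scheme that uses the Duhamel time integral only as an $L^1_t$ average of fixed-time estimates is too weak, and the ``refined estimate along the light cone'' is the whole proof, not a detail. The paraproduct requiring $\|e^{\be v}\|_{C^s}$ with $s>\al$ is also unnecessary and would only worsen the numerology. Since the Bessel kernel is positive and $\Ta\ge 0$, you already have $\|\jb{\nb}^{-\al}(f\Ta)\|_{L^p}\les\|f\|_{L^\infty}\|\jb{\nb}^{-\al}\Ta\|_{L^p}$ for continuous $f$.

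What is missing is the mechanism that lets the time integration against the cone singularity do part of the smoothing. The paper splits $v=X+Y$. Here $Y$ carries $\mc D-\mc S$, which gains two derivatives, so $Y\in C_TH^{1+s_2}_x\subset L^\infty$ by $L^2$ estimates and the positivity product bound. $X$ carries the explicit kernel of $\mc S$ and is placed in the anisotropic space $\Ld^{s,b}_p$ with $s>\frac2p$, $b>\frac1p$, which embeds into $L^\infty_{t,x}$ using time regularity as well as space regularity. The deterministic input is a pointwise bound $|(-\Dl_{\R\times\T^2})^{\s/2}K|\les\mc R_\s$, where $\mc R_\s\sim|x|^{-1/2}||t-t'|-|x||^{-\frac12-\s}$ plus similar terms. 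This is integrable in $t'$ only for $\s=s+b<\frac12$, and together with $s+b>\frac3p$ it forces $p>6$. Positivity of $\Ta$ then gives $\|X\|_{\Ld^{s,b}_p}\les\|e^{\pm\be(z+X+Y)}\|_{L^\infty}\|\mathcal{P}_{\kk,\al}[\Ta^\pm]\|_{L^p_{t,x}}$ with $\kk=\frac12+\s<1$ and $\al=\frac32-$. The derivatives needed for the embedding only sharpen the cone singularity and do not eat into the spatial gain. After integrating out the cone in $t'$, the modified GMC behaves like $\jb{\nb}^{-3/2}\Ta$. The probabilistic input is therefore not $\Ta\in L^p_tW^{-\al,p}_x$ but uniform $L^p_{t,x}$ moments of $\mathcal{P}_{\kk,\al}[\Ta_N]$. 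These come from Kahane's inequality after dominating the space--time covariance $-\frac1{2\pi}\log(|t_1-t_2|+|x|+N^{-1})$ by the purely spatial one, so the $t'$-integral over $||t-t'|-|x-y||\sim 2^{-\tau}$ can be done explicitly, followed by rescaling the dyadic annuli $|x-y|\sim 2^{-\ell}$. This requires $\frac{(p-1)\be^2}{4\pi}<\al\approx\frac32$, and letting $p\downarrow 6$ gives $\be^2<\frac{6\pi}{5}$. Your last paragraph points in this direction but supplies none of these pieces. Finally, because the key estimate rests on positivity, it is not Lipschitz in $\Ta$: the differences $\Ta_N-\Ta$ are not positive. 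So ``continuity of the solution map in $\Ta$'' also needs an argument. The paper interpolates uniform bounds on $\mathcal{P}[\Ta_N]$ against a rate for $\Ta_N-\Ta$ in $L^2_tH^{s_2-1}_x$, using a Littlewood--Paley decomposition of $e^{\pm\be X}$ and $e^{\pm\be(z+Y)}$.
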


We note that in the defocusing case $\iota>0$, since the Gibbs measure $\rhoo$ is absolutely continuous with respect to $\muu_1$ (see Lemma~\ref{lem:gibbs}), Theorem~\ref{thm:0} also yields a local well-posedness result on the support of the Gibbs measure~$\rhoo$.

In the next result, restricting to the defocusing case $\iota>0$, we extend the local solution constructed in Theorem~\ref{thm:0} to a global-in-time solution by exploiting the invariance of the Gibbs measure, albeit for a more restricted range of the parameter $\be$.

\begin{theorem}\label{thm:main}
Let  $0 < \be^2 < \frac{2\pi}{11}$ and $\iota >0$. Then,
the stochastic damped sinh-Gordon equation~\eqref{liouville3} is almost  surely globally  well-posed with respect to the renormalized Gibbs  measure~$\rhoo$ defined in~\eqref{gibbs10} and the renormalized Gibbs measure $\rhoo$ is invariant under the dynamics. More  precisely, there exists a  process $(u,\dt u) \in C(\R_+; \mc H^{-\eps}(\T^2))$\footnote{Here, $C(\R_+; X)$ for a Banach space $X$ is the space of continuous functions from $\R_+$ to $X$, endowed with the compact-open topology.} for  any  small $\eps > 0$ such  that the solution $(u_N, \dt u_N)$ to \eqref{liouville2} converges to $(u, \dt u)$ in $C(\R_+; \mc H^{-\eps}(\T^2))$ $\rhoo$-almost  surely as $N \to \infty$. Moreover, for  each $t \ge 0$, the law of $(u(t), \dt u(t))$ is  given by  $\rhoo$.
\end{theorem}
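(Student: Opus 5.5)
The proof follows Bourgain's invariant measure argument, implemented in the form developed for singular hyperbolic SPDEs by Oh--Thomann and by Oh, Robert, Wang and collaborators. For each fixed $N$, the truncated dynamics \eqref{liouville2} is a finite-dimensional nonlinear system for the low modes $\Pii_N u_N$. The high modes are linear Ornstein--Uhlenbeck damped-wave dynamics, for which $\muu_1$ restricted to the high modes is invariant. The low-mode part is a Langevin (Hamiltonian plus damping and noise) system whose invariant measure is exactly the truncated Gibbs measure $\rhoo_N$ in \eqref{gibbsN}. Hence $\rhoo_N$ is invariant under \eqref{liouville2}. Global existence for each $N$ follows from the defocusing sign $\iota>0$: the truncated energy $\frac12\|(u,\dt u)\|^2 + \iota\g_N\int\cosh(\be\Pii_N u)\,dx$ controls the low modes, and It\^o's formula gives a growth bound.

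The key step is a uniform-in-$N$ control of the solutions, obtained via the first-order expansion used in the proof of Theorem~\ref{thm:0}. I write $u_N = \Psi + v_N$, where $\Psi$ is the stochastic convolution solving the linear damped wave equation with initial data distributed according to $\muu_1$. Under $\muu_1\otimes\Prob$ the process $\Psi$ is stationary. The remainder $v_N$ solves
\[\dt^2 v_N+\dt v_N+(1-\Dl)v_N = -\iota\be\,\Pii_N\big(\Ta_N^{+}e^{\be \Pii_N v_N}-\Ta_N^{-}e^{-\be\Pii_N v_N}\big)/2,\]
where $\Ta_N^{\pm}=\g_N e^{\pm\be\Pii_N\Psi}$ are the GMC objects. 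I would then prove the following bound, uniform in $N$: with probability at least $1-\eps$ (under the product of $\rhoo_N$ and the noise), $\sup_{t\in[0,T]}\|(u_N,\dt u_N)(t)\|_{\mc H^{-\eps}}\le C(\eps,T)$ and $\|v_N\|_{L^\infty([0,T]\times\T^2)}$ is bounded.

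This uses a Bourgain-type iteration in time. Write $T=k\tau$, where $\tau$ is the local existence time from Theorem~\ref{thm:0}, determined by the sizes of the data and of the stochastic objects. By invariance of $\rhoo_N$, at each time $j\tau$ the law of the data is $\rhoo_N$. Since $\rhoo_N\ll\muu_1$ with $L^p$ densities uniform in $N$ (Lemma~\ref{lem:gibbs}), the data at each step lies in a ball of size $K$ with high probability. One then restarts the local theory on each interval $[j\tau,(j+1)\tau]$ and takes a union bound over the $k$ steps.

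Transferring estimates from $\muu_1$ to $\rhoo_N$ costs a loss in integrability, via H\"older's inequality with the density. This matters because the GMC only has moments of order $p<8\pi/\be^2$, and the local theory needs tail bounds on the stochastic objects and on the time $\tau$ that are strong enough to survive the union bound over $k\sim T/\tau$ steps. I expect this to be the main obstacle. It is where the restriction $\be^2<\frac{2\pi}{11}$ should come from: the number of moments needed (from the H\"older exponent for the density, the polynomial dependence of $\tau^{-1}$ on the norms in the $L^\infty$-based local theory, and the Chebyshev tail bounds) must stay below $8\pi/\be^2$. My first attempt would be to make $\tau^{-1}$ depend polynomially on the norms of $\Ta_N^\pm$ and of $\Psi$, and to optimize the exponents.

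Finally, I would show convergence of $(u_N,\dt u_N)$ to $(u,\dt u)$ on each $[0,T]$. On the good event above, the difference of the local solutions is estimated using the stability part of the local theory, the convergence $\Ta_N^\pm\to\Ta^\pm$, and the uniform $L^\infty$ bound on $v_N$, iterated over the $k$ intervals. A Borel--Cantelli argument in $\eps$ and $T\in\N$ then gives almost sure convergence in $C(\R_+;\mc H^{-\eps})$. The marginal law of the limit at time $t$ is the limit of $\rhoo_N$, namely $\rhoo$, because $\rhoo_N\to\rhoo$ in total variation by \eqref{gibbs10} and $(u_N(t),\dt u_N(t))\to(u(t),\dt u(t))$ almost surely. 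This yields the invariance of $\rhoo$.
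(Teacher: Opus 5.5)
Your first step (global existence and invariance of $\rhoo_N$ for the truncated flow) and your last step (identifying the marginal law via total variation convergence) match the paper. The middle of your argument, however, has a genuine gap: the Bourgain-type iteration over the intervals $[j\tau,(j+1)\tau]$ cannot be run with the local theory of Proposition~\ref{PROP:LWP}.

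That proposition solves for a remainder whose free part $z=\D(t)(v_0,v_1)$ must lie in $L^\infty_{t,x}$. It guarantees this only for $(v_0,v_1)\in\H^{s_3}$ with $s_3>1$. At a restart time, however, the remainder $v_N(j\tau)=X_N(j\tau)+Y_N(j\tau)$ is only in $H^{s_2}$ with $s_2<1-\frac{\be^2}{4\pi}$, because the $X$-component lies only in $\Ld^{s_1,b}_p\cap C_TH^{s_2}$. The free wave flow is not bounded on $\Ld^{s,b}_p$ or other $L^\infty$-based spaces (Remark~\ref{rmk:space}), and Strichartz estimates only give time-averaged $L^\infty_x$ control.

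Suppose instead you restart from the full data $u_N(j\tau)$, whose law is $\rhoo_N$, with a fresh stochastic convolution $\Psi^{(j)}$. Then you at best control $u_N$ in $C_T\H^{-\eps}$, not $v_N=u_N-\Psi$ in $L^\infty$, because $\Psi^{(j)}-\Psi$ is exactly the free evolution of $(v_N,\dt v_N)(j\tau)$. For the limiting equation this restart is not available at all. Building the Gaussian multiplicative chaos of the free evolution of $u(j\tau)$ is a probabilistic construction that needs the law of $u(j\tau)$, which is what you are trying to prove. So neither your claimed $L^\infty$ bound on $v_N$ nor your interval-by-interval stability step goes through.

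The union bound you flag as the main obstacle is also left open. With the parameters \eqref{params}, the time gain in Proposition~\ref{PROP:LWP} is only of order $T^{\eps/8}$, so $\tau(R)^{-1}\sim R^{8/\eps}$. Meanwhile, intermittency limits the tails of the enhanced data to $R^{-p}$ with $p<\frac{8\pi}{\be^2}$. There is also no integrability loss in passing from $\muu_1$ to $\rhoo_N$: the density is bounded by $1$ and $Z_N$ is bounded below (Lemma~\ref{LEM:enhanced}). In particular, $\frac{2\pi}{11}$ does not come from balancing these exponents.

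The paper avoids iteration altogether. Uniform-in-$N$ bounds on all of $[0,T]$ for the time-zero decomposition $u_N=\Psi_N+X_N+Y_N$ come directly from invariance. The reason is that $X_N$ and $Y_N$ are Duhamel integrals of $\g_N\sinh(\be\Pii_Nu_N)$, a functional of $u_N$ whose time marginals have law $\rhoo_N$.
\begin{itemize}
\item For $Y_N$, the bound is Minkowski's inequality plus invariance.
\item For $X_N$ (Lemma~\ref{LEM:PuN}), one bounds $\|X_N\|_{L^\infty_{T,x}}\les\|X_N\|_{\Ld^{s_q,b_q}_q(T)}$ by moments of $\mathcal{P}_{\kk_q,\al}[\g_Ne^{\pm\be\Pii_Nu_N}]$. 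One then moves the time integral outside the expectation and reduces, by invariance, to moments of $\Dr_N(0,\cdot)$. The hyperbolic singularity $\big||t'|-|y|\big|^{-\kk_q}$ is absorbed using GMC moment bounds on balls and on thin annuli (Lemma~\ref{LEM:GMCannuli}). This forces $\be^2<\frac{4\pi(q-6)}{q(q-1)}$ for even $q$, optimized at $q=12$; that is where $\frac{2\pi}{11}$ comes from.
\item The $\Ld^{s_1,b}_p(T)$ bound on $X_N$ then follows from positivity together with these $L^\infty$ bounds (Lemma~\ref{LEM:unifNMbds}).
\end{itemize}
The limit is then constructed on all of $[0,T]$ in a single step, perturbatively around $(X_N,Y_N)$, in exponentially time-weighted norms (Lemma~\ref{LEM:stab}). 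There, Proposition~\ref{prop:keryld} and the weighted modified GMC supply a factor $\ld^{-\eps/32}$ that makes the difference map a contraction.
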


To the best of our knowledge, Theorems~\ref{thm:0} and~\ref{thm:main} provide the first well-posedness results for the hyperbolic defocusing sinh–Gordon model. This resolves an open question raised in the closely related work~\cite{ORW} concerning the well-posedness (even locally in time) of the hyperbolic sinh-Gordon model. Indeed, in \cite[Remark 1.18]{ORW}, the authors state that they “do not know how to handle the hyperbolic sinh–Gordon equation for any $\be^2>0$.” See Section~\ref{SEC:proof} below for further discussion.

Our approach also covers the hyperbolic Liouville model \eqref{hypliouville}. Consider the following renormalized truncated Liouville dynamics
\begin{align}
\dt^2 u_N   + \dt u_N  +(1-\Dl)  u_N 
+ \iota \be \g_N \Q_{N}\big\{\exp\big(\be \Q u_N \big)\big\}   = \sqrt{2} \xi , 
\label{liouvilleUS}
\end{align} 
with $\iota >0$ (defocusing) and $\be \in \R$, and its associated renormalized Gibbs measure
\begin{align}
\label{gibbsNUS}
d \rhoo_{N, 2}(u, v) =Z_{N,2}^{-1}  \exp\Big( -\iota \g_N \int_{\T^2} \exp \big(\be \Q_{N} u\big) dx \Big) d \muu_1(u,v).
\end{align}
Here, $\Q_{N}$ is an approximation to the identity which has a non-negative convolution kernel. For example, given $\eta$ a smooth, non-negative, even function of compact support such that $\int_{\R^2}\eta(x) dx =1$, then for $N\in \N$, $\Q_{N}$ is defined by 
\begin{align*}
\Q_{N} h = \eta_{N} \ast_{x} h,
\end{align*}
where $\eta_{N}(x)= N^2 \eta(Nx)$. Notice that $\Q_{N}$ is not a projection onto finitely many frequencies unlike $\Pii_{N}$. Lastly, the renormalization constant $\g_N$ is as in~\eqref{gN}.\footnote{Changing the approximation procedure from $\Pii_N$ to $\Q_N$ in $\s_N$ only modifies $\g_N$ by an $O(1)$ factor.} By a variant of Lemma~\ref{lem:gibbs}, one shows the existence of a measure $\rhoo_2$ such that
\begin{align}
\lim_{N \to \infty} \rhoo_{N,2} = \rhoo_2,
\label{gibbs100}
\end{align}
in  the sense of total variation. 
With the measure in hand, we consider the associated invariant hyperbolic Liouville dynamics
\noi
\begin{align}
\dt^2 u + \dt u  +(1-\Dl)  u
+ \iota  \be 0 \cdot \exp(\be u) = \sqrt{2} \xi,
\label{liouvilleUS2}
\end{align}
which, as before, we interpret as the limit of the solution to the truncated renormalized Liouville equation~\eqref{liouvilleUS}.

\begin{theorem}\label{thm:2}
Let  $0 < \be^2 < \frac{6\pi}{5}$ and $\iota >0$. Then,
the stochastic damped Liouville equation~\eqref{liouvilleUS2} is almost  surely globally  well-posed with respect to the renormalized Gibbs  measure~$\rhoo_2$ defined in~\eqref{gibbs100} and the renormalized Gibbs measure $\rhoo_2$ is invariant under the dynamics. More  precisely, there exists a  process $(u,\dt u) \in C(\R_+; \mc H^{-\eps}(\T^2))$ for  any  small $\eps > 0$ such  that the solution $(u_N, \dt u_N)$ to \eqref{liouvilleUS} converges to $(u, \dt u)$ in $C(\R_+; \mc H^{-\eps}(\T^2))$ $\rhoo_2$-almost  surely as $N \to \infty$. Moreover, for  each $t \ge 0$, the law of $(u(t), \dt u(t))$ is  given by  $\rhoo_2$.
\end{theorem}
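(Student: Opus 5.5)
We follow the same architecture as for Theorems~\ref{thm:0} and~\ref{thm:main}, but with the sign-definiteness of the Liouville nonlinearity replacing the $L^\infty$ control needed for $\sinh$. The plan has three stages.

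\emph{Stage 1: first-order expansion.} Write $u_N = \Psi + v_N$, where $\Psi$ is the stochastic damped wave linear solution with initial data $(u_0,u_1)$ of law $\muu_1$. Define the truncated Gaussian multiplicative chaos
\[
\Ta_N = \g_N e^{\be \Q_N \Psi}.
\]
Its law is stationary in time for each $t$. By the $L^p(\O)$ moment bounds for GMC (finite for $p<\frac{8\pi}{\be^2}$), it converges almost surely to a positive random distribution $\Ta$ in $L^p([0,T];W^{-\al,p}(\T^2))$, for suitable $\al>(p-1)\frac{\be^2}{4\pi}$. We obtain this from the second-moment or $p$-th moment computation in physical space, using that $\Q_N$ has a non-negative kernel, together with Kolmogorov-type regularity in time.

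The remainder then solves
\[
\dt^2 v_N+\dt v_N+(1-\Dl)v_N+\iota\be\,\Q_N\bigl(e^{\be \Q_N v_N}\Ta_N\bigr)=0,
\qquad (v_N,\dt v_N)|_{t=0}=0.
\]

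\emph{Stage 2: a priori bounds from sign-definiteness.} Since $\Ta_N\ge 0$ and $\Q_N$ preserves positivity, the forcing $F_N=\iota\be\,\Q_N(e^{\be \Q_N v_N}\Ta_N)$ has the sign of $\be$. For the positive sign, we aim to show $\be v_N\le 0$. The damped Klein--Gordon Duhamel kernel on $\T^2$ is not positive, so we do not argue through the kernel. Instead, following the $L^\infty$-based framework developed for the sinh case, we establish an $L^\infty_{t,x}$ bound on $v_N$ directly. We use the $L^1$-type control of $e^{\be \Q_N v_N}\Ta_N$: it is a positive measure, and its total mass is controlled by an energy identity, obtained by testing the equation against $\dt v_N$ and exploiting positivity. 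We then gain $1-\al$ derivatives via the Duhamel operator in Besov spaces $B^{1-\al-\dl}_{p,\infty}$, and embed into $L^\infty$ when $1-\al-\frac2p>0$.

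\emph{Where the threshold comes from.} Optimizing $1-(p-1)\frac{\be^2}{4\pi}>\frac2p$ with $p<\frac{8\pi}{\be^2}$ gives the constraint $\be^2<\frac{6\pi}{5}$, the same threshold as in Theorem~\ref{thm:0}. The crucial difference from the sinh case is that positivity replaces the two-sided $L^\infty$ control. We only need a one-sided bound on $e^{\be v}$, which the positivity yields globally in time without invoking invariance. This is why the range here is not restricted to $\frac{2\pi}{11}$. The key steps are a contraction, or a compactness-plus-uniqueness argument, in $L^\infty_{t,x}\cap C_t H^{1-\al-\dl}$ on $[0,T]$, followed by convergence $v_N\to v$.

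\emph{Stage 3: globalization and invariance.} We run the standard Bourgain invariant-measure argument. Verify that $\rhoo_{N,2}$ is invariant for the truncated dynamics \eqref{liouvilleUS}; this follows from the Hamiltonian-plus-Ornstein--Uhlenbeck structure, with the Hamiltonian containing $\g_N\int e^{\be\Q_N u}$, whose generator preserves $\rhoo_{N,2}$ as in Lemma~\ref{lem:gibbs}. Then combine the total-variation convergence $\rhoo_{N,2}\to\rhoo_2$ with uniform-in-$N$ growth bounds. Since the local time depends only on norms of $\Ta_N$ with tail estimates, we obtain $\rhoo_2$-almost sure global existence, convergence of $u_N$ in $C(\R_+;\mc H^{-\eps})$, and invariance.

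\emph{Main obstacle.} The hardest step is Stage 2: closing the $L^\infty$ estimate for $v_N$ given only $L^p$-moments with $p<\frac{8\pi}{\be^2}$. This requires a careful product estimate between $e^{\be \Q_N v_N}\in W^{s,\infty}$-type spaces and the positive distribution $\Ta_N$. The difficulty is to exploit positivity at negative regularity, for instance through the bound $|\langle f\Ta,\phi\rangle|\le \|f\|_{L^\infty}\langle\Ta,|\phi|\rangle$ applied to localized kernels.
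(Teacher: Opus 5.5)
There is a genuine gap in Stage 2, and this is the step that has to produce the range $\be^2<\frac{6\pi}{5}$. There are four separate problems.

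\emph{No full derivative of gain.} The wave Duhamel operator does not gain a full derivative on $L^p$-based Sobolev or Besov spaces when $p\neq2$. On $L^p(\T^2)$, $p>2$, the propagator $e^{\pm it|\nabla|}$ loses $\frac12-\frac1p$ derivatives (cf.\ Remark~\ref{rmk:space}). So the mapping $\I_{\textup{wave}}:L^1_tW^{-\al,p}_x\to L^\infty_tB^{1-\al-\dl}_{p,\infty}$ that you rely on is false.

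\emph{Wrong numerology.} Even granting that mapping, your conditions do not give $\frac{6\pi}{5}$. The requirements $1-\al>\frac2p$ and $\al>(p-1)\frac{\be^2}{4\pi}$ force $\be^2<\frac{4\pi(p-2)}{p(p-1)}$. Its maximum, at $p=2+\sqrt2$, is $\frac{4\pi}{3+2\sqrt2}\approx0.69\pi$, which is below even the range \eqref{ORWbe} of \cite{ORW}.

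\emph{Energy identity is not uniform in $N$.} Testing against $\dt v_N$ does not control the mass $\g_N\int e^{\be\Q_Nu_N}\,dx$ uniformly in $N$. After integrating by parts in time you are left with $\int\!\!\int\dt\Q_N\Psi\cdot\g_Ne^{\be\Q_Nu_N}$, and $\dt\Psi$ only has regularity $-1-\eps$. The paper uses this identity only for fixed-$N$ global existence.

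\emph{Mass does not give $L^\infty$.} An $L^1$ mass bound could not yield $L^\infty$ control in any case, because the wave kernel $\mathcal W_{\R^2}(t,x)=\ind_{B(0,|t|)}(x)(t^2-|x|^2)^{-1/2}$ is unbounded.

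The missing ingredients are the following.
\begin{itemize}
\item The decomposition $v_N=X_N+Y_N$. Here $X_N$ is given by $\I_{\textup{wave}}$, whose explicit kernel is non-negative, so $\be X_N\le0$ in the defocusing case. This is how the sign survives even though the damped Klein--Gordon kernel is not positive. $Y_N$ is given by $\I_{\textup{KG-wave}}$, which smooths by two derivatives, so $Y_N\in L^\infty$ by Sobolev embedding.
\item The physical-space kernel estimate for $X_N$. Your heuristic $|\langle f\Ta,\phi\rangle|\le\|f\|_{L^\infty}\langle\Ta,|\phi|\rangle$ is the right idea. It has to be applied to fractional space-time derivatives of the explicit wave kernel. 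Proposition~\ref{prop:kery} then gives $\|\I_{\textup{wave}}(f\Ta_N)\|_{\Ld^{s,b}_p}\les\|f\|_{L^\infty}\|\mathcal P_{\kk,\al}[\Ta_N]\|_{L^p}$ in anisotropic spaces $\Ld^{s,b}_p\hookrightarrow L^\infty$, with $s>\frac2p$, $b>\frac1p$ and $s+b<\frac12$. These constraints force $p>6$.
\item The modified GMC bound. $\mathcal P_{\kk,\al}[\Ta_N]$ is in $L^p$ with $\al\approx\frac32$, by Kahane's inequality plus space-time covariance bounds. The condition $(p-1)\frac{\be^2}{4\pi}<\frac32$ at $p\approx6$ is exactly $\be^2<\frac{6\pi}{5}$.
\end{itemize}

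Your Stage 3 is the argument the paper explains does \emph{not} close here.

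\emph{Positivity alone gives no global bound.} The claim that positivity gives global bounds ``without invoking invariance'' is false. The sign $\be X_N\le0$ only gives $e^{\be\Q_N(X_N+Y_N)}\le e^{\be\Q_NY_N}$. The estimate for $Y_N$ is then a Gronwall-type inequality with an exponential nonlinearity, which by itself does not exclude finite-time blow-up.

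\emph{A standard Bourgain iteration fails.} By intermittency, the enhanced data ($\Ta$ and $\mathcal P_{\kk,\al}[\Ta]$) have only polynomial tails. Meanwhile the local time shrinks like a large power of the data norm. So the union bound over $T/\tau(M)$ subintervals does not become small. Moreover, $\S(t)$ is not bounded on $\Ld^{s,b}_p$, so the $X$-part cannot be restarted from nonzero data.

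\emph{What the paper does instead.}
\begin{itemize}
\item It bounds $Y_N$ in $L^\infty_TH^{s_2+1}_x$ on the whole interval $[0,T]$ by Minkowski's inequality and invariance of $\rhoo_{N,2}\otimes\PP$.
\item It uses $\be X_N\le0$ together with the enhanced-data bound to control $X_N$ in $\Ld^{s_1,b}_p(T)$ with no $L^\infty$ bound on $X_N$ needed (Remark~\ref{rmk:liouvilleUS}). This is what removes the $\frac{2\pi}{11}$ restriction of Lemma~\ref{LEM:PuN}. The threshold $\frac{6\pi}{5}$ itself comes from the local theory, which is the same for sinh and Liouville.
\item It passes to $N=\infty$ on $[0,T]$ in a single step, via a stability argument in exponentially time-weighted norms (Lemma~\ref{LEM:stab}). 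This step needs the $\ld$-decay of $\mathcal P_{\kk,\al,\ld}$.
\item It concludes by ``almost'' almost sure global well-posedness and total-variation convergence.
\end{itemize}
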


 Theorem~\ref{thm:2} therefore improves upon the parameter range \eqref{ORWbe} from \cite{ORW}. We point out that in both defocusing and focusing cases, we can establish local well-posedness for \eqref{liouvilleUS2} in the sense of Theorem~\ref{thm:0} for any $0<\be^2<\frac{6\pi}{5}$. 
The improved range of $\be^2$ in Theorem~\ref{thm:2} as compared to Theorem~\ref{thm:main} is due to a sign-definite structure hidden in \eqref{liouvilleUS2} which does not appear in \eqref{liouville}. In particular, relative to~\cite{ORW}, the use of an approximation procedure with a positive structure (namely, via the projectors $\Q_N$) is essential only in the globalization part of Theorem~\ref{thm:2}. A more detailed discussion of the differences between the two approaches is provided in Section~\ref{SEC:proof} below.

\subsection{Overview of the proofs} \label{SEC:proof}
First, we focus on the sinh-Gordon model \eqref{liouville3} and hence discuss the proofs of Theorem~\ref{thm:0} and~\ref{thm:main}. Then, we discuss the adjustments needed to obtain Theorem~\ref{thm:2} for the Liouville model \eqref{liouvilleUS2}.

In view of the mutual absolute continuity of $\rhoo$ and $\muu_1$ (see Lemma~\ref{lem:gibbs} below), we consider \eqref{liouville2} with initial data $(u_0, u_1)$ sampled from $\muu_1$.
Proceeding with a first order expansion as in \cite{BO96, DPD, ORW}, we have
\begin{align}
u_N = \Psi_{N} + v_N, \label{uNfull}
\end{align}
where $\Psi$ is the so-called stochastic convolution, $\Psi_{N}:= \Pii_{N}\Psi$ is the truncated stochastic convolution, and $v_{N}$ is a remainder.
\noi
The stochastic convolution $\Psi$ solves the following linear damped Klein-Gordon equation:
\begin{align*}
\begin{cases}
\dt^2 \Psi + \dt\Psi +(1-\Dl)\Psi  = \sqrt{2}\xi,\\
(\Psi,\dt\Psi) |_{t=0}=(u_0,u_1).
\end{cases}
%\label{sto_conv}
\end{align*}
Namely, $\Psi$ can be written as
\begin{align*}
\Psi(t)=\Psi(t; u_0,u_1,\o)=\dt \D(t) u_0 + \D(t) (u_0+u_1) + \sqrt{2} \int_{0}^{t} \D(t-t')d\B(t'),
%\label{stoconv1}
\end{align*}
where $\D(t)$ denotes the linear damped  Klein-Gordon  propagator  which is a Fourier multiplier operator with symbol
\begin{align}\label{prop1}
\D(t) = e^{-\frac{t}2}\frac{\sin(t \fbb{\nb})}{\fbb \nabla}, \quad t \in \R,
\end{align} 
where
\[ \fbb n = \big(\tfrac34 + |n|^2\big)^{\frac12} , \quad n \in \Z^2, \] and $\B$ is a cylindrical Wiener process on $L^2(\T^2)$: 
\begin{align*}
\B(t) = \sum_{n\in \Z^2} B_{n}(t) e_n,
\end{align*}
and where $\{B_{n}\}_{n\in \Z^2}$ is defined by $B_{n}(0)=0$ and $B_{n}(t) = \jb{ \xi , \ind_{[0,t]}\cdot e_n}_{t,x}$. Here, $\jb{\cdot ,\cdot }_{t,x}$ denotes the duality pairing on $\R_{+}\times \T^2$. Consequently, $\{B_{n}\}_{n\in \Z^2}$ is a family of mutually independent complex-valued Brownian motions such that $B_{-n}=\cj{B_n}$, $n\in \Z^2$, and normalized so that $\text{Var}(B_{n}(t))=t$.
The nonlinear remainder $v_N$ solves 
\noi
\begin{align}
\begin{split}
\dt^2 v_N   + \dt v_N  +(1-\Dl)  v_N & = - \iota \be \g_N \Pii_{ N} \big\{\sinh\big(\be \Pii_{ N} v_N + \be \Psi_N \big)\big\} \\
&=   - \iota \tfrac{1}{2}\be \g_N \Pii_{ N} \big\{ e^{\be \Pii_{N}v_N} e^{\be \Psi_{N}}   -e^{-\be \Pii_{N}v_{N}} e^{-\be \Psi_{N}}\big\},
\end{split}
\label{liouville4}
\end{align} 
with zero initial data, where $\g_N$ is as in \eqref{gN}.

We now explain further the multiplicative renormalization procedure. 
The main issue is that the term $e^{\be \Psi_{N}}$ does not have a well-defined limit as $N\to \infty$ for any $\be \in \R\setminus \{0\}$.
This can be traced to the Taylor expansion 
 \begin{align*}
e^{\be \Psi_{N}} = \sum_{k=0}^{\infty} \frac{\be^k}{k!} \Psi_{N}^{k}
\end{align*}
and the fact that the products $\Psi_{N}^{k}$, $k\geq 2$, do not converge as $N\to \infty$.
The remedy is to replace these powers by their Wick products $:\Psi_{N}^{k}:$ defined as
\begin{align}
: \Psi_{N}^{k}:  \stackrel{\text{def}}{=} H_{k}(\Psi_{N}; \s_N), \label{Wickk}
\end{align}
where $\s_{N}$ is as in \eqref{sN} and $H_{k}$ is the $k$th Hermite polynomial defined through the generating function
\begin{align}
e^{tx- \frac{\s^2}{2}t^2} = \sum_{k=0}^{\infty} \frac{t^k}{k!} H_{k}(x;\s). \label{generating}
\end{align}
The Wick products $:\Psi_{N}^{k}:$ in \eqref{Wickk} do have a non-trivial limiting behaviour as $N\to \infty$ and this suggests a renormalization of $e^{\be \Psi_{N}}$ as 
\begin{align*}
\Dr_{N}(t,x ;\be)  \stackrel{\text{def}}{=} \sum_{k=0}^{\infty} \frac{\be^k}{k!} :\Psi_{N}(t,x)^{k}:.
\end{align*}
By using \eqref{generating}, we find that 
\begin{align}
\Dr_{N}(t,x;\be) = e^{\be \Psi_{N}} e^{-\frac{\be^2}{2}\s_{N}} = \g_{N} e^{\be \Psi_{N}}. \label{GMC}
\end{align}
Note that from \eqref{sN} and \eqref{gN}, $\g_{N}\to 0$ as $N\to \infty$. 
 For $0<\be^2<4\pi$, $\Theta_N$ converges, in the sense of distributions, to a nontrivial limit (see Lemma~\ref{LEM:GMC}) commonly referred to as Gaussian multiplicative chaos in the mathematical physics literature. 
We refer the interested reader to~\cite{Bere, LRV, RV, Shamov} and references therein for more information on GMC.

Using the notation,
\begin{align}
\begin{split}
\Dr_{N}^{\pm}(t,x) &: = \Dr_{N}(t,x;\pm\be) ,
\end{split} 
\label{GMCcs}
\end{align}
we rewrite \eqref{liouville4} as 
\begin{align}
\dt^2 v_{N} + \dt v_N +(1-\Dl)v_N = -\iota \tfrac{1}{2} \be \Pii_{N}\big\{ e^{\be \Pii_{N}v_N} \Dr^{+}_{N}-e^{-\be \Pii_{N}v_N} \Dr^{-}_{N}\big\}. \label{liouville42}
\end{align}
Now, we recall the linear wave propagator $\S$, given by
\begin{align}
\label{S}
\S(t) = e^{-\frac{t}2}\frac{\sin(t |\nb|)}{|\nb|}, \quad t \in \R,
\end{align} 
where $\frac{\sin(t|\nb|)}{|\nb|}$ is  a convolution operator with {\it explicit} kernel given by
\begin{align}
\mc W_{\T^2}(t,x) = \frac{\ind_{B(0,|t|)}(x)}{\sqrt{t^2-|x|^2}},
\label{ker00}
\end{align}
for any $x \in \T^2$ and $|t| \le \pi$; see Subsection~\ref{sec:notations} for more details.
Moreover, the difference $\mc D(t) - \mc S(t)$, where $\mathcal{D}(t)$ is as in \eqref{prop1}, is smoothing in the sense of Lemma~\ref{LEM:diffprop} below.
Aiming to leverage the explicit formulation of the kernel~\eqref{ker00}, we write $v_N = X_N + Y_N$, where
\begin{align}
\begin{split}
X_N & = - \iota \tfrac{1}{2} \be \Pii_{ N} \I_{\textup{wave}} \big(  f^{+}( \Pii_{N}(X_N+Y_N)) \Dr^{+}_{N}- f^{-}(\Pii_{N}(X_N+Y_N)) \Dr^{-}_{N} \big),\\
Y_N & = - \iota \tfrac{1}{2} \be \Pii_{ N} \I_{\textup{KG} - \textup{wave}} \big(  f^{+}( \Pii_{N}(X_N+Y_N)) \Dr^{+}_{N}-  f^{-}( \Pii_{N}(X_N+Y_N)) \Dr^{-}_{N} \big),
\end{split}
\label{liouville5}
\end{align}
on $\R_+ \times \T^2$, where we defined
\begin{align}
f^{\pm}(y) = e^{\pm \be y}, 
\label{fcfs}
\end{align}
and  $\I_{\textup{wave}}$ and $\I_{\textup{KG} - \textup{wave}}$ are the Duhamel operators given by
\begin{align}
\begin{split}
\I_{\textup{wave}}(F)(t) & = \int_{0}^t \mc S(t-t') F(t') dt', \\
\I_{\textup{KG} - \textup{wave}}(F)(t) & = \int_{0}^t (\mc D - \mc S)(t-t') F(t') dt'
\end{split} \label{duhamels}
\end{align}
for any $t \in \R$. Our goal is to solve the system~\eqref{liouville5} uniformly in $N$ in some function space and obtain the convergence of the solution $(X_N,Y_N)$ to a well-defined limit as $N \to \infty$.
We point out that such a decomposition akin to \eqref{liouville5} first appeared in \cite{ORW} in the context of the defocusing hyperbolic Liouville model. However, the way we exploit the structure in the $X$-equation is different from \cite{ORW} and the main novel idea introduced in this paper, as we will discuss later.

In the parabolic setting~\cite{HKK1, HKK2}, a crucial feature of Gaussian multiplicative chaos is the inequality
\begin{align}
\|\jb \nb ^{-s} (fg)\|_{L^p} \les \|f\|_{L^\infty} \|g\|_{W^{-s,p}},
\label{pos_bdd}
\end{align}
whenever $g$ is a \textit{positive} distribution and $0\leq s\leq 1$; see Lemma~\ref{LEM:pos}. 

The estimate~\eqref{pos_bdd} yields a derivative gain compared to the standard product bound:
\begin{align*}
\|\jb \nb ^{-s} (fg)\|_{L^p} \les \|f\|_{W^{s,r}} \|g\|_{W^{-s,q}},
\end{align*}
whenever $1<p,q,r<\infty$ with $\frac1p = \frac1r + \frac1q$ and $\frac1r + \frac1q = \frac1p + \frac s2$; see~\cite{GKO}.\footnote{For instance, consider the case where $g$ corresponds to the GMC in the standard wave $L^2$ setting, with $p \approx q \approx 2$ and $r \approx \infty$. Then Lemma~\ref{LEM:GMC} yields $s \approx \frac{\be^2}{4\pi}$. Thus,~\eqref{pos_bdd} provides an approximate $\frac{\be^2}{4\pi}$-derivative gain compared to the standard product bound, which is non-negligible for large values of $\be^2$.} Our approach seeks to exploit this improved estimate~\eqref{pos_bdd}—or rather, a variation thereof adapted to the current wave setting; see~\eqref{kerbdd2} below.

\medskip

\noi
{\it $\bul$ An $L^\infty$-framework for a wave equation.} We consider the following damped wave equation:
\begin{align}
\begin{cases}
\dt^2 u + \dt u -\Dl  u = F(u) \\
(u, \dt u) |_{t = 0} = 0, 
\end{cases}
\qquad (t, x) \in \R_+\times\T^2,
\label{toy}
\end{align}
with a nonlinearity $F(u) = \Theta e^{\be u} $, for some positive distribution $\Theta$. Note that~\eqref{toy} essentially corresponds to the $X$-equation in~\eqref{liouville5}. Our goal is to construct a solution theory for~\eqref{toy} in a space $\mc X$ that embeds into $L^\infty(\R \times \T^2)$. 
In the parabolic setting of \eqref{parasinh} in \cite{Garban, ORW}, the Schauder estimates ensure that one can take $L^{\infty}_{T}W^{\frac{2}{p}+,p}_{x}$ for some $p\geq 2$.\footnote{Which is chosen in order to optimize the range of $\be^2$, leading to $\be^{2} <\frac{8\pi}{3+2\sqrt{2}}$.}  In the hyperbolic case, the weaker smoothing property does not seem to be enough to do this without introducing new ideas.

Our choice of space $\mathcal{X}$ is motivated by two considerations. First, to take advantage of the bound~\eqref{pos_bdd}, we require $e^{\be u}$ to belong to $L^\infty(\R \times \T^2)$. Secondly, $L^\infty$-type spaces are stable under exponentiation (i.e. $u \in L^\infty \LRA e^u \in L^\infty$), which is particularly useful for treating the exponential nonlinearities present in the problem.

In the current rough setting, $\Theta$ belongs to $L^2([0,1]; H^{-\frac{\be^2}{4\pi}-\eps}(\T^2))$ for any $\eps >0$, see Lemma~\ref{LEM:GMC}. Consequently, by the standard non-homogeneous linear estimate
\begin{align}
\|\I_{\textup{wave}}(u) \|_{L^\infty_t H^{s+1}_x} \les \|u\|_{L^1_t H^{s}_x},
\label{basic}
\end{align}
valid for any $s \in \R$, we can only expect $v$ to lie in $L^\infty_t H^{1-\frac{\be^2}{4\pi}-\eps}_x$. For every $\be ^2 >0$, this regularity remains strictly below the Sobolev embedding threshold $H^{1+\eps}(\T^2) \hookrightarrow L^\infty(\T^2)$, for any $\eps >0$.\footnote{For small values of $\be^2$, one may combine \eqref{basic} with standard refinements such as Strichartz estimates (see \cite{ORW}) to place $v$ in $L^\infty_x$ though only after some time averaging; that is, one obtains $v \in L^p_t L^\infty_x$ for some $1 \le p < \infty$.} Therefore, standard PDE techniques are insufficient in this regime, and new ideas are required. 

First, as the difference $\mathcal{D}-\mathcal{S}$ is smoothing of order $2$ (Lemma~\ref{LEM:diffprop}) this ensures, by Sobolev embedding, that $Y\in L^{\infty}_{t,x}$. Thus, it remains to consider the variable $X$.
To this end, we take advantage of the explicit formula~\eqref{ker00} for the convolution kernel of the multiplier $\mc S$ in the Duhamel operator $\I_{\textup{wave}}$, which is our main motivation for the decomposition \eqref{liouville5}. 
We essentially have
\[ \I_{\textup{wave}}(F)(t) = \int_{\R} K(t, t', \cdot) \ast_x F(t') dt',  \]
where $\ast_x$ denotes the spatial convolution, and the kernel $K$ is given by
\[ K(t,t',x) = e^{-\frac{t-t'}{2}} \frac{\ind_{B(0,|t-t'|)}(x) \ind_{[0,t]}(t')}{\sqrt{|t-t'|^2-|x|^2}}. \]
See Subsection~\ref{subsec:ker} and, specifically, \eqref{ker1b} and \eqref{duha3} for more precise definitions.
Our strategy is to evaluate the operator $\I_{\textup{wave}}$ in space-time anisotropic Sobolev spaces $\Ld^{s,b}_p(\R \times \T^2)$ defined by the norm
\[ \|u\|_{\Ld^{s,b}_p} = \| \jb \dt ^b \jb{\nb_x}^s u\|_{L^p_{t,x}} \]
for $s,b \in \R$ and $1 \le p < \infty$. We develop the relevant properties of these spaces in Subsection~\ref{subsec:spaces}. In particular, by Sobolev embeddings in both space and time,
\begin{align}
\Ld_p^{s,b}(\R \times \T^2) \hookrightarrow L^{\infty}(\R \times \T^2)
\label{embd}
\end{align}
whenever 
\begin{align}
s>\tfrac{2}{p} \quad \text{and} \quad b>\tfrac 1p. \label{LinftySob}
\end{align} See Lemma~\ref{LEM:Sobolev}.
 Thus, the space $\Ld_p^{s,b}(\R \times \T^2)$ with $(s,b)$ satisfying \eqref{LinftySob} is admissible for our purposes.\footnote{Strictly speaking, we use a version of $\Ld_p^{s,b}(\R \times \T^2)$ localised to a time interval $[0,T]$, for $T>0$, but we ignore this in the following discussion.} 

\begin{remark}\rm
\label{rmk:space}
We point out that the spaces $\Ld^{s,b}_p(\R \times \R^2)$ are not, in general, well suited to the study of wave equations, in the sense that the linear propagator $\mc S(t)$ is not expected to be bounded on them. For instance, in the Euclidean setting (replacing $\T^2$ with $\R^2$) and taking $s=b=0$ and $p \ge 4$, the boundedness of $\mc S(t)$ on $L^p([1,2] \times \R^2)$ comes at the expense of a loss of derivatives; see~\cite{GWZ}. Thus, we must crucially exploit the positivity properties of the nonlinear term $F(u)$ in order to study the problem~\eqref{toy} in these anisotropic Sobolev spaces.
\end{remark}

To estimate the operator $\I_{\textup{wave}}$ in $\Ld^{s,b}(\R \times \T^2)$, we need to control fractional derivatives of the kernel $K$  in both the $t$-and $x$-variables. The least integrable component of $K$ is the factor $||t-t'|- |x||^{-\frac12}$. Thus, after placing the derivatives $\jb \dt ^b$ and $\jb{\nb_x}^s$ directly on $K$, we expect a bound of the form
\begin{align}
\big|\jb{\dt} ^b \jb{\nb_x} ^s K (t,t',x)\big| \les | |t-t'| + |x| |^{-\frac12} ||t-t'| - |x||^{-\frac12-s-b}.
\label{kerbdd1}
\end{align}
In practice, however, the non-differentiability of the indicator functions appearing in $K$ prevent a direct computation of such derivatives. We address this technical issue via an approximation argument; see Subsection~\ref{subsec:ker}.

From \eqref{kerbdd1}, we expect an estimate of the form
\begin{align}
\|\I_{\text{wave}}(F)\|_{\Ld^{s,b}_p} \les \|\mc R \ast_{t,x} |F| \|_{L^p_{t,x}}, \quad s+b<\tfrac 12 \quad \text{and} \quad 1\leq p<\infty,
\label{kerbdd2}
\end{align}
where $\mc R$ denotes the integrable kernel
\begin{align}
\mc R(t,x) = | |t| + |x| |^{-\frac12} ||t| - |x||^{-\frac12-s-b}. \label{Rkernel}
\end{align}
We select a triplet 
\begin{align*}
(s,b,p) \approx (\tfrac13, \tfrac16, 6), 
% \label{triplet}
\end{align*}
so that both~\eqref{kerbdd2} and the embedding~\eqref{embd} apply. Then, specializing to $F(u) = \Theta e^{\be u}$,  \eqref{kerbdd2} implies
\begin{align}
\|\I_{\text{wave}}(F)\|_{\Ld^{s,b}_p} \les \|\mc R \ast_{t,x} \Theta \|_{L^p_{t,x}} e^{\be \|u\|_{\Ld^{s,b}_p}}.
\label{kerbdd3}
\end{align}
The bound \eqref{kerbdd3} (or mild refinements thereof) is sufficient to establish a satisfactory well-posedness theory for \eqref{toy}, \textit{irrespective of the sign of} $\iota$, provided we can control (a slight variant of) the quantity
\begin{align}
\|\mc R \ast_{t,x} \Theta \|_{L^p_{t,x}}.
\label{stoint}
\end{align}
We address the estimate for \eqref{stoint} next.

\begin{remark}\rm We mention here that, in order to avoid additional technicalities, we actually compute a space–time isotropic version of the anisotropic derivative 
$\jb{\dt} ^b \jb{\nb_x} ^s K (t,t',x)$;
see Lemma~\ref{lem:ker03}. 
Moreover, the right-hand side bound~\eqref{kerbdd1} (and hence the kernel $\mc R$) also contains additional less singular terms; see Proposition~\ref{prop:kery}. 
\end{remark}

\medskip

\noi
{\it $\bul$ On the modified GMC.} We now aim to control the quantity \eqref{stoint} for $\Theta=\Theta_N$, the GMC random field, with bounds uniform in $N$. This essentially reduces to analyzing the modified GMC expression
\begin{align}
\int_{\T^2}\int_{0}^{1}  \frac{\Dr_{N}(t',y;\be)}{| |t-t'|-|x-y||^{\frac 12+s+b} |x-y|^{\frac 12}} dt' dy.\label{modGMC}
\end{align}
Ignoring the hyperbolic singularity term,\footnote{That is, one would use the time integral to integrate out the hyperbolic singularity since $\frac12 + s + b < 1$—which would be valid if, for instance, $\Theta_N$ were constant in time.} we see that the modified GMC is roughly similar to
\begin{align*}
\jb{\nb_x}^{-\frac32} \Dr_{N}
\end{align*}
where we recall that the Bessel potential operator $\jb{\nb_x}^{-\al}$ has an integral kernel on $\T^2$ that behaves like $|x|^{-2+\al}$ modulo a smooth function; see \cite[Lemma 2.2]{ORSW1}. This spatial smoothing is the reason for the $L^p$-integrability of \eqref{modGMC} uniformly in $N$.

Broadly speaking, our estimates for \eqref{modGMC} rely on Kahane’s convexity inequality (Lemma~\ref{LEM:Kahane}) together with detailed structural properties of the GMC measure (Lemma~\ref{LEM:GMCmeas}), following the approach in \cite{ORW}. 
Unlike in \cite{ORW} though, handling the hyperbolic singularity in \eqref{modGMC} requires us 
 to use refined information on the \textit{space–time correlations} of the stochastic convolution
\begin{align}
\G_{N}(t_1,t_2,x_1,x_2): = \E[ \Psi_{N}(t_1,x_1)\Psi_{N}(t_2,x_2)] \label{GN}
\end{align}
where $t_1 \neq t_2$. 
Such sharp estimates were previously developed in~\cite{Zine3}; see Lemma~\ref{LEM:GN}. 

As for the numerology in Theorem~\ref{thm:0}, the condition $s+b<\frac 12$ is used to ensure that the hyperbolic singularity is locally integrable in time. We are then able to establish that the modified GMC object belongs to $L^{p}_{t,x}$ uniformly in $N$ for the natural range of parameters: $0<\be^2 <8\pi$, $1\leq p< \frac{8\pi}{\be^2}$, and
 $\frac{(p-1)\be^2}{4\pi}<\frac 12$.  
Specialising $p\approx 6$ yields the condition $\be^{2} <\frac{6\pi}{5}$ in Theorem~\ref{thm:0}.

\medskip

\noi
{\it $\bul$ The global well-posedness argument.} We now discuss our globalization argument leading to Theorem~\ref{thm:main}. The estimates \eqref{kerbdd2} and \eqref{kerbdd3} combined with the probabilistic estimates on the modified GMC object \eqref{stoint} are used to obtain the (uniform in $N$) local well-posedness 
for the truncated system $(X_N,Y_N)$ in \eqref{liouville5}, under the improved condition $0<\be^2 <\frac{6\pi}{5}$, thereby proving Theorem~\ref{thm:0}. It then remains to establish the global well-posedness with respect to the Gibbs measure  $\rhoo$, for which we require the restricted range $0<\be^2 <\frac{2\pi}{11}$.

The well understood strategy here is to apply Bourgain invariant measure argument \cite{BO94, BO96}. In the singular stochastic setting, this argument was detailed in \cite{ORTz} for the two dimensional singular stochastic wave equations with polynomial nonlinearity on compact manifolds; see also \cite{HM, GKOT}. In our setting of a singular problem with an exponential nonlinearity, we encounter two fundamental difficulties in attempting to apply Bourgain's invariant measure argument, as we now explain.

First, we note that in the setting of random initial data, Bourgain's invariant measure argument was abstracted in \cite[Theorem 6.1]{FTo}. The key inputs here are (i) a sub-critical local well-posedness theory which provides a (uniform) bound on solutions in a norm $\|\cdot \|_{\mathcal{Z}}$ for a short time $\tau( \|u_0 \|_{\mathcal{Z}})>0$ and (ii) exponential integrability for the norm $\|\cdot \|_{\mathcal{Z}}$ in the sense that  
\begin{align}
\E[ \exp( \|u_0\|_{\mathcal{Z}}^{\dl})] <\infty \label{Boexp}
\end{align}
for some $\dl>0$. These ingredients come together in an estimate for the probability that the solution measured in $\|\cdot \|_{C_{T}\mathcal{Z}}$ is larger than some threshold $M\gg 1$ is bounded above by 
\begin{align}
T\tau(M)^{-1} \Prob( \| u_0\|_{\mathcal{Z}} >M) \les T\tau(M)^{-1}  e^{-M^{\dl}}. \label{Boprob}
\end{align}
For polynomial nonlinearities, typically $\tau(M)^{-1}\les M^{k}$ for some $k\geq 1$, so that the exponential decay in \eqref{Boprob} ensures that the final probability can be made small.
For exponential-type nonlinearities, $\tau(M)^{-1}\lesssim e^{cM}$. Consequently, using \eqref{Boprob} requires either $\delta>1$, or $\delta=1$ with $c>0$ chosen sufficiently small. In particular, this is essentially the case studied in \cite{STz, Robert} for wave and Schr\"{o}dinger-type equations with exponential nonlinearities, respectively. Here, the previously cited works concern the \textit{non-singular} regime where the initial data is almost surely bounded and thus $e^{u_0}$ is controlled in $L^{\infty}$ and one can establish \eqref{Boexp} for any $\dl\leq 2$.

In the \textit{singular regime}, the random initial data no longer belongs to $L^{\infty}$ almost surely and one needs a renormalization in order to make sense of $e^{u_0}$. This is precisely the case for our setting where $e^{\be \Psi}$ is understood as the Gaussian multiplicative chaos and constructed probabilistically as a single object. 
 Then the intermittency phenomenon rules out the exponential integrability assumption~\eqref{Boexp} used in Bourgain’s invariant measure argument.
 %, and a different approach is therefore required.

Secondly, our local well-posedness theory for the system \eqref{liouville5} does not seem suitable for iteration from short to long time intervals. Indeed, to control the $X$ variable,
we use the space $\Ld^{\frac 13-,\frac{1}{6}-}_{6+}$. As discussed in Remark~\ref{rmk:space}, the linear operator $\S(t)$ is not expected to even bounded on this space, and, in any case, the Strichartz estimates for the wave equation \cite{GV,  KeelTao, GKO} impose too strong of a spatial regularity assumption to recover the full range of $\be^2$ in Theorem~\ref{thm:main}.

In order to resolve these issues, we instead partially adapt the strategy in \cite{OOTol, OOTol2} which was developed in order to extend Bourgain's invariant measure argument to settings where the Gibbs measure is singular with respect to the base Gaussian measure. See also \cite{Bring2, BDNY} for other such approaches. In our setting, the Gibbs measure and the base Gaussian measure are actually equivalent.
Nonetheless, these arguments are well suited to address the aforementioned issues. First for the stability argument—namely, the construction of a solution to the infinite-dimensional problem ($N=\infty$) from well-posedness in the finite-dimensional setting—we employ an auxiliary time-weighted norm, following~\cite{OOTol, OOTol2}. This allows us to construct the infinite dimensional solutions on a long-time interval in a {\it single step}, without appealing to an iteration over short time intervals. 
To exploit the gain coming from the additional smallness in the time-weighted norm, we need to introduce further modified GMC objects and keep track of their additional time-decay. We point out that the stability argument works for the entire range $0<\be^2 <\frac{6\pi}{5}$.

The main new part in our argument is in establishing the probabilistic a priori bounds which are uniform in $N$ and over an arbitrary time interval $[0,T]$. See Lemma~\ref{LEM:unifNMbds}. The approaches in \cite{OOTol, OOTol2, BDNY, LTW} are essentially based on the argument described above leading to \eqref{Boprob}. Namely, for $M\geq 1$ controlling the relevant norms of the enhanced data on $[0,T]$, one breaks the time interval $[0,T]$ into $O(\frac{T}{\tau(M)})$-many subintervals of width $\tau(M)$. The local well-posedness theory controls the norms of the nonlinear solutions on each such interval with high-probability and then one sums over all of the intervals using the invariance of the truncated measures under the truncated flows to ensure that the norms actually do not grow.

The condition for the total probability to be small is precisely \eqref{Boprob}. In our setting, however, in order to cover the full range of $\be^2$ in the local theory of Theorem~\ref{thm:0}—which we do recover in Theorem~\ref{thm:2} for the defocusing Liouville model—we have $\tau(M)^{-1}\approx M^{-0+}$, while intermittency implies that the exceptional probability of the enhanced data set decays at most at the \textit{polynomial rate} $M^{-\frac{1}{q}}$.%In our setting, at best we can hope to take $\tau(M)^{-1}\approx M^{\frac{1}{q}+}$ for any fixed $q>6$ (see \eqref{timegain2}),

Instead, we observe that the right-hand sides of \eqref{liouville5} are \textit{only} functions of the full truncated solution $u_N$ which preserves the truncated measures $\rhoo_{N}$, so that $X_{N}=X_{N}[u_N]$ and $Y_{N}=Y_{N}[u_N]$. 
To control the $\Ld^{\frac{2}{p}+,\frac{1}{p}+}_{p}([0,T])$ norm of $X_{N}$, it suffices to obtain high-probability bounds for both $X_{N}$ and $Y_{N}$ in $L^{\infty}_{T,x}$. For $Y_{N}$, a straightforward argument based on Minkowski's inequality, the two degrees of smoothing enjoyed by $Y_{N}$, and invariance yields control of the relevant norms, including $L^{\infty}_{T,x}$.

In establishing that $X_{N}\in L^{\infty}_{T,x}$, we cannot directly apply the Kahane approach to bound the corresponding moments, as was done for \eqref{modGMC}, since $u_{N}$ is not Gaussian. This again suggests exploiting invariance. To do so, the additional time integral in the Duhamel formula forces us to first apply Minkowski's inequality in order to move it outside the expectation. In the worst case, the remaining spatial integral must be used to compensate for the hyperbolic singularity. We extract as much as possible at this stage by establishing moment bounds for the GMC measure on thin annuli; see Lemma~\ref{LEM:GMCannuli}, which may be of independent interest. To further weaken the singularity, we pass to an $L^{12+}$-theory for the GMC\footnote{Rather than the $L^{6+}$-theory we use for Theorem~\ref{thm:0}.}, leading to the sharper restriction $\be^{2}<\frac{2\pi}{11}$; see Lemma~\ref{LEM:PuN}.

This overall strategy proves to be more effective in the defocusing hyperbolic Liouville model, which we discuss next.

\medskip

\noi
{\it $\bul$ On the hyperbolic Liouville model.} 
The argument in \cite{ORW} for studying \eqref{liouvilleUS} locally-in-time begins similar to ours for \eqref{liouville} involving a refined first order expansion: 
\begin{align*}
u_{N} = \Psi_{N} + X_{N} +Y_{N}
\end{align*} 
where $X_{N}$ and $Y_{N}$ are unknowns satisfying the system:
\begin{align}
\begin{split}
X_{N}& = -\iota \be \Q_{N} \mathcal{I}_{\text{wave}}[ \Dr_{N} e^{\be \Q_{N}(X_N +Y_N)}],\\
Y_{N}& = -\iota \be \Q_{N} \mathcal{I}_{\text{KG-wave}}[ \Dr_{N} e^{\be\Q_{N}(X_N +Y_N)}],
\end{split} \label{XYorw}
\end{align}
with zero initial data. As in our approach, the authors in \cite{ORW} seek to exploit the positivity of $\Dr_{N}$. However, since $X_{N}$ is not expected to lie in $H^{1+\eps}_{x}(\T^2)$, they observe that by making the \textit{defocusing} assumption $\iota>0$, it follows that $X_{N}\leq 0$ a.s. i.e. in the defocusing case, $X_N$ has a \textit{good} sign so that $e^{\Q_{N}X_{N}} \in L^{\infty}(\T^2)$. To ensure this sign, key roles are played by (i) the defocusing assumption $\iota>0$, (ii) the positivity of $\Dr_{N}$, (iii) the choice of $\Q_{N}$ which is crucial as it preserves the sign of its input function;
 namely, if $h\geq 0$, then $\Q_{N}h \geq 0$, and (iv) the fact that the kernel for $\mathcal{S}(t)$ in \eqref{S} is non-negative, see \eqref{ker00}.
 Then, a uniform-in-$N$ local well-posedness is established for the system $(X_N,Y_N)$ using \eqref{pos_bdd} and Strichartz estimates, assuming that $\iota>0$ and $\be^2 < \frac{32-16\sqrt{3}}{5}\pi \approx 0.86\pi$.
This approach no longer works in the \textit{focusing} case ($\iota <0$) because now $X\geq 0$ a.s. and the authors in \cite{ORW} left this situation open.

In contrast, our approach applies equally well in the focusing case, thereby resolving this open problem, and it even yields the improved range $\be^{2} <\frac{6\pi}{5}$. At the level of the local theory, the key difference lies in how we exploit the structure of the equation for $X_{N}$. We rely on (i) the positivity of $\Dr_{N}$ and (ii) the explicit formula for the kernel of $\mathcal{S}(t)$, and \textit{not} on its sign-definite structure. This enables us to carry out physical-space estimates on the kernel and derive \eqref{kerbdd2} without any assumption on the sign of $\iota$. In particular, our local-in-time analysis also applies with the frequency projections $\Pii_{N}$.

For the global well-posedness result in Theorem~\ref{thm:2}, we likewise improve on the range obtained in \cite{ORW}. Moreover, unlike Theorem~\ref{thm:main}, we do not need to impose any additional restriction on $\be^2$ beyond what is already required for the local theory, so that the global result matches the range $\be^{2} <\frac{6\pi}{5}$. The main reason is that, in obtaining uniform-in-$N$ and long-time bounds for $X_N$ and $Y_{N}$, we can exploit the sign structure of $\be X_{N}$ to observe that our approach for bounding the norm $\|X_{N}\|_{\Ld^{2/p+,1/p+}_{p}([0,T])}$ does \textit{not} involve the $L^{\infty}_{T,x}$-norm of $X_{N}$. Consequently, we do not need an analogue of Lemma~\ref{LEM:PuN}, which led to the tighter restriction $\be^2 <\frac{2\pi}{11}$. We refer to Remark~\ref{rmk:liouvilleUS} for further details. In summary, while our local well-posedness theory for the Liouville model does not rely on a sign-definite structure, the global well-posedness theory does exploit it.
\begin{remark}\rm \label{RMK:gwp}
Note that we do not expect the range $\be^{2}<\frac{2\pi}{11}$ to be optimal for Theorem~\ref{thm:main} to hold, even within the current framework for the local theory of Theorem~\ref{thm:0}. Modest improvements to this range may be possible but would require additional technical inputs that do not seem to overcome the main obstructions discussed above.
\end{remark}

\begin{remark}\rm \label{RMK:focusing}
Note that in the focusing case $\iota<0$, the Gibbs measure $\rhoo_{N}$ in \eqref{gibbsN} is not normalizable, since $Z_{N}\to \infty$ as $N\to \infty$ when $\iota<0$; see \cite[Proposition A.1]{ORTzW}. Thus, while Theorem~\ref{thm:0} establishes a local-in-time existence and uniqueness theory for \eqref{liouville} with $\iota<0$, we do not expect an analogue of Theorem~\ref{thm:main} in this regime.
\end{remark}

The remainder of this paper is organized as follows. In Section~\ref{sec:preliminaries}, we introduce notation, present key formulas, and study properties of the function spaces $\Ld_{p}^{s,b}$. The main focus of Section~\ref{sec:det} is the derivation of kernel estimates in Section~\ref{subsec:ker}, culminating in the main estimate of Proposition~\ref{prop:kery}. Section~\ref{sec:sto} contains the principal probabilistic input, concerning properties of the GMC and the modified GMC. Finally, in Section~\ref{sec:wp}, we state and prove the main local existence and uniqueness result for~\eqref{liouville} (Proposition~\ref{PROP:LWP}), which leads to the proof of Theorem~\ref{thm:0}. We then complete the proofs of Theorems~\ref{thm:main} and~\ref{thm:2} by employing a modified version of Bourgain's invariant measure argument.

\section{Preliminaries}\label{sec:preliminaries}
\subsection{Notations}\label{sec:notations} We write  $ A \les B $ to  denote an estimate  of the form  $A \leq CB$.  
 Similarly, we write $A \sim B$ to denote $A \les B $ and $ B\les A$ and use $A \ll B$ 
when we have $A \leq c B$ for small $c > 0$. We may write  $A \les_\ta B$  for  $A \leq C B$ with $C = C(\ta)$  
if we want to emphasize the dependence of the implicit constant on some parameter $\ta$.  
We   use $C, c > 0$, etc.~to denote  various constants  whose  values   may change  line by   line.

For $x, y \in \T^2 \cong [-\pi, \pi)^2$,  set
\[ |x - y|_{\T^2  } =  \min_{k \in  2\pi  \Z^2} |x - y + k|_{\R^2},  \]
 where   $|\cdot |_{\R^2}$   denotes the   standard Euclidean   norm   on $\R^2$. When there is no confusion, 
we  simply  use $|\cdot|$  for both  $|\cdot|_{\T^2}$ and $|\cdot |_{\R^d}$ for any $d \ge 1$.

\medskip

\noi
{\bf $\bullet$ Fourier transforms.}\quad  We respectively denote  by  $\Ft_{\R^2}$ and $\Ft_{\R^2}^{-1}$
the spatial   Fourier transform on $\R^2$ and its  inverse,
which  are given by
\noi
\begin{align*}
\Ft_{\R^2} (f)(\xi) = \frac{1}{2\pi} \int_{\R^2} f(x) e^{-i \xi \cdot x} dx, \qquad \Ft_{\R^2} ^{-1}(f)(x) = \frac{1}{2 \pi} \int_{\R^2} f(\xi) e^{i \xi \cdot x} d \xi.
%\label{fourier1}
\end{align*}

\noi
%where $a \cdot b$ denotes the usual scalar product of the vectors $a$ and $b$ in $\R^2$. 
We   then define the convolution product on  $\R^2$ via
\begin{align*}
(f \ast g)(x) =   \frac{1}{2 \pi}   \int_{\R^2}  f(y)   g(x-y) dy
%\label{fourier2}
\end{align*}
such that $\Ft_{\R^2} (f \ast g) = \Ft_{\R^2} (f) \Ft_{\R^2}(g)$. Similarly, 
the  Fourier   transform $\Ft_{\T^2}$  on the   torus $\T^2$ is given by
\begin{align*}
\Ft_{\T^2} (f)( n  )   = \int_{\T^2} f(x)   \cj{e_n (x)} dx,  \quad n \in \Z^2,
%\label{fourier3}
\end{align*}

\noi
where 
\begin{align*}
e_n (x)= \frac{1}{2 \pi}e^{i n \cdot x}. 
%\label{exp0}
\end{align*}

\noi
Then, the Fourier inversion formula reads as 
\begin{align*}
f(x) = \sum_{n \in \Z^2} \Ft_{\T^2} (f)(n)  e_n (x),
%\label{fouriersum}
\end{align*}

\noi
Define the convolution product on $\T^2$ via 
\begin{align*}
(f \ast g)(x) = \frac{1}{2 \pi} \int_{\T^2} f(y) g(x-y) dy,
%\label{fourier5}
\end{align*}
such that $\Ft_{\T^2} (f \ast g) = \Ft_{\T^2} (f) \Ft_{\T^2}(g)$.

In this work, when considering space-time functions of the form $u : \R \times \T^2 \to \R$ and $v : \R \times \T^2 \to \R$, we will use subscripts to indicate the variable with respect to which the convolution is taken. For instance, we write 
\begin{align*}
(u \ast_{t,x} v) (t,x) & = \frac{1}{(2\pi)^3} \int_{\R \times \T^2} u(t-t', x-y) v(t',y) dt' dy, \\
(u \ast_{x} v)(t,x) & = \frac{1}{(2\pi)^2} \int_{\T^2} u(t, x-y) v(t,y)
\end{align*}
for any $(t,x) \in \R \times \T^2$.

In the following, 
when it is clear from the context, 
 we write $\Ft(f)$ and $\ft f$ for the Fourier transform of a function $f$ defined either on 
  $\R^2$,  $\T^2$, and $\R \times \R^2$. 
  A similar comment applies to $\F^{-1}(f)$ and $\widecheck f$.

\medskip

Next, we recall the Poisson summation formula; see \cite[Theorem 3.2.8]{Grafakos1}. 
Let $f \in L^1 (\R^2)$ such that (i)
there exists  $\eta >0$ such that 
 $|f(x)| \les \jb{x}^{-2 - \eta}$ for any $x \in \R^2$, and (ii)~$\sum_{n \in \Z^2} |\F_{\R^2} (f) (n)| < \infty$. Then, we have
\begin{align}
\sum_{n \in \Z^2} \F_{\R^2} (f) (n) e_n (x) = \sum_{k \in \Z^2} f(x + 2 \pi k)
\label{poisson}
\end{align}

\noi
for any $x \in \R ^2$.

In the following, we define a number of frequency projections. Let $\varphi\in C_{c}^{\infty}([0,1])$ be a smooth symmetric bump function such that $\varphi(\xi)=1$ for $|\xi| \leq \frac{5}{4}$ and $\supp \varphi \subseteq \{ |\xi|\leq \frac{8}{5}\}$. We then define 
$\phi(\xi):= \varphi(\xi) - \varphi(2\xi)$ and for $N\in 2^{\N_0}$, we set 
\begin{align*}
\phi_{1}(\xi)  :=  \varphi(\xi), \qquad 
\phi_{N}(\xi) : = \phi(\tfrac{\xi}{N}) \quad \text{for} \quad N\in 2^{\N}.
\end{align*}
Next, for dyadic numbers $N\in 2^{\N_{0}}$, we define the following Littlewood-Payley frequency projectors: $\F_{x}( \P_{N}u)(n)  =\phi_{N}(n) \ft u(n)$
for $n\in \Z^2$.  Note that we have 
\begin{align*}
\sum_{N\in 2^{\N_0}}\P_{N} =   \text{Id}.
\end{align*}

%We recall some results regarding the projectors $\Pii_{N}^{+}$: 
%for any $s\geq 0$ and $1<p<\infty$, 
%\begin{align}
%\| \Pii_{N}^{+}f\|_{W^{s,p}(\T^2)} \les N^{s} \| \Pii_{N}^{+}f\|_{L^{p}(\T^2)} \label{QNbd}
%\end{align}
%and
%for any $0<\s-s<1$ and $1<p<\infty$, it holds that 
%\begin{align}
%\|( \Pii_{N}^{+}-\text{Id})f\|_{W^{s,p}(\T^2)} \les N^{s-\s}\|f\|_{W^{\s,p}(\T^2)}. \label{QNdiff}
%\end{align}
%Both \eqref{QNbd} and \eqref{QNdiff} follow from transference and the Mihlin-H\"{o}rmander theorem; see \cite[(5.25)]{ORW} for a proof of \eqref{QNdiff}.
%\medskip

\noi
{\bf $\bullet$ Poisson's formula.}  Consider   the  Fourier   multiplier on $\R^2$   given by   $ \frac{\sin( t |\nb|)}{|\nb|}$   for  $t \in \R_+$. Then,   from \cite[(27) on p.\,74]{Evans}, it   admits the following physical   space   representation as   a convolution kernel:
\begin{align}
\frac{\sin( t |\nb|)}{ |\nb| }  f  =   \mc W_{\R^2}(t, \cdot) * f
\label{poisson2}
\end{align}
\noi
for any $t \in \R$ and where the wave kernel $\W_{\R^2}$ on $\R^2$ is defined as
\begin{align}
\mc W_{\R^2}(t,x) = \frac{\ind_{B(0,|t|)}(x)}{\sqrt{t^2-|x|^2}}
\label{poisson3}
\end{align}
for any $(t,x)\in  \R\times \R^2$. The identity \eqref{poisson2} is often referred to as Poisson's formula. Note  that for a fixed function $f$, the function   $g = \frac{\sin( t |\nb|)}{|\nb|} f  $   is   the   solution  to the  Cauchy  problem   for  the   linear    wave  equation:
\begin{align*}
\begin{cases}
\dt^2 g   -  \Dl  g     =   0,\\
(g, \dt g) |_{t   =   0} = (0, f), 
\end{cases}
\qquad (t, x) \in \R_+\times\R^2.
\end{align*}
We denote by $\mc W_{\T^2}$, the function given by
\begin{align}
\mc W_{\T^2}(t,x) = \sum_{m\in \Z^2} \mc W_{\R^2} (t,x+2\pi m)
\label{poisson4}
\end{align}
for any $(t,x) \in \R \times \T^2 \cong \R \times [-\pi, \pi)^2$. Then, by Poisson's summation formula~\eqref{poisson} and an approximation argument; see for instance~\cite[Lemma 2.5]{ORW}, we have that
\begin{align*}
\sum_{n \in \Z^2} \frac{\sin(t |n|)}{|n|} \ft f(n) e_n = \mc W_{\T^2} \ast_x f.
%\label{poisson5}
\end{align*}
Note that for $|t| < \pi$, the spatial of support of $\mc W_{\R^2}(t,\cdot)$ is included in $B(0,\pi) \subset [-\pi, \pi)^2 \cong \T^2$ and hence $\mc W_{\R^2}(t,\cdot) \equiv \mc W_{\T^2}(t, \cdot)$. 

\medskip

\noi
{\bf $\bullet$ Fractional derivations.}
Let $d \in \N$ and $\Dl_{\R^d}$ be the Laplacian on $\R^d$. Fix $0 < \s < 1.$ Then, we have the following standard formula:
\begin{align}
(-\Dl_{\R^d})^{\frac{\s}{2}}f(z) = c_{d,b} \int_{\R^d} \frac{f(z+h)-f(z)}{|h|^{d+\s}}dh
\label{dtb}
\end{align}
for any $z \in \R^d$ and any smooth function $f \in \mc S(\T^d)$. See for instance~\cite[Theorem 1]{Stinga}.

By~\eqref{dtb}, a standard approximation procedure and the Poisson summation formula; see for instance~\cite[Lemma 2.5]{ORW}, we have the following integral representation for the Laplacian $\Dl_{\R \times \T^d}$ on $\R \times \T^d$:
\begin{align}
(-\Dl_{\R\times\T^d})^{\frac{\s}{2}}f(t,x) = c_{d,b} \int_{\R \times \T^d} (f(t+h_1, x+h_2) - f(t,x)) \mc K_{d,\s}(h_1,h_2) dh_1 dh_2
\label{dtb2}
\end{align}
for any $(t,x) \in \R \times \T^d$ and $f \in \mc S(\R \times \T^d)$. Here, the kernel $\mc K_{d,\s}$ is given by
\begin{align}
\mc K_{d,\s}(t,x) = \sum_{m \in \Z^d} \frac{1}{| (t,x + 2\pi m)|^{1+d +\s}} \label{Laplaceker}
\end{align}
for any $(t,x) \in \R \times \T^d \cong \R \times [-\pi, \pi)^d$.

\medskip

\noi
{\bf $\bullet$ Sobolev spaces}  Given $s\in \R$, the $L^2$-based Sobolev space $H^{s}(\T^2)$ is defined by norm 
\begin{align*}
\| f\|_{H^s} = \| \jb{\nb}^{s}f\|_{L^{2}_{x}} = \| \jb{n}^{s} \ft f(n)\|_{\l^2_{n}}.
\end{align*}
We use the notation $\mc H^{s} (\T^2) : = H^{s}(\T^2)\times H^{s-1}(\T^2)$.
Given $s\in \R$ and $1\leq p\leq \infty$, we defined the $L^p$-based Sobolev spaces $W^{s,p}(\T^2)$ by the norm 
\begin{align*}
\| f\|_{W^{s,p}} = \| \jb{\nb}^{s}f\|_{L^p} =\| \mathcal{F}^{-1}[ \jb{n}^{s} \ft f(n)] \|_{L^p_x}.
\end{align*}
We define the (time) Sobolev spaces $W^{s,p}(\R)$ similarly.

\subsection{Function spaces, embeddings and linear estimates}\label{subsec:spaces}

Recall that $\D(t)$, defined in \eqref{prop1} is the propagator for the linear damped Klein-Gordon and $\S(t)$, defined in \eqref{S}, is the propagator for the linear wave equation.
We recall the following lemma regarding the smoothing property of the difference operator $\D(t) -  \S(t)$.

\begin{lemma} \cite[Lemma 2.6]{ORW}\label{LEM:diffprop}
Let $t\geq 0$ and $s\in \R$. Then: \\
\noi
\textup{(i)} The operator $\D(t) -  \S(t)$ is bounded from $H^{s}(\T^2)$ to $H^{s+2}(\T^2)$.\\
\textup{(ii)}The operator $\dt(\D(t) -  \S(t))$ is bounded from $H^{s}(\T^2)$ to $H^{s+1}(\T^2)$.
\end{lemma}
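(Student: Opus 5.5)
Since both $\D(t)$ and $\S(t)$ are Fourier multipliers on $\T^2$, they commute with $\jb{\nb}^s$. The lemma therefore reduces to pointwise bounds on symbols. I will show that, uniformly in $n\in\Z^2$ and locally uniformly in $t\ge 0$,
\[
|m_t(n)| \les \jb{n}^{-2}, \qquad |\dt m_t(n)| \les \jb{n}^{-1},
\]
where
\[
m_t(n) = e^{-\frac t2}\Big(\frac{\sin(t\fbb n)}{\fbb n} - \frac{\sin(t|n|)}{|n|}\Big),
\]
with the convention $\frac{\sin(t|n|)}{|n|}\big|_{n=0}=t$. Plancherel then gives (i) and (ii) at once.

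\textbf{Low frequencies.} For $|n|\le 1$ the bounds are trivial. Both terms, and their $t$-derivatives, are bounded by $C(1+t)$, because $\fbb n\ge \frac{\sqrt3}{2}$ and $|\sin(t|n|)/|n||\le t$.

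\textbf{High frequencies.} For $|n|\ge 1$, write $a=\fbb n$ and $b=|n|$. Then
\[
a-b = \frac{a^2-b^2}{a+b} = \frac{3/4}{a+b} \sim \jb n^{-1}.
\]
Split the difference as
\[
\frac{\sin ta}{a}-\frac{\sin tb}{b} = \frac{\sin ta-\sin tb}{a} + \sin(tb)\Big(\frac1a-\frac1b\Big).
\]
For the second term, $|\frac1a-\frac1b| = \frac{a-b}{ab}\les \jb n^{-3}$. For the first term, the mean value theorem gives $|\sin ta-\sin tb|\le t|a-b|\les t\jb n^{-1}$, and dividing by $a$ yields $t\jb n^{-2}$. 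Combined with the factor $e^{-t/2}$, this gives $|m_t(n)|\les \sup_t (1+t)e^{-t/2}\,\jb n^{-2}\les \jb n^{-2}$, which is even uniform in $t\ge0$.

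\textbf{Time derivative.} Differentiating in $t$ produces two pieces. The first is $-\frac12 m_t(n)$, which is already $O(\jb n^{-2})$. The second is $e^{-t/2}(\cos ta-\cos tb)$, and the mean value theorem bounds it by $t|a-b|e^{-t/2}\les \jb n^{-1}$.

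Hence $\|(\D(t)-\S(t))f\|_{H^{s+2}}\les\|f\|_{H^s}$ and $\|\dt(\D(t)-\S(t))f\|_{H^{s+1}}\les\|f\|_{H^s}$. The only point needing care is the zero mode and small frequencies, where $1/|n|$ must be read as the limit $t$; even there the symbols stay bounded, so there is no real obstacle.
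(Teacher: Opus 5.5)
Your proof is correct: reducing to the symbol bounds $|m_t(n)|\lesssim \jb{n}^{-2}$ and $|\dt m_t(n)|\lesssim \jb{n}^{-1}$ via $\fbb{n}-|n| = \frac{3/4}{\fbb{n}+|n|}$ and the mean value theorem, with the zero mode of $\S(t)$ read as $t e^{-t/2}$, is the natural argument, and it even gives bounds uniform in $t\ge 0$. The paper gives no proof of its own and simply quotes \cite[Lemma 2.6]{ORW}, where the lemma is proved by this same direct symbol estimate.
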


The key working function space for us is the following anisotropic Sobolev spaces.

\begin{definition} \label{DEF:spaces2}
\rm 

Let $(s, b) \in \R^2 $ and $1 \le p \le \infty$. We define the space  $\Ld^{s,b}_{p}(\R \times \T^2)$   as the completion   of $\mathcal{S} (\R \times \R^2)$   under the   norm
\noi
\begin{align*}
\|u\|_{\Ld^{s,b}_{p}(\R  \times  \T^2)} :=   \big\|  \jb{\nb_x}^s \jb \dt ^b u \big\|_{L^p_{t,x}(\R\times \T^2)}.
% \label{ld1}
\end{align*} 
Given an interval $I\subset \R$, we define the restriction $\Ld_{p}^{s,b}(I)$ of the space $\Ld_{p}^{s,b}(\R\times \T^2)$ onto the interval $I$ via the norm:
\begin{align*}
\| u\|_{\Ld^{s,b}_{p}(I)} : = \inf\{ \| v\|_{\Ld^{s,b}_{p}} \, : \, v\vert_{I\times \T^2} =u\}.
\end{align*}
When $I=[0,T]$ for some $T>0$, we use the shorthand notation $\Ld^{s,b}_{p}(T)$ for $\Ld_{p}^{s,b}([0,T])$.
\end{definition}

In the remainder of this section, we state and prove some basic results regarding these anisotropic Sobolev spaces, paying particular attention to their properties under time localisation. In the special case $p=2$, a number of these results are well-known and can be proved using the Plancherel theorem and Fourier analytic arguments; see for instance \cite{TAO}. In our setting, we need to work with $p\neq 2$. In view of Subsection~\ref{subsec:ker}, we use a physical side approach for these estimates. We will extensively use the formula \eqref{dtb} and the the following equivalence of norms:
\begin{align}
\| u\|_{\Ld^{s,b}_{p}(I)} \sim \| u\|_{\Ld^{s,0}_{p}(I)} + \| (-\dt^{2})^{\frac{b}{2}} u\|_{\Ld^{s,0}_p (I)} \label{dtnorm}
\end{align}
for any interval $I\subseteq \R$. Note that \eqref{dtnorm} follows from the Mikhlin-H\"{o}rmander theorem \cite[Theorem 6.2.7]{Grafakos1}.

\begin{lemma}\label{LEM:timegain}
Let $1<p<\infty$, $s\in \R$ and $b_1\leq b_2 <\frac{1}{p}<b_3$. Let $0<\tau \leq 1$. Then, for all $u\in \Ld_{p}^{s,b_2}$ and $v\in \Ld^{s,b_2}_{p}([0,\tau])$, it holds that
\begin{align}
\| \varphi(t/\tau) u\|_{\Ld^{s,b_1}_{p}} \les \tau^{b_2 -b_1} \|u\|_{\Ld^{s,b_{2}}_{p}} \quad \text{and} \quad \|v\|_{\Ld_{p}^{s,b_1}([0,\tau])} \les \tau^{b_2-b_1} \|v\|_{\Ld_{p}^{s,b_2}([0,\tau])}\label{timegain1}
\end{align}
and for all $v\in \Ld^{s,b_3}_{p}$, 
\begin{align}
\| \varphi(t/\tau) v\|_{\Ld^{s,b_3}_{p}(\R)} \les \tau^{\frac 1p - b_3}\| v\|_{\Ld_{p}^{s,b_3}(\R)}. \label{timegain3}
\end{align}
Moreover, if $u(0,x)= 0$ for all $x\in \T^2$, then 
\begin{align}
\| u\|_{\Ld^{s,b_1}_{p}(I)} \les |I|^{b_2-b_1} \|u\|_{\Ld^{s,b_2}_{p}(I)}. \label{timegain2}
\end{align}
for any $\frac{1}{p}<b_1\leq b_2 <\min(b_1+\frac{1}{p},1)$ and closed interval $I\subset\R$ such that $0<|I| <1$.
\end{lemma}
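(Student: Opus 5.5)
The plan is to reduce everything to one-dimensional statements in the time variable. The spatial operator $\jb{\nb_x}^s$ commutes with multiplication by $\varphi(t/\tau)$ and with time restriction, so we may replace $u$ by $\jb{\nb_x}^s u$ and take $s=0$. By Fubini, it then suffices to prove the corresponding estimates in $W^{b,p}(\R_t)$ for each fixed $x$, with constants uniform in $x$, and integrate the $p$-th powers over $\T^2$. Throughout we use the equivalence \eqref{dtnorm} together with the difference-quotient formula \eqref{dtb} for $(-\dt^2)^{b/2}$ in dimension $d=1$, so that every time-derivative is expressed through the integral $\int (f(t+h)-f(t))|h|^{-1-b}\,dh$.

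For the first bound in \eqref{timegain1}, the main tool is the Hardy-type inequality $\| |t|^{-b} f\|_{L^p(\R)} \les \|f\|_{\dot W^{b,p}(\R)}$, valid for $0\le b<\frac1p$. This inequality is the reason for the hypothesis $b_2<\frac1p$.
\begin{itemize}
\item First reduce to $b_1=0$. Write $\jb{\dt}^{b_1}=\jb{\dt}^{b_1-b_2}\jb{\dt}^{b_2}$ and use that $\jb{\dt}^{b_1-b_2}$ is bounded on $L^p$. Combined with a fractional Leibniz/commutator bound for $\varphi(t/\tau)$ (interpolation between $b=0$ and $b=b_2$), this reduces the claim to $\|\varphi(t/\tau)u\|_{L^p}\les \tau^{b_2}\|u\|_{W^{b_2,p}}$.
\item For the reduced claim, write $|\varphi(t/\tau)|\les \tau^{b_2}|t|^{-b_2}$, since $\varphi(t/\tau)$ vanishes unless $|t|\les\tau$. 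Then apply the Hardy inequality with $b=b_2<\frac1p$.
\end{itemize}

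The second bound in \eqref{timegain1} follows by taking an extension $\tilde v$ of $v$ that is nearly optimal in $\Ld^{s,b_2}_p$. Since $\varphi(t/\tau)\tilde v$ is again an extension of $v$ from $[0,\tau]$ (using $\varphi=1$ near $[0,1]$), we get the estimate by taking the infimum.

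For \eqref{timegain3} we argue by scaling.
\begin{itemize}
\item Split $(-\dt^2)^{b_3/2}(\varphi_\tau v)$ via \eqref{dtb} into three pieces: $\varphi_\tau (-\dt^2)^{b_3/2}v$, the term $v\,(-\dt^2)^{b_3/2}\varphi_\tau$, and a bilinear remainder $\int(\varphi_\tau(t+h)-\varphi_\tau(t))(v(t+h)-v(t))|h|^{-1-b_3}dh$.
\item The first piece is bounded by $\|v\|$. Since $b_3>\frac1p$, the factor $\tau^{\frac1p-b_3}\ge1$, so this piece is harmless.
\item For the second piece, $(-\dt^2)^{b_3/2}\varphi_\tau=\tau^{-b_3}\psi(t/\tau)$ with $|\psi(t)|\les\jb{t}^{-1-b_3}$. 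Since $b_3>\frac1p$, the embedding $W^{b_3,p}\embeds L^\infty$ gives $\|v\|_{L^\infty}\les\|v\|_{W^{b_3,p}}$. Then $\|\tau^{-b_3}\psi(\cdot/\tau)\|_{L^p}=\tau^{\frac1p-b_3}\|\psi\|_{L^p}$.
\item The bilinear remainder is handled by Hölder and the same scaling.
\end{itemize}

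For \eqref{timegain2}, the vanishing condition $u(0)=0$ replaces the Hardy inequality above the threshold $\frac1p$. For $\frac1p<b<1$ and $u(0)=0$, the Hölder bound $|u(t)|\les |t|^{b-\frac1p}\|u\|_{\dot W^{b,p}}$ holds, or more precisely a Hardy inequality $\||t|^{-\gamma}u\|_{L^p}\les\|u\|_{\dot W^{\gamma,p}}$ for $\gamma\in(\frac1p,1)$ when $u(0)=0$. The plan is then:
\begin{itemize}
\item Choose an extension of $u$ from $I$ and subtract nothing, since $u(0)=0$ is preserved.
\item Cut off with a smooth bump adapted to a $|I|$-neighbourhood of $I$.
\item Run the commutator argument above with $\tau=|I|$, using the modified Hardy inequality with exponent $\gamma=b_2-b_1<\frac1p$ applied to $(-\dt^2)^{b_1/2}u$. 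The constraint $b_2<b_1+\frac1p$ is exactly what keeps $\gamma<\frac1p$.
\end{itemize}

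The main obstacle is this last step. The difficulty is that $0$ need not lie in $I$, so the vanishing at $0$ must be transported to the interval. We would handle this by working on $\tilde I=I\cup[0,\sup I]$. This is acceptable when $I\subset[0,1]$, which is the intended use, and otherwise we would track how far the interval sits from the origin.
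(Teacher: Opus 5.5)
Your final step fails, and no repair is possible. Replacing $I$ by $\tilde I=I\cup[0,\sup I]$ breaks both sides of the estimate. On the left, the gain becomes $|\tilde I|^{b_2-b_1}$, which is not $\les|I|^{b_2-b_1}$ when $\inf I\gg|I|$. On the right, you get $\|u\|_{\Ld^{s,b_2}_p(\tilde I)}$, which is not bounded by $\|u\|_{\Ld^{s,b_2}_p(I)}$: the restriction norm on $I$ sees nothing of $u$ off $I$, in particular not the condition $u(0,\cdot)=0$.

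In fact \eqref{timegain2} is false whenever $0\notin I$ and $b_1<b_2$. Take $I=[\tau,2\tau]\subset[0,1]$ and $u(t,x)=\theta(t/\tau)$, with $\theta$ smooth, $\theta(0)=0$ and $\theta\equiv1$ on $[1,2]$.
\begin{itemize}
\item On $I$, $u$ agrees with a fixed $x$-independent bump $w\equiv1$ on $[-3,3]$. Hence the right-hand side is $\les\tau^{b_2-b_1}\|w\|_{\Ld^{s,b_2}_p}$.
\item Averaging any extension over $\T^2$ and using $W^{b_1,p}(\R)\embeds L^\infty(\R)$ (recall $b_1>\frac1p$) shows the left-hand side is $\gtrsim1$.
\end{itemize}
Letting $\tau\to0$ gives a contradiction. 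So the estimate genuinely requires $0\in I$. You should state and prove it for $I=[0,\tau]$; there every extension of $u|_{[0,\tau]}$ automatically vanishes at $t=0$, which is your ``subtract nothing'' remark. This is exactly the reduction the paper's proof makes, and it is the case actually used, e.g.\ with $I=[0,T]$ in \eqref{bdX}.

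With that restriction, the rest of your proposal is sound. Your extension argument for the second bound in \eqref{timegain1} is the paper's. For $I=[0,\tau]$ your argument for \eqref{timegain2} is also essentially the paper's.
\begin{itemize}
\item The term $\varphi(t/\tau)(-\dt^2)^{b_1/2}u$ is handled by H\"older and Sobolev in the paper, and by the ordinary Hardy inequality with exponent $b_2-b_1<\frac1p$ in yours. No vanishing is needed there; this is where $b_2<b_1+\frac1p$ enters.
\item The commutator terms are handled in both by the Morrey bound $|u(t)|\les|t|^{b_2-\frac1p}\|u\|_{W^{b_2,p}}$, which comes from $u(0)=0$.
\end{itemize}
Elsewhere your routes differ but work.
\begin{itemize}
\item For the first bound in \eqref{timegain1}, the paper appeals once to the fractional Leibniz rule (Lemma~\ref{LEM:prod}\,(i)). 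Measuring $\varphi(t/\tau)$ in $L^{1/(b_2-b_1)}$, $\jb{\dt}^{b_1}\varphi(t/\tau)$ in $L^{1/b_2}$, and $u$ in Sobolev-embedded spaces gives $\tau^{b_2-b_1}$ directly.
\item Your Hardy-plus-interpolation route also works. However, the factorisation $\jb{\dt}^{b_1}=\jb{\dt}^{b_1-b_2}\jb{\dt}^{b_2}$ on its own throws the gain away. What the interpolation really needs is the second endpoint, the uniform bound $\|\varphi(t/\tau)u\|_{W^{b_2,p}}\les\|u\|_{W^{b_2,p}}$. This holds for $0\le b_2<\frac1p$ by the same Hardy inequality and scaling.
\item For \eqref{timegain3}, your three-term splitting is a hands-on proof of the Leibniz-type algebra bound the paper quotes. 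Since it relies on \eqref{dtb}, as written it only covers $b_3<1$.
\end{itemize}
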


\begin{proof}
The first estimate in \eqref{timegain1} follows from direct computations using the fractional Leibniz rule; see Lemma~\ref{LEM:prod} (i) (in particular, the proof of \eqref{timegain2} is a refinement of this argument). The second inequality in \eqref{timegain1} then follows from the first one in \eqref{timegain1} since for any $u\in \Ld^{s,b_2}_{p}(\R)$ such that $u\vert_{[0,\tau]\times \T^2} = v$, and $\varphi(t/\tau)u\vert_{[0,\tau]\times \T^2} =v$. Here, $\varphi$ is a smooth bump function defined in Subsection~\ref{sec:preliminaries} which equals $1$ on a neighborhood of the origin and is supported on a slightly larger set. Thus, 
\begin{align*}
  \|v\|_{\Ld_{p, \tau}^{s,b_1}} \leq \| \varphi(t/\tau)u\|_{\Ld^{s,b_1}_{p}} \les \tau^{b_2-b_1}\| u\|_{\Ld^{s,b_1}_{p}}.
\end{align*}
As $u$ was an arbitrary extension of $v$, the result follows. 
Next, \eqref{timegain3} is a consequence of the fact that $W^{b_3,p}(\R)$ is an algebra since $b_3>\frac 1p$ and \eqref{dtnorm}.

It remains to show \eqref{timegain2}.
To this end, it is enough to assume that $I=[0,\tau]$ for some $0<\tau<1$ and to establish
\begin{align}
\| \jb{\dt}^{b_1}[ \varphi(t/\tau) f]\|_{L^{p}(\R)} \les \tau^{b_2-b_1} \| \jb{\dt }^{b_2}f\|_{L^{p}(\R)}. \label{timegain21}
\end{align}
for $f\in C_c^{\infty}(\R)$ satisfying $f(0)=0$.
By Sobolev embedding, 
\begin{align}
\| \varphi(t/\tau) f\|_{L^{p}(\R)} \les \tau^{\frac{1}{p}} \|f\|_{L^{\infty}} \les \tau^{\frac{1}{p}}\|f\|_{W^{b_2,p}}. \label{timegain22}
\end{align}
Next, using \eqref{dtb}, we write 
\begin{align}
(-\dt^2)^{\frac{b_1}{2}} \big[ \eta(t/\tau)f](t)  =c_{1,b_1} \int_{\R} \frac{[\varphi(\tfrac{t+h}{\tau})-\varphi(\tfrac{t}{\tau})]f(t+h)}{|h|^{1+b_1}}dh + c_{1,b_1}\varphi(\tfrac{t}{\tau}) (-\dt^2)^{\frac{b_1}{2}}f(t).
\label{timegain20}
\end{align}
Taking the $L^p(\R)$ norm and using the triangle inequality leaves us to control each of these terms separately. The second term in \eqref{timegain20} is controlled using H\"{o}lder's inequality, which requires $b_2-b_1<\frac 1p$, and Sobolev embedding: 
\begin{align}
\begin{split}
\|\varphi(\tfrac{t}{\tau}) (-\dt^2)^{\frac{b_1}{2}}f(t)\|_{L^{p}}& \les \|\varphi(\tfrac{t}{\tau})\|_{L^{ \frac{1}{b_2-b_1}  }}  \| (-\dt^2)^{\frac{b_1}{2}}f\|_{L^{\frac{p}{1-p(b_2-b_1)}}}  \les \tau^{b_2-b_1} \| f\|_{W^{b_2,p}}.
\end{split}
\label{timegain23}
\end{align}
Now we control the first term in \eqref{timegain20}. By a change of variables, we  have 
\begin{align*}
\int_{\R} \frac{[\varphi(\tfrac{t+h}{\tau})-\varphi(\tfrac{t}{\tau})]f(t+h)}{|h|^{1+b_1}}dh  = \tau^{-b_1} \int_{\R}\frac{[\varphi(\tfrac{t}{\tau}+h)-\varphi(\tfrac{t}{\tau})]f(t+\tau h)}{|h|^{1+b_1}}dh  =: \1 (t)+ \II(t),
\end{align*}
where the integral in $h$ in $\1(t)$ is restricted to $|h|\les 1$ and the integral in $\II(t)$ is restricted to $|h|\gg 1$. 
To estimate $\1(t)$, we note that $|t|\les \tau$ since otherwise the integrand vanishes due to the compact support of $\varphi$. In particular, $|t+\tau h|\les \tau$. 
Using that $f(0)=0$, and Morrey's inequality \cite[Theorem 8.2]{Hitchhiker}, we have
\begin{align}
|f(t+\tau h)| = |f(t+\tau h)-f(0)| \les \tau^{b_2-\frac 1p}  \sup_{  \substack{ t_1,t_2 \in [0,5\tau] \\ t_1\neq t_2} } \frac{| f(t_1)-f(t_2)|}{|t_1-t_2|^{b_2-\frac{1}{p}}}  \les \tau^{b_2-\frac 1p} \|f\|_{W^{b_2,p}}. \label{timegainf}
\end{align}
Meanwhile, the fundamental theorem of calculus gives
\begin{align}
|\varphi(\tfrac{t}{\tau}+h)-\varphi(\tfrac{t}{\tau})| \les |h| \int_{0}^{1} |(\dt \varphi)(\tfrac{t}{\tau}+sh)|ds.
\label{ftc}
\end{align}
Thus, by 
Minkowski's inequality, \eqref{timegainf} and a change of variables, we have  
\begin{align}
\begin{split}
\| \1(t)\|_{L^p}  &\les \tau^{-b_1}  \tau^{b_2-\frac{1}{p}} \| f\|_{W^{b_2,p}} \bigg( \int_{|h|\les 1}|h|^{-b_1}dh\bigg) \sup_{y\in \R} \| (\dt \varphi)( \tfrac{t}{\tau}+y)\|_{L_t^{p}}  \\
 & \les \tau^{b_2-b_1}\|f\|_{W^{b_2,p}}.
 \end{split} \label{timegainIbd} 
\end{align}
Now for $\II(t)$, if $|t+\tau h|\les \tau$, the same argument as for $\1(t)$ applies using \eqref{timegainf} and Minkowski's inequality and using
\begin{align*}
\sup_{h\in \R}\| \eta(\tfrac{t}{\tau}+h)-\eta(\tfrac{t}{\tau})\|_{L^{p}}\les \tau^{\frac 1p}
\end{align*}
 instead of \eqref{ftc}. 
 Thus, it remains to consider when $|t+\tau h|\gg \tau$. Here we have $\varphi(\tfrac{t}{\tau}+h)=0$.
 Then, as in \eqref{timegainf}, we have
 \begin{align*}
|f(t+\tau h) | \les |t+\tau h|^{b_2-\frac 1p} \| f\|_{W^{b_2,p}} \les (|t|^{b_2-\frac 1p} + |\tau h|^{b_2-\frac 1p}) \| f\|_{W^{b_2,p}}.
\end{align*}
Then, for this portion of $\II(t)$, we have the bound
\begin{align*}
& \tau^{-b_1} \| f\|_{W^{b_2,p}}\Bigg\{ \| \varphi(\tfrac{t}{\tau}) |t|^{b_2-\frac{1}{p}} \|_{L^p}  \bigg( \int_{\R}\frac{1}{\jb{h}^{1+b_1}}dh\bigg) +  \tau^{b_2-\frac{1}{p}}\|\varphi(\tfrac{t}{\tau})\|_{L^{p}}  \bigg( \int_{\R} \frac{1}{ \jb{h}^{1+\frac 1p -(b_2-b_1)}  }dh\bigg)  \Bigg\} \\
 & \les \tau^{b_2-b_1} \|f\|_{W^{b_2,p}}.
\end{align*}
 Now combining this with  \eqref{timegain22}, \eqref{timegain20}, \eqref{timegain23}, \eqref{timegainIbd} and \eqref{dtnorm}, proves \eqref{timegain21}.
\end{proof}

%
%
%\begin{lemma}
%Let $s\in \R$, $1<p<\infty$ and $b<\frac 1p$. Then, we have for any interval $I\subseteq \R$ and any $u\in \Ld_{p}^{s,b}(I)$, 
%\begin{align*}
%\| u\|_{\Ld_{p}^{s,b}(I)} \sim \| \ind_{I}(t) u\|_{\Ld^{s,b}_{p}(\R)}
%\end{align*}
%and for any $v\in \Ld_{p}^{s,b}$, 
%\begin{align*}
%\| \ind_{I}(t) v \|_{\Ld^{s,b}_{p}} \les \|v\|_{\Ld^{s,b}_{p}}.
%\end{align*}
%%Finally, if $t_0 = \inf I$, then the map 
%%\begin{align*}
%%t \mapsto \| \ind_{[t_0 , t]} v\|_{\Ld^{s,b}_{p}}
%%\end{align*}
%%is continuous. 
%\end{lemma}
%See for instance \cite[Lemma 4.4]{

Next, we give some important embeddings regarding $L^{\infty}$-based spaces.

\begin{lemma}[Embeddings] \label{LEM:Sobolev}
Let $1\leq p \leq \infty$, $s>\frac2p$ and $b>\frac 1p$. Then, $\Ld^{s,b}_{p}(\R\times \T^2) \embeds C(\R\times \T^2)$, and we have
\begin{align}
\| u\|_{L^{\infty}(\R\times \T^2)} &\les \| u\|_{\Ld^{s,b}_{p}(\R\times \T^2)} \label{sobolev1} \\
\| u\|_{L^{\infty}_{t}(\R; W^{s,p}_x(\T^2))}& \les \| u\|_{\Ld^{s,b}_{p}(\R\times \T^2)} \label{sobolev12}
\end{align}
Furthermore, for any interval $I\subseteq \R$, $\Ld^{s,b}_{p}(I) \embeds C(I\times \T^2)$ and 
\begin{align}
\| u\|_{L^{\infty}(I\times \T^2)} &\les \| u\|_{\Ld^{s,b}_{p}(I)}, \label{sobolev2} \\
\| u\|_{L^{\infty}_{t}(I; W^{s,p}_x(\T^2))}& \les \| u\|_{\Ld^{s,b}_{p}(I)}  \label{sobolev21}
\end{align}
\end{lemma}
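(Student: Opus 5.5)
The plan is to reduce everything to one-dimensional and two-dimensional Sobolev embeddings applied in each variable separately, using that $\jb{\nb_x}^s$ and $\jb{\dt}^b$ commute. We first treat the whole-line estimates \eqref{sobolev1}--\eqref{sobolev12}, and then deduce the localised versions \eqref{sobolev2}--\eqref{sobolev21} from the definition of the restriction norm.

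\textbf{Estimate \eqref{sobolev12}.} Fix $u\in \mathcal{S}(\R\times\T^2)$ and set $w=\jb{\nb_x}^s u$, so that $\|u\|_{\Ld^{s,b}_p}=\|\jb{\dt}^b w\|_{L^p_{t,x}}$.
\begin{itemize}
\item For each fixed $x$, the one-dimensional Sobolev embedding $W^{b,p}(\R)\embeds L^\infty(\R)$ holds since $b>\frac1p$. It is valid for $1<p<\infty$ via Bessel potentials, whose kernel $G_b$ lies in $L^{p'}(\R)$ exactly when $b>\frac1p$. The endpoint cases $p=1$ and $p=\infty$ are handled by the same kernel argument, where $G_b$ is bounded, respectively integrable, for $b>1$, respectively $b>0$.
\item This gives $\sup_t |w(t,x)|\les \|\jb{\dt}^b w(\cdot,x)\|_{L^p_t}$ pointwise in $x$.
\item More precisely, write $w(t,x)=(G_b\ast_t \jb{\dt}^b w(\cdot,x))(t)$. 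Minkowski's inequality then gives, for each fixed $t$,
\[
\|w(t,\cdot)\|_{L^p_x}\le \|G_b\|_{L^{p'}_t}\,\big\|\jb{\dt}^b w\big\|_{L^p_{t,x}}.
\]
\end{itemize}
Taking the supremum in $t$ yields \eqref{sobolev12}.

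\textbf{Estimate \eqref{sobolev1}.} We apply the spatial embedding $W^{s,p}(\T^2)\embeds L^\infty(\T^2)$, valid for $s>\frac2p$ because the Bessel kernel $\jb{\nb}^{-s}$ on $\T^2$ behaves like $|x|^{s-2}$ near $0$ (see \cite[Lemma 2.2]{ORSW1}) and hence lies in $L^{p'}(\T^2)$. This gives, for each $t$,
\[
\|u(t)\|_{L^\infty_x}\les \|u(t)\|_{W^{s,p}_x},
\]
and combining with \eqref{sobolev12} proves \eqref{sobolev1}.

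\textbf{Continuity.} Since $\mathcal{S}$ is dense in $\Ld^{s,b}_p$ and each Schwartz function is continuous, \eqref{sobolev1} shows that approximating sequences are Cauchy in the uniform norm. Their uniform limits are continuous, which gives $\Ld^{s,b}_p\embeds C(\R\times\T^2)$. For $p=\infty$ (no density) we use instead the direct convolution representation $u=G_b\ast_t \jb{\nb_x}^{-s}\ast_x F$ with $F\in L^\infty$ and integrable kernels; continuity then follows from continuity of translations of the kernels in $L^1$.

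\textbf{Localised estimates.} Given $u\in\Ld^{s,b}_p(I)$, take any extension $v$ with $v|_{I\times\T^2}=u$. Then
\[
\|u\|_{L^\infty(I\times\T^2)}\le \|v\|_{L^\infty(\R\times\T^2)}\les \|v\|_{\Ld^{s,b}_p},
\]
and likewise for the $L^\infty_tW^{s,p}_x$ norm. Taking the infimum over extensions gives \eqref{sobolev2} and \eqref{sobolev21}, and continuity on $I\times\T^2$ is inherited from $v$.

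The main obstacle is only technical: justifying the Bessel-kernel representation in the mixed setting, since the two multipliers act on different variables, and handling the endpoint exponents $p=1$ and $p=\infty$. These are dealt with by the explicit kernel integrability noted above.
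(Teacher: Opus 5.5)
Your proof is correct. For \eqref{sobolev12} and for the localised bounds \eqref{sobolev2}--\eqref{sobolev21} it is the paper's argument: Sobolev embedding in time for each fixed $x$ combined with Minkowski, then an infimum over extensions.

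The difference is in \eqref{sobolev1}. The paper proves it directly with a Littlewood--Paley decomposition in both $t$ and $x$ and Bernstein's inequality. That is, it bounds
\[
\|\mathbf{P}^t_M\mathbf{P}^x_N u\|_{L^\infty}\lesssim M^{1/p}N^{2/p}\|\mathbf{P}^t_M\mathbf{P}^x_N u\|_{L^p}\lesssim M^{1/p-b}N^{2/p-s}\|u\|_{\Lambda^{s,b}_p}
\]
and sums over dyadic $M,N$. You instead obtain \eqref{sobolev1} as a corollary of \eqref{sobolev12} composed with the spatial embedding $W^{s,p}(\T^2)\hookrightarrow L^\infty(\T^2)$.

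Your ordering is more economical. You need \eqref{sobolev12} anyway, and everything then reduces to the $L^{p'}$-integrability of two Bessel kernels. This also makes the endpoints $p=1$ and $p=\infty$ transparent. The paper's two-parameter dyadic argument instead treats both variables in one step, without passing through the mixed norm $L^\infty_tW^{s,p}_x$.

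One small remark: by Definition~\ref{DEF:spaces2}, the space is the completion of Schwartz functions for every $1\le p\le\infty$. So density is available at $p=\infty$ as well, and your separate convolution argument for continuity there is unnecessary, though correct.
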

\begin{proof}
The first claim \eqref{sobolev1} is a simple consequence of Littlewood-Payley theory and Bernstein inequalities. Moreover, \eqref{sobolev12} follows from Minkowski's inequality and the Sobolev embedding in time (for each fixed $x\in \T^2$). The time localised estimates \eqref{sobolev2} and \eqref{sobolev21} then follow from \eqref{sobolev1} and \eqref{sobolev12}, respectively, and considering extensions, similar to the proof of Lemma~\ref{LEM:timegain}.
\end{proof}

In our proof of the kernel estimates in Section~\ref{subsec:ker}, it turns out to be more convenient to establish bounds using the isotropic fractional Laplacian $(-\Dl_{\R\times \T^2})^{\g}$ rather than the anisotropic operators $\jb{\nb_{x}}^{a}\jb{\dt}^{b}$. The next result plays an important role in connecting these two kinds of spaces.

\begin{lemma}
For any $1<p<\infty$ and $s,b\geq 0$ such that $0\leq s+b\leq 1$, it holds that
\begin{align}
\| u\|_{\Ld^{s,b}_{p} (\R\times \T^2)} \les  \| u\|_{L^{p}_{t,x}(\R\times \T^2)} + \| (-\Dl_{\R\times \T^2})^{\frac{s+b}{2}} u\|_{L^{p}_{t,x}(\R\times \T^2)}.
\label{iso}
\end{align}
\end{lemma}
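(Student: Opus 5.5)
This is a Fourier multiplier statement on $\R\times\T^2$, and I would prove it by transference. Write $\g = s+b\in[0,1]$. For $(\tau,n)\in\R\times\Z^2$ consider the multiplier
\[
m(\tau,n) = \frac{\jb{n}^s\jb{\tau}^b}{1+(\tau^2+|n|^2)^{\g/2}}.
\]
Then $\jb{\nb_x}^s\jb{\dt}^b u = m(\dt,\nb_x)\big(u + (-\Dl_{\R\times\T^2})^{\frac{\g}{2}}u\big)$. The right-hand side of \eqref{iso} dominates $\|u+(-\Dl)^{\g/2}u\|_{L^p}$ by the triangle inequality. So the claim reduces to proving that $m$ is an $L^p(\R\times\T^2)$ multiplier for $1<p<\infty$.

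To verify this, I extend $m$ to $\R\times\R^2$ by the same formula, with $\xi\in\R^2$ in place of $n$. By the transference principle (de Leeuw's theorem: restricting a continuous $L^p(\R^3)$ multiplier to $\R\times\Z^2$ gives an $L^p(\R\times\T^2)$ multiplier), it suffices to show that $m(\tau,\xi)$ is an $L^p(\R^3)$ multiplier. For that I plan to invoke the Mikhlin--H\"ormander theorem \cite[Theorem 6.2.7]{Grafakos1}, the same tool behind \eqref{dtnorm}.

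First, boundedness. We have $\jb{\xi}^s\jb{\tau}^b\le \jb{(\tau,\xi)}^{s+b}$, and $1+|(\tau,\xi)|^{\g}\sim \jb{(\tau,\xi)}^{\g}$. Hence $|m|\les 1$.

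Second, the derivative bounds $|\partial^\al m(\zeta)|\les |\zeta|^{-|\al|}$ for $|\al|\le 2$ (enough in dimension three), where $\zeta=(\tau,\xi)$.
\begin{itemize}
\item The factors $\jb{\xi}^s$ and $\jb{\tau}^b$ are symbols of order $s$ and $b$ in their own variables. Differentiating them gains $\jb{\xi}^{-1}$ or $\jb{\tau}^{-1}$, and each of these is at least $\jb{\zeta}^{-1}$.
\item The denominator $1+|\zeta|^\g$ has derivatives of size $|\zeta|^{\g-k}$.
\end{itemize}
Combining these with the size bound gives $|\partial^\al m|\les \jb{\zeta}^{-|\al|}\cdot\max(1,|\zeta|^{\g-|\al|}\jb{\zeta}^{-\g+|\al|})\les |\zeta|^{-|\al|}$.

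The step that needs care is the origin, where $|\zeta|^\g$ is not smooth. Near $\zeta=0$ its derivatives behave like $|\zeta|^{\g-k}$, which is still compatible with the Mikhlin condition $|\zeta|^{-k}$ since $\g\ge0$. The case $\g=0$ is trivial. At the other extreme $\g=1$, $|\zeta|$ is homogeneous of degree one and still satisfies the Mikhlin bounds. So $m$ satisfies the hypotheses, and transference followed by the reduction above yields \eqref{iso}.

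The main obstacle is the transference step. It requires $m$ to be continuous at every point of $\R\times\Z^2$, which holds. Alternatively, one can cite the periodic (Marcinkiewicz-type) multiplier theorem on $\R\times\T^2$ directly.
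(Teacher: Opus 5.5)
You correctly reduce the claim to showing that $m(\tau,\xi)=\jb{\xi}^s\jb{\tau}^b(1+|\zeta|^{\g})^{-1}$, $\zeta=(\tau,\xi)$, is an $L^p(\R^3)$ multiplier. The transference to $\R\times\T^2$ is also fine; the paper likewise just adapts the standard transference argument.

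\textbf{The gap is your verification of the Mikhlin--H\"ormander condition: it is false for this symbol.} A $\tau$-derivative of $\jb{\tau}^b$ gains only $\jb{\tau}^{-1}$. The inequality $\jb{\tau}^{-1}\ge\jb{\zeta}^{-1}$ that you invoke goes the wrong way, because Mikhlin requires a gain of $|\zeta|^{-1}$ per derivative.

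Concretely, take $\tau=1$ and $|\xi|=R\gg1$. Then $\partial_\tau m(1,\xi)\sim b\,R^{s}R^{-(s+b)}=bR^{-b}$; the term where the derivative falls on the denominator is only $O(R^{-b-2})$. Hence $|\zeta|\,|\partial_\tau m(\zeta)|\sim bR^{1-b}\to\infty$ whenever $0<b<1$. Symmetrically, $|\partial_{\xi_1}m|\sim sR^{-s}$ at $\xi=(1,0)$, $|\tau|=R$, which breaks the bound whenever $0<s<1$.

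So the origin, which you flagged as the delicate point, is harmless. The failure is at infinity, in directions where one of $|\tau|,|\xi|$ stays bounded while the other grows. This reflects the fact that $\jb{\dt}^b\jb{\nb_x}^s$ is a product-type (multi-parameter) symbol, not an isotropic one. In particular, your final estimate $|\partial^\al m|\les\jb{\zeta}^{-|\al|}\max(\cdots)$ does not follow from the two bullet points before it.

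The missing ingredient is the Marcinkiewicz multiplier theorem. It only requires $|\partial^{\al}m(\zeta)|\les\prod_{j:\,\al_j=1}|\zeta_j|^{-1}$ for $\al\in\{0,1\}^3$, with $m$ smooth off the coordinate hyperplanes. Your $m$ does satisfy this. A routine check shows that each derivative $\partial_{\zeta_j}$, whichever factor it hits, costs at most a factor $\les|\zeta_j|^{-1}$. This uses bounds such as $|\tau|\jb{\tau}^{-2}\le|\tau|^{-1}$, $|\xi_j|\jb{\xi}^{-2}\le|\xi_j|^{-1}$, and $|\zeta|^{\g-2}|\zeta_j|(1+|\zeta|^{\g})^{-1}\le|\zeta_j|^{-1}$, together with their analogues for repeated hits on the same factor.

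This is essentially the paper's proof. It factors the symbol as $[\jb{\tau}^b\jb{\zeta}^{-b}]\,[\jb{\xi}^s\jb{\zeta}^{-s}]\,[\jb{\zeta}^{\g}(1+|\zeta|^{\g})^{-1}]$. The first two factors are handled by Marcinkiewicz, and only the last, genuinely isotropic, factor by Mikhlin--H\"ormander; then it transfers to $\R\times\T^2$. With that replacement your argument goes through.
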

\begin{proof}
 It suffices to prove the following on $\R\times \R^2$:
\begin{align}
\| \jb{\dx}^{b}\jb{\nb_x}^{s} u\|_{L^{p}_{t,x}(\R\times \R^2)} \les \|\jb{\nb_{t,x}}^{s+b}u\|_{L^{p}_{t,x}(\R\times\R^2)}. \label{iso2}
\end{align}
Indeed, assuming \eqref{iso2}, by the Mikhlin-H\"{o}rmander multiplier theorem  \cite[Theorem 6.2.7]{Grafakos1}, we obtain 
\begin{align}
\| \jb{\dt}^{b}\jb{\nb_x}^{s} u\|_{L^{p}_{t,x}(\R\times \R^2)} \les \|u\|_{L^p_{t,x}(\R\times \R^2)} + \| (-\Dl_{\R\times \R^2})^{\frac{s+b}{2}} u\|_{L^{p}_{t,x}(\R\times \R^2)}. 
\label{iso3}
\end{align}
Now, \eqref{iso} follows from \eqref{iso3} and transference.\footnote{Strictly speaking, we need a transference result from multipliers on $\R\times \R^2$ to multipliers on $\R\times \T^2$. We were not able to find a ready-made result in this mixed-type context in the literature. Nonetheless, one may easily adapt the transference proof in \cite[Theorem 3]{FS} to this setting. We leave the details to the reader.}
To prove \eqref{iso2}, it is enough to establish $L^p_{t,x}(\R\times \R^2)$ boundedness for the multiplier operators $\jb{\dt}^{a}\jb{\nb_{t,x}}^{-a}$ and $\jb{\nb_{x}}^{a}\jb{\nb_{t,x}}^{-a}$ for $0\leq a\leq 1$. It is straightforward to check that the corresponding multipliers satisfy the assumptions of the Marcinkiewicz multiplier theorem \cite[Corollary 6.2.5]{Grafakos1} and are thus $L^p_{t,x}$ bounded for $1<p<\infty$.
\end{proof}

In order to prove a stability result, which is key for the globalisation argument using Bourgain's invariant measure argument, we employ an idea from \cite{OOTol, OOTol2} which amounts to considering exponentially weighted-in-time norms. Due to the definition of the space $\Ld^{s,b}_{p}(I)$, we need to proceed carefully to obtain the required adaptions. 
Let $\ld\geq \frac 12$ and  define the time-weighted spaces
\begin{align}
 \| u\|_{\Ld^{s,b}_{p, \ld}} =  \| e^{-\ld |t|} u\|_{\Ld^{s,b}_{p}} \quad \text{and} \quad   \| u\|_{\Ld^{s,b}_{p, \ld}(I)} =  \| e^{-\ld |t|} u\|_{\Ld^{s,b}_{p}(I)}. 
\label{Ldweight}
\end{align}
Then, we have the following estimate:

\begin{lemma}\label{LEM:ldup}
Let $s\in \R$, $0<b<1$, and $1<p<\infty$. Then,
\begin{align*}
\| u\|_{\Ld^{s,b}_{p}([0,T])} \les  \ld e^{2\ld T} \|u\|_{\Ld^{s,b}_{p,\ld}([0,T])}
\end{align*}
for any $T>0$ and $\ld \geq \frac 12$.
\end{lemma}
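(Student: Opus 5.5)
The estimate is a multiplier-by-weight bound. Writing $u = e^{\ld|t|}\cdot(e^{-\ld|t|}u)$, it suffices to show that multiplication by a smooth function equal to $e^{\ld|t|}$ on $[0,T]$ is bounded on $\Ld^{s,b}_p(\R\times\T^2)$ with operator norm $\les \ld e^{2\ld T}$. The whole argument is one-dimensional in time, since the weight does not depend on $x$ and commutes with $\jb{\nb_x}^s$.

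First, fix any extension $w\in\Ld^{s,b}_p$ of $e^{-\ld|t|}u|_{[0,T]}$; by the definition of the restriction norm it is enough to produce an extension of $u$ with norm $\les \ld e^{2\ld T}\|w\|_{\Ld^{s,b}_p}$ and then take the infimum over $w$. Let $\ta\in C_c^\infty(\R)$ with $\ta\equiv1$ on $[0,T]$, supported in $[-1,T+1]$, and $|\ta'|\les1$ uniformly in $T$. Set $m(t)=\ta(t)e^{\ld t}$, which agrees with $e^{\ld|t|}$ on $[0,T]$. Then $m w$ is an extension of $u$, and we need
\[
\|\jb{\nb_x}^s\jb{\dt}^b(mw)\|_{L^p_{t,x}}\les \ld e^{2\ld T}\|\jb{\nb_x}^s\jb{\dt}^b w\|_{L^p_{t,x}}.
\]
Since $m$ is independent of $x$, $\jb{\nb_x}^s(mw)=m\,\jb{\nb_x}^s w$. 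Setting $g=\jb{\nb_x}^s w$, the problem reduces, for each fixed $x$, to the time estimate $\|\jb{\dt}^b(mg)\|_{L^p_t}\les \ld e^{2\ld T}\|\jb{\dt}^b g\|_{L^p_t}$, which we then integrate in $x$.

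By \eqref{dtnorm}, it is enough to control $\|mg\|_{L^p}\le\|m\|_{L^\infty}\|g\|_{L^p}\les e^{\ld (T+1)}\|g\|_{L^p}$ and $\|(-\dt^2)^{b/2}(mg)\|_{L^p}$. For the latter we use the difference formula \eqref{dtb}, exactly as in \eqref{timegain20}:
\[
(-\dt^2)^{b/2}(mg)(t)=m(t)\,(-\dt^2)^{b/2}g(t)+c\int_\R\frac{(m(t+h)-m(t))\,g(t+h)}{|h|^{1+b}}\,dh .
\]
The first term is bounded by $\|m\|_{L^\infty}\|g\|_{W^{b,p}}$. For the commutator term, split into $|h|\le1$ and $|h|>1$. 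On $|h|\le 1$, use $|m(t+h)-m(t)|\le|h|\,\|m'\|_{L^\infty}$ with $\|m'\|_{L^\infty}\les(\ld+1)e^{\ld(T+1)}$; since $\int_{|h|\le1}|h|^{-b}\,dh<\infty$ for $b<1$, Minkowski's inequality gives a bound $\les\ld e^{\ld(T+1)}\|g\|_{L^p}$. On $|h|>1$, use $|m(t+h)-m(t)|\le2\|m\|_{L^\infty}$ together with $\int_{|h|>1}|h|^{-1-b}\,dh<\infty$ for $b>0$, and again apply Minkowski's inequality.

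Collecting terms gives a constant $\les \ld e^{\ld(T+1)}\le \ld e^{2\ld T}$ when $T\ge1$, using $\ld\ge\frac12$ and $\ld\ge1$ respectively in the two terms (up to absorbing constants). The main nuisance is small $T$, where $e^{\ld(T+1)}$ is not bounded by $e^{2\ld T}$. To handle this, shrink the cutoff: take $\ta$ supported in $[-\dl,T+\dl]$ with $\dl=\min(1,T)$. The $|h|\le1$ term then picks up $\|m'\|_{L^\infty}\les\ld e^{2\ld T}+\dl^{-1}e^{2\ld T}$. The $\dl^{-1}$ loss can be avoided by instead choosing $m=e^{\ld t}$ on $[-1,\infty)$ capped smoothly: take $m(t)=\psi(e^{\ld t})$ with $\psi$ smooth, $\psi(y)=y$ for $y\le e^{\ld T}$, and $\psi$ constant $\approx 2e^{\ld T}$ beyond, and cut off only on the left at $t\le-1$. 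Then $\|m\|_{L^\infty}\les e^{\ld T}$ and $\|m'\|_{L^\infty}\les\ld e^{\ld T}$, which yields the claimed bound $\les\ld e^{2\ld T}$ for all $T>0$. This choice of weight extension is the one point requiring care; everything else is the same commutator estimate already used in Lemma~\ref{LEM:timegain}.
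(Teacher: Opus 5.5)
Your proof is correct and follows essentially the same route as the paper. Both arguments multiply an extension of $e^{-\ld|t|}u$ by a weight equal to $e^{\ld t}$ on $[0,T]$, pull out $\jb{\nb_x}^s$, and control the commutator from the difference formula \eqref{dtb} by the mean value theorem for $|h|\le 1$ and by boundedness of the weight for $|h|>1$. Your capped weight $\psi(e^{\ld t})$ is slightly more careful than the paper's $e^{\ld|t|}\varphi(t/T)$: the derivative of the latter carries an extra $T^{-1}$ factor that the paper's bound \eqref{weight2} silently drops. That omission is harmless in the paper, where the lemma is only applied with $T\geq 1$, but your construction genuinely covers all $T>0$ as stated.
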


\begin{proof}
It is clear that 
\begin{align*}
\| u\|_{\Ld^{s,0}_{p}([0,T])} \leq e^{\ld T}\| u\|_{\Ld^{s,0}_{p,\ld}([0,T])}.
\end{align*}
Thus by \eqref{dtnorm} it remains to show
\begin{align}
\| (-\dt^{2})^{\frac{b}{2}} u\|_{\Ld^{s,0}_p (I)} \les \ld e^{2 \ld T} \|  u\|_{\Ld^{s,b}_{p,\ld} (I)}. \label{weight00}
\end{align}
Let $v$ be any extension of $ \jb{\nb_{x}}^{s} u$ such that $v|_{[0,T]\times \T^2}= \jb{\nb_{x}}^{s} u$. Then, 
\begin{align*}
\| (-\dt^{2})^{\frac{b}{2}} u\|_{\Ld^{s,0}_p (I)} & \leq \| (-\dt^{2})^{\frac{b}{2}}  [ \varphi(t/T) v(t)]\|_{L^{p}(\R\times \T^2)}  \\
& = \big\| \| (-\dt^{2})^{\frac{b}{2}}  [  \varphi_{\ld,T}(t) ( e^{-\ld |t|}v(t))]\|_{L^{p}(\R)} \big\|_{L^{p}(\T^2)},
\end{align*}
where $ \varphi_{\ld,T}(t): = e^{\ld |t|}\varphi(t/T)$. Here, $\varphi$ is a smooth bump function defined in Subsection~\ref{sec:preliminaries} which equals $1$ on a neighborhood of the origin and is supported on a slightly larger set.
In the following, we fix $x\in \T^d$ and omit the dependence of $v$ on $x$ for clarity. 
Using \eqref{dtb}, we have 
\begin{align*}
(-\dt^{2})^{\frac{b}{2}}&  [  \varphi_{\ld,T}(t) ( e^{-\ld |t|}v(t))]  \\
& =  \varphi_{\ld,T}(t)(-\dt^{2})^{\frac{b}{2}}[ e^{-\ld |t|}v(t)] + c_{1,b}\int_{\R} \frac{\varphi_{\ld,T}(t+h)- \varphi_{\ld,T}(t) }{|h|^{1+b}}  (e^{-\ld |t+h|}v(t+h)) dh 
\end{align*}
With \eqref{dtnorm}, the first term yields
\begin{align}
\|\varphi_{\ld,T}(t)(-\dt^{2})^{\frac{b}{2}}[ e^{-\ld |t|}v(t)] \|_{L^{p}(\R\times \T^2)} & \les e^{2\ld T} \| e^{-\ld |t|}v\|_{\Ld^{0,b}_{p}}. \label{weight0}
\end{align}
For the second term, we split the integral into two parts. When $|h|>1$, Minkowski's inequality and a change of variables yields 
\begin{align}
\Big\|\int_{|h|>1} \frac{\varphi_{\ld,T}(t+h)- \varphi_{\ld,T}(t) }{|h|^{1+b}}  (e^{-\ld |t+h|}v(t+h)) dh  \Big\|_{L^p (\R)} \les e^{2\ld T} \| e^{-\ld|t|}v(t)\|_{L^{p}(\R)}. \label{weight1}
\end{align}
When $|h|\leq 1$, we first use the mean value theorem to get
\begin{align}
\Big\|\int_{|h|\leq 1} \frac{\varphi_{\ld,T}(t+h)- \varphi_{\ld,T}(t) }{|h|^{1+b}}  (e^{-\ld |t+h|}v(t+h)) dh  \Big\|_{L^p (\R)} \les \ld e^{2\ld T} \| e^{-\ld|t|}v(t)\|_{L^{p}(\R)}. \label{weight2}
\end{align} 
Combining \eqref{weight0}, \eqref{weight1} and \eqref{weight2} gives 
\begin{align*}
\| (-\dt^{2})^{\frac{b}{2}} u\|_{\Ld^{s,0}_p (I)}  \les (1+\ld)e^{2\ld T} \| e^{-\ld|t|}v\|_{L^p(\R\times \T^2)}.
\end{align*}
Then \eqref{weight00} follows by taking an infimum over the extensions $v$.
\end{proof}

\section{Analytic estimates}\label{sec:det}

\subsection{Basic product estimates}\label{subsec:prod}

We recall the following product estimate (fractional Leibniz rule). 
See \cite{CW, GO} on $\R^d$ and
\cite{BOZ} on $\T^d$.

\begin{lemma}\label{LEM:prod}
 Let $0\le s \le 1$. Let $\mathcal{M}=\R^d$ or $\T^d$.
Suppose that 
 $1<p_j,q_j,r < \infty$, $\frac1{p_j} + \frac1{q_j}= \frac1r$, $j = 1, 2$. 
 Then, we have  
\begin{equation*}  
\| \jb{\nb}^s (fg) \|_{L^r(\mathcal{M})} 
\les \| f \|_{L^{p_1}(\mathcal{M})} 
\| \jb{\nb}^s g \|_{L^{q_1}(\mathcal{M})} + \| \jb{\nb}^s f \|_{L^{p_2}(\mathcal{M})} 
\|  g \|_{L^{q_2}(\mathcal{M})}.
\end{equation*}

\smallskip

\noi
\textup{(ii)} 
Suppose that  
 $1<p,q,r < \infty$ satisfy the scaling condition:
$\frac1p+\frac1q\leq \frac1r + \frac{s}d $.
Then, we have
\begin{align*}
\big\| \jb{\nb}^{-s} (fg) \big\|_{L^r(\mathcal{M})} \les \big\| \jb{\nb}^{-s} f \big\|_{L^p(\mathcal{M}) } 
\big\| \jb{\nb}^s g \big\|_{L^q(\mathcal{M})}.  
\end{align*}
Moreover, it holds that 
\begin{align*}
\big\| \jb{\nb}^{-s} (fg) \big\|_{L^{2}(\mathcal{M})} \les \big\| \jb{\nb}^{-s} f \big\|_{L^2(\mathcal{M}) } 
\big\| \jb{\nb}^s g \big\|_{L^{\infty}(\mathcal{M})}. 
%  \label{productneg}
\end{align*}
\end{lemma}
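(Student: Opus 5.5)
Both parts of Lemma~\ref{LEM:prod} are standard, and the plan is to reduce everything to the Euclidean case $\mathcal{M}=\R^d$ and then transfer to $\T^d$. On the torus the bounds would follow either by periodization with the Littlewood--Paley machinery on $\T^d$ (as in \cite{BOZ}) or by a transference argument for bilinear multipliers. So the main work is on $\R^d$.

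For (i), I would use a Littlewood--Paley / Bony paraproduct decomposition
\[ fg = \sum_{N} \P_N f\, \P_{\ll N} g + \sum_N \P_{\ll N} f\, \P_N g + \sum_{N\sim M} \P_N f\, \P_M g. \]
In the first term, $\jb{\nb}^s$ falls on the high-frequency factor $\P_N f$. The square function estimate together with the Fefferman--Stein vector-valued maximal inequality (since $|\P_{\ll N} g|\les Mg$) bounds it by $\|\jb{\nb}^s f\|_{L^{p_2}}\|g\|_{L^{q_2}}$. The second term is symmetric and gives $\|f\|_{L^{p_1}}\|\jb{\nb}^s g\|_{L^{q_1}}$. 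For the resonant term, the output frequency is $\les N$ and $s\ge 0$, so $\jb{\nb}^s$ can be moved onto either factor. The resulting sum is controlled by the two square functions via Cauchy--Schwarz in $N$ and H\"older. This is the Coifman--Meyer / Kato--Ponce argument, so I would simply cite \cite{CW, GO} rather than reprove it.

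For (ii), I would argue by duality. For $h\in L^{r'}$,
\[ \jb{\jb{\nb}^{-s}(fg),h} = \jb{\jb{\nb}^{-s}f, \jb{\nb}^{s}(g\,\jb{\nb}^{-s}h)}. \]
Applying (i) to $g\cdot \jb{\nb}^{-s}h$ in $L^{p'}$ gives two terms. The first is $\|\jb{\nb}^s g\|_{L^q}\|\jb{\nb}^{-s}h\|_{L^{a}}$, where Sobolev embedding $W^{s,r'}\embeds L^{a}$ holds with $\frac1a=\frac1{r'}-\frac sd$; the scaling condition $\frac1p+\frac1q\le\frac1r+\frac sd$ is exactly what makes the H\"older exponents fit, using the inclusions of the torus or a slightly lossy Sobolev embedding when the inequality is strict. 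The second term is $\|g\|_{L^{b}}\|h\|_{L^{r'}}$, with $\|g\|_{L^b}\les\|\jb{\nb}^s g\|_{L^q}$, again by Sobolev and the same numerology. The $L^2$--$L^\infty$ variant follows similarly: duality gives $\|\jb{\nb}^s(g\,\jb{\nb}^{-s}h)\|_{L^2}$, and the endpoint Kato--Ponce estimate bounds this by $\|\jb{\nb}^s g\|_{L^\infty}\|h\|_{L^2}$, with the $L^\infty$ endpoint justified by the Bourgain--Li or Grafakos--Oh result \cite{GO}.

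The main obstacle is the endpoint bookkeeping: checking that every exponent stays strictly inside $(1,\infty)$ so that the Sobolev embeddings and square function estimates apply, and handling the $L^\infty$ endpoint in the last bound, where the standard Coifman--Meyer theory fails and one must rely on the endpoint results of \cite{GO}.
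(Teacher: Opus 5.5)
The paper does not prove this lemma; it only cites \cite{CW, GO} for $\R^d$ and \cite{BOZ} for $\T^d$. Your plan is the standard argument behind those references: cite Coifman--Meyer/Kato--Ponce for (i), derive (ii) from (i) via the identity $\langle \jb{\nb}^{-s}(fg),h\rangle=\langle \jb{\nb}^{-s}f,\jb{\nb}^{s}(g\,\jb{\nb}^{-s}h)\rangle$ and Sobolev embedding, then transfer to $\T^d$. It is sound for (i). It is also sound for the $L^2$--$L^\infty$ bound, once you add the trivial facts $\|g\|_{L^\infty}\les\|\jb{\nb}^s g\|_{L^\infty}$ and $\|\jb{\nb}^{-s}h\|_{L^2}\le \|h\|_{L^2}$. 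These are the only forms of the lemma the paper uses: in the proof of Lemma~\ref{LEM:timegain}, in \eqref{YPN}, in \eqref{ctyPhi22}, and in the proof of \eqref{conv2}.

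The gap is in the general case of (ii). The scaling condition does \emph{not} make your H\"older exponents fit; it only supplies the Sobolev numerology. Your two splits need $\frac1a=1-\frac1p-\frac1q>0$ and $\frac1b=\frac1r-\frac1p>0$, i.e.\ $\frac1p+\frac1q<1$ and $r<p$. Neither follows from $\frac1p+\frac1q\le\frac1r+\frac sd$.

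This is not endpoint bookkeeping, because (ii) as literally stated fails there:
\begin{itemize}
\item With $g\equiv 1$ on $\T^d$, the estimate reduces to $\|F\|_{L^r}\les\|F\|_{L^p}$, which is false for $r>p$. The scaling condition permits $r>p$ whenever $\frac1q<\frac sd$, e.g.\ $d=2$, $s=1$, $p=2$, $q=r=4$.
\item The choice $d=2$, $s=1$, $p=q=r=2$ satisfies every stated hypothesis. By your own duality identity, the estimate would give $\|g\phi\|_{H^1}\les\|g\|_{H^1}\|\phi\|_{H^1}$ on $\T^2$, i.e.\ $H^1(\T^2)$ would be an algebra, which it is not. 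This is exactly your case $a=\infty$, where you would need $H^1(\T^2)\embeds L^\infty$.
\end{itemize}

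So what your argument proves, and what is true, is (ii) under the extra hypotheses $\frac1p+\frac1q<1$ and $r<p$. On $\R^d$ you also need $\frac1r\le\frac1p+\frac1q$, since the torus inclusions you appeal to are not available there. With these hypotheses added, your proof goes through.
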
 

Next, we recall the fractional chain rule from \cite{Gatto}. The fractional chain rule on $\R^d$ was essentially proved in \cite{CW}.
%\footnote{As pointed out in \cite{Staffilani}, 
%the proof in \cite{CW} needs
%a small correction, which  yields the fractional chain  rule in a 
%less general context.
%See \cite{Kato, Staffilani, Taylor}.}
As for the estimate on $\T^d$, see~\cite{Gatto}.

\begin{lemma} \label{LEM:Lip}
Let $0<s<1$. 
%\textup{(i)} Suppose the $F$ is a Lipschitz function with Lipschitz constant $L>0$. Then, for any $1<p<\infty$, we have 
%\begin{align*}
%\| |\nb|^s F(u) \|_{L^{p}(\T^d)} \les L \||\nb|^{s} u\|_{L^{p}(\T^d)}.
%\end{align*}
Suppose that $F\in C^1 (\R)$ satisfies
\begin{align*}
|F'(\tau x+(1-\tau)y)|\leq c(\tau)( |F'(x)|+|F'(y)|),
\end{align*}
for every $\tau \in [0,1]$ and $x,y\in \R$, where $c\in L^1([0,1])$. Then, for every $1<p,q,r<\infty$ with $\frac{1}{p}+\frac{1}{q}=\frac{1}{r}$, we have
\begin{align*}
\| |\nb|^{s} F(u)\|_{L^{r}(\T^d)} \leq \|F'(u)\|_{L^{p}(\T^d)} \||\nb|^{s} u\|_{L^q (\T^d)}.
\end{align*}
\end{lemma}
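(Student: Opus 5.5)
We prove the estimate with an implicit constant, i.e.\ with $\les$ in place of $\le$. The plan is to follow the difference-quotient proof of Christ--Weinstein \cite{CW}, carried out directly on $\T^d$ so that no periodic transference is needed.

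\emph{Step 1: square-function characterization.} We use Strichartz's characterization of $|\nb|^s$, valid on $\T^d$ for $0<s<1$ and $1<q<\infty$:
\begin{align*}
\big\||\nb|^s f\big\|_{L^q(\T^d)} \sim \big\|\mathcal{D}_{s,t} f\big\|_{L^q(\T^d)}, \qquad
\mathcal{D}_{s,t}f(x) := \bigg(\int_0^{1} \Big(\frac{1}{\rho^{s}}\Big(\fint_{|y|<\rho}|f(x+y)-f(x)|^{t}dy\Big)^{\frac1t}\Big)^2\frac{d\rho}{\rho}\bigg)^{\frac12}.
\end{align*}
We use it with averaging exponent $t=1$ on the left and some $1\le t<q$ on the right. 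It holds modulo the mean of $f$, which $|\nb|^s$ annihilates, and the range $\rho>1$ is harmless on the torus. For $t>1$ this is the Triebel--Lizorkin characterization of $\dot F^{s}_{q,2}$ by local $L^t$ means. We take it as the standard torus analogue of the $\R^d$ statement, proved by the same Littlewood--Paley argument.

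\emph{Step 2: pointwise bound on differences.} By the fundamental theorem of calculus and the hypothesis on $F'$,
\begin{align*}
|F(u(x+y))-F(u(x))| &\le |u(x+y)-u(x)|\int_0^1 |F'(\tau u(x+y)+(1-\tau)u(x))|\,d\tau\\
&\le \|c\|_{L^1}\,|u(x+y)-u(x)|\big(|F'(u(x))|+|F'(u(x+y))|\big).
\end{align*}
Averaging over $|y|<\rho$ produces two terms.

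\emph{Step 3: the two terms.} In the term carrying $|F'(u(x))|$, this factor comes out of the average and the $\rho$-integral. It contributes at most $|F'(u(x))|\,\mathcal{D}_{s,1}u(x)$. Hölder in $x$ with $\frac1r=\frac1p+\frac1q$ and Step~1 bound its $L^r$ norm by $\|F'(u)\|_{L^p}\||\nb|^s u\|_{L^q}$.

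The term carrying $|F'(u(x+y))|$ is the main obstacle, since this factor stays inside the average. Fix $1<a<p$ with dual exponent $a'$, and suppose $a'<q$ (this is not automatic, see below). Hölder inside the average gives
\begin{align*}
\fint_{|y|<\rho}|F'(u(x+y))|\,|u(x+y)-u(x)|\,dy \le \big(M(|F'(u)|^a)(x)\big)^{\frac1a}\Big(\fint_{|y|<\rho}|u(x+y)-u(x)|^{a'}dy\Big)^{\frac1{a'}},
\end{align*}
where $M$ is the Hardy--Littlewood maximal function on $\T^d$. Hence this term is at most $\big(M(|F'(u)|^a)\big)^{1/a}\mathcal{D}_{s,a'}u$. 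Hölder in $x$ bounds its $L^r$ norm by
\[
\big\|M(|F'(u)|^a)\big\|_{L^{p/a}}^{1/a}\,\|\mathcal{D}_{s,a'}u\|_{L^q}.
\]
The maximal theorem on $L^{p/a}$, using $p/a>1$, controls the first factor by $\|F'(u)\|_{L^p}$, and Step~1 with $t=a'<q$ controls the second by $\||\nb|^s u\|_{L^q}$.

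The constraint $a'<q$ together with $a<p$ requires $\frac1p+\frac1q<1$, i.e.\ $r>1$, which is exactly the standing assumption. So both constraints can be met by taking $a$ close to $p$. If the local-$L^{t}$ characterization proves delicate near $t=q$, the fallback is the Christ--Weinstein sharp-maximal-function argument applied directly to $F(u)$ on $\T^d$, or the citation \cite{Gatto}.
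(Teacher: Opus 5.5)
Your Step~1 overstates the characterization, and Step~3 relies on exactly the part that fails. Local $L^t$ means of differences, summed in $\ell^2$ over scales, are bounded by $\||\nb|^s u\|_{L^q}$ in general only when $s>d\big(\frac{1}{\min(q,2)}-\frac1t\big)$, not for all $1\le t<q$. The trouble is large $t$, not $t$ near $q$.

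For $q\ge 2$, $t>2$, $s<d(\frac12-\frac1t)$, the bound $\|\mathcal{D}_{s,t}u\|_{L^q}\les\||\nb|^s u\|_{L^q}$ genuinely fails. For $k=1,\dots,L=c\,2^{Dd}$ with $c$ small, put one $L^\infty$-normalized wavelet at scale $2^{-k-D}$, with coefficient $2^{-(k+D)s}$, in each cube of side $\sim 2^{-k}$, with disjoint supports for different $k$. The wavelet square function of $|\nb|^s u$ is then the indicator of a set of small measure, so $\||\nb|^s u\|_{L^q}\les 1$. At most points off the supports, every scale $\rho\sim 2^{-k}$ with $k\le L$ gives $\rho^{-s}\big(\fint_{|y|<\rho}|u(x+y)-u(x)|^t\,dy\big)^{1/t}\ges 2^{-D(s+d/t)}$. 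Hence $\mathcal{D}_{s,t}u\ges L^{1/2}2^{-D(s+d/t)}\approx 2^{D(\frac d2-\frac dt-s)}\to\infty$.

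The maximal theorem forces $a<p$, i.e.\ $t=a'>p'$. So your Step~3 needs $s>d\big(\frac{1}{\min(q,2)}-\frac{1}{p'}\big)$, which fails whenever $p<2\le q$ and $s\le d(\frac1p-\frac12)$. An example is $d=2$, $p=\frac65$, $q=10$, $s=\frac12$, where $r=\frac{15}{14}$. The proposal therefore proves the lemma only in a restricted range. That range does cover the uses in the proof of Proposition~\ref{PROP:LWP}, where $F'(u)$ is placed in $L^p$ with $p$ large; the paper itself gives no proof and only cites \cite{CW, Gatto}.

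The fix is to not split $G:=\|c\|_{L^1}|F'(u)|$ off the difference by H\"older. Use the characterization only with $t=1$, which is valid in both directions for all $1<q,r<\infty$. Write $u=\sum_{j\ge0}u_j$ with $u_j=\P_{2^j}u$. For $\rho\sim 2^{-k}$, use the mean value theorem on the low frequencies $j<k$ and the triangle inequality on $j\ge k$:
\[
\fint_{|y|<\rho}G(x+y)|u(x+y)-u(x)|\,dy\les MG(x)\Big(\sum_{j<k}2^{j-k}u_j^*(x)+\sum_{j\ge k}|u_j(x)|\Big)+\sum_{j\ge k}M\big(G\,|u_j|\big)(x),
\]
where $u_j^*(x)=\sup_{|z|\le 2^{-j}}2^{-j}|\nb u_j(x+z)|$ is a Peetre-type maximal function. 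Multiply by $2^{ks}$ and sum in $\ell^2_k$, using Schur's test and $0<s<1$. This bounds $\mathcal{D}_{s,1}(F(u))$ by
\[
MG\cdot\Big(\sum_j 2^{2js}\big((u_j^*)^2+|u_j|^2\big)\Big)^{1/2}+\Big(\sum_j M\big(G\,2^{js}|u_j|\big)^2\Big)^{1/2}.
\]
The first term is handled by H\"older, the maximal theorem and the Peetre maximal inequality. For the second, apply the Fefferman--Stein $\ell^2$-valued maximal inequality on $L^r$ ($1<r<\infty$) and then H\"older. This gives $\|G\|_{L^p}\big\|\big(\sum_j 2^{2js}|u_j|^2\big)^{1/2}\big\|_{L^q}\les\|F'(u)\|_{L^p}\||\nb|^s u\|_{L^q}$ for the full range $1<p,q,r<\infty$.
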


We also recall the following product estimate regarding the product with a positive distribution from \cite[Lemma 2.14]{ORW}.

\begin{lemma} \label{LEM:pos}
Let $0\leq s\leq 1$ and $1<p<\infty$. Then, we have 
\begin{align*}
\| \jb{\nb}^{-s}(fg)\|_{L^p(\T^d)} \les \|f\|_{L^{\infty}(\T^d)} \|\jb{\nb}^{-s}g\|_{L^{p}(\T^d)}
\end{align*}
for any $f\in L^{\infty}(\T^d)$ and any positive distribution $g\in W^{-s,p}(\T^d)$, satisfying one of the following two conditions: \textup{(i)} $f\in C(\T^d)$ or \textup{(ii)} $f\in W^{s,q}(\T^d)$ for some $1<q<\infty$ such that $\frac{1}{p}+\frac 1q <1+\frac{s}{d}$.
\end{lemma}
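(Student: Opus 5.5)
The plan is a duality argument combined with positivity of $g$. Write $T_s = \jb{\nb}^{-s}$, whose convolution kernel $G_s$ on $\T^d$ is non-negative for $0<s\le 1$; this follows from the subordination formula $\jb{\nb}^{-s} = \Gamma(s/2)^{-1}\int_0^\infty e^{-t} e^{t\Dl} t^{s/2-1}\,dt$ and positivity of the heat kernel. The case $s=0$ is just H\"older's inequality, so assume $0<s\le 1$. The naive approach of writing $\jb{\nb}^{-s}(fg)$ via the kernel and bounding pointwise would give $|T_s(fg)| \le \|f\|_{L^\infty} T_s g$, because $G_s\ge 0$ and $g\ge 0$. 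This is exactly the desired estimate, provided the pointwise manipulation can be justified for a positive distribution $g$.

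The key steps are as follows. First, a positive distribution on $\T^d$ is a positive Radon measure $g(dy)$. For $f$ continuous (case (i)), the product $fg$ is then a well-defined finite signed measure, and
\[ T_s(fg)(x) = \int_{\T^d} G_s(x-y) f(y)\, g(dy) \]
holds as a function defined for a.e.\ $x$. This is legitimate because $T_s g(x) = \int G_s(x-y)\,g(dy)$ lies in $L^p$ by assumption, so the integral converges absolutely for a.e.\ $x$ by Tonelli. Second, we bound pointwise: $|T_s(fg)(x)| \le \|f\|_{L^\infty}\int G_s(x-y)\,g(dy) = \|f\|_{L^\infty} T_s g(x)$. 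Taking $L^p$ norms finishes case (i). To be careful about identifying the distributional $T_s(fg)$ with this integral, test against smooth $\phi$ and use Fubini, which is justified by the absolute convergence from Tonelli.

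Case (ii), $f\in W^{s,q}$ possibly discontinuous, is the main obstacle, since $fg$ must first be given a meaning. Here I would define $fg$ via the product estimate of Lemma~\ref{LEM:prod}(ii): the condition $\frac1p+\frac1q<1+\frac sd$ is precisely the duality-scaling condition ensuring that $(f,g)\mapsto fg$ extends continuously from smooth $f$ to $W^{s,q}\times W^{-s,p}$ into some $W^{-s,r}$. Next, approximate $f$ by $f_\eps = \rho_\eps * f$ with a non-negative mollifier. These approximants are continuous, satisfy $\|f_\eps\|_{L^\infty}\le\|f\|_{L^\infty}$, and converge to $f$ in $W^{s,q}$. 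Case (i) gives the uniform bound $\|T_s(f_\eps g)\|_{L^p}\le \|f\|_{L^\infty}\|T_s g\|_{L^p}$. The continuity of the product then gives $f_\eps g\to fg$ in distributions. Finally, weak-$*$ lower semicontinuity of the $L^p$ norm (using $1<p<\infty$ and extracting a weakly convergent subsequence) transfers the bound to the limit.
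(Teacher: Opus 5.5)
The paper gives no proof of this lemma; it quotes it from \cite[Lemma 2.14]{ORW}. Your argument is correct and is essentially the standard proof behind that citation: a non-negative Bessel kernel acting on $g$, viewed as a finite positive Radon measure, gives pointwise domination in case (i), and case (ii) follows by mollifying $f$, using the product estimate of Lemma~\ref{LEM:prod}\,(ii), and passing to the limit by a Fatou or weak-limit step.

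One small point: the a.e.\ finiteness of $\int G_s(x-y)\,g(dy)$ and the Fubini step follow directly from $G_s\in L^1$ and $g(\T^d)<\infty$. Your appeal to $\jb{\nb}^{-s}g\in L^p$ there is therefore unnecessary, and slightly circular before the identification is made.
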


\subsection{Kernel estimates}\label{subsec:ker}
In this section, we establish the key kernel estimates and corresponding inhomogenous estimate tailored to our $L^{\infty}$-framework.
Define the kernel $K^\star$ by
\begin{align}
K^{\star} (t,s, x) =  e^{-\frac{1}{2}(t-s)} \chi(t) \ind_{[s, \infty)}(t) \ind_{[0, \infty)}(s)  \mc W_{\T^2}(t-s,x), \quad (s,t,x) \in \R^2 \times \T^2.
\label{ker1b}
\end{align}
Here, $\mc W_{\T^2}$ is as in~\eqref{poisson3} and $\chi\in C^{\infty}_{0}(\R)$ with $\chi=1$ on $[-\frac 34, \frac 34]$ and $\supp \chi \subset [-1,1]$.  Note that $\mathcal{I}_{\text{wave}}^{\star}$ is a properly time-localised (with respect to $t$) version of $\mathcal{I}_{\text{wave}}$ from \eqref{duhamels}.
Consequently, since $\chi$ is supported on $[-1,1]$ and $0 \le s \le t$ on the support of $K^\star$, we may, as discussed in Section~\ref{sec:preliminaries}, replace $\mc W_{\T^2}$ by $\mc W_{\R^2}$ in~\eqref{ker1b}. This convention will be adopted throughout this section without further comment.

\begin{comment}
\younes{This is the kernel we want to work with as it is 
\begin{itemize}

\item compactly supported in a small ball in the time variables $(t,s)$ s.t. $\mc W$ on the torus coincides with $\mc W$ on $\R^2$

\item there is a time singularity at $t=s$, but not at $t=0$, which would be an issue — it's easy to introduce such a singularity when dealing with an integral from $0$ to $t$.

\end{itemize}}

\justin{Do we need to replace $\chi(t)$ in \eqref{ker1b} by $\chi(t/T)$, or is there a simpler way to handle time localisation?
I'm thinking this because the first thing for estimates seems to be:
\begin{align*}
\| \I_{\textup{wave},\ld}^\star(F) (t)\|_{\Ld^{s,b}_{p}([0,T])} \les \| \chi(t/T)\I_{\textup{wave},\ld}^\star( \chi(\cdot/T)  F) (t)\|_{\Ld^{s,b}_{p}(\R)}
\end{align*}
}
\younes{Yes, but there's no way to keep an 'exact'—i.e. up to $T$—time localisation as you apply fractional derivatives since the corresponding singular integrals are not time localised. Since the bound you've stated is basically the best we can do, we'll have to integrate things over the real line and hence have some decay in time for our estimates.
Hence, I think we should keep $K^\star_\ld$ as is and replace the RHS of~\eqref{kerygoal} with an expression of the form 
\[\Big\| \int_{\R} \mc R^\star_\s (t,s,\cdot) *_x |F(s)| ds \Big\|_{L^p(\R \times \T^2)},\]
where $\mc R_\s^\star(t,s,x)$ looks like the current $\mc R_\s^\star$, but decays in $\jb{t}$.}

\justin{I see, thanks! I think I'm okay with this as we can still have $\ind_{[0,T]}$ on the function $F$, $0<T\leq 1$.}
\end{comment}

Now, define the following Duhamel operator:
\begin{align}
\I_{\textup{wave}}^\star(F) (t) = \int_{\R} K^{\star}(t,s,\cdot) \ast_x F(s) ds,
\label{duha3}
\end{align}
and for $0\le \s<\frac12$, the sublinear operator $\mc A_\s$ given by
\begin{align}
\mc A_\s(F) (t) = \int_{\R} \mc R_\s(t,s,\cdot) \ast_x |F(s)| ds,
\label{subop}
\end{align}
where the kernel $\mc R_\s$ is defined as
\begin{align*}
\mc R_{\s}(t,s,x) & = \jb{t}^{-3-\s} \big(1 +   |t-s|^{-\frac12-\s} |x|^{-\frac12} \\
& \quad + (|t-s| + |x|)^{-\frac12} ||t-s| - |x||^{-\frac12-\s}\big)  \jbb{\log(||t-s|- |x||)}^2 \, \jb{\log(|x|)}^3.
\end{align*}

The main goal of this section is to prove the next estimate.

\begin{proposition}\label{prop:kery}
Let $1 \le p < \infty$, $0\le \s<\frac12$, and $\I^{\star}_{\textup{wave}}$ and $\mc A_\s$ be as in~\eqref{duha3} and~\eqref{subop}, respectively. Then, the following bound holds:
\begin{align}
\|(-\Dl_{\R \times \T^2})^{\frac\s2} \I^{\star}_{\textup{wave}}(F) \|_{L^p(\R \times \T^2)} \les \|A_\s(F)\|_{L^p(\R \times \T^2)}.
\label{kerygoal}
\end{align}
\end{proposition}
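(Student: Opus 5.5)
The plan is to reduce \eqref{kerygoal} to a pointwise bound on the space-time fractional derivative of the kernel $K^\star$ itself. By the integral representation \eqref{dtb2} of $(-\Dl_{\R\times\T^2})^{\frac\s2}$ and Fubini, for nice $F$ we have
\begin{align*}
(-\Dl_{\R \times \T^2})^{\frac\s2} \I^{\star}_{\textup{wave}}(F)(t,x) = \int_{\R}\int_{\T^2} \big[(-\Dl_{\R\times\T^2})^{\frac\s2}_{(t,x)} K^\star\big](t,s,x-y) F(s,y)\, dy\, ds,
\end{align*}
where the derivative acts in $(t,x)$ with $s$ frozen. Hence it suffices to prove the pointwise estimate $|(-\Dl)^{\frac\s2}_{(t,x)}K^\star(t,s,x)| \les \mc R_\s(t,s,x)$ for a.e.\ $(t,s,x)$. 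Then $|(-\Dl)^{\frac\s2}\I^\star_{\textup{wave}}(F)| \le \mc A_\s(F)$ pointwise, and taking $L^p$ norms gives the claim for every $1\le p<\infty$, with no multiplier theory needed. For $\s=0$ the estimate is just $|K^\star|\les \jb t^{-3}(\ldots)$, which is immediate from \eqref{poisson3} and the cutoff $\chi(t)$. Since the indicators in $K^\star$ are not differentiable, I would first replace $\ind_{[s,\infty)}$, $\ind_{[0,\infty)}$ and the ball indicator in $\mc W$ by smooth approximations (or regularize $\mc W$ by $(t^2-|x|^2+i\dl)^{-1/2}$-type mollification). I would then prove the bound uniformly in the regularization and pass to the limit by Fatou / dominated convergence.

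For the pointwise bound, write $(-\Dl)^{\frac\s2}K^\star(z)=c\int (K^\star(z+h)-K^\star(z))\mc K_{2,\s}(h)\,dh$ with $z=(t,x)$, and split into $|h|\ge \frac12 d(z)$ and $|h|<\frac12 d(z)$. Here $d(z)=\min(|t-s|,\,||t-s|-|x||,\,|x|)$ is the distance to the singular set of $K^\star$: the light cone $|x|=|t-s|$, the vertex, and the slice $t=s$. On the near region, $K^\star$ is smooth on the ball of radius $d(z)/2$, and we have second-derivative bounds $|\nabla^2 K^\star|\les (|t-s|+|x|)^{-\frac12}||t-s|-|x||^{-\frac52}$ plus similar terms. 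Symmetrizing $h\mapsto -h$ then gives a contribution $\les d^{2-\s}$ times this, and that term is $\les (|t-s|+|x|)^{-\frac12}||t-s|-|x||^{-\frac12-\s}$ when the cone distance is the smallest. On the far region, the term $K^\star(z)\int_{|h|>d/2}|h|^{-3-\s}$ gives $|K^\star(z)|d^{-\s}$, which is of the same form. The term $\int_{|h|>d/2}|K^\star(z+h)||h|^{-3-\s}dh$ requires integrating the singularity $||t'|-|x'||^{-\frac12}$ of $K^\star$ against $|h|^{-3-\s}$. In polar-type coordinates adapted to the cone this is a one-dimensional integral of the form $\int r^{-\frac12}\,(\cdot)$, which converges since $\frac12<1$. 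Logarithms appear when the cone and the vertex (or the $x=0$ axis) scales interact, which accounts for the factors $\jb{\log||t-s|-|x||}^2\jb{\log|x|}^3$. When $d=|t-s|$ is smallest (the region near the initial slice, cut off by $\ind_{[s,\infty)}$), the jump across $t=s$ produces the term $|t-s|^{-\frac12-\s}|x|^{-\frac12}$, with $|x|^{-\frac12}$ coming from the size of $\mc W$ there. The decay $\jb t^{-3-\s}$ comes from the cutoff $\chi(t)$: for $|t|\ge2$, $K^\star(z)=0$ and only the far integral survives, with $|h|\gtrsim|t|$. This gives $\int|K^\star(z+h)|\,dh\cdot|t|^{-3-\s}$, and $\int|K^\star|$ is finite. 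The constant term $1$ absorbs the smooth contributions from $\chi$, the exponential factor, and the periodization sum in \eqref{Laplaceker} over $m\neq0$, which is a bounded kernel.

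The main obstacle is the far-region integral near the light cone when $z$ is itself close to the cone. There one must show that $\int_{|h|>d}|K^\star(z+h)||h|^{-3-\s}dh \les (|t-s|+|x|)^{-\frac12}d^{-\frac12-\s}$ up to logs. I would handle this by decomposing $h$ dyadically, $|h|\sim 2^k d$. On each shell I would bound $\int_{|h|\sim r}|K^\star(z+h)|dh$ by the volume of the shell intersected with the cone's $r$-neighborhood, which has measure $\sim r\cdot R\cdot$(transverse integral $\int_0^r u^{-\frac12}du$), times the prefactor $R^{-\frac12}$ with $R\sim|t-s|+|x|$. This gives $\les r^{\frac32}R^{\frac12}$ when $r\le R$, hence shell contributions $r^{-\frac32-\s}R^{\frac12}\cdot R^{-1}$. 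Summing from $r=d$ yields $R^{-\frac12}d^{-\frac32-\s}\cdot d$ as desired. The shells with $r\ge R$ are summed separately and are where the logarithmic losses arise.
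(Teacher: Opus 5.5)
Your route is essentially the paper's. You regularize, prove a pointwise bound on $(-\Dl)^{\frac\s2}$ of the kernel that is uniform in the regularization, so that $|(-\Dl)^{\frac\s2}\I^\star_{\textup{wave}}(F)|\les\mc A_\s(F)$, and obtain it by splitting into $|h_1|\gg1$, a near region handled by Taylor expansion, and a far region handled dyadically. Your variations are harmless:
\begin{itemize}
\item Second-order symmetrized Taylor replaces the paper's first-order mean value theorem with Lemma~\ref{lem:ker03}; first order already suffices since $\s<1$.
\item Putting $|x|$ into $d$ is unnecessary, because $(\tau^2-|x|^2)^{-1/2}$ is smooth across $x=0$ inside the cone. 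It costs nothing, since $|t-s|^{-1}|x|^{-\s}\le|t-s|^{-\frac12-\s}|x|^{-\frac12}$ for $|x|\le|t-s|$.
\end{itemize}
Your far-region count is off, however. For $d\le r\ll R$, the ball of radius $r$ about $z$ meets the cone in a patch of area $\sim r^2$, not $rR$. Hence $\int_{|h|\les r}|K^\star(z+h)|\,dh\les R^{-\frac12}r^{\frac52}$, each shell contributes $R^{-\frac12}r^{-\frac12-\s}$, and the shells sum to $R^{-\frac12}d^{-\frac12-\s}$. Your intermediate expressions reach the stated bound only through the unexplained factors $R^{-1}$ and $d$. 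For $r\ge R$ one has $\int_{B(z,r)}|K^\star|\les r^2$, so those shells give $\les R^{-1-\s}$ with no logarithmic loss.

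The step that genuinely fails is the large-$|t|$ decay. The kernel \eqref{Laplaceker} is periodized in $x$. For $|h_1|\gg1$ one has $\mc K_{2,\s}(h_1,h_2)\sim|h_1|^{-1-\s}$ uniformly in $h_2$, since about $|h_1|^2$ lattice points contribute $|h_1|^{-3-\s}$ each. This matches $(-\Dl_{\R\times\T^2})^{\frac\s2}=(-\dt^2)^{\frac\s2}$ on $x$-independent functions. Since $K^\star\ge0$, for $|t|\ge2$ and $s\in[0,\frac12]$ you get $(-\Dl)^{\frac\s2}K^\star(t,s,x)\sim|t|^{-1-\s}\int K^\star(\cdot,s,\cdot)$. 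Up to logarithms, $\mc R_\s$ is only $\jb t^{-3-\s}$ there. So your claim $\int|K^\star(z+h)|\,dh\cdot|t|^{-3-\s}$ is false. The $m\neq0$ terms are bounded but are not absorbed by the constant $1$ in $\mc R_\s$, because that constant carries the weight $\jb t^{-3-\s}$.

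The paper's Case 1 of Lemma~\ref{lem:ker1}, which asserts $|\mc K(h_1,h_2)|\les|h_1|^{-3-\s}$, has the same defect. The proposition nevertheless survives:
\begin{itemize}
\item For $|t|\le2$, keep the pointwise bound. There large $|h_1|$ only multiplies $K^\star(t,s,x)$ by $\int_{|h_1|\ge10}|h_1|^{-1-\s}\,dh_1<\infty$.
\item For $|t|>2$, use $|(-\Dl)^{\frac\s2}\I^\star_{\textup{wave}}(F)(t,x)|\les|t|^{-1-\s}\|F\|_{L^1([0,1]\times\T^2)}$. Its $L^p$ norm is $\les\|F\|_{L^1}\les\|\mc A_\s(F)\|_{L^p}$, because $\mc R_\s\ge\jb t^{-3-\s}$ pointwise forces $\mc A_\s(F)(t,x)\ges\jb t^{-3-\s}\|F\|_{L^1}$.
\end{itemize}
Alternatively, replace $\jb t^{-3-\s}$ by $\jb t^{-1-\s}$ in $\mc R_\s$. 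This appears harmless for the later $L^p$ estimates, since $\jb t^{-(1+\s)p}$ remains integrable.
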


Let $\nu^1 \in C^{1}_c(\R; [0,1])$ and $\nu^2 \in C^1_c(\R^2;\R)$ be functions supported in $[-1,1]$ and $B(0,1)$, respectively, which integrate to $1$. Then, for $N \ge 1$, set $\nu^1_N = N \nu(N \cdot)$ and $\nu^2_N = N^2 \nu^2(N\cdot)$. Define the spatially periodic function $K^\star_{N}$ via 
\begin{align}
K^\star_{N}(t,s,x) = \int_{\R \times \T^2} \nu^1_N(t-t') \nu^2_N(x-y) K^\star(t',s,y) dt' dy
\label{kery1}
\end{align}
for all $(t,s,x) \in \R^2 \times \T^2$.

\begin{lemma}\label{lem:ker01}
Fix $N \ge 1$, $a \in \R\setminus\{0\}$ and $0 <\g \le \frac12$. Let $\nu \in L^{\infty}(\R; \R)$ be a compactly supported function and set $\nu_N = N \nu(N \cdot)$. Let $S_{\g,a}$ be the function the given by

\noi
\begin{align*}
S_{\g,a}(t) & = \big|t^2 - a^2 \big|^{-\g}
\end{align*}
and set $S_{N,\g,a} := S_{\g, a} *_t \nu_N$. Then, the following bound holds:

\noi
\begin{align}
\begin{split}
 |S_{N,\g,a} (t)|& \les \big| t^2 - a^2 \big|^{-\g} \jb{\log |a|}
\end{split}
\label{kery2}
\end{align}
for any $t \in \R$, where the implicit constant is uniform in the parameter $N \ge 1$.
\end{lemma}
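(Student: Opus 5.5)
The plan is to reduce \eqref{kery2} to an elementary one-dimensional estimate by comparing $S_{\g,a}$ with itself on the window of width $\sim N^{-1}$ around $t$. Since $\nu$ is bounded and compactly supported, there is $C\ge1$ with $|\nu_N(r)|\le C N\ind_{\{|r|\le C/N\}}$. Hence
\begin{align*}
|S_{N,\g,a}(t)|\les N\int_{|t'-t|\le C/N}|t'-a|^{-\g}|t'+a|^{-\g}\,dt'.
\end{align*}
Both $S_{\g,a}$ and the right-hand side of \eqref{kery2} are even in $t$ and in $a$. The integral above is symmetric under $(t,t')\mapsto(-t,-t')$, so we may assume $t\ge0$ and $a>0$; then $t+a\ge|t-a|$. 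The argument splits according to the size of $|t-a|$ relative to $N^{-1}$, and then of $a$ relative to $N^{-1}$.

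\emph{Case 1: $|t-a|\ge 2C/N$.} On the window $|t'-t|\le C/N$ we have $|t'-a|\sim|t-a|$. Since $t+a\ge|t-a|\ge2C/N$, also $|t'+a|\sim t+a$. The integrand is therefore pointwise comparable to $|t^2-a^2|^{-\g}$, and integrating against a kernel of mass $O(1)$ gives \eqref{kery2} without the logarithm.

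\emph{Case 2: $|t-a|<2C/N$ and $a\ge 6C/N$.} The window lies in $\{|t'-a|\le 3C/N\}$, where $|t'+a|\sim a\sim t+a$. So the bound reduces to $N\int_{|t'-a|\le3C/N}|t'-a|^{-\g}dt'\les N^{\g}\les|t-a|^{-\g}$, using $\g\le\frac12<1$. Multiplying by $(t+a)^{-\g}$ gives the claim.

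\emph{Case 3: $|t-a|<2C/N$ and $a<6C/N$.} This is the step I expect to be the main obstacle, since both singular points $\pm a$ lie inside the window and the endpoint $\g=\frac12$ produces a logarithm. Here $|t|,|t'|\les N^{-1}$, so $|t^2-a^2|\les N^{-2}$ and the right-hand side of \eqref{kery2} is $\ges N^{2\g}\jb{\log a}$. It therefore suffices to show
\begin{align*}
N\int_{|t'|\les 1/N}|t'-a|^{-\g}|t'+a|^{-\g}dt'\les N^{2\g}\jb{\log a}.
\end{align*}
Rescale $t'=r/N$ and set $\al=Na\les1$. The integral becomes $N^{2\g}\int_{|r|\les1}|r^2-\al^2|^{-\g}dr$. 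Split into $|r|\le2\al$ and $|r|>2\al$.
\begin{itemize}
\item On $|r|\le2\al$, scaling $r=\al\rho$ gives $\al^{1-2\g}\int_{|\rho|\le2}|\rho^2-1|^{-\g}d\rho\les1$.
\item On $|r|>2\al$, the integrand is $\sim|r|^{-2\g}$. Its integral over $2\al<|r|\les1$ is $O(1)$ uniformly in $\al$ when $2\g<1$. When $\g=\frac12$ it is $\les1+\log(1/\al)$.
\end{itemize}
It then remains to check that $\log(1/\al)=\log\frac1{Na}\le\log\frac1a\les\jb{\log a}$ when $N\ge1$ and $a\le 1$. If instead $a>1$, then $\al\ge N^{-1}\cdot1$, but Case 3 forces $a<6C/N\le 6C$, so $\al\sim1$ and the logarithm is bounded.

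For $\g$ strictly below but close to $\frac12$, uniformity of constants would be handled by the bound $\int_{2\al}^1 r^{-2\g}dr\le\int_{2\al}^1r^{-1}dr$ (valid since $r\le1$), which again yields $\les1+\log(1/\al)$. This makes all constants uniform in $\g\in(0,\frac12]$ and in $N\ge1$, completing \eqref{kery2}.
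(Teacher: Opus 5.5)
Your proof is correct, and it organizes the case analysis differently from the paper.

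\textbf{How the paper splits.} The paper fixes $t$ and splits according to where the integration variable $s$ sits inside the window $|t-s|\lesssim N^{-1}$.
\begin{itemize}
\item First, $|t|\lesssim |s|$. Either $|s^2-a^2|\gtrsim |t^2-a^2|$ holds pointwise, or else $|t^2-a^2|\lesssim N^{-1}(|t|+|s|)$, and one integrates $||s|-|a||^{-\gamma}$ over the window.
\item Second, $|s|\ll|t|\ll|a|$, which is trivial.
\item Third, $|s|\ll|t|$ with $|t|\gtrsim|a|$, which forces $|t|\lesssim N^{-1}$. This splits further into $|a|\ll|s|$ and $|s|\lesssim|a|$. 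In the former, at $\gamma=\frac12$, the logarithm appears as $\int_{|a|}^{1/N}s^{-1}\,ds$.
\end{itemize}

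\textbf{How you split.} You split instead on the external parameters measured against the scale $N^{-1}$: the distance $|t-a|$ from the singular point, and the size of $a$. This gives a clean trichotomy.
\begin{itemize}
\item Far from the singularity, the integrand is pointwise comparable to $|t^2-a^2|^{-\gamma}$.
\item Near one singular point with $a\gtrsim N^{-1}$, a one-sided $|t'-a|^{-\gamma}$ integral gives $N^{\gamma}\lesssim|t-a|^{-\gamma}$.
\item When both $\pm a$ lie in the window, the rescaling $r=Nt'$ reduces everything to the single model integral $\int_{|r|\lesssim1}|r^2-\alpha^2|^{-\gamma}\,dr$ with $\alpha=N|a|\lesssim 1$.
\end{itemize}

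\textbf{What each route buys.} Both are elementary. Yours makes it transparent that the logarithm arises only in the fully degenerate regime at the endpoint $\gamma=\frac12$, and is really $1+\log_+\bigl(1/(N|a|)\bigr)\le 1+\log_+(1/|a|)$. Your use of $r^{-2\gamma}\le r^{-1}$ for $r\le1$ also gives constants uniform in $\gamma\in(0,\frac12]$. In the paper's Subcase 3.1, by contrast, the constant for $\gamma<\frac12$ degenerates like $(1-2\gamma)^{-1}$. This is harmless there, since $\gamma$ is fixed. The paper's route works directly in the original variables without rescaling, but its case split is less informative about where the singular behaviour comes from.

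One small slip: in Case 3 with $a>1$ you wrote $\alpha\ge N^{-1}\cdot 1$. You mean $\alpha=Na\ge a>1$, which together with $\alpha<6C$ gives $\alpha\sim1$, as you conclude.
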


\begin{proof}
Fix $t \in \R$ and consider
\[ S_{N,\g,a} (t) = \int_{\R} \nu_N(t-s)  \big|s^2 - a^2 \big|^{-\g} ds. \]
Let $s$ be in the support of $\nu_N(t-\cdot)$.

\medskip

\noi
{\bf $\bul$ Case 1: $|t| \les |s|$.}  If $\big|s^2 - a^2 \big| \ges \big| t^2 - a^2 \big|$ then~\eqref{kery2} holds. Hence, we assume that $\big|s^2 - a^2 \big| \ll \big| t^2 - a^2 \big|$. Under this assumption, we have
\begin{align*}
  \big| t^2 - a^2 \big| \sim \big|t^2 - a^2 - (s^2 - a^2) \big| \sim ||t|-|s|| \cdot ||t|+|s|| \les N^{-1} ||t| + |s||.
\end{align*}
Therefore, we have
\begin{align*}
\big|t^2 - a^2 \big|^{\g} \int_{\R} |\nu_N(t-s)|  \big|s^2 - a^2 \big|^{-\g} ds & \les N^{1-\g} \int_\R \ind_{|t-s| \les  N^{-1}} \frac{|t| + |s|}{|s| + |a|} ||s|-|a||^{-\g} \\
& \les N^{1-\g} \int_\R \ind_{|t-s| \les  N^{-1}}  ||s|-|a||^{-\g} ds \\
& \les 1,
\end{align*}
as desired in~\eqref{kery2}. 

\medskip

\noi
{\bf $\bul$ Case 2: $|s| \ll |t|$ and $|t| \ll |a|$.} In this case, we simply bound
\begin{align*}
 \int_{\R} |\nu_N(t-s)| \big|s^2 - a^2 \big|^{-\g} \ind_{|s| \ll |t| \ll |a|} ds &  \les |a|^{-2\g}  \int_\R |\nu_N(t-s)| ds \les |a|^{-2\g}  \les  \big|t^2 - a^2\big|^{-\g},
\end{align*}
since $|t| \ll |a|$. This is acceptable in view of~\eqref{kery2}.

\medskip

\noi
{\bf $\bul$ Case 3: $|s| \ll |t|$ and $|t| \ges |a|$.} We consider two scenarios.

\medskip

\noi
{\bf $\bul$ Subcase 3.1: $|a| \ll |s| \ll |t|$.} In this situation, we have
\begin{align*}
 \int_{\R} |\nu_N(t-s)| \big|s^2 - a^2 \big|^{-\g} \ind_{|a| \ll |s| \ll |t|} ds 
&  \les N  \int_\R |s|^{-2\g} \ind_{|a|\ll |s| \les N^{-1}} ds \\
&  \les N \big( |\log |a|| \cdot \ind_{\g = \frac12} + N^{2\g-1} \cdot \ind_{0 < \g < \frac12}\big) \\
&  \les N^{2\g} |\log |a||\\
& \les |t|^{-2\g} |\log |a|| \\
&  \les \big|t^2 - a^2\big|\jb{\log |a|}.
\end{align*}
since $N \les |t|^{-1}$ and $|t| \gg |a|$. This shows~\eqref{kery2}. 

\medskip

\noi
{\bf $\bul$ Subcase 3.2: $|s| \ll |t|$, $|s| \les |a|$ and $|t| \ges |a|$.} In this situation, we have
\begin{align*}
 \int_{\R} |\nu_N(t-s)| \big|s^2 - a^2 \big|^{-\g} \ind_{|s| \les |a|} ds & \les |a|^{-\g} N \int_\R \ind_{|s| \les  |a|}  ||s|-|a||^{-\g} ds \\
&  \les |a|^{1-2\g} N \\
&  \les |t|^{-2\g} \\
&  \les \big|t^2 - a^2\big|,
\end{align*}
since $N \les |t|^{-1}$ and $|a| \les |t|$. Here, we used that $\g \le \frac12$. This agrees with~\eqref{kery2}.
\end{proof}

Next, we prove the following variant of \cite[Lemma 3.4]{Zine3}.

\begin{lemma}\label{lem:ker02}
Fix $N \ge 1$, $0 <\g \le \frac12$ and $k \in \{1,2\}$. Let $\nu \in L^{\infty}(\T^2; \R)$ be a compactly supported function and set $\nu_N = N^2 \nu(N \cdot)$. Let $t \in \R\setminus\{0\}$, and define the function $H_{t,\g}$ by

\noi
\begin{align*}
H_{t,\g}(x) & = \big|t^2 - |x|^2 \big|^{-\g} \jb{\log |x|}^k
\end{align*}
Set $ H_{N,t, \g} := H_{t,\g} *_x \nu_N$. Then, the following bound holds:

\noi
\begin{align}
\begin{split}
 |H_{N,t,\g} (x)|& \les \big| t^2 - |x|^2 \big|^{-\g} \jbb{\log(||t|- |x||)}  \jb{\log|x|}^{k}
\end{split}
\label{CC1}
\end{align}
for any $x \in \T^2\setminus\{0\}$, where the implicit constant is uniform in the parameter $N \ge 1$.
\end{lemma}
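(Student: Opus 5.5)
We mirror the case analysis of Lemma~\ref{lem:ker01}, now in two spatial dimensions and with the extra logarithmic weight. Fix $t\neq 0$ and $x\in\T^2\setminus\{0\}$ and write
\[ H_{N,t,\g}(x)=\int \nu_N(x-y)\,\big|t^2-|y|^2\big|^{-\g}\jb{\log|y|}^k\,dy, \]
so that $|x-y|\les N^{-1}$ on the support. By symmetry we may take $t>0$. We split according to the relative sizes of $|y|$, $|x|$, $t$ and $N^{-1}$.

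\medskip

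\noi
\textbf{Case 1: $|x|\les |y|$ (or $|x|\gg N^{-1}$, so that $|y|\sim|x|$).} Here $\jb{\log|y|}\les\jb{\log|x|}$. If $|t^2-|y|^2|\ges|t^2-|x|^2|$, the integrand is pointwise acceptable. Otherwise
\[ |t^2-|x|^2|\sim\big||x|-|y|\big|\,(|x|+|y|)\les N^{-1}(|x|+|y|). \]
We pass to polar coordinates $y=r\omega$ and use that the $r$-range in the support has length $\les N^{-1}$, while the angular measure of the support at radius $r$ is $\les \min(1,(Nr)^{-1})$. The resulting integral is
\[ N^{2}\int_{|r-|x||\les N^{-1}} \min\big(r,N^{-1}\big)\,|t-r|^{-\g}(t+r)^{-\g}\,dr . \]
Writing $|t-r|^{-\g}$ and integrating in $r$ over an interval of length $N^{-1}$ that contains the point $t$ (this is where $|t-|x||\les N^{-1}$ enters), we obtain $N^{\g}(t+|x|)^{-\g}\les |t^2-|x|^2|^{-\g}$. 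In the endpoint case $\g=\frac12$, with the ring geometry degenerating, a logarithm $\jb{\log(|t-|x||)}$ can appear, and this is exactly the extra factor allowed in \eqref{CC1}.

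\medskip

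\noi
\textbf{Case 2: $|y|\ll|x|\les N^{-1}$.} The log weight $\jb{\log|y|}^k$ is now larger than $\jb{\log|x|}^k$, but it is integrable. There are two subcases.

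If $t\gg|x|$ or $t\ll|x|$, the singular factor is essentially constant, equal to $(t+|x|)^{-2\g}$ up to constants. Then
\[ N^2\int_{|y|\les N^{-1}}\jb{\log|y|}^k\,dy\les\jb{\log N}^k\les\jb{\log|x|}^k. \]

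If $t\sim|x|$, then $|y|\ll t$ gives $|t^2-|y|^2|\sim t^2\ges|t^2-|x|^2|$, and the same computation applies.

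\medskip

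\noi
\textbf{Main obstacle.} The hard part is Case 1 when $t\sim|x|\les N^{-1}$. There the hyperbolic singularity set $\{|y|=t\}$ and the logarithmic singularity at $y=0$ both lie inside the mollification ball, and the ring $\{|y|=t\}$ is not approximately flat at scale $N^{-1}$. We would handle this by integrating radially using $\g\le\frac12$, as in Subcase~3.1 of Lemma~\ref{lem:ker01}:
\[ N^2\int_0^{CN^{-1}} r\,|t-r|^{-\g}(t+r)^{-\g}\jb{\log r}^k\,dr\les N^{2\g}\jb{\log N}^k . \]
We then compare $N^{2\g}$ with $|t^2-|x|^2|^{-\g}$. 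Since $|t^2-|x|^2|\les N^{-2}$ in this regime, we have $N^{2\g}\les|t^2-|x|^2|^{-\g}$. For the logarithm, $\jb{\log N}\les\jb{\log|x|}$ because $|x|\les N^{-1}$. Tracking the boundary case $\g=\frac12$ is what produces the factor $\jbb{\log(||t|-|x||)}$.
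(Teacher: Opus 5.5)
Your Case~1 is sound. In fact, the arc-length bound $\min(r,N^{-1})$ there gives the estimate without the factor $\langle\log(||t|-|x||)\rangle$, so your remark about $\gamma=\frac12$ producing that logarithm is not substantiated; this is harmless, since the paper instead pulls out $\langle\log|y|\rangle^k\lesssim\langle\log|x|\rangle^k$ and quotes \cite[Lemma 3.4]{Zine3}.

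\textbf{The gap is in Case~2.} There $|y|\ll|x|\lesssim N^{-1}$, and you claim that for $t\ll|x|$ the factor $|t^2-|y|^2|^{-\gamma}$ is essentially constant, $\sim(t+|x|)^{-2\gamma}$. This is false. The constraint $|y|\ll|x|$ still lets $|y|$ range over $|y|\ll t$, $|y|\sim t$ and $t\ll|y|\ll|x|$:
\begin{itemize}
\item on $|y|\ll t$ the factor is $\sim t^{-2\gamma}\gg|x|^{-2\gamma}$;
\item on $t\ll|y|\ll|x|$ it is $\sim|y|^{-2\gamma}\gg|x|^{-2\gamma}$;
\item on $|y|\sim t$ it blows up on the circle $\{|y|=t\}$, which now lies inside the mollification ball next to the logarithmic singularity at $y=0$.
\end{itemize}
So $(t+|x|)^{-2\gamma}N^2\int_{|y|\lesssim N^{-1}}\langle\log|y|\rangle^k\,dy$ does not control this piece. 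This configuration is the real difficulty, not Case~1 with $t\sim|x|$, which your Case~1 argument already covers. It is exactly what the paper's Subcases~2.1--2.3 handle. They split into $|y|\gg|t|$, $|y|\sim|t|$, $|y|\ll|t|$ and integrate radially, obtaining respectively $N^{2\gamma}\log(N)^k$, $N^2|t|^{2-2\gamma}|\log|t||^k$, and $|t|^{-2\gamma}N^2\min(N^{-2}\log(N)^k,|t|^2\langle\log|t|\rangle^k)$. They then convert these into $|t^2-|x|^2|^{-\gamma}\langle\log|x|\rangle^k$ using $|x|\lesssim N^{-1}$ and the monotonicity of $r\mapsto r^{\alpha}(-\log r)^k$ near $0$.

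\textbf{The repair is within reach of your own ``main obstacle'' computation,} but it has to be placed in Case~2. Since $|x|\lesssim N^{-1}$ there, the whole support lies in $\{|y|\lesssim N^{-1}\}$. The radial bound
$N^2\int_0^{CN^{-1}}r\,|t-r|^{-\gamma}(t+r)^{-\gamma}\langle\log r\rangle^k\,dr\lesssim N^{2\gamma}\langle\log N\rangle^k$
then handles $t\ll|x|$, because $|t^2-|x|^2|\sim|x|^2\lesssim N^{-2}$ and $\langle\log N\rangle\lesssim\langle\log|x|\rangle$. You must justify that radial bound by the same three-way split $r\ll t$, $r\sim t$, $r\gg t$; it is not a consequence of Subcase~3.1 of Lemma~\ref{lem:ker01}, which is one-dimensional. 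As written, the subcase $t\ll|x|$ rests on an incorrect pointwise claim.
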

\begin{proof}
Let $x \in \T^2 \setminus\{0\}$. We have
\begin{align}
H_{N,t,\g}(x) = \int_{\T^2} \nu_N(x-y) H_{t,\g}(y) dy.
\label{kery20}
\end{align}
Let $y$ be in the support of the integrand of the integral in~\eqref{kery20}.

\medskip

\noi
{\bf $\bul$ Case 1: $|y| \ges |x|$.} Then, from~\cite[Lemma 3.4]{Zine3}, we have
\begin{align*}
|H_{N,t,\g}(x)| & \les \jb{\log |x|}^{k} \, \int_{\T^2} |\nu_N(x-y)| \big|t^2 - |y|^2\big|^{-\g} dy \\
& \les \jb{\log |x|}^{k} \, \big| t^2 - |x|^2 \big|^{-\g} \, \jbb{\log(||t|- |x||)},
\end{align*}
as desired in~\eqref{CC1}. 

\medskip

\noi
{\bf $\bul$ Case 2: $|y| \ll |x|$.} Note that in this situation, we necessarily have $|x| \les N^{-1}$. We consider three different scenarios.

\medskip

\noi
{\bf $\bul$ Subcase 2.1: $|y| \gg |t|$.} In this case, we simply have
\begin{align*}
H_{t,\g}(y) \les |y|^{-2\g} \, \jb{\log |y|}^k \, \ind_{|y|\les N^{-1}}.
\end{align*}
Hence, by a polar change of coordinates and integration by parts, we have
\begin{align*}
H_{N,t,\g}(x)  \les N^2 \int_{\T^2} |y|^{-2\g} \, \jb{\log |y|}^k dy & \les N^2 \int_{0}^{10 N^{-1}} r^{1-2\g} |\log(r)|^k dr \\
& \les N^{2\g} \log(N)^k\\
& \les |x|^{-2\g} \, \jb{\log |x|}^k \\
& \les \big|t^2 - |x|^2\big|^{-\g} \, \jb{\log |x|}^k,
\end{align*}
since $|x| \les N^{-1}$ and $|t| \ll |x|$. This shows~\eqref{CC1}.

\medskip

\noi
{\bf $\bul$ Subcase 2.2: $|y| \sim |t|$.} In this case, we have 
\begin{align*}
H_{t,\g}(y) \les |t|^{-\g} ||t| - |y||^{-\g} \, \jb{\log |y|}^k \, \ind_{|y|\sim |t|}.
\end{align*}
Hence, by a polar change of coordinates and integration by parts, we have
\begin{align*}
H_{N,t,\g}(x) & \les N^2 |t|^{-\g} \int_{\T^2} ||t| - |y||^{-\g} \, \jb{\log |y|}^k \, \ind_{|y|\sim |t|} dy \\
& \les N^2 |t|^{1-\g} \int_{10^{-1} |t|}^{10 |t|} ||t|-r|^{-\g} |\log(r)|^k dr \\
& \les N^{2} \cdot |t|^{2-2\g} |\log |t||^k \\
& \les N^{2\g} \log(N)^k \\
& \les \big|t^2 - |x|^2\big|^{-\g} \, \jb{\log |x|}^k.
\end{align*}
Note that in the above, we used that for $\al >0$ the map $r \mapsto r^{\al} (-\log(r))^k$ is non-decreasing in a neighborhood of the origin, and that $|t| \les N^{-1}$ and $|x| \les N^{-1}$. This shows~\eqref{CC1}.

\medskip

\noi
{\bf $\bul$ Subcase 2.3: $|y| \ll |t|$.} In this case, we have
\begin{align*}
H_{t,\g}(y) \les |t|^{-2\g} \jb{\log|y|}^k \ind_{|y| \les \min(N^{-1},|t|)}.
\end{align*}
Therefore, by a polar change of coordinates and integration by parts, we have
\begin{align*}
|H_{N,t,\g}(x)| & \les |t|^{-2\g} N^2 \cdot \min \! \big(N^{-2} \log(N)^k, |t|^2 \, \jb{\log|t|}^k\big) \ \\
& \les \min \! \big( |t|^{-2\g} \log(N)^k, |t|^{2-2\g} N^2 \, \jb{\log|t|}^k\big) \\
& \les \min \! \big( |t|^{-2\g} \, \jb{\log |x|}^k, |t|^{2-2\g} |x|^{-2} \, \jb{\log|t|}^k\big) \\
& \les \big| t^2 - |x|^2 \big|^{-\g} \, \jb{\log |x|}^k
\end{align*}
In the above, we again used that $N \les |x|^{-1}$ and the fact that the map $r \mapsto r^{2-2\g} (-\log(r))^k$ is non-decreasing around the origin. This completes the proof of~\eqref{CC1}. 
\end{proof}

Before stating our next result, we introduce some notation. Given $s_1, s_2,s_3 \geq 0$, set
\[ \mc R_{s_1, s_2, s_3}(t,x) := |t|^{-s_1} (|t| + |x|)^{-s_2} | |t| - |x||^{-s_3} \jbb{\log(||t|- |x||)} \, \jb{\log(|x|)} \]
for all $(t,x) \in \R \times \T^2$.
We note that by first applying Lemma~\ref{lem:ker02}, and then Lemma~\ref{lem:ker03}, we have the following useful estimate:
\begin{align}
 |K^\star_{N}(t,s,x)| \les \mc R_{0,\frac12,\frac12}(t-s,x). \label{keryK}
\end{align}

\begin{comment}
\younes{In what follows, we essentially use Lemmas~\ref{lem:ker01} and~\ref{lem:ker02} to show that 
\[ |K^\star_{\ld, N}(t,s,x)| \les \mc R_{0,\frac12,\frac12}(t,s,x)\]
We could write a lemma/corollary to state this once and for all or use both lemmas everytime we need the bound in the above—that's what I'm doing down below. What do you think?
}
\justin{I vote for stating it here and then citing the mathdisplay later}

\younes{Actually, on a second thought we don't exactly use the bound \[ |K^\star_{\ld, N}(t,s,x)| \les \mc R_{0,\frac12,\frac12}(t,s,x),\]
but a similar bound with absolute values inside on the bump functions or their derivatives. It's hard to capture all of this in a single statement so I suggest we keep things this way.}
\end{comment}

\begin{lemma}\label{lem:ker03}
Fix $N\ge 1$ and let $K^\star_{N}$ be as in~\eqref{kery1}. Then, the following estimates hold:
\begin{align}
| \partial_t K^\star_{N}(t,s,x) | & \les \mc R_{1, \frac12, \frac12}(t-s, x) + \mc R_{0, \frac12, \frac32}(t-s, x),\label{kery5} \\
| \nb_x K^\star_{ N}(t,s,x) | & \les \mc R_{0, \frac12, \frac32}(t-s, x), \label{kery6}
\end{align}
for all $(t,s,x) \in \R^2 \times \T^2$. Here, the implicit constants are uniform in $N \ge 1$.
\end{lemma}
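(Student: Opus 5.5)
Since $K^\star_N$ is the mollification \eqref{kery1} of $K^\star$, the plan is to move each derivative onto the mollifier and then reuse the one-variable smoothing bounds of Lemmas~\ref{lem:ker01} and~\ref{lem:ker02}. For the spatial derivative \eqref{kery6}, write $\nb_x K^\star_N = (\nb\nu^2_N)\ast_x(\nu^1_N\ast_t K^\star)$. For the time derivative \eqref{kery5}, write $\dt K^\star_N = (\partial\nu^1_N)\ast_t(\nu^2_N\ast_x K^\star)$. In both cases the differentiated mollifier has the form $N\cdot \tilde\nu_N$, where $\tilde\nu$ is again a bounded compactly supported profile. Note that $\tilde\nu$ has zero mean, which can be exploited when needed.

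\textbf{Step 1: the ``easy'' factor $N$.} Set $r=|t-s|$. On the region where $N^{-1}\gtrsim \big|r-|x|\big|$, the factor $N$ is at most $\big|r-|x|\big|^{-1}$. Bounding $|\tilde\nu_N|$ by a bump, I then apply Lemma~\ref{lem:ker01} (in time, with $\g=\frac12$, $a=|x|$) and Lemma~\ref{lem:ker02} (in space, with $\g=\frac12$). Together these give
\[
|\,\cdot\,|\les N\,(r+|x|)^{-\frac12}\big|r-|x|\big|^{-\frac12}\jbb{\log(|r-|x||)}\jb{\log|x|}.
\]
Combined with the bound on $N$, this is $\mc R_{0,\frac12,\frac32}(t-s,x)$. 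The smooth factors $e^{-\frac12(t-s)}$ and $\chi(t)$ are harmless: any derivative falling on them yields a term bounded as in \eqref{keryK}, which is dominated by the right-hand sides.

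\textbf{Step 2: the region $N^{-1}\ll \big|r-|x|\big|$.} Here the derivative can instead be moved back onto $K^\star$ by integrating by parts. This is legitimate because, at distance $\gg N^{-1}$ from the light cone $r=|x|$, the mollification only sees the smooth part of $\mc W_{\R^2}$.

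For $x$-derivatives, $|\nb_x (r^2-|x|^2)^{-1/2}|\les |x|(r^2-|x|^2)^{-3/2}\les (r+|x|)^{-\frac12}(r-|x|)^{-\frac32}$, and averaging over a mollification scale gives the same bound. This yields \eqref{kery6}.

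For $t$-derivatives, $\partial_r (r^2-|x|^2)^{-1/2} = -r(r^2-|x|^2)^{-3/2}$, which is again of size $\mc R_{0,\frac12,\frac32}$. There are two further sources. First, the cutoffs $\ind_{[s,\infty)}(t)$ and $\ind_{[0,\infty)}(s)$ produce boundary terms at $r\approx 0$. Second, the factor $\ind_{[0,\infty)}(t-s)$ combined with mollification at scale $N^{-1}\gtrsim r$ makes us lose $N\les r^{-1}$ rather than $\big|r-|x|\big|^{-1}$. This is exactly the source of the term $\mc R_{1,\frac12,\frac12}$, and it is handled as in Step 1 with $|t-s|^{-1}$ in place of $\big|r-|x|\big|^{-1}$.

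\textbf{Main obstacle.} The difficulty is the bookkeeping at the junction of the regimes: near $x=0$ with $r\les N^{-1}$, and near the cone in the mixed case where the time and space mollifications interact. I would organize this by a case split on the relative sizes of $N^{-1}$, $r$, $|x|$ and $\big|r-|x|\big|$, following the case structure in the proofs of Lemmas~\ref{lem:ker01} and~\ref{lem:ker02}. In each case the aim is to bound the local averages of $(r^2-|x|^2)^{-\g}$ by their value at the point, up to a logarithmic loss. Those logarithmic losses are what force the factors $\jbb{\log(|r-|x||)}\jb{\log|x|}$ appearing in $\mc R$.
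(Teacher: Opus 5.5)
Your proposal is correct and follows the paper's proof essentially step for step. The paper uses three cases:
\begin{itemize}
\item $\big||t-s|-|x|\big|\les N^{-1}$: the derivative goes on the mollifier, and $N\les \big||t-s|-|x|\big|^{-1}$ is combined with \eqref{keryK}.
\item $\big||t-s|-|x|\big|\gg N^{-1}$ and $|t-s|\les N^{-1}$: the derivative again goes on the mollifier, now with $N\les |t-s|^{-1}$, which yields $\mc R_{1,\frac12,\frac12}$.
\item The remaining region: the derivative is placed on $K^\star$ itself, and \eqref{kery8}--\eqref{kery9} make the $\mc R$ factors comparable across the mollification scale.
\end{itemize}
The zero-mean property of the differentiated mollifier and the finer case bookkeeping you flag as the ``main obstacle'' are not needed.
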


\begin{proof}
We focus on~\eqref{kery5} as~\eqref{kery6} follows from slightly simpler arguments. We also note that the bound~\eqref{kery6} follows from a (variant of)~\cite[Lemma 3.11]{Zine3}. 
Fix $(t,s,x) \in \R^2 \times \T^2$. We divide our analysis into different cases depending on the value of $(t,s,x)$.

\medskip

\noi
{\bf $\bul$ Case 1: $||t-s|-|x|| \les N^{-1}$.} In this case, we move time derivative on $K^\star_{N}$ to the corresponding bump function $\nu^1_N$ so that by~\eqref{kery1}, we have
\begin{align*}
\partial_t K^\star_{N}(t,s,x) = \int_{\R \times \T^2} \dt \nu^1_N(t-t') \nu^2_N(x-y) K^\star(t',s,y) dt' dy.
\end{align*}
Hence, by \eqref{keryK}, we deduce that
\begin{align*}
|\partial_t K^\star_{N}(t,s,x)| & \les N \cdot \mc R_{0, \frac12, \frac12}(t-s,x) \les \mc R_{0, \frac12, \frac32}(t-s,x),
\end{align*}
as desired in~\eqref{kery6}.

\medskip

\noi
{\bf $\bul$ Case 2: $||t-s|-|x|| \gg N^{-1}$.} In this case, we consider again two scenarios.

\medskip

\noi
{\bf $\bul$ Subcase 2.1: $||t-s|-|x|| \gg N^{-1}$ and $|t-s| \les N^{-1}$.}  We again move the time derivative on $K^\star_N$ to its corresponding bump function $\nu^1_N$. As in Case 1, this gives
\begin{align*}
|\partial_t K^\star_{N}(t,s,x)| & \les N \cdot \mc R_{0, \frac12, \frac12}(t-s,x) \les \mc R_{1, \frac12, \frac12}(t-s,x)
\end{align*}
for any $\eps >0$, as desired in~\eqref{kery5}.

\medskip

\noi
{\bf $\bul$ Subcase 2.2: $|t-s| \gg N^{-1}$ and $||t-s|-|x|| \gg N^{-1}$.} Let $(t',y)$ be in the support of the function $(t',y) \mapsto \nu^1_N(t-t') \nu^2_N(x-y) K^\star(t',s,y)$. Then, the following bound holds:
\begin{align}
\big| ||t'-s| + \s |y|| - ||t-s| + \s|x|| \big| \le |t-t'| + |x-y| \les N^{-1}
\label{kery8}
\end{align}
for $\s \in \{-1,0,+1\}$. Since $||t-s|-|x|| \gg N^{-1}$,~the last estimate implies that 
\begin{align}
| |t'-s| + \s |y| | \sim | |t-s| + \s |x| |
\label{kery9}
\end{align}
for $\s \in \{-1,0,+1\}$. Moreover, for any fixed $y \in \T^2$, the function $t' \mapsto K^\star(t',s,y)$ is smooth on the support of $(t',y) \mapsto \nu^1_N(t-t') \nu^2_N(x-y) K^\star(t',s,y)$,\footnote{Note that this support is a closed interval.} with derivative bounded by 
$$\mc R_{1,\frac12,\frac12}(t'-s,x)+\mc R_{0,\frac12,\frac32}(t'-s,x)$$
Hence, by moving the time derivative on $K^\star_{N}$ to the kernel $K^\star$ and from~\eqref{kery9}, this leads to the bound
\begin{align*}
|\partial_t K^\star_{N}(t,s,x)| & = \Big| \int_{\R \times \T^2} \nu^1_N(t-t')  \nu^2_N(x-y) \partial_{t'}  K^\star(t',s,y) dt' dy \Big| \\
& \les \int_{\R \times \T^2} | \nu^1_N(t-t')|  |\nu^2_N(x-y)| \\
& \qquad \qquad \quad \times  \big(\mc R_{1,\frac12,\frac12}(t-s,x) + \mc R_{0,\frac12,\frac32}(t-s,x)\big) dt' dy \\
& \les \mc R_{1,\frac12,\frac12}(t-s,x) + \mc R_{0,\frac12,\frac32}(t-s,x),
\end{align*}
as desired in~\eqref{kery5}.
\end{proof}

We now put these previous results together to conclude an estimate on the fractional derivatives of $K^\star_{N}$.

\begin{lemma}\label{lem:ker1}
Let $N \ge 1$, $K^\star_{N}$ be as in~\eqref{kery1}, and $0 < \s < \frac12$. Then, the following estimate holds:
\begin{align}
\begin{split}
 |(-\Dl_{\R \times \T^2})^{\frac \s2} K^\star_{N} (t,s,x) \big| & \les \jb t^{-3 -\s} \big( 1+ \mc R_{\s, \frac12, \frac12}(t-s,x) + \mc R_{0, \frac12, \frac12 +\s}(t-s,x)\big)\\
 & \qquad \quad \times \jbb{\log(||t-s| - |x||)} \, \jb{\log|x|}
 \end{split}
 \label{kery12}
\end{align}
for all $(t,s,x) \in \R^2 \times \T^2$. Here, $(-\Dl_{\R \times \T^2})^{\frac \s2}$ denotes the fractional Laplacian on $\R \times \T^2$~\eqref{dtb2}, with respect to the first and third variables of $K^\star_{N}$, and the implicit constant is uniform in $N\ge 1$.
\end{lemma}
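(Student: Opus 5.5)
We plan to estimate $(-\Dl_{\R \times \T^2})^{\frac \s2} K^\star_{N}$ pointwise through the singular-integral representation \eqref{dtb2}. Fix $(t,s,x)$ and write $z=(t,x)$, $h=(h_1,h_2)$, $G(z)=K^\star_N(t,s,x)$. Set $\dl := \min(|t-s|, ||t-s|-|x||)$, which measures the distance from $z$ to the two singular sets of $G$: the light cone $\{|t-s|=|x|\}$ and the vertex $\{t=s,\,x=0\}$. We then split
\[
(-\Dl)^{\frac\s2}G(z) = c\int_{|h|\le \dl/10}\big(G(z+h)-G(z)\big)\mc K_{2,\s}(h)\,dh + c\int_{|h|>\dl/10}\big(G(z+h)-G(z)\big)\mc K_{2,\s}(h)\,dh .
\]
The time cutoff $\chi(t)$ and the damping factor in \eqref{ker1b} restrict everything to $|t|\le 1$. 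The factor $\jb t^{-3-\s}$ arises because, when $|t|\gg 1$, $G(z)=0$ and only the tail of $\mc K_{2,\s}$ at distance $\sim|t|$ contributes, which gives $|t|^{-3-\s}\|G\|_{L^1}$. The constant term $1$ comes from the large-$|h|$ part of the kernel and from the periodization.

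For the near region $|h|\le \dl/10$ we use the mean value theorem together with Lemma~\ref{lem:ker03}. On the segment from $z$ to $z+h$, the quantities $|t-s|$, $|t-s|+|x|$ and $||t-s|-|x||$ stay comparable to their values at $z$, by the same argument as \eqref{kery8}--\eqref{kery9}. Hence $|G(z+h)-G(z)|\les |h|\big(\mc R_{1,\frac12,\frac12}+\mc R_{0,\frac12,\frac32}\big)(t-s,x)$. Integrating $|h|^{1-3-\s}$ over $|h|\le\dl$ gives a factor $\dl^{1-\s}$. When $\dl=||t-s|-|x||$, this turns $\mc R_{0,\frac12,\frac32}$ into $\mc R_{0,\frac12,\frac12+\s}$, and turns $\mc R_{1,\frac12,\frac12}$ into something controlled by $\mc R_{\s,\frac12,\frac12}$ once we use $\dl\le|t-s|$. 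When $\dl=|t-s|$, we obtain the bounds in the opposite roles.

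For the far region $|h|>\dl/10$ we bound $|G(z)|\int_{|h|>\dl}|h|^{-3-\s}\,dh\les \dl^{-\s}|G(z)|$ and use \eqref{keryK}. This produces exactly $\mc R_{0,\frac12,\frac12+\s}$ or $\mc R_{\s,\frac12,\frac12}$. For the term $|G(z+h)|$, we integrate \eqref{keryK} against $|h|^{-3-\s}\ind_{|h|>\dl/10}$. Concretely, we decompose dyadically in $|h|\sim 2^j\dl$ and use that $\mc R_{0,\frac12,\frac12}$ is locally integrable on $\R\times\R^2$ with $\int_{|h|\sim r}\mc R_{0,\frac12,\frac12}(z+h)\,dh\les r^{2}\log$-factors. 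The singularity $|t-s+h_1|^{-1/2}\cdot||\cdot|-|\cdot||^{-1/2}$ integrates over a ball of radius $r$ to at most $r^3\cdot r^{-1}$, up to logarithms. Summing $\sum_j (2^j\dl)^{2-3-\s}$ gives $\dl^{-1-\s}$ times the integral mass. We will need to redo this so that the bound retains the factor $(|t-s|+|x|)^{-1/2}$. For that, we split further according to whether $|h|\ll |t-s|+|x|$ or not. In the latter case, $|h|^{-3-\s}\le (|t-s|+|x|)^{-3-\s}$, which is bounded by the claimed terms.

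The main obstacle is this far-region integration of $|G(z+h)|$ across the light cone when $||t-s|-|x||\ll |t-s|$. In that regime the translates $z+h$ sweep through the cone singularity and we must show that they cost no more than $||t-s|-|x||^{-\frac12-\s}(|t-s|+|x|)^{-\frac12}$. We would handle it by working in coordinates $(\rho,\ta)$ with $\rho=|t-s+h_1|-|x+h_2|$ near the cone. In these coordinates the integrability in $\rho$ is $\int|\rho|^{-1/2}\,d\rho$, while the transverse directions contribute area at most $(|t-s|+|x|)$. Extra logarithmic losses at this stage are absorbed into the $\jb{\log}$ factors in \eqref{kery12}.
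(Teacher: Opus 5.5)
Your decomposition is the paper's. The split at $\dl=\min(|t-s|,\,||t-s|-|x||)$ matches. So does the mean value theorem with Lemma~\ref{lem:ker03} and \eqref{kerynoh} for $|h|\ll\dl$ (Subcase~2.2), and so does the bound $\dl^{-\s}|K^\star_N(t,s,x)|$ for the $G(z)$-part of the far zone. The gap is exactly the step you flag as the main obstacle: the far-zone contribution of $|G(z+h)|$. Your sketch for it does not work as written. Put $D=|t-s|+|x|$.

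For $|h|\gtrsim D$, the pointwise bound $|h|^{-3-\s}\le D^{-3-\s}$ only gives $D^{-3-\s}\|G\|_{L^1}$, which is not dominated by the right side of \eqref{kery12} when $D\ll1$. You must keep the dyadic mass bound $\int_{|h|\sim r}|G(z+h)|\,dh\les r^2$. That gives $\sum_{r\gtrsim D}r^{-1-\s}\sim D^{-1-\s}\le\mc R_{0,\frac12,\frac12+\s}(t-s,x)$.

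In the decisive range $\dl\les|h|\ll D$, letting the transverse directions carry measure ``at most $D$'' loses a factor of order $D/\dl$. That is, leaving the angular variable unlocalised to an arc of length $\sim|h|$ costs exactly the factor you are trying to save. What is true, and needed, is the following. For $\dl\les r\ll D$, the ball $B(z,r)$ meets the region near the cone in a set of transverse area $\les r^2$, on which $|G|\les D^{-\frac12}|\rho|^{-\frac12}$ up to logarithms. So the average of $|G|$ over $B(z,r)$ is $\les D^{-\frac12}r^{-\frac12}\les\mc R_{0,\frac12,\frac12}(t-s,x)$, and $\sum_{r\ge\dl}r^{-\s}D^{-\frac12}r^{-\frac12}\les D^{-\frac12}\dl^{-\frac12-\s}$.

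The paper's Subcase~2.1 rests on this property (ball averages are dominated by the value at the centre), and it gets it without cone coordinates. The box average $2^{3\ell}\int_{|h_1|,|h_2|\les2^{-\ell}}|K^\star_N(t+h_1,s,x+h_2)|\,dh_1dh_2$ is a mollification at scale $2^{-\ell}$ of the majorant in \eqref{keryK}. Lemmas~\ref{lem:ker01} and~\ref{lem:ker02} therefore bound it by $\ell\,\mc R_{0,\frac12,\frac12}(t-s,x)\jb{\log|x|}$. This is done after removing the $2^{-100\ell}$-neighbourhood of the cone, which contributes $2^{-10\ell}$, so that $\jbb{\log(||t-s+h_1|-|x+h_2||)}\les\ell$. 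Summing $2^{\ell\s}$ over $2^\ell\les\dl^{-1}$ then gives $\dl^{-\s}$ times the pointwise majorant, up to a logarithm. That yields $\mc R_{\s,\frac12,\frac12}$ and $\mc R_{0,\frac12,\frac12+\s}$ at once, with no split at $D$.

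Separately, your tail bound $|t|^{-3-\s}\|G\|_{L^1}$ is not right for the periodised kernel \eqref{Laplaceker}; the paper's Case~1 asserts the same bound. For $|h_1|\gg1$ one has $\sum_{m\in\Z^2}|(h_1,h_2+2\pi m)|^{-3-\s}\sim|h_1|^{-1-\s}$. Since $K^\star_N(\cdot,s,\cdot)$ generally has non-zero space-time integral (e.g.\ $s=0$, $\chi\ge0$), the true decay for $|t|\gg1$ is $\jb{t}^{-1-\s}$. The weight in \eqref{kery12} should be weakened accordingly. This looks harmless downstream, where only integrability of the weight in $t$ is used.
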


\noi
Note that while $K_{N}^{\star}(t,s,x)$ is compactly supported in the variables $(t,s)$, this is no longer the case for the fractional derivatives $(-\Dl_{\R \times \T^2})^{\frac \s2} K^\star_{N}$.

\begin{proof}
Let us fix $(t,s,x) \in \R^2 \times \T^2$. From~\eqref{dtb}, we get
\begin{align}
(-\Dl_{\R \times \T^2})^{\frac \s2} K^\star_{N} (t,s,x) = c_{3,\s} \int_{\R\times\T^2} K^\star_{N}(t,s,x;h_1,h_2) \mc K_{3,\s}(h_1, h_2) dh_1 dh_2,
\label{kery13}
\end{align}
where
\begin{align*}
 K^\star_{ N}(t,s,x;h_1,h_2)  :=  K^\star_{N}(t+h_1,s,x+h_2) - K^\star_{N}(t,s,x).
\end{align*}
 and $\mc K_{3,\s}$ is as in \eqref{Laplaceker}. Note that in this case, by construction of $K^\star$ and $K^\star_{N}$, the contribution of $K^\star_{N}(t+h_1, s, x+h_2)$ to~\eqref{kery13} is non-zero if  we have $|t+h_1|, |s| \les 1$.

We divide our analysis into several cases.

\medskip
\noi
{\bf $\bul$ Case 1: $|h_1| \gg 1$.} In this case, it follows from \eqref{Laplaceker} that
\begin{align*}
|\mc K_{3,\s}(h_1, h_2)| \les |h_1|^{-3-\s},
\end{align*}
for any $ h_2\in \T^2 \cong [-\pi, \pi)^2$.
Hence, in view of the support properties of $K^\star_{N}(t+h_1, s, x+h_2)$ and ~\eqref{keryK}, we have that
\begin{align}
\begin{split}
&  \int_{\R\times\T^2} |K^\star_{N}(t,s,x;h_1,h_2)| |\mc K_{3,\s}(h_1, h_2)| \ind_{|h_1| \gg 1} dh_1 dh_2 \\
&\quad  \les \int_{\R \times \T^2} |h_1|^{-3-\s} \mc R_{0, \frac12,\frac12}(t+h_1-s,x+h_2) \ind_{|h_1| \gg 1} \ind_{|t+h_1| \les 1} dh_1 dh_2 \\
& \quad \qquad + \int_{\R \times \T^2} |h_1|^{-3-\s} |K^\star_{ N}(t,s,x)| \ind_{|h_1| \gg 1} dh_1 dh_2 \\
& \quad =: \1 + \II.
\end{split}
\label{kery105}
\end{align}
On the one hand, by a translation in $h_2$ and since $|s| \les 1$ and $|h_1| \sim |t| \gg 1$, the first term in~\eqref{kery105} is estimated by
\begin{align}
\begin{split}
\1 & \les \jb{t}^{-3-\s} \int_{|h_1+t|\les 1} dh_1 \frac{\jb{\log(|t-s+h_1|)} }{  |t-s+h_1|^{\frac12} }    \int_{\T^2} dh_2 \frac{\jb{\log(|h_2|)}}{  ||t-s+h_1| - |h_2| |^{\frac12} } \\
& \les \jb{t}^{-3-\s} \int_{|h_1+t-s|\les 1} \frac{1 }{  |t-s+h_1|^{\frac12+\eps} }dh_1 \\
&\les \jb{t}^{-3-\s}.
\end{split}
\label{kery106}
\end{align}
On the other hand, from~\eqref{keryK} and recalling that $|t| \les 1$ on the support of $K^\star_{ N}(t,s,x)$, we have
\begin{align}
\II \les \jb t ^{-10} \, \mc R_{0,\frac12,\frac12}(t-s,x).
\label{kery107}
\end{align}
Therefore, by~\eqref{kery105},~\eqref{kery106} and~\eqref{kery107}, we deduce that
\begin{align*}
  \int_{\R\times\T^2} |K^\star_{N}(t,s,x;h_1,h_2)| |\mc K_{3,\s}(h_1, h_2)| \ind_{|h_1| \gg 1} dh_1 dh_2   \les \jb t^{-3-\s} \big( 1+ \mc R_{0,\frac12, \frac12}(t-s,x)\big),
\end{align*}
as desired in~\eqref{kery12}.

\medskip

\noi
{\bf $\bul$ Case 2: $|h_1| \les 1$.} Note that in this case, by construction of $K^\star$ and $K^\star_{N}$, we have $||t-s| - |x||, |t|, |s|, |h_1| \les 1$ on the support of the integrand in~\eqref{kery13}. It hence suffices to prove~\eqref{kery12} in the case $|t| \les 1$ since the bound is otherwise trivial. 
Note by \eqref{Laplaceker}, we have
\begin{align}
|\mc K_{3,\s}(h_1, h_2)| \les  \frac{1}{|(h_1, h_2)|^{3+\s}}
\label{kery13b}
\end{align}
for all $|(h_1,h_2)|\les 1$.

\medskip

\noi
{\bf $\bul$ Subcase 2.1: $\min(||t-s| - |x||, |t-s|) \les |(h_1,h_2)|$.}  From~\eqref{kery13b} and a dyadic decomposition, we bound
\begin{align}
\begin{split}
& \int_{\R\times\T^2} |K^\star_{N}(t,s,x;h_1,h_2)||\mc K_{3,\s}(h_1, h_2)| \ind_{\min(||t-s| - |x||, |t-s|) \les |(h_1,h_2)| \les 1} \ind_{|h_1|\les 1} dh_1 dh_2 \\
& \quad  \les \sum_{\l=0}^{\infty} \ind_{2^l \les \max(||t-s| - |x||^{-1}, |t-s|^{-1})} \int_{\R\times\T^2} \frac{|K^\star_{ N}(t,s,x;h_1,h_2)|}{|(h_1,h_2)|^{3+\s}} \ind_{|(h_1,h_2)| \sim 2^{-\l}} dh_1 dh_2 \\
& \quad  \les \sum_{\l=0}^{\infty} \ind_{2^l \les \max(||t-s| - |x||^{-1}, |t-s|^{-1})} 2^{\l \s} \\
& \quad \qquad \qquad \qquad \times \int_{\R\times\T^2} | K^\star_{N}(t,s,x;h_1,h_2)| 2^{3\l} \ind_{|h_1| \les 2^{-\l}} \ind_{|h_2| \les 2^{-\l}} dh_1 dh_2.
\end{split}
\label{kery13c}
\end{align}
From~\eqref{keryK}, and recalling that $|t+h_1| \les 1$ on the support of $K_{N}^{\star}(t+h_1,s,x+h_2)$, the contribution from $K_{N}^{\star}(t+h_1,s,x+h_2)$ to the integral
\begin{align} \int_{\R\times\T^2} | K^\star_{N}(t,s,x;h_1,h_2)| 2^{3\l} \ind_{|h_1| \les 2^{-\l}} \ind_{|h_2| \les 2^{-\l}} dh_1 dh_2
\label{kery13cc}
\end{align}
is bounded by
\begin{align}
\begin{split}
& \int_{\R\times\T^2} | K^\star_{N}(t+h_1,s,x+h_2)| 2^{3\l} \ind_{|h_1| \les 2^{-\l}} \ind_{|h_2| \les 2^{-\l}} dh_1 dh_2 \\
& \quad \les \int_{\R \times \T^2} \frac{\jbb{\log(||t-s+h_1| - |x+h_2||)}}{\big|(t-s+h_1)^2 - |x+h_2|^2\big|^{\frac12}} \ind_{|t-s+h_1| \les 1} \\
& \qquad \qquad \quad \times \jb{\log|x+h_2|} \, 2^{3\l} \ind_{|h_1| \les 2^{-\l}} \ind_{|h_2| \les 2^{-\l}} dh_1 dh_2.
\end{split}
\label{kery13d}
\end{align}
Let us denote by $\1_\l$ and $\II_\l$ the respective contributions of $||t-s+h_1| - |x+h_2|| < 2^{-100 \l}$ and $||t-s+h_1| - |x+h_2|| \ge 2^{-100 \l}$ to the right-hand side of~\eqref{kery13d}. On the one hand, by using the bound $|\log (x)| \les |x|^{-\frac14}$ for any $0<|x| \les 1$ and a polar change of coordinates, we first estimate $\1_\l$ by
\begin{align}
\begin{split}
\1_\l & \les 2^{3\l} \int_{\R} dh_1 \frac{\ind_{|t-s+h_1| \les 1}}{|t-s+h_1|^{\frac12}} \int_{\T^2} dh_2 \frac{\jb{\log|x+h_2|} }{||t-s+h_1| - |x+h_2||^{\frac34}}  \ind_{||t-s+h_1| - |x+h_2|| < 2^{-100 \l}} \\
& \les 2^{3\l} \int_{\R} dh_1 \ind_{|h_1| \les 1} \int_{0}^{10} dr \frac{\jb{\log(r)} }{||h_1| - r|^{\frac34}}  \ind_{||h_1| - r| < 2^{-100 \l}} \\
& \les 2^{-10\l}.
\end{split}
\label{kery100}
\end{align}
On the other hand, the term $\jbb{\log(||t-s+h_1| - |x+h_2||)} \ind_{||t-s+h_1| - |x+h_2|| \ge 2^{-100 \l}}$ is bounded up to a constant by $ \l$. Hence, applying Lemma~\ref{lem:ker01} and Lemma~\ref{lem:ker02} with $k=2$, yields
\begin{align}
\begin{split}
\II_\l & \les \l  \int_{\T^2} dh_2 2^{2\l} \ind_{|h_2| \les 2^{-\l}} \int_{\R} dh_1 \frac{\jb{\log|x+h_2|}}{\big|(t-s+h_1)^2 - |x+h_2|^2\big|^{\frac12}}  2^{\l} \ind_{|h_1| \les 2^{-\l}} \\
& \les  \l \int_{\T^2}  2^{2\l} \ind_{|h_2| \les 2^{-\l}}  \frac{\jb{\log|x+h_2|}^2}{\big|(t-s)^2 - |x+h_2|^2\big|^{\frac12}}dh_2 \\
& \les \l \cdot \mc R_{0,\frac12,\frac12}(t-s,x) \, \jb{\log |x|}
\end{split}
\label{kery101}
\end{align}
Lastly, the contribution from $K_{\ld,N}^{\star}(t,s,x)$ to~\eqref{kery13cc} is simply bounded by $\mc R_{0,\frac12,\frac12}(t-s,x)$. Therefore, from the above,~\eqref{kery13c},~\eqref{kery13d}, \eqref{kery100} and~\eqref{kery101}, we deduce that
\begin{align*}
& \int_{\R\times\T^2} |K^\star_{ N}(t,s,x;h_1,h_2)||\mc K_{3,\s}(h_1, h_2)| \ind_{\min(||t-s| - |x||, |t-s|) \les |(h_1,h_2)| \les 1} dh_1 dh_2 \\
& \quad  \les \sum_{\l=0}^{\infty} \ind_{2^l \les \max(||t-s| - |x||^{-1}, |t-s|^{-1})} 2^{\l \s} \big( \l \cdot \mc R_{0,\frac12, \frac12}(t-s,x) \, \jb{\log|x|} + 2^{-10\l} \big) \\
& \quad \les 1+ \big(\mc R_{\s,\frac12, \frac12}(t-s,x) + \mc R_{0, \frac12, \frac12+\s}(t-s,x)\big)\, \jbb{\log(||t-s| - |x||)} \, \jb{\log|x|}.
\end{align*}
This shows~\eqref{kery12} since $|t| \les 1$.
%
%
%\justin{NB: By the support properties of $K_{\ld, N}^{\star}$, in this case we must have $|t|\les 1$ since otherwise if $|t|\gg 1$, as $0\leq s, |x|\les 1$, we'd have $\min(||t-s| - |x||, |t-s|) \sim |t|$, a contradiction to the Case 2 assumption.}
%
%\younes{Yep, so we get a factor $\jb t^{-3 - \s}$ for free in front of our bounds.}

\medskip

\noi
{\bf $\bul$ Subcase 2.2: $|(h_1,h_2)| \ll \min(||t-s| - |x||, |t-s|)$.} In this case, we use~\eqref{kery13b} and the mean value theorem and~\eqref{kery5}-\eqref{kery6} in Lemma~\ref{lem:ker03} to obtain
\begin{align}
\begin{split}
& \int_{\R\times\T^2} |K^\star_{N}(t,s,x;h_1,h_2)| |\mc K_{3,\s}(h_1, h_2)| \ind_{|(h_1,h_2)| \ll \min(||t-s| - |x||, |t-s|)} dh_1 dh_2 \\
& \qquad \les \int_{\R\times\T^2} dh_1 dh_2 \frac{\ind_{|(h_1,h_2)| \ll \min(||t-s| - |x||, |t-s|)}}{|(h_1,h_2)|^{3+\s}} \\
& \qquad \qquad \quad \times \int_0^1 |\nb_{t,x} K^\star_{N}(t + \tau h_1, s,x+\tau h_2)| |(h_1,h_2)| d \tau \\
& \qquad  \les \int_{\R\times\T^2} dh_1 dh_2  \frac{\ind_{|(h_1,h_2)| \ll \min(||t-s| - |x||, |t-s|)}}{|(h_1,h_2)|^{2+\s}} \\
& \qquad \qquad  \times \int_0^1 \big(\mc R_{1, \frac12, \frac12}(t+\tau h_1-s, x+\tau h_2) + \mc R_{0, \frac12, \frac32}(t+\tau h_1-s, x+\tau h_2) \big) d \tau \\
& \qquad \les \big(\mc R_{1, \frac12, \frac12}(t-s, x) + \mc R_{0, \frac12, \frac32}(t-s, x) \big) \int_{\R\times\T^2}  \frac{\ind_{|(h_1,h_2)| \ll \min(||t-s| - |x||, |t-s|)}}{|(h_1,h_2)|^{2+\s}} dh_1 dh_2 \\
& \qquad \les \mc R_{\s, \frac12, \frac12}(t-s, x) + \mc R_{0, \frac12, \frac12+\s}(t-s, x).
\end{split}
\label{kery15}
\end{align}
In~\eqref{kery15}, we used that for any $s_1,s_2,s_3 \geq 0$, we have
\begin{align}
\mc R_{s_1,s_2,s_3}(t+h_1-s,x+h_2) \sim \mc R_{s_1,s_2,s_3}(t-s,x) \label{kerynoh}
\end{align}
for any $|(h_1,h_2)| \ll \min(||t-s| - |x||, |t-s|)$, which can be observed from a variant of~\eqref{kery8}-\eqref{kery9}. This finishes the proof of~\eqref{kery12}.
%\justin{From comment after Lemma \ref{lem:ker03}, we have $|s|\les 1$ and $|t+\tau h_1|\les 1$. Since $|h_1|\les 1$, we have $|t|\les 1$ in this case.}
%\younes{Yes, I'm already using this fact—see the first sentences in Case 2. This is why we have the bound~\eqref{kery13b} and we only have to decompose the variables $(h_1,h_2)$ along dyadic values which are $\les 1$ in the integrals of Subcase 2.1}
\end{proof}

We may now prove Proposition~\ref{prop:kery}.

\begin{proof}[Proof of Proposition~\ref{prop:kery}]
The bound~\eqref{kerygoal} for $\s=0$ follows from the triangle inequality with~\eqref{ker1b} and~\eqref{poisson4} and we thus consider the case $0<\s<\frac12$ in what follows. Recall the approximations of unity $\{\nu^1_N\}_{N\in \N}$ and $\{\nu^2_N\}_{N\in \N}$ used to define $\{K^\star_{\ld, N}\}_{N \in \N}$ in~\eqref{kery1}. Then, 
\begin{align*}
\nu^1_{N}\nu^2_{N} \ast_{t,x} G \to G 
\end{align*}
in $L^p(\R \times \T^2)$ as $N \to \infty$. Thus, by~\eqref{dtb2} and~\eqref{duha3}, in order to deduce~\eqref{kerygoal}, it suffices to prove 
\begin{align}
\Big\| \int_{\R} \{(-\Dl_{\R \times \T^2})^{\frac\s2} K^\star_{N}(\cdot, s,\cdot) \} \ast_x F(s,\cdot) ds\Big\|_{L^p(\R \times \T^2)} \les_\eps \|\mc A_{\s}(F)\|_{L^p(\R \times \T^2)},
\label{kerygoal2}
\end{align}
with an implicit constant uniform in $N \ge 1$. The bound~\eqref{kerygoal2} follows immediately from the pointwise estimate in Lemma~\ref{lem:ker1} and noting that, for any $s_1, s_2, s_3>0$, we either have
\begin{align*}
 \mc R_{s_1,s_2,s_3}(t,x) \les \mc R_{0,s_2,s_1+s_3}(t,x),
\end{align*}
if $|t| \ges |x|$, or 
\begin{align*}
 \mc R_{s_1,s_2,s_3}(t,x) \les |t|^{-s_1 - s_2} |x|^{-s_3} \, \jb{\log(|x|)}^3,
\end{align*}
if $|t| \ll |x|$.
\end{proof}

\subsection{Kernel estimates with additional damping}
We revisit the results of the previous subsection in the context of a stronger damping factor $e^{-\ld|\cdot |}$. The main reason for tracking this additional parameter is is the stability lemma (Lemma~\ref{LEM:stab}) appearing later on in the global well-posedness part. We stress that the results of the previous subsection suffice completely for proving the local well-posedness of \eqref{liouville} (Proposition~\ref{PROP:LWP}).

For $\ld\geq \frac 12$, we define the weighted Duhamel operator:
\begin{align}
\I_{\textup{wave},\ld}^\star(F) (t) = \int_{\R} e^{-\ld|t-s|} K^{\star}(t,s,\cdot) \ast_x F(s) ds,
\label{duha3ld}
\end{align}
the significance of which is that from \eqref{Ldweight}, we have 
\begin{align}
\| \I_{\textup{wave}}^\star(F) \|_{\Ld^{s,b}_{p,\ld}} = \| \I_{\textup{wave},\ld}^\star( e^{-\ld |\cdot|}F) \|_{\Ld^{s,b}_{p}}.
\label{ldnorm}
\end{align}
For $0\le \s<\frac12$, we define the sublinear operator $\mc A_{\s, \ld}$ given by
\begin{align}
\mc A_{\s,\ld}(F) (t) = \int_{\R} \mc R_{\s,\ld}(t,s,\cdot) \ast_x |F(s)| ds,
\label{subopld}
\end{align}
where the kernel $\mc R_{\s, \ld}$ is defined as
\begin{align}
\begin{split}
\mc R_{\s,\ld}(t,s,x) & =e^{-\frac{\ld}{200}|t-s|}\mc R_{\s}(t,s,x) +\jb{t}^{-3-\s} \ld^{-\frac 14}  \\
 & \quad +\jb{t}^{-3-\s} \ld^{-\dl} \frac{\jb{\log |x|}}{ |x|^{\frac 12}} \big( \mc R_{\frac 12+\s+2\dl,0,0}(t-s,x) + \mc R_{0, 0,\frac 12+\s +2\dl}(t-s,x)\big)
 \end{split} \label{Rsld}
\end{align}
for $0<\dl\leq \frac 12$.
We now give a version of Proposition~\ref{prop:kery} for $\I^{\star}_{\textup{wave},\ld}$.

\begin{proposition}\label{prop:keryld}
Let $\ld \ge \frac12$, $1 \le p < \infty$, $0\le \s,\dl<\frac12$, and $\I^{\star}_{\textup{wave},\ld}$ and $\mc A_{\s,\ld}$ be as in~\eqref{duha3ld} and~\eqref{subopld}, respectively. Then, 
\begin{align}
\|(-\Dl_{\R \times \T^2})^{\frac\s2} \I^{\star}_{\textup{wave},\ld}(F) \|_{L^p(\R \times \T^2)} \les \|\mc A_{\s,\ld}(F)\|_{L^p(\R \times \T^2)}.
\label{kerygoalld}
\end{align}
\end{proposition}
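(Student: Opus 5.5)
The plan is to redo the proof of Proposition~\ref{prop:kery} with the weighted kernel $K^\star_\ld(t,s,x) := e^{-\ld|t-s|}K^\star(t,s,x)$ in place of $K^\star$. As before, mollify it to $K^\star_{\ld,N}$ via \eqref{kery1}. By density and \eqref{dtb2} it then suffices to prove the pointwise bound
\begin{align*}
|(-\Dl_{\R\times\T^2})^{\frac\s2}K^\star_{\ld,N}(t,s,x)| \les \mc R_{\s,\ld}(t,s,x),
\end{align*}
uniformly in $N$. Indeed, once this holds, integrating against $|F(s)|$ gives \eqref{kerygoalld}, exactly as \eqref{kerygoal2} follows from Lemma~\ref{lem:ker1}. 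The case $\s=0$ is immediate from $e^{-\ld|t-s|}\le 1$.

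First, the pointwise bounds carry over. Since $e^{-\ld|t-s|}$ is Lipschitz in $t$ with constant $\ld$, the mollification bound \eqref{keryK} becomes $|K^\star_{\ld,N}|\les e^{-c\ld|t-s|}\mc R_{0,\frac12,\frac12}(t-s,x)$. Here I use that on the support of the mollifier, $|t'-t|\les N^{-1}$, and when $N^{-1}\gtrsim |t-s|$ the exponential is comparable to $1$ anyway. The analogue of Lemma~\ref{lem:ker03} for the weighted kernel picks up an extra term from the time derivative of the weight:
\begin{align*}
|\dt K^\star_{\ld,N}| \les e^{-c\ld|t-s|}\big(\mc R_{1,\frac12,\frac12}+\mc R_{0,\frac12,\frac32} + \ld\,\mc R_{0,\frac12,\frac12}\big)(t-s,x).
\end{align*}

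Next I run the case analysis of Lemma~\ref{lem:ker1} on the difference $K^\star_{\ld,N}(t+h_1,s,x+h_2)-K^\star_{\ld,N}(t,s,x)$.
\begin{itemize}
\item Case 1, $|h_1|\gg1$. This gives $\jb t^{-3-\s}$ times the integral of $e^{-c\ld|t+h_1-s|}\mc R_{0,\frac12,\frac12}$ over $|t+h_1|\les1$. That integral is bounded by a constant, and the exponential improves it. This is the source of the $\jb t^{-3-\s}$ contributions in \eqref{Rsld}.
\item Cases 2.1 and 2.2, $|h_1|\les 1$ with $|(h_1,h_2)|\les\min(||t-s|-|x||,|t-s|)$ or the reverse. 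Here the old estimates go through with the factor $e^{-c\ld|t-s|}$ preserved, provided $|h_1|\ll|t-s|$, since then $e^{-\ld|t+h_1-s|}\sim e^{-\ld|t-s|}$ up to halving the exponent. This produces $e^{-\frac{\ld}{200}|t-s|}\mc R_\s$.
\item The weight-derivative term $\ld\,\mc R_{0,\frac12,\frac12}$ in Subcase 2.2. Integrating against $|h|^{-2-\s}$ over $|h|\ll|t-s|$ gives $\ld|t-s|^{1-\s}\mc R_{0,\frac12,\frac12}e^{-c\ld|t-s|}$. Using $\ld|t-s|e^{-c\ld|t-s|}\les e^{-c'\ld|t-s|}$, this is absorbed into $e^{-\frac{\ld}{200}|t-s|}\mc R_{\s}$.
\end{itemize}

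The main obstacle is the regime where the exponential weight gives no decay: the increments $h$ are comparable to or larger than $|t-s|$, but the point $t+h_1$ lies in the region $|t+h_1-s|\les\ld^{-1}$. There the kernel's singularity is not damped, so we must extract a gain in $\ld$ from the smallness of the region instead.

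My plan for this regime is as follows.
\begin{itemize}
\item Split the dyadic sum in Subcase 2.1 according to whether $2^{-\l}\ges\ld^{-1}$ or not.
\item For the contribution of $K^\star_{\ld,N}(t+h_1,\dots)$ restricted to $|t+h_1-s|\les\ld^{-1}$, trade a small power of the time-integrability of the kernel for a factor $\ld^{-\dl}$. Concretely, bound $\ind_{|t+h_1-s|\les\ld^{-1}}\le \ld^{-2\dl}|t+h_1-s|^{-2\dl}$, which costs $2\dl$ extra powers of the singularity.
\item Recombine the result as $|x|^{-\frac12}\jb{\log|x|}\big(\mc R_{\frac12+\s+2\dl,0,0}+\mc R_{0,0,\frac12+\s+2\dl}\big)$, splitting according to whether $|t-s|\ges|x|$ or not, as at the end of the proof of Proposition~\ref{prop:kery}. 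This gives the third term in \eqref{Rsld}.
\item The leftover bounded pieces, such as the $2^{-10\l}$ tails in \eqref{kery100}, summed over $2^{\l}\ges\ld$, yield the harmless $\ld^{-\frac14}\jb t^{-3-\s}$ term.
\end{itemize}

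The delicate point is checking that this power counting stays within the constraint $\frac12+\s+2\dl$ that the kernel can absorb. I expect this bookkeeping to be the hard step.
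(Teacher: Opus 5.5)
Your skeleton matches the paper's. You weight the kernel, mollify, and rerun the cases of Lemma~\ref{lem:ker1}. Case~1 goes through via $\int e^{-\ld r}r^{-\frac12-\eps}\,dr$, and Subcase~2.2 via the mean value theorem together with $\ld|t-s|e^{-c\ld|t-s|}\les e^{-c'\ld|t-s|}$. One small difference: the paper weights the mollified kernel, $e^{-\ld|t-s|}K^\star_N$, rather than mollifying the weighted one. With your choice, the comparability of the weight on the mollifier's support needs $N\gtrsim\ld$, which is harmless since $\ld$ is fixed as $N\to\infty$.

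\textbf{How the paper gets the $\ld^{-\dl}$ gain in Subcase~2.1.} This is where your route genuinely differs.
\begin{itemize}
\item It discards $\ld|h_1|\les1$, where $f_\ld(t-s+h_1)\les f_\ld(t-s)$, so that $2^\l\ll\ld$ on the remaining part.
\item It crudely bounds $(|t-s+h_1|+|x+h_2|)^{-\frac12}\le|x+h_2|^{-\frac12}$. This is the source of $|x|^{-\frac12}$ in \eqref{Rsld}.
\item It integrates the exponential across the light cone: $\int_{|r-|x+h_2||\sim2^{-k}}e^{-\ld|r|}\,dr\les\ld^{-\frac12}2^{-\frac k2}$. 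Hence the $h_1$-integral is $\les\l\ld^{-\frac12}$.
\item Finally $\ld^{-\frac12}\le\ld^{-\dl}2^{-(\frac12-\dl)\l}$, and the resulting loss $2^{\dl\l}$ is absorbed into the exponent $\frac12+\s+2\dl$.
\end{itemize}

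\textbf{Your route.} You use only a pointwise trade of decay for singularity. This also works and is simpler in spirit. Apply $e^{-\ld r}\le C_\dl(\ld r)^{-2\dl}$ to the shifted term on all of Subcase~2.1. This gives $\ld^{-2\dl}|t-s|^{-2\dl}$ times the right-hand side of \eqref{kery12}. That is dominated by the third term of \eqref{Rsld}, using $(|t-s|+|x|)^{-\frac12}\le|x|^{-\frac12}$ and $a^{-2\dl}b^{-\g}\le a^{-\g-2\dl}+b^{-\g-2\dl}$. No split in $2^{-\l}$ versus $\ld^{-1}$ is then needed.

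Two caveats:
\begin{itemize}
\item Use this exponential form of the trade, not your indicator bound. As written, your regimes miss $\ld^{-1}\ll|t+h_1-s|\ll|t-s|$ with $|h_1|\sim|t-s|$. There the weight decays, but not like $e^{-c\ld|t-s|}$, so neither of your bullets covers it.
\item The price of your route is exactly the bookkeeping you anticipated. Lemmas~\ref{lem:ker01}--\ref{lem:ker02} must be redone with a power weight rather than a logarithmic one. Take the $h_1$-average of $|\tau+h_1|^{-2\dl}|(\tau+h_1)^2-a^2|^{-\frac12}$ with $|\tau|\sim2^{-\l}\gg a=|x+h_2|$. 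It loses a factor $(2^{-\l}/a)^{2\dl}$, not merely $\jb{\log a}$, and this is recovered only after averaging in $h_2$. So the averaging must be done jointly in $(h_1,h_2)$.
\end{itemize}
The paper's route needs no new averaging lemma, because it exploits the integrability of $e^{-\ld r}$ near the cone directly.
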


\begin{proof} 
It suffices to prove the following version of \eqref{kerygoal}: 
\begin{align}
\begin{split}
(-&\Dl_{\R \times \T^2})^{\frac \s2}[e^{-\ld |t-s|}K^\star_{N} (t,s,x)]   \\
&\les e^{-\frac{
\ld}{200}|t-s|} [\text{RHS} \eqref{kery12}] +\jb{t}^{-3-\s} \ld^{-\frac 14}  \\
 & \quad +\jb{t}^{-3-\s} \ld^{-\dl} \jb{\log |x|} |x|^{-\frac 12} \big( \mc R_{\frac 12+\s+2\dl,0,0}(t-s,x) + \mc R_{0, 0,\frac 12+\s +2\dl}(t-s,x)\big)
 \end{split} \label{kery12ld}
\end{align}
for all $(t,s,x) \in \R^2 \times \T^2$ and uniformly in $N\geq 1$. Indeed, once we have \eqref{kery12ld}, \eqref{kerygoalld} follows by the same arguments as in the proof of Proposition~\ref{prop:kery}.

Fix $(t,s,x)\in \R^2\times \T^2$ and let $f_{\ld}(t):= e^{-\ld|t|}$. Then, by \eqref{dtb}, we have
\begin{align}
\begin{split}
(-&\Dl_{\R \times \T^2})^{\frac \s2}[f_{\ld}(t-s) K^\star_{N} (t,s,x)]  \\
&= -c_{3,\s} \int_{\R\times\T^2} \big[ f_{\ld}(t-s+h_1)K^\star_{N}(t+h_1,s,x+h_2) \\
& \hphantom{XXXXXXXX}-f_{\ld}(t-s)K^{\star}_{N}(t,s,x)\big]  \mc K_{3,\s}(h_1, h_2) dh_1 dh_2
\end{split} \label{prodK}
 \end{align}
We follow the same case separation as in the proof of Lemma~\ref{lem:ker1}.

\smallskip
\noi
{\bf $\bul$ Case 1: $|h_1| \gg 1$.}
We write 
\begin{align}
\begin{split}
&\eqref{prodK}= -c_{3,\s} \int_{\R\times\T^2}f_{\ld}(t-s) K^\star_{N}(t,s,x;h_1,h_2) \mc K_{3,\s}(h_1, h_2) dh_1 dh_2 \\
& +c_{3,\s} \int_{\R\times\T^2} [f_{\ld}(t-s+h_1)-  f_{\ld}(t-s)] K^\star_{N}(t+h_1,s,x+h_2) \mc K_{3,\s}(h_1, h_2) dh_1 dh_2.
\end{split} \label{prodK2}
\end{align}
For the first term, we have precisely the same bound as in \eqref{kery12} with the additional factor $f_{\ld}(t-s)$. It then remains to control the second term.
By the support properties of $K^\star_{N}(t+h_1,s,x+h_2)$ and since $|h_1|\gg 1$, we have $|h_1|\sim |t|\gg |s|$. We reduce down to estimating
\begin{align}
\int_{\R\times\T^2} f_{\ld}(t-s+h_1) K^\star_{N}(t+h_1,s,x+h_2) \mc K_{3,\s}(h_1, h_2) dh_1 dh_2. \label{prodK3}
\end{align}
By arguing as in~\eqref{kery106}, this term is bounded by
\begin{align*}
\frac{1}{\jb{t}^{3+\s}} \int_{|t-s+h_1|\les 1} \frac{ e^{-\ld|t-s+h_1|}}{ |t-s+h_1|^{\frac 12+\eps}} dh_1 \les \frac{1}{\jb{t}^{3+\s}} \frac{1}{\ld^{\frac 14}},
\end{align*}
where we used H\"{o}lder in the second inequality.
\smallskip

\noi
{\bf $\bul$ Case 2: $|h_1| \les 1$.}
\smallskip

\noi
{\bf $\bul$ Subcase 2.1: $\min(||t-s| - |x||, |t-s|) \les |(h_1,h_2)|$.} 

As in Case 1, we reduce to controlling the term \eqref{prodK3} appearing in \eqref{prodK2}, as the remaining terms differ from those already considered in \eqref{kery13c} by the additional $(h_1,h_2)$-independent factor $f_{\ld}(t-s)$.
Moreover, we may assume that $\ld |h_1|\gg  1$ since otherwise, if $\ld |h_1|\les 1$, we use
$
f_{\ld}(t-s+h_1)=e^{-\ld|t-s+h_1|} \leq e^{-\ld|t-s|}e^{\ld |h_1| } \les f_{\ld} (t-s),
$
which is an acceptable upper bound that does not depend on $(h_1,h_2)$.

As in \eqref{kery13c}, we have
 \begin{align*}
& \int_{\R\times\T^2} f_{\ld}(t-s+h_1) |K^\star_{ N}(t+h_1,s,x+h_2)||\mc K_{3,\s}(h_1, h_2)|\\
& \qquad\qquad \qquad\qquad\times  \ind_{\min(||t-s| - |x||, |t-s|) \les |(h_1,h_2)| \les 1} \ind_{|h_1|\les 1} dh_1 dh_2 \\
& \quad  \les \sum_{\l=0}^{\infty} \ind_{2^\l \les \max(||t-s| - |x||^{-1}, |t-s|^{-1})} 2^{\l \s} \\
& \quad \qquad  \times \int_{\R\times\T^2} f_{\ld}(t-s+h_1) | K^\star_{N}(t+h_1,s,x+h_2)| 2^{3\l} \ind_{|h_1| \les 2^{-\l}} \ind_{|h_2| \les 2^{-\l}} dh_1 dh_2.
\end{align*}
As $\ld |h_1|\gg 1$, we have $2^{\l} \ll \ld$. In view of \eqref{keryK} and that $|t-s+h_1|\les 1$, since otherwise $K^\star_{N}(t+h_1,s,x+h_2) =0$, we have 
\begin{align*}
&\int_{\R\times\T^2} f_{\ld}(t-s+h_1) | K^\star_{N}(t+h_1,s,x+h_2)| 2^{3\l} \ind_{|h_1| \les 2^{-\l}} \ind_{|h_2| \les 2^{-\l}} dh_1 dh_2  \\
& \quad \les \int_{\R \times \T^2} f_{\ld}(t-s+h_1)  \frac{\jbb{\log(||t-s+h_1| - |x+h_2||)}}{\big|(t-s+h_1)^2 - |x+h_2|^2\big|^{\frac12}} \ind_{|t-s+h_1| \les 1} \\
& \qquad \qquad \quad \times \jb{\log|x+h_2|} \, 2^{3\l} \ind_{|h_1| \les 2^{-\l}} \ind_{|h_2| \les 2^{-\l}} dh_1 dh_2.
\end{align*}
We again denote by $\1_{\l}$ and $\II_{\l}$ the contributions coming from $ \big| |t-s+h_1|-|x+h_2|\big| <2^{-100\l}$ and  $ \big| |t-s+h_1|-|x+h_2|\big| >2^{-100\l}$, respectively. By passing to polar coordinates and using the mean value theorem,  we get 
\begin{align*}
\1_{\l} & \les  2^{3\l} \int_{|t-s+h_1|\les 1} \frac{f_{\ld}(t-s+h_1)}{ |t-s+h_1|^{\frac 12}} \int_{0}^{10} \frac{ \jb{\log r} }{| |t-s+h_1|-r|^{\frac 34}} \ind_{| | t-s+h_1|-r|<2^{-100\l}} dr dh_1 \\
& \les 2^{-24\l}\int_{|t-s+h_1|\les 1} \frac{f_{\ld}(t-s+h_1)}{ |t-s+h_1|^{\frac 12}} dh_{1} \\
&\les 2^{-24\l} \sum_{j=0}^{\infty} 2^{\frac{j}{2}} \int_{2^{-j-1}}^{2^{-j+1}} e^{-\ld h} dh \\
& \les 2^{-24\l} \sum_{j=0}^{\infty} 2^{\frac j2} 2^{-\frac 34 j} \ld^{-\frac 14} \\
& \les 2^{-24\l} \ld^{-\frac 14},
\end{align*}
Thus, 
\begin{align*}
\sum_{\l=0}^{\infty} \ind_{2^l \les \max(||t-s| - |x||^{-1}, |t-s|^{-1})} 2^{\l \s}  \1_{\l} \les \ld^{-\frac 14}.
\end{align*}

We now consider the contribution from $\II_{\l}$. We have
\begin{align*}
\II_{\l} & \les \l\,  2^{3\l} \int_{\T^2}  \frac{ \jb{\log|x+h_2|}}{ |x+h_2|^{\frac 12}} \ind_{|h_2|\les 2^{-\l}}  \int_{|h_1|\les 2^{-\l}} \frac{f_{\ld}(t-s+h_1)}{  | |t-s+h_1|-|x+h_2||^{\frac 12}} dh_1 dh_2
\end{align*}
For the inner integral over $h_1$, we decompose:
\begin{align*}
 \int_{|h_1|\les 2^{-\l}} \frac{f_{\ld}(t-s+h_1)}{  | |t-s+h_1|-|x+h_2||^{\frac 12}} dh_1& \les \sum_{k=0}^{100\l} 2^{\frac{k}{2}} \int_{| r-|x+h_2|| \sim 2^{-k}} f_{\ld}(r) dr \\
 &\les \sum_{k=0}^{100\l} 2^{\frac{k}{2}} 2^{-\frac{k}{2}}\ld^{-\frac 12}   \les \l \ld^{-\frac 12} .
\end{align*}
Here in the second inequality, we split into the cases $r<|x+h_2|$ and $r\geq |x+h_2|$. 
When $r\geq |x+h_2|$, we have rapid decay in $\ld 2^{-k}$: 
\begin{align*}
\int_{| r-|x+h_2|| \sim 2^{-k}} \ind_{r\geq |x+h_2|} f_{\ld}(r) dr =\ld^{-1}f_{\ld}(x+h_2) [ e^{-\frac{1}{4}\ld 2^{-k}} - e^{-4\ld 2^{-k}}] \les \ld^{-\frac 12} 2^{-\frac{k}{2}} f_{\ld}(x+h_2).
\end{align*}
If instead $r<|x+h_2|$, then since $r\geq 0$, we also have $|x+h_2|>2^{2-k}$ and so by the mean value theorem, we only have
\begin{align*}
\int_{| r-|x+h_2|| \sim 2^{-k}} \ind_{r< |x+h_2|} f_{\ld}(r) dr &= \int_{|x+h_2|-2^{2-k}}^{|x+h_2|-2^{-2-k}} f_{\ld}(r)dr    \\
&=\ld^{-1}[ e^{-\ld( |x+h_2|-2^{-2-k})}- e^{-\ld( |x+h_2|-2^{2-k})}] \\
&\les \ld^{-\frac 12}2^{-\frac{k}{2}}.
\end{align*}
Therefore, 
\begin{align*}
\II_{\l} \les \ld^{-\frac 12} \l^2 2^{3\l} \int_{|h_2|\les 2^{-\l}} \frac{ \jb{\log |x+h_2|}}{|x+h_2|^{\frac 12}} dh_2.
\end{align*}
We decompose this integral into two cases if $|x|\les |h_2|$ or $|x|\gg |h_2|$, denoting the contributions by $\II_{\l}^{(1)}$ and $\II^{(2)}_{\l}$, respectively.
 If $|x|\les |h_2|$, then $|x+h_2|\les 2^{-\l}$, so that by a change of variables and moving to polar coordinates
\begin{align*}
\II^{(1)}_{\l} &=\ld^{-\frac 12}\l^2 2^{3\l} \int_{|h_2|\les 2^{-\l}} \ind_{|x|\les |h_2|}  \frac{\jb{\log |x+h_2|}}{|x+h_2|^{\frac 12}} dh_2 \\
& \les \ld^{-\frac 12} \l^2 2^{3\l}\int_{|h|\les 2^{-\l}} \frac{\jb{\log |h|}}{|h|^{\frac 12}}  dh \\
& \les \ld^{-\frac 12}  \l^2 2^{3\l} \sum_{k=\l}^{\infty} 2^{\frac k2}k\int_{r\sim 2^{-k}} 1 dr \\
& \les \ld^{-\frac 12}  \l^2 2^{3\l}  \sum_{k=\l}^{\infty} k 2^{-\frac 32 k} \\
&\les \ld^{-\frac 12} \l^2 2^{(\frac 32  +)\l}.
\end{align*}
 Therefore, by recalling that $\ld \gg 2^{\l}$,  and since $|x|\les 2^{-\l}$ implies that $2^{\l}\les |x|^{-1}$, we have
\begin{align*}
\sum_{\l=0}^{\infty} \ind_{2^l \les \max(||t-s| - |x||^{-1}, |t-s|^{-1})} 2^{\l \s} \II_{\l}^{(1)} & \les \ld^{-\dl} \sum_{\l=0}^{\infty} \l^2 2^{(1+\s+\dl+)\l} \ind_{2^\l \les \max(||t-s| - |x||^{-1}, |t-s|^{-1})}  \\
& \les \ld^{-\dl} |x|^{-\frac 12} \big[ \mc R_{\frac 12+ \s+2\dl, 0,0}(t-s,x) + \mc R_{0,0,\frac12 +\s+2\dl}(t-s,x)]
\end{align*}
for any $0<\dl \leq \frac 12$.

For the contribution $\II_{\l}^{(2)}$, we have $|x|\gg |h_2|$, so
\begin{align*}
\II_{\l}^{(2)}\les \ld^{-\frac 12}\l^2 2^{3\l} \frac{\jb{\log |x|}}{|x|^{\frac 12}} \int_{|h_2|\les 2^{-\l}}1 dh_2 \les \ld^{-\frac 12}\l^2 2^{\l} \frac{\jb{\log |x|}}{|x|^{\frac 12}} \les \ld^{-\dl} \l^2 2^{(\frac 12+\dl)\l} \frac{\jb{\log |x|}}{|x|^{\frac 12}}.
\end{align*}
Thus, 
\begin{align*}
\sum_{\l=0}^{\infty} \ind_{2^l \les \max(||t-s| - |x||^{-1}, |t-s|^{-1})} &2^{\l \s} \II_{\l}^{(2)}  \les \ld^{-\dl} \sum_{\l=0}^{\infty} \l^2 2^{(\frac 12+\s+\dl+)\l} \ind_{2^l \les \max(||t-s| - |x||^{-1}, |t-s|^{-1})} \frac{\jb{\log |x|}}{|x|^{\frac 12}}  \\
& \les \ld^{-\dl} \frac{\jb{\log |x|}}{|x|^{\frac 12}} \big[ \mc R_{\frac 12+ \s+2\dl, 0,0}(t-s,x) + \mc R_{0,0,\frac12 +\s+2\dl}(t-s,x)]
\end{align*}
for any $0<\dl \leq \frac 12$.

\smallskip

\noi
{\bf $\bul$ Subcase 2.2: $|(h_1,h_2)| \ll \min(||t-s| - |x||, |t-s|)$.} 
As $|h_1|\ll |t-s|$, we have 
\begin{align*}
f_{\ld}(t-s+\tau h_1) \leq f_{\ld/100}(t-s) \quad \text{and} \quad |\dt f_{\ld}(t-s+\tau h_1) |\les \ld f_{\ld/100}(t-s)
\end{align*}
 for any $\tau\in [0,1]$.
By \eqref{kery13b}, the mean value theorem, \eqref{kery5}-\eqref{kery6}, \eqref{kerynoh} we have 
\begin{align*}
|\eqref{prodK}| \les 
&  \int_{\R\times\T^2} dh_1 dh_2 \frac{\ind_{|(h_1,h_2)| \ll \min(||t-s| - |x||, |t-s|)}}{|(h_1,h_2)|^{3+\s}} \\
& \quad  \times \int_0^1 \Big( |(\dt f_{\ld})(t-s+\tau h_1)| |K^{\star}_{N}(t+\tau h_1, s,x+\tau h_2)|\\
& \hphantom{XXXXX} +  f_{\ld}(t-s+\tau h_1) |\nb_{t,x} K^{\star}_{N}(t+\tau h_1,s,x+\tau h_2)| \Big)   |(h_1,h_2)| d \tau  \\
& \les \int_{\R\times\T^2} dh_1 dh_2 \frac{\ind_{|(h_1,h_2)| \ll \min(||t-s| - |x||, |t-s|)}}{|(h_1,h_2)|^{2+\s}}\\ &\hphantom{XX}\times  f_{\ld/100}(t-s)\bigg( \ld \mc R_{0,\frac 12, \frac 12}(t-s,x)  +  \big(  \mc R_{1,\frac 12, \frac 12}(t-s,x)+ \mc R_{0,\frac 12, \frac 32}(t-s,x)\big)\bigg) \\
& \les f_{\ld/100}(t-s)\big( \mc R_{\s,\frac 12, \frac 12}(t-s,x) + \mc R_{0,\frac 12,\frac 12+\s}(t-s,x)\big)) \\
&\quad   + \ld f_{\ld/100}(t-s) |t-s|^{1-\s} \mc R_{0,\frac 12,\frac 12}(t-s,x) \\
& \les f_{\ld/200}(t-s)\big( \mc R_{\s,\frac 12, \frac 12}(t-s,x) + \mc R_{0,\frac 12,\frac 12+\s}(t-s,x)\big)),
\end{align*}
where in the last inequality we used $\ld e^{-a\ld}\les a^{-1}$ for any $\ld, a>0$.

Collecting all the cases establishes \eqref{kery12ld} and thus completes the proof.
\end{proof}

\section{Stochastic objects}\label{sec:sto}

\subsection{The stochastic convolution} \label{subsec:stochconv}

We recall the following regularity properties of the stochastic convolution. See \cite[Proposition 2.1]{GKO} for a proof.

\begin{lemma}\label{LEM:stick}
Given any $T,\eps>0$ and finite $p\geq 1$, $\{\Psi_{N}\}_{N\in \N}$ is a Cauchy sequence in $L^p(\O; C([0,T];W^{-\eps, \infty}(\T^2)))$, converging to some limit $\Psi$ in $L^p(\O; C([0,T];W^{-\eps, \infty}(\T^2)))$.
\end{lemma}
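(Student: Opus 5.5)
We argue in Fourier space, following the standard scheme for Gaussian stochastic objects. The first step is to make the covariance explicit. Writing $\Psi = \dt\D(t)u_0 + \D(t)(u_0+u_1) + \sqrt2\int_0^t \D(t-t')\,d\B(t')$, each Fourier coefficient $\ft\Psi(t,n)$ is a centred complex Gaussian with $\ft\Psi(t,-n)=\cj{\ft\Psi(t,n)}$. Its variance is $\jb{n}^{-2}$ for every $t\ge 0$; this reflects the invariance of $\muu_1$ under the linear damped Klein--Gordon flow with noise, and can also be checked directly from \eqref{prop1}. More importantly, we need the increment bound
\begin{align*}
\E\big[|\ft\Psi(t_1,n)-\ft\Psi(t_2,n)|^2\big]\les \min\big(\jb{n}^{-2},\,|t_1-t_2|^{2\ta}\jb{n}^{-2+2\ta}\big),
\end{align*}
for $0\le\ta\le1$. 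This follows from the mean value theorem applied to the propagators $\D(t)$ and $\dt\D(t)$, whose time derivatives cost one factor of $\jb{n}$. For the stochastic integral we additionally use the Itô isometry on $[t_1,t_2]$.

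The second step is to pass from second moments to $W^{-\eps,\infty}$ bounds. Set $\Psi_N=\Pii_N\Psi$ and fix $M\ge N$. For fixed $(t,x)$, the variable $\jb{\nb}^{-\eps}(\Psi_M-\Psi_N)(t,x)=\sum_n (\chi_M(n)-\chi_N(n))\jb{n}^{-\eps}\ft\Psi(t,n)e_n(x)$ is Gaussian. Its variance is at most $\sum_{|n|\ges N}\jb{n}^{-2-2\eps}\les N^{-2\eps}$. The same computation for time increments gives a variance bound $N^{-\eps}|t_1-t_2|^{\ta}$, with $\ta$ small relative to $\eps$, obtained by interpolating the two bounds above. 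Gaussian hypercontractivity (Wiener chaos of order one) then upgrades these to $L^p(\O)$ bounds on $\jb{\nb}^{-\eps}(\Psi_M-\Psi_N)(t_1,x)-\jb{\nb}^{-\eps}(\Psi_M-\Psi_N)(t_2,x)$, with constants growing like $\sqrt p$.

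The third step, the passage from these pointwise-in-$(t,x)$ moment bounds to the supremum over $[0,T]\times\T^2$, is the only genuinely delicate point. We handle it with the standard trick of trading a little regularity. First use the Sobolev embedding $W^{\eps/2,r}(\T^2)\embeds L^\infty(\T^2)$ for $r$ large, so that
\begin{align*}
\|f\|_{W^{-\eps,\infty}}\les\|\jb{\nb}^{-\eps/2}f\|_{L^r_x}.
\end{align*}
Next, Minkowski's inequality with $p\ge r$ moves the expectation inside the $x$-integral. Finally, the Kolmogorov continuity criterion (equivalently, Garsia--Rodemich--Rumsey) in the time variable gives
\begin{align*}
\E\Big[\|\Psi_M-\Psi_N\|_{C([0,T];W^{-\eps,\infty})}^p\Big]\les_{T,p} N^{-c\eps p}
\end{align*}
for some $c>0$. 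Hence $\{\Psi_N\}_{N\in\N}$ is Cauchy in $L^p(\O;C([0,T];W^{-\eps,\infty}(\T^2)))$, and we call its limit $\Psi$.

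It remains to identify this limit with the full stochastic convolution. Repeating the estimates without the truncation shows that $\Psi$ itself lies in the same space, and that $\Pii_N\Psi\to\Psi$ there. This limit is consistent with the Cauchy limit constructed above.
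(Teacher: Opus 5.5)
Your argument is correct and is essentially the standard proof behind \cite[Proposition 2.1]{GKO}, which the paper cites instead of giving a proof. That proof uses second-moment bounds on $\ft\Psi(t,n)$ and its time increments, Gaussian hypercontractivity, the embedding $W^{\eps/2,r}\embeds L^\infty$ to replace $L^\infty_x$ by $L^r_x$, and Kolmogorov's criterion in time. The one point to state more carefully is the order in your third step: apply Kolmogorov to the $L^r_x$-valued process $t\mapsto \jb{\nb}^{-\eps/2}(\Psi_M-\Psi_N)(t)$, verify its moment hypothesis via Minkowski and pointwise Gaussian moments with $p\ge r$ large enough that $\ta p>1$, and recover smaller $p$ by H\"older.
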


Given $N_1,N_{2}\in \N$, we set 
\begin{align}
\G_{N_1,N_2}(t_1, t_2, x_1,x_2)= \E\big[ \Psi_{N_1}(t_1,x_1) \Psi_{N_2}(t_2,x_2)\big] \label{cov}
\end{align}
for any $(t_1,x_1), (t_2,x_2)\in \R_{+}\times \T^2$. As $\Psi_{N}(t,x)$ is spatially homogeneous, we have 
\begin{align*}
\G_{N_1,N_2}(t_1,t_2, x_1,x_2)=  \E\big[ \Psi_{N_1}(t_1,x_1-x_2) \Psi_{N_2}(t_2,0)\big] =\G_{N_1,N_2}(t_1,t_2,x_1-x_2,0).
\end{align*}
With an abuse of notation, we define $\G_{N_1,N_2}(t_1,t_2,y)$ as $\G_{N_1,N_2}(t_1,t_2,y,0)$ for any $(t_1,y), (t_2,y)\in \R_{+}\times \T^2$. Moreover, if $N_1=N_2$, then we define $\G_{N_1}(t_1,t_2, y) = \G_{N_1,N_1}(t_1,t_2,y)$.
We recall the following result from \cite[Proposition 5.5]{Zine3} which establishes sharp estimates on the space-time covariances $\G_{N_1,N_2}$.

\begin{lemma}\label{LEM:GN}
Given $N\in \N$, let $\G_{N}$ be as in \eqref{GN}. Then, 
\begin{align}
\G_{N}(t_1,t_2,x) \approx -\tfrac{1}{2\pi}\log( |t_1-t_2|+|x|+N^{-1}) \label{cov1}
\end{align}
for any $(t_1,t_2,x)\in [0,1]^{2}\times \T^2$. Moreover, given $N_{1},N_2 \in \N$, let $\G_{N_1,N_2}$ be as in \eqref{cov}. Then, 
\begin{align}
\G_{N_1,N_2}(t_1,t_2, x) \approx -\tfrac{1}{2\pi} \log( | t_1-t_2|+|x|+(N_1 \wedge N_2)^{-1}) \label{cov2}
\end{align}
and 
\begin{align}
\begin{split}
|\G_{N_j}(t_1,t_2, x) - \G_{N_1,N_2}(t_1,t_2, x)| 
 \les \min\big(&  1 \vee (-\log(|t_1-t_2|+|x|+(N_1\wedge N_2)^{-1})), \\
&\, (N_1\wedge N_2)^{-\frac 12}|x|^{-\frac 12}) + O( (N_1\wedge N_2)^{-1})\big)
\end{split} \label{cov3}
\end{align}
for any $(t_1,t_2,x)\in [0,1]^{2}\times \T^2$, $N_{1},N_{2}\in \N$ and $j=1,2$.
\end{lemma}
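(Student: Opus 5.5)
The plan is to compute the covariance explicitly in Fourier space and compare it with the Green's function of $1-\Dl$ (equivalently $\jb{\nb}^{-2}$) on $\T^2$. Since $\Psi$ is stationary under $\muu_1$, $\E[\Psi_{N_1}(t_1,x)\Psi_{N_2}(t_2,0)]$ depends only on $t_1-t_2$. By It\^o's isometry and the explicit propagator \eqref{prop1}, for $t_1\ge t_2$ we expect
\begin{align*}
\G_{N_1,N_2}(t_1,t_2,x) = \frac{1}{(2\pi)^2}\sum_{n\in\Z^2} \chi_{N_1}(n)\chi_{N_2}(n)\, e^{-\frac{|t_1-t_2|}{2}}\Big(\frac{\cos(|t_1-t_2|\fbb n)}{\jb n^2}+\frac{\sin(|t_1-t_2|\fbb n)}{2\fbb n\jb n^2}\Big) e^{in\cdot x}.
\end{align*}
I will verify this formula first, by solving the damped oscillator for each Fourier mode and using invariance of $\mu_1\otimes\mu_0$ under the linear flow.

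The main term is $\sum_n \chi_{N_1}\chi_{N_2}\jb n^{-2}\cos(\tau|n|)e^{in\cdot x}$ with $\tau=|t_1-t_2|$. The other pieces are harmless. The $\sin$ term carries an extra factor $\fbb n^{-1}$, so it is absolutely summable and $O(1)$. Replacing $\fbb n$ by $|n|$ costs $\tau/|n|$, which is again summable. The factor $e^{-\tau/2}=1+O(\tau)$ changes things by only $O(1)$.

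For the main term I split into two cases. When $|x|+\tau\les N^{-1}$ (with $N=N_1\wedge N_2$), the smooth cutoffs dominate: the bound $|\cos-1|\les \tau|n|$ on $|n|\le N$ gives $O(1)$, leaving $\s_N\approx\frac{1}{2\pi}\log N$ from \eqref{sN}. When $|x|+\tau\gg N^{-1}$, I drop the cutoffs at cost $O(1)$. The resulting function is the kernel of $\jb\nb^{-2}\cos(\tau|\nb|)$, and here I use the physical-space Poisson formula \eqref{poisson2}--\eqref{poisson4}. Write $\cos(\tau|\nb|)=\partial_\tau[\sin(\tau|\nb|)/|\nb|]$ and apply it to the Bessel kernel $G(x)\approx -\frac1{2\pi}\log|x|$. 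Averaging $\log|x-y|$ against the wave kernel over $|y|<\tau$ produces $-\frac1{2\pi}\log(\tau+|x|)+O(1)$. This follows by treating $|y|\le\tau\ll|x|$ and $|x|\les\tau$ separately, together with the mean value property of $\log$ averaged on circles.

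For \eqref{cov3}, the difference $\G_{N_j}-\G_{N_1,N_2}$ has multiplier $\chi_{N_j}(\chi_{N_j}-\chi_{N_{j'}})\jb n^{-2}\cos(\cdots)$, supported on $|n|\gtrsim N_1\wedge N_2$. The first bound in the minimum is simply \eqref{cov2} applied twice. The bound $(N|x|)^{-1/2}$ comes from summation by parts, or stationary phase in $n$: the oscillation $e^{in\cdot x}$ on the annulus $|n|\sim N$ gives $(N|x|)^{-1/2}$ decay. The $\cos(\tau|n|)$ factor complicates this, since the phase $n\cdot x\pm\tau|n|$ may be stationary when $|x|\approx\tau$. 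I expect this to be the main obstacle. My plan there is to use only the radial (Bessel-function) decay $J_0(r|x|)\les (r|x|)^{-1/2}$ after averaging in angle. The $\tau$-oscillation then only enters as a bounded factor, which gives the claimed $N^{-1/2}|x|^{-1/2}$ with an $O(N^{-1})$ error from lattice-versus-integral replacement.
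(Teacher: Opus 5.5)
The paper gives no proof of this lemma; it is quoted from \cite{Zine3}, so your argument has to stand on its own. The overall plan is sound. Your Fourier formula for the stationary covariance is correct. The reductions of the sine term, of $\fbb{n}$ to $|n|$, and of $e^{-\tau/2}$ are fine. Your handling of the light cone in \eqref{cov3} also works: $\cos(\tau\fbb{n})$ is radial, so stationary phase on each circle $|\xi|=r$ gives $(r|x|)^{-1/2}$ for every $\tau$.

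However, two steps behind \eqref{cov1}--\eqref{cov2} are asserted rather than proved.

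\textbf{The $\tau$-derivative.} An asymptotic relation with an $O(1)$ error cannot be differentiated in $\tau$. Compute instead $\frac{\sin(\tau|\nabla|)}{|\nabla|}G(x)=\frac{\tau}{2\pi}\int_{|z|<1}\frac{G(x-\tau z)}{\sqrt{1-|z|^2}}\,dz$. Its $\tau$-derivative is your weighted average plus the extra term $-\frac{\tau}{2\pi}\int_{|z|<1}\frac{z\cdot\nabla G(x-\tau z)}{\sqrt{1-|z|^2}}\,dz$, and you must show this term is $O(1)$. Using $\nabla G(w)=-\frac{w}{2\pi|w|^2}+O(1)$, it equals, up to $O(\tau)$, a function of $a=x/\tau$ alone. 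That function is bounded uniformly in $a$, because $|a-z|^{-1}(1-|z|^2)^{-1/2}$ is integrable even when $|a|=1$.

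\textbf{Dropping the cutoffs.} This is the more serious gap. Removing the cutoffs at cost $O(1)$ requires cancellation, since the tail $\sum_n(1-\chi_{N_1}\chi_{N_2})(n)\jb{n}^{-2}\cos(\tau|n|)e^{in\cdot x}$ does not converge absolutely. For $|x|\gtrsim N^{-1}$, with $N=N_1\wedge N_2$, your shell-wise Bessel bound handles it. For $|x|\ll N^{-1}\lesssim\tau$, however, the spatial phase does nothing on the shells $N\lesssim|n|\lesssim|x|^{-1}$, of which there are infinitely many when $x=0$. There only the radial oscillation of $\cos(\tau|n|)$ keeps the sum bounded. That is exactly what your \eqref{cov3} argument discards; Poisson summation plus radial integration by parts would give $(M\tau)^{-1}$ per shell.

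A cleaner route avoids the issue entirely. The kernel of $\chi_{N_1}\chi_{N_2}(\nabla)$ is $N^2\rho_\lambda(N\cdot)$, periodised, where $\lambda=(N_1\vee N_2)/N$ and $\{\rho_\lambda\}_{\lambda\ge 1}$ is uniformly Schwartz with total mass $\chi(0)^2=1$. Mollifying the uncut profile $-\frac{1}{2\pi}\log(\tau+|y|)+O(1)$ at scale $N^{-1}$ then gives $-\frac{1}{2\pi}\log(\tau+|x|+N^{-1})+O(1)$ in every regime, including your small-scale case.

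\textbf{The estimate \eqref{cov3}.} Your ``apply \eqref{cov2} twice'' works only for the index with $N_j=N_1\wedge N_2$.

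\begin{itemize}
\item For the index with $N_j=N_1\vee N_2$, \eqref{cov1} bounds $\Gamma_{N_j}$ only by $1\vee(-\log(\tau+|x|+N_j^{-1}))$, and this is sharp. At $\tau=x=0$ the difference equals $\frac{1}{2\pi}\log\frac{N_1\vee N_2}{N_1\wedge N_2}+O(1)$, which is not $\lesssim 1\vee\log(N_1\wedge N_2)$ uniformly. So the statement as written needs $N_j^{-1}$ in the first entry of the minimum for that index. This is harmless for its use in Lemma~\ref{LEM:I1theta}, which only needs $|\log(\cdot)|\lesssim_{\varepsilon}|x|^{-\varepsilon}$.
\item For that same index, the multiplier $\chi_{N_j}^2-\chi_{N_1}\chi_{N_2}$ lives on $\frac12(N_1\wedge N_2)\le|n|\le N_j$, not on a single annulus. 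So the $(M|x|)^{-1/2}$ bound must be summed over dyadic shells $M$: geometrically when $M|x|\ge 1$, and as $O(1)$ per shell for the rest. The latter total $\lesssim\log\frac{1}{N|x|}\lesssim(N|x|)^{-1/2}$.
\item $\chi$ is not assumed radial, so $J_0$ should be replaced by stationary phase on each circle. The resulting bound is the same.
\end{itemize}
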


\subsection{The Gaussian multiplicative chaos} \label{subsec:gmc}

We recall the following regularity and convergence properties of the Gaussian multiplicative chaoses $\Dr$.

\begin{lemma}\label{LEM:GMC} The following statements hold:

\smallskip

\noi
\textup{(i)} Given $0<\be^{2}<8\pi$, let $1\leq p<\frac{8\pi}{\be^2}$ and define $\al=\al(p)$ such that 
\begin{align}
\frac{(p-1)\be^{2}}{4\pi} < \al <2. \label{alpha}
\end{align}
Then, given any $T>0$, the sequence of stochastic processes $\Dr_{N}$ is uniformly bounded in $L^{p}(\O; L^{p}([0,T];W^{-\al,p}(\T^2)))$.

\smallskip

\noi
\textup{(ii)} Given $0<\be^2<4\pi$, let $1\leq p<\frac{8\pi}{\be^2}$ and $\al(p)$ be as in \eqref{alpha}. Then, given any $T>0$, $\{\Dr_{N}\}_{N\in \N}$ is a Cauchy sequence in $L^{p}(\O; L^{p}([0,T];W^{-\al,p}(\T^2)))$ and converges to a limit $\Dr$. 
\end{lemma}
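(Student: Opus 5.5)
The plan is to reduce everything to second moments of Fourier coefficients, or more precisely of Bessel-potential-smoothed GMC, via the covariance bounds in Lemma~\ref{LEM:GN} together with Kahane-type comparison for higher moments. This follows the strategy of \cite[Proposition 1.12]{ORW} and \cite{Garban}; the only new input here is the time variable.

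\textbf{Part (i): uniform bounds.} By Fubini it suffices to show
\[
\sup_N\sup_{t\in[0,T],\,x\in\T^2}\E\big[|\jb{\nb}^{-\al}\Dr_N(t)(x)|^p\big]<\infty .
\]
By stationarity in $x$ (and, up to the contribution of the initial data, approximately in $t$), it is enough to take $x=0$.

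Write $J_\al$ for the kernel of $\jb{\nb}^{-\al}$. By \cite[Lemma 2.2]{ORSW1}, $J_\al(y)\les |y|^{\al-2}$ modulo a smooth part, and $J_\al\ge0$. Since $\Dr_N>0$, the quantity $\jb{\nb}^{-\al}\Dr_N(t)(0)=\int J_\al(y)\,\Dr_N(t,y)\,dy$ is a positive integral. For integer $p$, expand the $p$th moment as a $p$-fold integral and use
\[
\E\Big[\prod_{j}\Dr_N(t,y_j)\Big]=\exp\Big(\be^2\sum_{i<j}\G_N(t,t,y_i-y_j)\Big).
\]
By \eqref{cov1} this is bounded by $\prod_{i<j}|y_i-y_j|^{-\be^2/2\pi}$, uniformly in $N$. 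The resulting integral
\[
\int\prod_j|y_j|^{\al-2}\prod_{i<j}|y_i-y_j|^{-\be^2/2\pi}\,dy
\]
then has to be shown finite under \eqref{alpha}. This is a standard multi-scale (Selberg-type) count: cluster the points dyadically, note that a cluster of $k$ points at scale $r$ contributes $r^{2(k-1)-\binom{k}{2}\be^2/2\pi}$ times the weight $r^{k(\al-2)}$ when the cluster sits near $0$, and check that the worst case $k=p$ converges exactly when $\al>(p-1)\be^2/4\pi$ and $p<8\pi/\be^2$.

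For non-integer $p$, use Kahane's convexity inequality (Lemma~\ref{LEM:Kahane}) to compare with an exactly scale-invariant log-correlated field. Alternatively, use the GMC measure bounds of Lemma~\ref{LEM:GMCmeas}: decompose $J_\al$ dyadically and estimate $\E[\Dr_N(B(0,2^{-k}))^p]\les 2^{-k\zeta(p)}$ with $\zeta(p)=(2+\frac{\be^2}{4\pi})p-\frac{\be^2}{4\pi}p^2$. Multiplying by $2^{-k(\al-2)p}$ and summing in $k$, then using Minkowski in $L^p(\O)$, converges precisely under \eqref{alpha}. I would use this second route, since it handles all $p$ at once.

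\textbf{Part (ii): Cauchy property.} For $N_1\le N_2$, expand $\E|\jb{\nb}^{-\al}(\Dr_{N_1}-\Dr_{N_2})(t)(0)|^2$ into three cross terms of the form $\iint J_\al J_\al\, e^{\be^2\G_{N_i,N_j}}$. Use \eqref{cov2}–\eqref{cov3} to show that each term differs from a common limit by $o(1)$, via dominated convergence. The dominating function is $|y-y'|^{-\be^2/2\pi}|y|^{\al-2}|y'|^{\al-2}$, which is integrable for $\be^2<4\pi$. This gives convergence in $L^2(\O)$ for slightly larger $\al$.

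Interpolating this with the uniform $L^{p+}$ bound from (i) yields the Cauchy property in $L^p(\O;L^p_tW^{-\al,p}_x)$ for every $\al$ strictly above the threshold in \eqref{alpha}, after shrinking $\al$ slightly. Finally, apply Fubini in $t$.

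\textbf{Main obstacle.} The hard step is the sharp non-integer moment bound $\E[\Dr_N(B_r)^p]\les r^{\zeta(p)}$ uniformly in $N$. This is where Kahane's inequality is needed to compare $\Psi_N(t,\cdot)$ with an exactly scale-invariant log-correlated field, up to bounded covariance errors that are permitted by \eqref{cov1}.
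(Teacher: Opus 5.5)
Your proposal is correct and follows essentially the route the paper relies on through its citation of \cite[Proposition 1.12]{ORW}. In part (i), positivity of $\Dr_N$ and the bound $|y|^{\al-2}$ on the Bessel kernel reduce matters to a dyadic decomposition with Minkowski in $L^p(\O)$, and the Kahane-based ball moments of Lemma~\ref{LEM:GMCmeas} give exactly the threshold in \eqref{alpha}; in part (ii), an $L^2(\O)$ difference estimate is interpolated with the uniform bounds from (i).

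Two small corrections. First, $\Psi$ is exactly stationary in $t$, not just approximately, because the initial data are drawn from the invariant measure $\muu_1$. Second, for $1<p<2$ you should interpolate with a uniform bound at an exponent slightly below $p$, since $L^2$ and $L^{p+}$ do not bracket $p$ in that range, and the case $p=1$ then follows from $p=1+$.
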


\noi
For a proof of Lemma~\ref{LEM:GMC}, we refer to \cite[Proposition 1.12]{ORW}. It is clear that the same properties in Lemma~\ref{LEM:GMC} hold for the $\cosh$ and $\sinh$ variants $\{\Dr^{a}\}_{a\in \{c,s\}}$ in \eqref{GMCcs} 
One of the main ingredients in the proof of Lemma~\ref{LEM:GMC} is the following convexity result, originally due to Kahane.

\begin{lemma} \label{LEM:Kahane}
Given $n\in \N$, let $\{X_{j}\}_{j=1}^{n}$, $\{Y_{j}\}_{j=1}^{n}$ be two centered Gaussian vectors satisfying 
\begin{align*}
\E[ X_{i} X_{j}]\leq \E[Y_{j}Y_{k}]
\end{align*}
for all $j,k=1,\ldots, n$. Then, for any sequence $\{p_{j}\}_{j=1}^{n}$ of non-negative numbers and any convex function $F:[0,\infty)\to \R$ with at most polynomial growth at infinity, it holds that 
\begin{align*}
\E \bigg[ F \bigg(  \sum_{j=1}^{n} p_{j} e^{X_{j}- \frac 12 \E[X_j^2]} \bigg)\bigg] \leq \E \bigg[ F \bigg(  \sum_{j=1}^{n} p_{j} e^{Y_{j}- \frac 12 \E[Y_j^2]}\bigg)\bigg].
\end{align*}
\end{lemma}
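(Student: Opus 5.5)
We read the hypothesis as $\E[X_jX_k]\le\E[Y_jY_k]$ for all $j,k$ (the indices in the statement are a typo). We prove the inequality by the standard Gaussian interpolation argument.

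\emph{Setup.} Take $X$ and $Y$ independent, which does not affect either side. For $\ta\in[0,1]$ set $Z(\ta)=\sqrt{\ta}\,Y+\sqrt{1-\ta}\,X$, so that
\[
C_{jk}(\ta):=\E[Z_j(\ta)Z_k(\ta)]=\ta\,\E[Y_jY_k]+(1-\ta)\,\E[X_jX_k].
\]
Define
\[
W(\ta)=\sum_{j=1}^n p_j\, e^{Z_j(\ta)-\frac12 C_{jj}(\ta)}, \qquad \phi(\ta)=\E[F(W(\ta))].
\]
Then $\phi(0)$ and $\phi(1)$ are the left- and right-hand sides, so it suffices to show $\phi'\ge 0$ on $(0,1)$.

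\emph{Reduction to smooth $F$.} First assume $F\in C^2$ is convex with $F,F',F''$ of polynomial growth. A general convex $F$ of polynomial growth can be approximated by the mollifications $F_\eps=F\ast\rho_\eps$, after extending $F$ to a convex function on $\R$. These are convex, smooth, of uniformly polynomial growth, and converge to $F$ locally uniformly. Since $W(\ta)$ has finite moments of all orders (it is a finite sum of log-normals), dominated convergence passes the inequality to the limit. The same moment bounds justify differentiating under the expectation in $\phi$.

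\emph{Derivative computation.} Since $W(\ta)$ is a smooth function $G(Z(\ta),\ta)$ of a centered Gaussian vector, the Gaussian interpolation (heat-equation) formula gives
\[
\phi'(\ta)=\frac12\sum_{j,k}\dot C_{jk}\,\E\big[\partial_{z_j}\partial_{z_k}F(G)\big]+\E\big[\partial_\ta F(G)\big], \qquad \dot C_{jk}=\E[Y_jY_k]-\E[X_jX_k].
\]
Write $E_j=p_je^{Z_j-\frac12C_{jj}}$. Then
\[
\partial_{z_j}\partial_{z_k}F(G)=F''(W)\,E_jE_k+\dl_{jk}\,F'(W)\,E_j, \qquad \partial_\ta F(G)=-\tfrac12\sum_j \dot C_{jj}\,F'(W)\,E_j.
\]
The diagonal $F'$ terms cancel exactly; this is the purpose of the Wick normalization $-\frac12\E[X_j^2]$. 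We are left with
\[
\phi'(\ta)=\frac12\sum_{j,k}\dot C_{jk}\,\E\big[F''(W)\,E_jE_k\big].
\]
Each term is nonnegative, because $\dot C_{jk}\ge0$ by hypothesis, $F''\ge0$ by convexity, and $p_j\ge 0$ gives $E_j\ge0$. Hence $\phi$ is nondecreasing and $\phi(0)\le\phi(1)$.

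\emph{Main obstacle.} The one delicate point is rigorously justifying the interpolation formula for $\phi'$. One route is to differentiate in $\ta$ directly using Gaussian integration by parts, $\E[Z_i\,h(Z)]=\sum_k \E[Z_iZ_k]\,\E[\partial_k h(Z)]$. Alternatively, one can write the Gaussian density of $Z(\ta)$ explicitly and use the heat equation in the covariance parameter. In either case the polynomial-growth assumption together with the log-normal moment bounds supplies the needed integrability. If the covariance matrices are degenerate, we add $\eps$ times an independent standard Gaussian vector to both $X$ and $Y$; this preserves the hypothesis, and we then let $\eps\to0$.
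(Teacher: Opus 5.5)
Your argument is correct. The paper gives no proof of this lemma and only attributes it to Kahane, and what you wrote is the standard Gaussian interpolation proof of Kahane's convexity inequality. Your derivative computation is right, including the exact cancellation of the diagonal $F'$ terms that the Wick normalization produces.

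One step in your smoothing reduction needs a small repair. A convex $F:[0,\infty)\to\R$ need not extend to a convex function on $\R$: it can have $F'_+(0)=-\infty$ (for instance $F(x)=x\log x$ or $F(x)=-\sqrt{x}$), or a jump at $0$.

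This is easy to fix. Apply the smooth case to $x\mapsto F(x+\delta)$, which does extend convexly to $\R$ (linearly, with the finite slope $F'_+(\delta)$), and then let $\delta\downarrow 0$. The limit is justified by dominated convergence:
\begin{itemize}
\item $W(\ta)>0$ almost surely whenever some $p_j>0$ (if all $p_j=0$ both sides equal $F(0)$);
\item $F$ is continuous on $(0,\infty)$;
\item $|F(x+\delta)|\les 1+x^m$ uniformly in $\delta\le 1$, because a finite convex function is bounded on $[0,2]$ and $F$ has polynomial growth at infinity;
\item the log-normal moment bounds you already invoke then give the integrable majorant.
\end{itemize}
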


Given $A\in \mathcal{B}(\T^2)$, where $\mathcal{B}(\T^2)$ is the Borel sigma-algebra in $L^{2}(\T^2)$, and $t\geq 0$, we define the random measure 
\begin{align*}
\mathcal{M}_{N}(t,A) = \int_{A}\Dr_{N}(t,x)dx.
\end{align*}
Then, the following moment bounds hold. See \cite[Proposition 3.5]{RV} and \cite[Appendix B]{ORW}.

\begin{lemma}\label{LEM:GMCmeas}
For any $0<\be^{2}<8\pi$ and $1\leq p <\frac{8\pi}{\be^2}$, it holds that 
\begin{align}
\sup_{t\in \R_{+}, A\in \mathcal{B}(\T^2), N\in \N} \E \big[ \mathcal{M}_{N}(t,A)^{p}\big] <\infty. \label{GMCmeas1}
\end{align}
Moreover, it holds that 
\begin{align}
\sup_{t\in \R_{+}, x\in \T^2, N\in \N} \E \big[ \mathcal{M}_{N}(t,B(x,r))^{p}\big] \les r^{(2+\frac{\be^2}{4\pi})p-\frac{\be^2}{4\pi}p^2},
\label{GMCmeas2}
\end{align} 
for any $r\in (0,1)$, where $B(x,r)$ is the ball centered at $x\in \T^2$ of radius $r\in (0,1)$.
\end{lemma}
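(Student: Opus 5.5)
The plan is to reduce both bounds to statements about the log-correlated Gaussian field $\Psi_N(t,\cdot)$ at a fixed time. For \eqref{GMCmeas1} we compare with an exactly scale-invariant field via Kahane's inequality (Lemma~\ref{LEM:Kahane}). For \eqref{GMCmeas2} we use a scaling argument in the spirit of \cite[Proposition 3.5]{RV}.

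Fix $t\ge0$. By stationarity in time of $\Psi$ under $\muu_1$-distributed initial data, the law of $\Psi_N(t,\cdot)$ does not depend on $t$. This law is that of $\Pii_N u_0$, whose covariance, by \eqref{cov1} with $t_1=t_2$, satisfies
\[
\G_N(t,t,x)=-\tfrac1{2\pi}\log(|x|+N^{-1})+O(1)
\]
uniformly in $N$ and $x$. The supremum over $t$ is therefore harmless. Since $\mathcal M_N(t,A)\le\mathcal M_N(t,\T^2)$, for \eqref{GMCmeas1} it suffices to treat $A=\T^2$.

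Discretize the integral by Riemann sums, which is legitimate because $\Psi_N$ is smooth; then $\mathcal M_N$ is a limit of sums $\sum_j p_j e^{X_j-\frac12\E X_j^2}$ with $X_j=\be\Psi_N(t,x_j)$. Next pick a comparison field $Y_\eps$ on $\T^2$ with exactly scale-invariant covariance $-\frac{1}{2\pi}\log(|x-y|\vee\eps)+C$ near the diagonal, for instance a $\star$-scale-invariant kernel, and take $\eps\sim N^{-1}$ and $C$ large so that $\E[X_iX_j]\le\be^2\E[Y_\eps(x_i)Y_\eps(x_j)]$. Applying Kahane's inequality with the convex function $F(x)=x^p$ bounds $\E[\mathcal M_N^p]$ by the corresponding moment for $Y_\eps$. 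Up to constants, the $O(1)$ discrepancy is absorbed by adding an independent Gaussian of variance $C$, which costs only a factor $e^{C\be^2p(p-1)/2}$.

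For the scale-invariant field, finiteness of $p$-th moments uniformly in $\eps$ for $1\le p<\frac{8\pi}{\be^2}$ is the standard GMC moment result. That threshold is $\gamma^2=\be^2/(2\pi)$ in the normalization with covariance $-\log$, and the condition $p<4/\gamma^2$ is exactly $p<8\pi/\be^2$ in dimension $2$. We would prove it, or quote it, by the usual dyadic decomposition of $\T^2$ into squares combined with the subadditivity inequality $(\sum a_i)^{p}\le\sum a_i^{p}$ for $p\le1$, and Hölder's inequality plus induction on integer parts for $p>1$.

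For \eqref{GMCmeas2}, compare $\mathcal M_N(t,B(x,r))$, by Kahane again, with the scale-invariant field restricted to $B(x,r)$. Exact scaling gives $Y(x+r\cdot)\overset{d}{=}Y'(\cdot)+\Omega_r$ on the unit ball, with $\Omega_r$ an independent Gaussian of variance $-\frac{1}{2\pi}\log r$. Hence
\[
\mathcal M(B(x,r))\overset{d}{=}r^{2}e^{\be\Omega_r-\frac{\be^2}{2}\E\Omega_r^2}\mathcal M'(B(0,1)),
\]
with the regularization scale becoming $\eps/r$, which is still covered by the uniform bound when $\eps<r$. Taking the $p$-th moment produces
\[
r^{2p}\,e^{\frac{\be^2}{2}p(p-1)\cdot\frac{1}{2\pi}\log(1/r)}=r^{2p-\frac{\be^2}{4\pi}(p^2-p)},
\]
which is exactly the stated exponent. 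When $N^{-1}\gtrsim r$, the field is essentially smooth on $B(x,r)$, and a direct bound $\E[\mathcal M_N^p]\le r^{2p}\sup_y\E[\Theta_N(t,y)^p]\lesssim r^{2p}N^{\frac{\be^2}{4\pi}(p^2-p)}$ gives the same power, since $N\lesssim r^{-1}$ and $p\ge1$.

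The main obstacle is the uniform-in-$\eps$ moment bound for $p\in(1,8\pi/\be^2)$ near the threshold. Here we rely on the cited \cite[Proposition 3.5]{RV}, or equivalently \cite[Appendix B]{ORW}, together with the justification that the smooth cutoff $\Pii_N$ yields covariances comparable to a scale-invariant kernel up to $O(1)$ errors uniformly in $N$.
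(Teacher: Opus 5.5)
Your proposal is correct and reconstructs the argument behind the references the paper simply cites for this lemma (\cite[Proposition 3.5]{RV}, \cite[Appendix B]{ORW}, and \cite[(3.8), p.1318]{ORW}): a Kahane comparison with a log-correlated field, the standard uniform moment bound for $1\le p<\frac{8\pi}{\be^2}$, and a scaling step that produces the exponent $(2+\frac{\be^2}{4\pi})p-\frac{\be^2}{4\pi}p^2$. One simplification is worth noting. The comparison in the paper's own \eqref{logs}, namely $\G_N(t,t,r(u_1-u_2))\le \G_N(t,t,u_1-u_2)+\frac{1}{2\pi}\log\frac1r+C$ (from \eqref{cov1} and monotonicity of $-\log$), lets you use $\Psi_N(t,\cdot)$ plus an independent Gaussian as the comparison field and deduce \eqref{GMCmeas2} directly from \eqref{GMCmeas1}, with no auxiliary exactly scale-invariant field and no separate regime $N^{-1}\gtrsim r$.
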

\begin{proof}
For \eqref{GMCmeas1}, we refer to \cite[Proposition 3.5]{RV} and \cite[Appendix B]{ORW}.
The estimate \eqref{GMCmeas2} in the case of the ball can be found in \cite[(3.8), p.1318]{ORW}. 
\end{proof}

The next result concerns moments of the GMC measure on annuli, which will be a key input in obtaining uniform bounds on the Gibbs state for the hyperbolic sinh-Gordon model.

\begin{lemma} \label{LEM:GMCannuli}
Let $2\leq p<\infty$ be an even integer and $0<\be^{2}<\frac{4\pi}{p-1}$. Given $0<r_1<r_2<1$, let $A(r_1,r_2)$ be the annulus $\{r_1 \leq |x| \leq r_2\}$. Then,
\begin{align}
\sup_{t\in \R_+}\sup_{N\in \N}\E[ \mathcal{M}_{N}( t, A(r_1,r_2))^{p}] \les (  r_2( r_2-r_1))^{p-\frac{\be^{2}}{8\pi}(p-1)p}.\label{GMCann}
\end{align}
\end{lemma}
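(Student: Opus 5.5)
Since $p$ is an even integer, we expand the $p$-th moment directly:
\begin{align*}
\E[\mathcal{M}_N(t,A)^p] = \int_{A^p} \E\Big[\prod_{j=1}^p \Dr_N(t,x_j)\Big] dx_1\cdots dx_p = \int_{A^p} \exp\Big(\be^2\sum_{1\le i<j\le p}\G_N(t,t,x_i-x_j)\Big) dx,
\end{align*}
with $A=A(r_1,r_2)$. This uses that $\Dr_N = e^{\be\Psi_N - \frac{\be^2}{2}\s_N}$ with $\Psi_N(t,\cdot)$ Gaussian of variance $\s_N$, so that the product of exponentials produces exactly the pairwise covariances. By Lemma~\ref{LEM:GN}, \eqref{cov1}, uniformly in $N$ and $t$ we have $e^{\be^2\G_N(t,t,x)}\les (|x|+N^{-1})^{-\frac{\be^2}{2\pi}}\le |x|^{-\frac{\be^2}{2\pi}}$. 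The problem is thereby reduced to the deterministic bound
\begin{align*}
J:=\int_{A^p}\prod_{i<j}|x_i-x_j|^{-\g}\,dx \les \big(r_2(r_2-r_1)\big)^{p-\frac{\g}{2}\cdot\frac{p(p-1)}{2}},\qquad \g=\tfrac{\be^2}{2\pi},
\end{align*}
and the exponent matches the claim, since $\frac{\g}{4}p(p-1)=\frac{\be^2}{8\pi}p(p-1)$.

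To bound $J$, set $|A|\sim r_2(r_2-r_1)=:a$ and $w=r_2-r_1$. We apply H\"older's inequality (equivalently, the arithmetic--geometric mean inequality on the product) to write $\prod_{i<j}|x_i-x_j|^{-\g}$ as a geometric mean of the $(p-1)$-fold products $\prod_{j\ne i}|x_i-x_j|^{-\g(p-1)/2\cdot\frac{2}{p-1}}$. More cleanly, we use the generalized H\"older inequality over the $\binom p2$ pairs, or an ordering argument: for each configuration, order the points so that each $x_k$ pairs with a nearest previous point. This reduces $J$ to a product of one-point integrals of the form
\begin{align*}
\sup_{y}\int_A |x-y|^{-\g q}\,dx
\end{align*}
with exponents $\g q<2$. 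The hypothesis $\be^2<\frac{4\pi}{p-1}$ means $\g(p-1)<2$, which is exactly what keeps every such integral finite.

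The key geometric input is a bound for the annulus. For $\theta<2$,
\begin{align*}
\sup_y\int_A |x-y|^{-\theta}dx \les |A|^{1-\theta/2}
\end{align*}
whenever the scale is governed by $|A|$. This is true for balls of the same area but \emph{fails} for thin annuli: in the regime $w\ll r_2$, a thin annulus looks locally like a strip of width $w$, and the integral behaves like $w^{1-\theta}r_2$ when $\theta>1$. Consequently the Riesz-type bound must be organized differently, and this is the main obstacle. My plan is to split $A$ into $\sim r_2/w$ boxes of side $w$, so that $A=\bigcup_m Q_m$ with each $|Q_m|\sim w^2$, and expand $J$ over assignments of points to boxes. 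Points sitting in the same box contribute as in the ball estimate \eqref{GMCmeas2}, namely $w^{2k-\g k(k-1)/2}$ for $k$ points. Points in boxes at distance $\sim d$ apart contribute $d^{-\g}$ factors, and summing over box positions gives geometric sums controlled by $\g(p-1)<2$. Combining these, the worst case appears to be all points spread out with the interaction factors saturated, and this should yield $\les a^{p}\cdot(\text{typical } |x_i-x_j|)^{-\g\binom p2}$ with typical separation $\sim a^{1/2}$.

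I expect this to be the delicate point: the claimed bound effectively treats the annulus as if its points were separated at scale $\sqrt{a}$, which requires $\g\binom p2$ to be balanced against the $(r_2/w)$ combinatorial gain. If the direct box-counting fails to produce the exact power, I would fall back on bounding $\prod_{i<j}|x_i-x_j|^{-\g}$ by H\"older with exponents chosen so that each factor is of the form $\int_{A^2}|x-y|^{-\g(p-1)}$. Each such double integral is bounded by $\les |A|^{2-\g(p-1)/2}$ via the rearrangement inequality, because among all sets of a given area the ball maximizes $\int\!\!\int_{E^2}|x-y|^{-\theta}$ (Riesz rearrangement). Raising these to the H\"older powers gives exactly $|A|^{p-\frac{\g}{4}p(p-1)}$, which is the claimed bound. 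I would pursue this Riesz-rearrangement route first, since it sidesteps the thin-annulus geometry entirely and uses only $\g(p-1)<2$.
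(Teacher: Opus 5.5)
Your final (Riesz-rearrangement) route is the paper's route. You expand the even moment, use \eqref{cov1} to bound the integrand by $\prod_{i<j}|x_i-x_j|^{-\g}$, reduce to $\bigl(\iint_{A^2}|x-y|^{-\g(p-1)}\,dx\,dy\bigr)^{p/2}$, and bound that double integral by $|A|^{2-\g(p-1)/2}$. Two points need correcting.

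\textbf{The reduction step is not H\"older's inequality.} You raise each of the $\binom p2$ pair factors to the power $\frac1{p-1}$. These exponents add up to $\frac p2$, not $1$.

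Ordinary H\"older on $A^p$ with $\binom p2$ equal exponents gives $\prod_{i<j}\bigl(|A|^{p-2}\iint_{A^2}|x-y|^{-\g\binom p2}\bigr)^{1/\binom p2}$. This has the same power of $|A|$, but it is finite only if $\g\binom p2<2$, i.e.\ $\be^2<\frac{8\pi}{p(p-1)}$. That is strictly stronger than the hypothesis for $p\ge4$. For $p=12$ it gives $\frac{2\pi}{33}$, which would not even cover the range $\be^2<\frac{2\pi}{11}$ needed in Lemma~\ref{LEM:PuN}.

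The inequality that produces exactly your factors is the geometric Brascamp--Lieb (Finner) inequality, Lemma~\ref{LEM:BL}. It applies because each $x_i$ belongs to exactly $p-1$ pairs. This is the key input of the paper's proof, and you must invoke it, or prove it by applying H\"older one variable at a time.

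\textbf{The claim that the one-point bound fails for thin annuli is false.} You assert that $\sup_y\int_A|x-y|^{-\theta}dx\les|A|^{1-\theta/2}$ fails for thin annuli, with behaviour $w^{1-\theta}r_2$; both are wrong. Since $|x|^{-\theta}$ is radially decreasing, for any measurable $A$ the integral can only increase when $A$ is replaced by the ball $B(y,\rho)$ with $\pi\rho^2=|A|$.

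Concretely, the part of the annulus at distance $\sim d\in[w,r_2]$ from $y$ has area $\les wd$. This gives $\sim w^{2-\theta}$ for $1<\theta<2$ and $\sim w r_2^{1-\theta}$ for $\theta<1$, and both are $\le (r_2w)^{1-\theta/2}$. This is exactly how the paper argues: it writes $\iint_{A^2}\le |A|\sup_y\int_A$ and uses $|A\cap\{|x-y|\sim 2^{-j}\}|\le\min(2^{-2j},|A|)$. So neither the box decomposition nor Riesz rearrangement is needed.

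It also means your abandoned ordering idea already works, and avoids Brascamp--Lieb entirely. For a fixed ordering,
\[
\prod_{j<k}|x_k-x_j|^{-\g}\le\sum_{j<k}|x_k-x_j|^{-\g(k-1)}.
\]
Integrating out $x_p,x_{p-1},\dots,x_1$ in turn gives $\prod_{k=1}^{p}|A|^{1-\g(k-1)/2}=|A|^{p-\frac{\g}{2}\binom p2}$. This uses only $\g(p-1)<2$ and is valid for every integer $p\ge 2$, not just even ones.
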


Notice that the right-hand side of \eqref{GMCann} agrees with the moments on balls \eqref{GMCmeas2} as one takes $r_1 \to 0^{+}$.

The assumption of even integer $p$ in Lemma~\ref{LEM:GMCannuli} can likely be removed by a more refined analysis based on Kahane's convexity inequality (Lemma~\ref{LEM:Kahane}).
 We chose not to pursue this direction for two reasons: first, the restriction to even integer $p$ will not considerably alter the restriction on $\be^2$ in Lemma~\ref{LEM:PuN}. Second, for even integer $p$, we can provide an elementary argument based on directly computing the even integer moments using the geometric Brascamp-Lieb inequality \cite{BCCT}. Note that the geometric Brascamp-Lieb inequality was first used in the context of the GMC measure in \cite{ORW}, to provide an alternative argument for computing the even moments of the GMC. See \cite[Proposition 3.2]{ORW}.

We use the following version of the geometric Brescamp-Lieb inequality; see~\cite{BCCT} for a proof.

\begin{lemma} \label{LEM:BL}
Let $p\in \N$. Then, we have
\begin{align*}
\int_{(\T^2)^{2p}} &\prod_{1\leq j <k\leq 2p} |f_{j,k}(\pi_{j,k}(x))|^{\frac{1}{2p-1}}dx \les  \prod_{1\leq j<k\leq 2p} \bigg( \int_{(\T^2)^{2}} |f_{j,k}(x_j,x_k)| dx_j dx_k\bigg)^{\frac{1}{2p-1}}
\end{align*}
for any $f_{j,k}\in L^1(\T^2 \times \T^2)$ and where $\pi_{j,k}$ denotes the projection defined by $\pi_{j,k}(x) = \pi_{j,k}(x_1,\ldots, x_{2p}) = (x_j, x_k)$ for $x\in (\T^2)^{2p}$.
\end{lemma}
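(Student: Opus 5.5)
The plan is to recognize the inequality as an instance of Finner's generalized H\"older inequality (a Loomis--Whitney-type inequality) on the product space $(\T^2)^{2p}$, and to prove it by induction on the number of variables, integrating out one variable at a time with H\"older's inequality. The key combinatorial observation is that each variable $x_i$, $1\le i\le 2p$, appears in exactly $2p-1$ of the pairs $\{j,k\}$. Since every factor carries the exponent $c=\frac{1}{2p-1}$, the total weight attached to each variable is $\sum_{\{j,k\}\ni i} c = 1$. This is exactly the balancing condition needed for the estimate to hold with no loss, up to harmless constants coming from the finite measure of $\T^2$.

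I will prove the more general statement directly. Let $\mathcal{S}$ be a family of subsets $S\subset\{1,\dots,n\}$ with weights $c_S\in(0,1]$ satisfying $\sum_{S\ni i} c_S\le 1$ for every $i$. Then for non-negative $F_S$,
\begin{align*}
\int_{(\T^2)^n}\prod_{S\in\mathcal S} F_S(x_S)^{c_S}\,dx \les \prod_{S\in\mathcal S}\Big(\int F_S(x_S)\,dx_S\Big)^{c_S}.
\end{align*}
The inequality ``$\le 1$'' is allowed because $\T^2$ has finite measure: a deficit in the weight of a variable can be padded with the constant function $1$, at the cost of a factor $|\T^2|^{1-\sum c_S}$.

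The induction on $n$ goes as follows. Fix $x_2,\dots,x_n$ and integrate in $x_1$. The factors involving $x_1$ are $F_S^{c_S}$ for $S\ni 1$. I apply H\"older's inequality in $x_1$ with exponents $1/c_S$, padding with the constant $1$ if $\sum_{S\ni1}c_S<1$. This gives
\begin{align*}
\int \prod_{S\ni 1} F_S^{c_S}\,dx_1\les \prod_{S\ni 1} G_{S\setminus\{1\}}(x_{S\setminus\{1\}})^{c_S}, \qquad G_{S\setminus\{1\}} := \int F_S\,dx_1.
\end{align*}
The resulting family, indexed by $S\setminus\{1\}$ for $S\ni1$ together with the untouched $S\not\ni 1$, lives on $n-1$ variables. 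It keeps the same weights, so the weight condition still holds for every $i\ge2$, and the total integrals of its functions equal those of the $F_S$. The induction hypothesis then closes the argument.

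Specializing to $n=2p$, $\mathcal S=\{\{j,k\}:1\le j<k\le 2p\}$, $c_S=\frac{1}{2p-1}$ and $F_{\{j,k\}}=|f_{j,k}|$ yields the lemma. The only step needing care is the bookkeeping at the base case and when sets shrink. After integration some sets become singletons, or empty, in which case the factor is a constant raised to $c_S$. Each variable's total weight never increases, so the weight condition persists. I expect no genuine obstacle. The one point to check is that the padding with the constant $1$ in H\"older is legitimate, which it is because $\T^2$ has finite measure; this is exactly why the lemma is stated with $\les$ rather than with constant $1$.
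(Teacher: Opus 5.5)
Your proof is correct, but it takes a different route from the paper. The paper gives no argument: it simply invokes the geometric Brascamp--Lieb inequality of \cite{BCCT}. There the relevant hypothesis is that $\sum_{j<k}\frac{1}{2p-1}\pi_{j,k}^{*}\pi_{j,k}$ is the identity on $(\R^2)^{2p}$, which is the same counting fact you use: each block $x_i$ lies in exactly $2p-1$ pairs. Because all the maps here are coordinate projections, Brascamp--Lieb reduces to Finner's generalized H\"older inequality. Your induction (integrate out one variable by H\"older, note that the weight of every remaining variable and the total integrals are unchanged) is the standard proof of that inequality.

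What your route buys is that it is elementary and self-contained. It also works verbatim on $\T^2$, and indeed on any product of $\sigma$-finite measure spaces, with no passage between $\R^{4p}$ and the torus. What the citation buys is brevity, and it places the estimate in the Brascamp--Lieb framework already used for GMC moments in \cite{ORW}.

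Two small bookkeeping corrections. First, state the induction for indexed families (multisets) of subsets, not families of distinct subsets. Distinct sets can shrink to the same one: for instance $\{1,k\}$ and $\{2,k\}$ both become $\{k\}$ after $x_1$ and $x_2$ are integrated out. Second, in this application every remaining variable carries weight exactly $1$ at every stage, so the H\"older exponents always sum exactly to one. The padding by the constant function is therefore never used, and the inequality holds with constant $1$. The $\les$ in the statement is not forced by the finiteness of $|\T^2|$, contrary to your closing remark.
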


\begin{proof}[Proof of Lemma~\ref{LEM:GMCannuli}]
Assume that $p=2m$ for some $m\in \N$. Then, by Lemma~\ref{LEM:BL}, we have \begin{align}
\E&\bigg[ \bigg( \int_{\{r_1 \leq |y| \leq r_2\}} e^{\be \Psi_{N}(t,y)-\frac{\be^{2}}{2}\s_N}  dy\bigg)^p \bigg] \notag  \\
 & = e^{-m\be^2 \s_{N}}\E\bigg[ \bigg( \int_{\{r_1 \leq |y| \leq r_2 \}} e^{\be \Psi_{N}(t,y)}  dy\bigg)^{2m} \bigg]   \notag\\
& =e^{-m\be^2 \s_{N}} \int_{ \cap_{j=1}^{2m} \{ r_1 \leq |x_j| \leq r_2\}} \E\Big[ \exp\Big( \be \sum_{j=1}^{2m} \Psi_{N}(t,x_j)\Big) \Big] dx_1 \cdots dx_{2m} \notag \\
& \les  \int_{\R^{2m}}  \prod_{1\leq j<k\leq 2m} \frac{ \ind_{\{r_1 \leq |x_j|\leq r_2\}}\ind_{\{r_1\leq |x_k|\leq r_2\}}  }{ (|x_j-x_k|+N^{-1})^{\frac{\be^2}{2\pi}}} dx_1 \cdots dx_{2m} \notag \\
&\les \prod_{1\leq j<k\leq 2m}  \bigg( \int_{\R^2}   \frac{ \ind_{\{r_1 \leq |x_j|\leq r_2\}}\ind_{\{r_1\leq |x_k|\leq r_2\}}  }{ (|x_j-x_k|+N^{-1})^{\frac{\be^2}{2\pi}(2m-1)}} dx_j dx_k \bigg)^{\frac{1}{2m-1}} \notag \\
&\les   \bigg( \int_{\R^2}   \frac{ \ind_{\{r_1\leq |x|\leq r_2\}}\ind_{\{r_1 \leq |y|\leq r_2\}}  }{ (|x-y|+N^{-1})^{\frac{\be^2}{2\pi}(2m-1)}} dy dx \bigg)^{m}.  \label{GMCann1}
\end{align}
Let $a = |A(r_1,r_2)| =\pi (r_2^2-r_1^2)$. 
To estimate the inner integral, we the bound above by 
\begin{align*}
\int &\ind_{\{r_1\leq |y|\leq r_2\}} \int \frac{\ind_{\{r_1\leq |x|\leq r_2\}}}{|x-y|^{\frac{\be^2}{2\pi}(2m-1)}}dx dy  \\ 
& \les a \sup_{y\in \T^2}  \sum_{j=0}^{\infty} 2^{\frac{\be^2}{2\pi}(2m-1)j}  \int \ind_{\{r_1\leq |x|\leq r_2\}}  \ind_{\{|x-y|\sim 2^{-j}\}}  dx \\
& \les a \sum_{j=0}^{\infty} 2^{\frac{\be^2}{2\pi}(2m-1)j} \min(2^{-2j}, a  ) \\
& \les a \bigg\{  \sum_{j>\frac{1}{2}\log_{2}(1/a)} 2^{(\frac{\be^2}{2\pi}(2m-1)-2)j} + \sum_{j<\frac{1}{2}\log_{2}(1/a)} a \, 2^{\frac{\be^2}{2\pi}(2m-1)j}  \bigg\} \\
&\les a^{2-\frac{\be^2}{4\pi}(2m-1)},
\end{align*}
where we used that $\be^{2}<\frac{4\pi}{2m-1}$ for convergence of the first summation. Inserting this bound into \eqref{GMCann1} then establishes \eqref{GMCann} for even integer $p\geq 2$.
\end{proof} 

\begin{remark}\rm The above argument is valid for any Lebesgue measurable. Namely, we have
\[
\sup_{t\in \R_+}\sup_{N\in \N}\E[ \mathcal{M}_{N}( t, A)^{p}] \les |A|^{p-\frac{\be^{2}}{8\pi}(p-1)p}
\]
for any measurable set $A \subset \R^2$.

We conjecture that the same bound holds for all $2 \leq p < \infty$. We note, however, that this would lead only to a very minor improvement in the admissible range of $\be^2$ in Theorem~\ref{thm:main}, and we therefore restrict our attention to the even integer case, for which a simpler proof is available.
\end{remark}

Next, we construct the limiting Gibbs measure as the limit of the renormalized truncated measure \eqref{gibbsN}. A precise statement is given below and we refer to \cite[Proposition 1.4]{ORW} for a proof.

\begin{lemma}\label{lem:gibbs}
Let $\iota >0$ and $\be \in \R$ with $0 < \be^2 < 4\pi$. Then, for any $1\leq p <\frac{8\pi}{\be^2}$, 
the density 
\begin{align*}
R_{N}(u) = \exp\bigg( - \iota \g_{N}\int_{\T^2}\cosh(\be \Pii_{N}u)dx \bigg)
\end{align*}
is uniformly bounded in $N$  \textup{(}by $1$\textup{)}  and converges to a limit $R$ in $L^p (\muu_1)$ as $N\to \infty$. 
Furthermore, the sequence $\{\rhoo_N\}_{N \in \N}$ converges in total variation to some limiting probability measure $\rhoo$ on $\H^{-\eps}(\T^2)$ for any small $\eps >0$, and the measures $\rhoo$ and $\muu_1$ are mutually absolutely continuous.
\end{lemma}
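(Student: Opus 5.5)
The proof follows the standard construction of Gibbs measures with a defocusing exponential interaction, as in \cite[Proposition 1.4]{ORW}. There are three steps: uniform boundedness of $R_N$, convergence of $R_N$ in $L^p(\muu_1)$, and finally total variation convergence together with mutual absolute continuity.

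\emph{Uniform boundedness.} Since $\iota>0$, $\g_N>0$ and $\cosh\ge 0$, the exponent $-\iota\g_N\int_{\T^2}\cosh(\be\Pii_N u)\,dx$ is nonpositive. Hence $0<R_N\le 1$ for every $N$.

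\emph{Convergence of $R_N$.} Set
\[
\mathcal{C}_N=\g_N\int_{\T^2}\cosh(\be\Pii_N u)\,dx=\tfrac12\big(\mathcal{M}^+_N(\T^2)+\mathcal{M}^-_N(\T^2)\big),
\]
where $\mathcal{M}^\pm_N$ is the total mass of the GMC $\Dr_N(0,\cdot;\pm\be)$, with $u$ distributed as $\mu_1$. For $0<\be^2<4\pi$ the sequence $\mathcal{C}_N$ is Cauchy in $L^2(\O)$. To see this, expand $\E|\mathcal{C}_N-\mathcal{C}_M|^2$ into double integrals of the form
\[
\int\!\!\int \big(e^{\be^2\G_{N,N}}-2e^{\be^2\G_{N,M}}+e^{\be^2\G_{M,M}}\big)\,dx\,dy ,
\]
and similarly with $-\be$ for the cross terms, which give $e^{-\be^2\G}$ and are bounded. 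By the time-independent case of Lemma~\ref{LEM:GN}, the integrand is dominated by $|x-y|^{-\be^2/2\pi}$, which is integrable on $\T^2\times\T^2$ exactly when $\be^2<4\pi$. Since $\G_{N,M}\to\G$ pointwise off the diagonal by \eqref{cov3}, dominated convergence gives the Cauchy property. Then $\mathcal{C}_N\to\mathcal{C}\ge 0$ in probability. Because $x\mapsto e^{-\iota x}$ is $\iota$-Lipschitz on $[0,\infty)$ and all $R_N$ are bounded by $1$, we get $R_N\to R:=e^{-\iota\mathcal{C}}$ in probability, and bounded convergence upgrades this to $L^p(\muu_1)$ convergence for every finite $p$. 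In particular this covers $p<\frac{8\pi}{\be^2}$.

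\emph{Total variation convergence and absolute continuity.} The normalizing constants satisfy $Z_N=\E[R_N]\to Z=\E[R]$. Moreover $Z>0$, since $\mathcal{C}<\infty$ almost surely (indeed $\E\mathcal{C}<\infty$ by Lemma~\ref{LEM:GMCmeas}), which gives $R>0$ almost surely. Define $d\rhoo=Z^{-1}R\,d\muu_1$. Then
\[
\|\rhoo_N-\rhoo\|_{TV}\le \E\big|Z_N^{-1}R_N-Z^{-1}R\big|\to 0 .
\]
Finally, $R>0$ $\muu_1$-a.s.\ gives $\muu_1\ll\rhoo$, and $\rhoo\ll\muu_1$ holds by construction. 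The measures live on $\H^{-\eps}$ because $\muu_1$ does.

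The main obstacle is the second moment computation. The restriction $\be^2<4\pi$ is exactly the $L^2$ threshold for the GMC, so the key point is checking that the covariance bounds of Lemma~\ref{LEM:GN} give a uniform integrable majorant.
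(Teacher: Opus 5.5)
Your proposal is correct and follows essentially the same argument as \cite[Proposition 1.4]{ORW}, which the paper cites in place of a proof. That argument runs: $0<R_N\le 1$ by the defocusing sign; the total GMC mass $\mathcal{C}_N$ converges in the $L^2$ regime $\be^2<4\pi$; $x\mapsto e^{-\iota x}$ is Lipschitz on $[0,\infty)$; bounded convergence then gives $L^p$ convergence for every finite $p$; and $L^1$ convergence of the normalized densities together with $Z>0$ yields total variation convergence and mutual absolute continuity. The only difference is cosmetic: you check the $L^2(\O)$ Cauchy property of $\mathcal{C}_N$ by hand from the covariance bounds of Lemma~\ref{LEM:GN}, rather than deducing it from the GMC convergence in Lemma~\ref{LEM:GMC}\,(ii). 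For that check, note that \eqref{cov3} controls the differences $\G_{N_j}-\G_{N_1,N_2}$ rather than giving convergence to a limit $\G$, but those differences are exactly what the Cauchy computation needs.
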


\subsection{The modified Gaussian multiplicative chaos}

We define the following modified Gaussian multiplicative chaos adapted to the hyperbolic setting:
\begin{align}
\mathcal{P}_{\kk,\al}[\Dr_{N}] (t,x;\be) : =  \int_{\T^2}\int_{0}^{1} \K_{\kk,\al}( t,t',x-y) \Dr_{N}(t',y;\be) dt' dy, \label{HGMC}
\end{align}
where the kernel $\K_{\kk,\al}$ is given by
\begin{align}
\K_{\kk,\al}(t,t'x)=   \frac{1}{\jb{t}^{3}} \frac{1}{|x|^{2-\al}} \bigg[  1+ \frac{1}{|t-t'|^{\kk}} + \frac{1}{ ||t-t'|-|x||^{\kk}}     \bigg] \label{K}
\end{align}
for $\kk \geq0 $ and $0< \al < 2$ and $t\in \R$. 
The form of \eqref{HGMC} is motivated by the kernel $\mc R_{\s}$ appearing in \eqref{subop} and Proposition~\ref{prop:kery}.
Indeed, by Young's inequality, we have
\begin{align}
\mc R_{\s}(t,t',x) & = \jb{t}^{-3-\s} \big(1 +   |t-t'|^{-\frac12-\s} |x|^{-\frac12}  \notag\\
& \qquad + (|t-t'| + |x|)^{-\frac12} ||t-t'| - |x||^{-\frac12-\s}\big)  \jbb{\log(||t-t'|- |x||)}^2 \, \jb{\log(|x|)}^3 \notag \\
&\les_{\eps} \K_{\frac{1}{2}+\s+\eps, \frac{3}{2}-\eps}(t,t',x) \label{Rsbd}
\end{align}
for any $(t,t',x) \in \R \times [0,1] \times \T^2$ and $\eps>0$. 
We show that $\mathcal{P}_{\kk, \al}[\Dr_{N}]$ is uniformly bounded in $L^{p}(\R\times \T^2)$ for the natural range of $p$ depending on $(\be, \al)$ and converges in $L^{p}(\O; L^{p}(\R\times \T^2))$ as $N\to \infty$. 

\begin{proposition}\label{PROP:I1theta}
The following statements hold:

\smallskip

\noi
\textup{(i)} Given $0<\be^{2}<8\pi$, let $1\leq p<\frac{8\pi}{\be^2}$, $\al>0$  be as in \eqref{alpha} and $0<\kk<1$.
Then, the sequence of stochastic processes $\mathcal{P}_{\kk,\al}[\Dr_{N}]$ is uniformly bounded in $L^{p}(\O; L^{p}(\R\times \T^2))$. 

\smallskip

\noi
\textup{(ii)} Given $0<\be^{2}<4\pi$, let $1<p<\frac{8 \pi}{\be^2}$ and $\al$ be as in \eqref{alpha}. Then, given any $T>0$ and $0<\kk<1$, the sequence $\{\mathcal{P}_{\kk,\al}[\Dr_{N}]\}_{N\in \N}$ is Cauchy in $L^{p}(\O; L^{p}(\R\times \T^2))$ and thus converges to a limit, which we denote by $\mathcal{P}_{\kk, \al}[\Dr]$, in $L^{p}(\O; L^{p}(\R\times \T^2))$. 
%Moreover, it holds that 
%\begin{align}
%\E\Big[ \sup_{N\in 2^{\N_0}\cup \{\infty\}} \| \H_{\kk,\al}[\Dr_{N}]\|_{L^{p}_{t,x}([0,1]\times \T^2)}^{p}  \bigg] \leq C. \label{unifHbd}
%\end{align}
\end{proposition}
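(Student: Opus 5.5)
The plan is to compute the $p$-th moment of $\mathcal{P}_{\kk,\al}[\Dr_N](t,x)$ pointwise in $(t,x)$. This moment is uniform in $x$ by spatial stationarity. The factor $\jb{t}^{-3p}$ then makes the integral over $t \in \R$ converge, so (i) reduces to
\[\sup_{t\in\R,\,x\in\T^2,\,N}\E\big[|\mathcal{P}_{\kk,\al}[\Dr_N](t,x)|^p\big]\jb{t}^{3p}<\infty .\]
Since $\Dr_N>0$, $\mathcal{P}_{\kk,\al}[\Dr_N](t,x)$ is the integral of a positive kernel against the space-time random measure $\Dr_N(t',y)\,dt'dy$ on $[0,1]\times\T^2$. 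The first step is to discard the two-term structure of $\K_{\kk,\al}$. We bound $1+|t-t'|^{-\kk}+||t-t'|-|x-y||^{-\kk}$ by the sum of the three terms and treat each separately. The terms $1$ and $|t-t'|^{-\kk}$ are easy: after Minkowski in $t'$ (using $\kk<1$), they reduce to the purely spatial object $\jb{\nb_x}^{-\al}\Dr_N(t')$ (kernel $|x|^{\al-2}$), whose $L^p(\O)$ bound uniform in $t'$ is exactly the content of Lemma~\ref{LEM:GMC}~(i) under \eqref{alpha}. So the heart of the matter is the term with the hyperbolic singularity $||t-t'|-|x-y||^{-\kk}$.

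For that term, I would not freeze $t'$, since for fixed $t'$ the spatial singularity on a circle costs too much. Instead I would exploit the time integral, as the overview indicates. Decompose dyadically $|x-y|\sim 2^{-j}$ and $||t-t'|-|x-y||\sim 2^{-k}$, $j,k\ge0$. The corresponding space-time region $E_{j,k}$ has volume $\les 2^{-2j}2^{-k}$ and diameter $\les 2^{-\min(j,k)}$. Then
\[
\mathcal{P}^{(3)}\les \sum_{j,k} 2^{(2-\al)j}2^{\kk k}\,\mathcal{M}_N(E_{j,k}),\qquad \mathcal{M}_N(E):=\int_E\Dr_N(t',y)\,dt'dy .
\]
By Minkowski in $L^p(\O)$, it suffices to bound $\|\mathcal{M}_N(E_{j,k})\|_{L^p(\O)}$. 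To do so, I would invoke Kahane's inequality (Lemma~\ref{LEM:Kahane}, first on a discretization and then in the limit) with $F(z)=z^p$ convex. Combined with the sharp space-time covariance bound \eqref{cov1} of Lemma~\ref{LEM:GN}, this compares $\Psi_N$ on $[0,1]\times\T^2$ with a field whose covariance is $-\frac1{2\pi}\log(|t_1-t_2|+|x_1-x_2|+N^{-1})+C$. Next, cover $E_{j,k}$ by space-time balls of radius $2^{-\max(j,k)}$ (about $2^{2\max-2j}2^{\max-k}$ of them). On each ball, use the scaling of GMC moments as in \eqref{GMCmeas2}, integrated over its time extent; equivalently, Minkowski in $t'$ over an interval of length $\les 2^{-\max(j,k)}$ followed by \eqref{GMCmeas2} in space. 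This yields roughly
\[
\|\mathcal{M}_N(E_{j,k})\|_{L^p(\O)}\les (\#\text{balls})\,2^{-\max(j,k)}\,2^{-\max(j,k)(2-\frac{(p-1)\be^2}{4\pi})}.
\]
Summing, the $j$-sum converges because $\al>\frac{(p-1)\be^2}{4\pi}$, and the $k$-sum converges because $\kk<1$ is compensated by the time width $2^{-k}$. For any leftover borderline, I would shift an $\eps$ between $\kk$ and $\al$, using that \eqref{alpha} is a strict inequality.

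For (ii), I would run the same argument on differences $\mathcal{P}_{\kk,\al}[\Dr_{N_1}]-\mathcal{P}_{\kk,\al}[\Dr_{N_2}]$. For $1<p\le 2$-type ranges, I would expand the second moment and use \eqref{cov2}--\eqref{cov3}, which give a factor $(N_1\wedge N_2)^{-\frac12}|x|^{-\frac12}$, hence a decay $(N_1\wedge N_2)^{-\theta}$ after sacrificing a bit of $\al$. General $p<\frac{8\pi}{\be^2}$ then follows by interpolating this $L^2$-type decay with the uniform bound from (i) at a slightly larger $p$. This mirrors the proof of Lemma~\ref{LEM:GMC}~(ii) in \cite{ORW}, and it explains the restriction $\be^2<4\pi$.

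The main obstacle, I expect, is the second step. The Kahane comparison must be done in space-time rather than at fixed time, which requires the lower bound on the covariance from Lemma~\ref{LEM:GN} uniformly for $t_1\neq t_2$. One must also check that the covering of the thin sets $E_{j,k}$ does not lose more than the $2^{-k}$ volume gain; if it does, I would replace ball-coverings by a direct moment computation for integer $p$ via Brascamp--Lieb, as in Lemma~\ref{LEM:GMCannuli}, and then interpolate.
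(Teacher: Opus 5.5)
Your reduction to pointwise-in-$(t,x)$ moments, your treatment of the terms $1$ and $|t-t'|^{-\kk}$, and your plan for (ii) (an $L^2$ difference bound from \eqref{cov2}--\eqref{cov3}, then interpolation with (i) at a slightly larger $p$) all match the paper. The gap is in the hyperbolic term, at the covering step.

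Write $\gamma=\frac{(p-1)\be^2}{4\pi}$. Your own count gives $\|\mathcal M_N(E_{j,k})\|_{L^p(\O)}\les 2^{-2j-k}\,2^{\gamma\max(j,k)}$. So in the light-cone regime $k>j$ the $(j,k)$-term of your sum is
\[
2^{(2-\al)j}\,2^{\kk k}\cdot 2^{-2j-k}\,2^{\gamma k}=2^{-\al j}\,2^{(\kk-1+\gamma)k},
\]
and the $k$-sum converges only if $\gamma<1-\kk$. The time width $2^{-k}$ is eaten by the intermittency loss $2^{\gamma k}$, which you pay because you work at the finest scale $2^{-k}$. Shifting $\eps$ between $\kk$ and $\al$ cannot help, since $\al$ does not enter the $k$-exponent. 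The statement allows any $\kk<1$ subject only to $\al>\gamma$. Proposition~\ref{PROP:LWP} uses $1-\kk=\frac{\eps}{8}$ with $\gamma$ up to nearly $\frac32$, so the argument fails exactly where it is needed. Your fallback does not close this either. The Brascamp--Lieb computation, as in Lemma~\ref{LEM:GMCannuli}, needs even $p$ and $(p-1)\be^2<4\pi$, i.e.\ $\gamma<1$; for $p\approx6$ that means $\be^2<\frac{4\pi}{5}$ rather than $\frac{6\pi}{5}$. Minkowski in $t'$ plus \eqref{GMCann} on the thin annuli gives the condition $\frac{(p-1)\be^2}{8\pi}<1-\kk$, which is exactly the loss that confines Lemma~\ref{LEM:PuN} to $\be^2<\frac{2\pi}{11}$.

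The missing idea is the choice of comparison field in Kahane's inequality. Do not compare with a space-time log-correlated field; that keeps exactly the time correlations that force the loss at scale $2^{-k}$. Instead, use \eqref{cov1} and the monotonicity of $-\log$ to discard the time separation:
\[
\G_N(t_1,t_2,y_1-y_2)\le -\tfrac1{2\pi}\log\big(|y_1-y_2|+N^{-1}\big)+C .
\]
Note that Kahane needs an \emph{upper} bound on the covariance of $\Psi_N$, not a lower one. This dominates $\Psi_N$ on $[0,1]\times\T^2$ by a field constant in $t'$. After rescaling $y\mapsto 2^{-j}y+x$, the paper takes $\Psi_N(1,\cdot)+g_j$, with $g_j$ an independent Gaussian of variance $\frac1{2\pi}\log 2^{j}+C$. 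Once the random density no longer depends on $t'$, the thin-shell indicator is integrated deterministically: $\int_0^1\ind_{\{||t-t'|-|x-y||\sim 2^{-k}\}}\,dt'\les 2^{-k}$ uniformly in $y$. The shell's thinness therefore costs only its measure, with no intermittency at scale $2^{-k}$. What remains is the spatial GMC mass at scale $2^{-j}$, which costs $2^{-(2-\gamma)j}$ by \eqref{GMCmeas2} and scaling. Each piece then contributes $2^{(\gamma-\al)j}\,2^{-(1-\kk)k}$, which is summable exactly under $\kk<1$ and \eqref{alpha}. The same time-independent covariance bound lets the $t_1,t_2$ integrals be done deterministically in the $L^2$ difference estimate. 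So once (i) is repaired this way, your part (ii) goes through.
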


Before we give a proof of Proposition~\ref{PROP:I1theta}, we first establish the following preliminary results.

\begin{lemma}\label{LEM:I1theta}
Let $N\in \N$, $0\leq \kk<1$, $1\leq p <\frac{8\pi}{\be^{2}}$ and $\al$ satisfy \eqref{alpha}. Then, 
\begin{align}
\sup_{N\in \N} \E \Big[ \big\|\mathcal{P}_{\kk, \al}[\Dr_{N}] \big\|_{L^{p}_{t,x}(\R\times \T^2)}^{p} \Big] <\infty. \label{I1theta}
\end{align}
Furthermore, for $0<\be^{2}<4\pi$, it holds that 
\begin{align}
\E \Big[ \big\|\mathcal{P}_{\kk,\al}[\Dr_{N_1}]-\mathcal{P}_{\kk, \al}[\Dr_{N_2}] \big\|_{L^{2}_{t,x}(\R\times \T^2)}^{2} \Big] \leq C (N_1 \wedge N_2)^{-\ta}. \label{I1thetadiff}
\end{align}
for  some $\ta=\ta(\be,\al)>0$ and any $N_{1},N_{2}\in \N$. 
\end{lemma}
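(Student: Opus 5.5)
The plan is to treat \eqref{I1theta} and \eqref{I1thetadiff} separately. For \eqref{I1theta}, use Fubini to reduce to a pointwise bound
\[
\sup_{N\in\N}\sup_{x\in\T^2}\E\big[\mathcal P_{\kk,\al}[\Dr_N](t,x)^p\big]\les \jb{t}^{-3p},
\]
which is integrable over $\R\times\T^2$. The prefactor $\jb t^{-3}$ factors out. By translation invariance in $x$, take $x=0$.

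Because $\Dr_N\ge0$, $\mathcal P_{\kk,\al}[\Dr_N](t,0)$ is an integral of a positive measure against the positive kernel. Apply Minkowski's inequality in $t'\in[0,1]$, which is valid since $p\ge1$:
\[
\big\|\mathcal P(t,0)\big\|_{L^p(\O)}\le \int_0^1\Big\|\int_{\T^2}\frac{1+|t-t'|^{-\kk}+||t-t'|-|y||^{-\kk}}{|y|^{2-\al}}\,\mathcal M_N(t',dy)\Big\|_{L^p(\O)}dt'.
\]
The term $|t-t'|^{-\kk}$ is integrable in $t'$ because $\kk<1$. For it and the constant term, decompose $y$ dyadically, $|y|\sim2^{-j}$, and use Lemma~\ref{LEM:GMCmeas}, \eqref{GMCmeas2}. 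This gives
\[
\|\mathcal M_N(B(0,2^{-j}))\|_{L^p(\O)}\les 2^{-j(2+\frac{\be^2}{4\pi}-\frac{\be^2}{4\pi}p)}.
\]
Multiplying by $2^{j(2-\al)}$ leaves a sum $\sum_j 2^{-j(\al-\frac{(p-1)\be^2}{4\pi})}$, which converges exactly under \eqref{alpha}.

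The hyperbolic term $||t-t'|-|y||^{-\kk}$ is where time-dependence of the GMC matters. The route that avoids space-time covariances is to keep the Minkowski step in $t'$ and, for fixed $t'$ and $r=|t-t'|$, further split the annular region $\{|y|\sim2^{-j}\}$ into thin annuli $\{||y|-r|\sim2^{-k}\}$. On each piece use \eqref{GMCmeas1}–\eqref{GMCmeas2}, bounding the annulus crudely by a ball of radius $2^{-j}$. This yields $\sum_k 2^{k\kk}\cdot(\cdots)$, which need not sum. So instead integrate first in $t'$: the $t'$-integral of $||t-t'|-|y||^{-\kk}$ is bounded uniformly in $y$ since $\kk<1$. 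To justify exchanging it, use Minkowski in the form $\|\int f\,dt'\|_{L^p(\O)}$ while keeping the spatial integral inside. Alternatively, and this is the main obstacle, handle the time-dependence of $\Dr_N(t')$ by writing the $p$-th moment as a $p$-fold integral over $(t'_i,y_i)$. Bound $\E\prod\Dr_N(t'_i,y_i)$ via Lemma~\ref{LEM:GN}, \eqref{cov1}, by $\prod_{i<k}(|t'_i-t'_k|+|y_i-y_k|)^{-\be^2/2\pi}$. Then compare by Kahane (Lemma~\ref{LEM:Kahane}) with a time-independent field: dropping $|t'_i-t'_k|$ only increases the bound. This decouples the time integrals, which are then bounded by $\sup_y\int_0^1(1+|t-t'|^{-\kk}+||t-t'|-|y||^{-\kk})dt'\les1$ for each $i$. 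What remains is the spatial moment already treated, giving \eqref{I1theta}. For non-integer $p$, use the Kahane comparison with the convex map $x\mapsto x^p$ rather than expanding the moment.

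For \eqref{I1thetadiff}, expand the $L^2$ norm and compute the second moments explicitly. Write
\[
\E[\Dr_{N_a}(t_1',y_1)\Dr_{N_b}(t_2',y_2)]=e^{\be^2\G_{N_a,N_b}(t_1',t_2',y_1-y_2)}.
\]
The difference $\E|\mathcal P[\Dr_{N_1}]-\mathcal P[\Dr_{N_2}]|^2$ is then a combination of four exponentials. Bound it using \eqref{cov2}–\eqref{cov3} and $|e^a-e^b|\le|a-b|(e^a+e^b)$. Interpolating the two branches of the minimum in \eqref{cov3} gives a factor $(N_1\wedge N_2)^{-\ta}|y_1-y_2|^{-\ta}$. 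The resulting kernel is $|y_1-y_2|^{-\be^2/2\pi-\ta}$ against integrable kernels, which is fine since $\be^2<4\pi$ and $\al$ satisfies \eqref{alpha} with $p=2$, for small $\ta$. Proposition~\ref{PROP:I1theta} (ii) then follows by interpolating \eqref{I1thetadiff} with \eqref{I1theta} at a slightly larger exponent.
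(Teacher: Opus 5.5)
Your final route is correct and is essentially the paper's argument. The decisive step is the Kahane comparison (Lemma~\ref{LEM:Kahane}) of $\Psi_N(t',y)$ with a time-independent field $\Psi_N(1,y)+g$. It is justified because dropping $|t_i'-t_k'|$ inside the logarithm in \eqref{cov1} only increases the covariance, and afterwards the $t'$-integral absorbs the hyperbolic singularity uniformly in $y$. Your difference bound, via the explicit second moments and \eqref{cov2}--\eqref{cov3}, is exactly \eqref{covcov0}--\eqref{covcov2}.

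The paper only orders the steps differently. It first decomposes dyadically in $|x-y|\sim 2^{-s}$ and $||t-t'|-|x-y||\sim 2^{-\tau}$, then rescales each piece and compares it with $\Psi_N(1,\cdot)+g_s$, where $\E[g_s^2]\approx \tfrac{1}{2\pi}\log 2^s$; you instead make a single global comparison and then apply \eqref{GMCmeas2}. One caveat: your intermediate step of applying Minkowski in $t'$ while keeping the spatial integral inside does not by itself let you integrate the kernel in $t'$ first, since $\mathcal M_N(t',dy)$ depends on $t'$. It is the Kahane decoupling that makes that exchange legitimate.
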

\begin{proof}
The case $p=1$ follows from Fubini's theorem, the fact that 
\begin{align*}
\E[ |\Dr_{N}(t,x)|] = 1
\end{align*}
uniformly in $N\in \N$ and for any $t\in \R_{+}$ and $x\in \T^2$, and that $0\leq \kk<1$ and $\al>0$. Thus, we assume that $p>1$.
By the triangle and Minkowski's inequality, \eqref{I1theta} is directly implied by the following results: 
\begin{align}
\sup_{x\in \T^2, t\in \R, N\in \N} \E \bigg[ \bigg( \int_{\T^2}\int_{0}^{1}  \frac{1}{| |t-t'|-|x-y||^{\kk}} \frac{\Dr_{N}(t',y)}{|x-y|^{2-\al}} dt' dy \bigg)^{p} \bigg] <\infty, \label{I1thetagoal1} \\
\sup_{x\in \T^2, t\in \R, N\in \N} \E \bigg[ \bigg( \int_{\T^2}\int_{0}^{1}  \frac{1}{ |t-t'|^{\kk}} \frac{\Dr_{N}(t',y)}{|x-y|^{2-\al}} dt' dy \bigg)^{p} \bigg] <\infty. \label{I1thetagoal2}\\
\sup_{x\in \T^2, t\in \R, N\in \N} \E \bigg[ \bigg( \int_{\T^2}\int_{0}^{1}   \frac{\Dr_{N}(t',y)}{|x-y|^{2-\al}} dt' dy \bigg)^{p} \bigg] <\infty. \label{I1thetagoal3}
\end{align}
We only provide details for the (slightly) more involved \eqref{I1thetagoal1}. Similar and simpler arguments show \eqref{I1thetagoal2} and \eqref{I1thetagoal3}.

Fix $x\in \T^2$ and $t\in \R$. By the positivity of $\Dr_{N}$ and Minkowski's inequality, we have 
\begin{align}
& \E \bigg[ \bigg( \int_{\T^2}\int_{0}^{1}  \frac{1}{| |t-t'|-|x-y||^{\kk}} \frac{\Dr_{N}(t',y)}{|x-y|^{2-\al}} dt' dy \bigg)^{p} \bigg] \notag \\
 & \les   \E \bigg[ \bigg( \sum_{ s, \tau \geq 0  } 2^{(2-\al)s} 2^{\kk \tau} \int_{|x-y|\sim 2^{-s}}\int_{0}^{1} \ind_{\{ | |t-t'|-|x-y||\sim 2^{-\tau} \}} \Dr_{N}(t',y)dt' dy \bigg)^{p} \bigg] \notag \\
 & \les \bigg\{  \sum_{ s, \tau \geq 0  } 2^{(2-\al)s} 2^{\kk \tau} 
  \E \bigg[ \bigg(  \int_{|x-y|\sim 2^{-s}}\int_{0}^{1} \ind_{\{ | |t-t'|-|x-y||\sim 2^{-\tau} \}} \Dr_{N}(t',y)dt' dy \bigg)^{p} \bigg]^{\frac{1}{p}} \bigg\}^{p}.  \label{I1thetacomp0}
\end{align}
Now, by a change of variables, a rescaling, and a Riemann sum approximation, we have 
\begin{align}
  \E& \bigg[ \bigg(  \int_{|x-y|\sim 2^{-s}}\int_{0}^{1} \ind_{\{ | |t-t'|-|x-y||\sim 2^{-\tau} \}} \Dr_{N}(t',y)dt' dy \bigg)^{p} \bigg] \notag \\ 
  & \leq  2^{-2sp} \E \bigg[ \bigg(  \int_{\T^2}\int_{0}^{1} \ind_{\{ | |t-t'|-|2^{-s}y+x| |\sim 2^{-\tau} \}} \Dr_{N}(t',2^{-s}y+x)dt' dy \bigg)^{p} \bigg] \notag \\
  & = 2^{-2sp} \lim_{J\to \infty} \E \bigg[ \bigg(  \sum_{j,k,h=1}^{J} \frac{4\pi^2}{J^{3}} \ind_{\{ || t-t_{h}'|-|2^{-s}y_{j,k}+x| |\sim 2^{-\tau} \}} \Dr_{N}(t_{h}',2^{-s}y_{j,k}+x) \bigg)^{p} \bigg],
  \label{I1thetacomp1}
\end{align}
where $y_{j,k}$, $j,k=1,\ldots, J$ is defined as $y_{j,k}= ( -\pi +\frac{2\pi}{J}(j-1), -\pi +\frac{2\pi}{J}(k-1))\in \T^2 \equiv [-\pi,\pi)^{2}$ and $t'_{h}$, $h=1,\ldots, J$, is defined as $t'_{h}= \frac{1}{J}(h-1) \in [0,1]$.
Now, by \eqref{cov1}, 
\begin{align}
\begin{split}
\E[ &\Psi_{N}(t_{h_1}', 2^{-s}y_{j_1,k_1}+x)\Psi_{N}(t_{h_2}', 2^{-s}y_{j_2,k_2}+x)] \\
& \leq -\tfrac{1}{2\pi} \log\big( |t_{h_1}'-t_{h_2}'| + 2^{-s}|y_{j_1,k_1}-y_{j_2,k_2}|+N^{-1} \big)  +C \\
& \leq -\tfrac{1}{2\pi} \log( 2^{s}|t_{h_1}'-t_{h_2}'| +|y_{j_1,k_1}-y_{j_2,k_2}|+2^{s} N^{-1})+\tfrac{1}{2\pi}\log (2^{s}) +C \\
& \leq -\tfrac{1}{2\pi} \log( 2^{s}|t_{h_1}'-t_{h_2}'| +|y_{j_1,k_1}-y_{j_2,k_2}|+ N^{-1})+\tfrac{1}{2\pi}\log (2^{s})+C \\
& \leq -\tfrac{1}{2\pi} \log(|y_{j_1,k_1}-y_{j_2,k_2}|+  N^{-1})+\tfrac{1}{2\pi}\log (2^{s})+C \\
& = \E[ (\Psi_{N}(1, y_{j_1,k_1}) +g_s)(\Psi_{N}(1,y_{j_2,k_2})+g_{s})],
\end{split} \label{logs}
\end{align}
where $g_{s}$ is a collection of mean-zero Gaussian random variables, independent of $\Psi_{N}$, and with variance $\frac{1}{2\pi}\log(2^{s})+C$.

Now, with the convex function $x\mapsto x^{p}$, we apply Lemma~\ref{LEM:Kahane}, independence, and Lemma~\ref{LEM:GMCmeas}  to obtain
\begin{align*}
\eqref{I1thetacomp1} &\leq 
2^{-2sp} \lim_{J\to \infty} \E \bigg[ \bigg(  \sum_{j,k,h=1}^{J} \frac{4\pi^2}{J^{3}} \ind_{\{ | |t-t_{h}'|-|2^{-s}y_{j,k}+x| |\sim 2^{-\tau} \}} e^{\be( \Psi_{N}(1, y_{j_1,k_1}) +g_s) -\frac{\be^2}{2}(\s_{N}+\E[g_{s}^{2}])} \bigg)^{p} \bigg] \\
& = 2^{-2sp} \, \E \bigg[ \bigg( \int_{\T^2} \int_{0}^{1} \ind_{\{ | |t-t'|-|2^{-s}y+x| |\sim 2^{-\tau} \}} e^{\be( \Psi_{N}(1, y) +g_s) -\frac{\be^2}{2}(\s_{N}+\E[g_{s}^{2}])} dt' dy \bigg)^{p} \bigg]  \\
& = 2^{-2sp} \, \E \Big[ \big( e^{\be g_s -\frac{\be^{2}}{2} \E[g_{s}^2]}\big)^p\Big]
\E \bigg[ \bigg( \int_{\T^2} \int_{0}^{1} \ind_{\{ | |t-t'|-|2^{-s}y+x| |\sim 2^{-\tau} \}} e^{\be \Psi_{N}(1, y)  -\frac{\be^2}{2}\s_{N}} dt' dy \bigg)^{p} \bigg]  \\
& \les 2^{-2sp} \, 2^{s(p^2-p)\frac{\be^{2}}{4\pi}} \, 2^{-p\tau} \, \E \bigg[ \bigg( \int_{\T^2}  e^{\be\Psi_{N}(1, y) -\frac{\be^2}{2}\s_N} dy \bigg)^{p} \bigg]  \\
& = 2^{-2sp +s(p^2-p)\frac{\be^{2}}{4\pi}}\, 2^{-p \tau} \E\big[ \mathcal{M}_{N}(1,\T^{2})^{p}] \\
& \les 2^{-2sp +s(p^2-p)\frac{\be^{2}}{4\pi}}\, 2^{-p \tau}
\end{align*}
uniformly in $N\in \N$. Returning this bound to \eqref{I1thetacomp0}, we find 
\begin{align*}
\eqref{I1thetacomp0} \les  \bigg\{  \sum_{s,\tau \geq0 } 2^{(2-\al-2+(p-1)\frac{\be^2}{4\pi})s}  
\, 2^{-(1-\kk)\tau}    \bigg\}^{p} \les 1,
\end{align*}
uniformly in $N\in \N$, since $\kk<1$ and $\al>(p-1)\frac{\be^2}{4\pi}$.  This completes the proof of \eqref{I1thetagoal1} and thus also the proof of \eqref{I1theta}.

We now move onto the difference estimate \eqref{I1thetadiff}.
We first claim that
\begin{align}
\begin{split}
|\E&\big[  \big(  \Dr_{N_1}(t_1,y_1)-\Dr_{N_2}(t_1,y_1)  \big) \big(  \Dr_{N_1}(t_2,y_2)-\Dr_{N_2}(t_2,y_2) \big)\big]| \\
&  \les  (N_1 \wedge N_2)^{-\eps}(|y_1-y_2|+(N_1 \wedge N_2)^{-1})^{-\frac{\be^2}{2\pi}}  |y_1 -y_2|^{-2\eps}
\end{split} \label{covcov0}
\end{align}
for any $\eps>0$ and $(t_1,t_2,y_1,y_2)\in [0,1]^{2}\times (\T^{2})^{2}$.
By a direct computation, we have
\begin{align}
\begin{split}
\text{LHS} \text{ of } \eqref{covcov0} &  =e^{\be^{2} \G_{N_1}(t_1,t_2,y_1-y_2)} - e^{\be^{2} \G_{N_1,N_2}(t_1,t_2,y_1-y_2)} \\
& \hphantom{XXX}  +  e^{\be^{2} \G_{N_2}(t_1,t_2,y_1-y_2)}  -e^{\be^{2} \G_{N_2,N_1}(t_1,t_2,y_1-y_2)}.
\end{split} \label{covcov}
\end{align}
It suffices to consider the first term on RHS of \eqref{covcov}. By the mean value theorem, and \eqref{cov2}, we have
\begin{align}
&|e^{\be^{2} \G_{N_1}(t_1,t_2,y_1-y_2)} - e^{\be^{2} \G_{N_1,N_2}(t_1,t_2,y_1-y_2)}|  \notag \\
& = |\G_{N_1}(t_1,t_2,y_1-y_2) -  \G_{N_1,N_2}(t_1,t_2,y_1-y_2)| \notag \\
& \hphantom{XX} \times \int_{0}^{1} \be^2 \exp\big( \be^2 ( \tau \G_{N_1}(t_1,t_2, y_1-y_2) + (1-\tau)  \G_{N_1,N_2}(t_1,t_2, y_1-y_2))\big) d\tau \notag  \\
& \les (|t_1-t_2|+|y_1-y_2|+(N_1 \wedge N_2)^{-1})^{-\frac{\be^2}{2\pi}}|\G_{N_1}(t_1,t_2,y_1-y_2) -  \G_{N_1,N_2}(t_1,t_2,y_1-y_2)| \notag  \\
& \les (|y_1-y_2|+(N_1 \wedge N_2)^{-1})^{-\frac{\be^2}{2\pi}}|\G_{N_1}(t_1,t_2,y_1-y_2) -  \G_{N_1,N_2}(t_1,t_2,y_1-y_2)|. \label{covcov1}
\end{align}
Now combining \eqref{cov3}, the elementary inequality 
\begin{align*}
|\log(y+z) | \leq C_{\eps} |y+z|^{-\eps} \leq C_{\eps} |z|^{-\eps}
\end{align*}
for any $ 0<z,y \les 1$ and $\eps>0$, and an interpolation argument, we further obtain
\begin{align*}
\eqref{covcov1} \les (|y_1-y_2|+(N_1 \wedge N_2)^{-1})^{-\frac{\be^2}{2\pi}} (N_1 \wedge N_2)^{-\eps} |y_1 -y_2|^{-2\eps}
\end{align*} 
for any $\eps>0$, which establishes \eqref{covcov0}. 
We write 
\begin{align*}
\mathcal{P}_{\kk,\al}[\Dr_{N}] (t,x;\be) =  \mathcal{P}^{(1)}_{\kk,\al}[\Dr_{N}] (t,x;\be) + \mathcal{P}^{(2)}_{\kk,\al}[\Dr_{N}] (t,x;\be),
\end{align*}
where
\begin{align*}
 \mathcal{P}^{(2)}_{\kk,\al}[\Dr_{N}] (t,x;\be)&:=  \frac{1}{\jb{t}^3} \int_{\T^2}\int_{0}^{1}   \frac{1}{| |t-t'|-|x-y||^{\kk}}  \frac{\Dr_{N}(t',y;\be)}{|x-y|^{2-\al}} dt' dy, \\
 \mathcal{P}^{(1)}_{\kk,\al}[\Dr_{N}] (t,x;\be)& : =\mathcal{P}_{\kk,\al}[\Dr_{N}] (t,x;\be)-\mathcal{P}^{(2)}_{\kk,\al}[\Dr_{N}] (t,x;\be).
\end{align*}

Now, by Fubini's theorem, for any fixed $t\in \R$ and $x\in \T^2$, we have by \eqref{covcov0},
\begin{align}
\E&\big[  | \mathcal{P}_{\kk,\al}^{(2)}[\Dr_{N_1}](t,x) - \mathcal{P}_{\kk,\al}^{(2)}[\Dr_{N_2}](t,x) |^{2}\big] \notag \\
& \les \frac{1}{(N_1\wedge N_2)^{\eps}}\frac{1}{\jb{t}^{6}} \int_{(\T^2)^2} \int_{0}^{1} \int_{0}^{1} \prod_{j=1}^{2} \frac{1}{ | |t-t_j|-|x-y_j||^{\kk} |x-y_j|^{2-\al} } \frac{1}{|y_1-y_2|^{2\eps+\frac{\be^{2}}{2\pi}}} dt_1 dt_2 dy_1 dy_2  \notag \\
& \les \frac{1}{(N_1\wedge N_2)^{\eps}}\frac{1}{\jb{t}^{6}}  \int_{(\T^2)^2} \frac{1}{|x-y_1|^{2-\al} |x-y_2|^{2-\al} |y_1-y_2|^{2\eps+\frac{\be^{2}}{2\pi}}} dy_1 dy_2 \notag \\
& \sim\frac{1}{(N_1\wedge N_2)^{\eps}}\frac{1}{\jb{t}^{6}}\int_{(\T^2)^2} \frac{1}{|y_1|^{2-\al} |y_2|^{2-\al} |y_1-y_2|^{2\eps+\frac{\be^{2}}{2\pi}}} dy_1 dy_2  \notag \\
& \les  \frac{1}{(N_1\wedge N_2)^{\eps}}\frac{1}{\jb{t}^{6}} \label{covcov2}
\end{align}
provided that $\be^{2}<4\pi \min(1-\frac{\eps}{\pi}, \al -\eps) $, uniformly $x\in \T^2$. Note that this latter integral can be estimated by considering separately the cases $|y_1|\sim |y_2|\ges  |y_1-y_2|$ and $|y_1-y_2| \sim \max(|y_1|,|y_2|)$.
Similarly, we have
\begin{align}
\E&\big[  | \mathcal{P}_{\kk,\al}^{(1)}[\Dr_{N_1}](t,x) - \mathcal{P}_{\kk,\al}^{(1)}[\Dr_{N_2}](t,x) |^{2}\big] 
 \les  \frac{1}{(N_1\wedge N_2)^{\eps}}\frac{1}{\jb{t}^{6}} \label{covcov3}
\end{align}
Finally, \eqref{I1thetadiff} follows from Fubini's theorem, \eqref{covcov2} and \eqref{covcov3}.
\end{proof}

We point out that the fourth inequality in \eqref{logs}, which just uses the elementary fact that $x\mapsto -\log x$ is a decreasing function, played a key role. Indeed, it allowed us to avoid potential complications arising from correlations in time. 

%\begin{remark}\rm
%We will apply Proposition~\ref{PROP:I1theta} with $p=6+O(\eps)$ and $\al=\frac{3}{2}-\eps$ which enforces the restriction $\be^{2} < \frac{6 \pi}{5}$. See Proposition~\ref{PROP:LWP} below.
%\end{remark}

\begin{proof}[Proof of Proposition~\ref{PROP:I1theta}]
The uniform boundedness in part (i) is a direct consequence of \eqref{I1theta} in Lemma~\ref{LEM:I1theta}. For part (ii), we interpolate the difference estimate \eqref{I1thetadiff} in $L^{2}(\O; L^{2}(\R\times \T^2))$ with the uniform bound \eqref{I1theta} in $L^{p+\eps}(\O; L^{p+\eps}(\R\times \T^2))$ for some small $\eps>0$ so that the numerology in \eqref{alpha} continues to hold. This gives 
\begin{align*}
\E \Big[ \big\|\mathcal{P}_{\kk,\al}[\Dr_{N_1}]-\mathcal{P}_{\kk, \al}[\Dr_{N_2}] \big\|_{L^{p}_{t,x}(\R\times \T^2)}^{p} \Big] \leq C (N_1 \wedge N_2)^{-\ta_0}
% \label{Hbd0}
\end{align*}
for some $\ta_0>0$ and any $N_1,N_2\in \N$. 
\end{proof}

Finally, for the globalisation argument, we need estimates with respect to the time-weighted norm $\Ld_{p,\ld}^{s,b}(I)$, for which we use Proposition~\ref{prop:keryld}. The form of the kernel $\mc R_{\s,\ld}$ in \eqref{Rsld} motivates defining the following time-weighted version of \eqref{HGMC}:

\begin{align}
\begin{split}
\mathcal{P}_{\kk,\al,\ld}[\Dr_{N}] (t,x;\be) & : =   \int_{\T^2}\int_{0}^{1}  e^{-\frac{\ld}{200}|t-t'|}  \K_{\kk,\al}(t,t',x-y)  \Dr_{N}(t',y;\be)dt' dy, 
\end{split}
\label{HMCld}
\end{align}
for $\kk \geq0 $ and $0< \al < 2$ and $t\in \R$.

\begin{remark}\rm \label{RMK:gamma}
The positivity of $\Dr$ implies that $\kk \in [0,1) \mapsto \mathcal{P}_{\kk,\al,\ld}[\Dr](t,x)$ is increasing for every fixed $(t,x)$. This implies that control $\mathcal{P}_{\kk,\al,\ld}[\Dr]$ in $L^{p}_{t,x}$ implies control on $\mathcal{P}_{\kk',\al,\ld}[\Dr]$ in $L^{p}_{t,x}$ for any $0\leq \kk' \leq \kk$.
\end{remark}

The additional exponential time in \eqref{HMCld} can be exploited to gain an additional decay in $\ld$.

\begin{lemma}\label{LEM:I1thetaLd}
Let $N\in \N$, $0\leq \kk<1$, $1\leq p <\frac{8\pi}{\be^{2}}$ and $\al(p)$ satisfy \eqref{alpha}. Then, 
\begin{align}
\sup_{N\in \N} \E \bigg[   \bigg( \sum_{a\in \{c,s\}}\sum_{\ld \in 2^{\N_0}} \ld^{p(1-\kk-\eps_0)} \big\|\mathcal{P}_{\kk, \al;\ld}[\Dr^{a}_{N}] \big\|_{L^{p}_{t,x}(\R\times \T^2)}\bigg)^{p} \bigg] \leq C,\label{I1thetaLD}
\end{align}
for any $\ld>0$ and $0<\eps_0 <1-\kk$.
%Furthermore, for all $0<\be^{2}<4\pi$, it holds that 
%\begin{align}
%\E \Big[ \big\|\mathcal{P}_{\kk,\al}[\Dr_{N_1}]-\mathcal{P}_{\kk, \al}[\Dr_{N_2}] \big\|_{L^{2}_{t,x}([0,1]\times \T^2)}^{2} \Big] \leq C (N_1 \wedge N_2)^{-\ta}. \label{I1thetadiff}
%\end{align}
%for  some $\ta=\ta(\be,\al)>0$ and any $N_{1},N_{2}\in \N$. 
\end{lemma}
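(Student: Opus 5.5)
The estimate should follow from the proof of \eqref{I1theta} in Lemma~\ref{LEM:I1theta}, with the extra factor $e^{-\frac{\ld}{200}|t-t'|}$ tracked through the dyadic decomposition. The sums over $a\in\{c,s\}$ and $\ld\in 2^{\N_0}$ are handled by Minkowski's inequality in $L^p(\O)$. It therefore suffices to prove, for each fixed $\ld\ge1$ and $a$,
\begin{align*}
\sup_{N\in\N}\E\Big[\big\|\mathcal{P}_{\kk,\al;\ld}[\Dr^a_N]\big\|^p_{L^p_{t,x}(\R\times\T^2)}\Big]^{\frac1p}\les \ld^{-(1-\kk)},
\end{align*}
since then $\sum_{\ld}\ld^{1-\kk-\eps_0}\ld^{-(1-\kk)}=\sum_\ld \ld^{-\eps_0}<\infty$. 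By the triangle inequality on the three terms of $\K_{\kk,\al}$, and Fubini in $(t,x)$, one reduces, exactly as for \eqref{I1thetagoal1}--\eqref{I1thetagoal3}, to pointwise-in-$(t,x)$ moment bounds. The factor $\jb{t}^{-3}$ handles integrability in $t$ over $\R$.

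For the worst term $||t-t'|-|x-y||^{-\kk}$, I will decompose dyadically in $|x-y|\sim 2^{-s}$ and $||t-t'|-|x-y||\sim 2^{-\tau}$ as in \eqref{I1thetacomp0}. I will also insert a third dyadic decomposition $|t-t'|\sim 2^{-m}$, $m\ge -\log_2\ld$ roughly. On the piece $|t-t'|\sim 2^{-m}$, the weight is bounded by $e^{-c\ld 2^{-m}}$. The Kahane comparison \eqref{logs} and Lemma~\ref{LEM:GMCmeas} proceed unchanged. The only change is that the deterministic measure of the $t'$-set now enters as $|\{t'\in[0,1]: |t-t'|\sim2^{-m},\ ||t-t'|-|x-y||\sim2^{-\tau}\}|\les \min(2^{-m},2^{-\tau})$, in place of $2^{-\tau}$. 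This gives, for each $(s,\tau,m)$, a moment bound of the form
\begin{align*}
2^{(2-\al)s}2^{\kk\tau}e^{-c\ld2^{-m}}\, 2^{-2s+s(p-1)\frac{\be^2}{4\pi}}\min(2^{-m},2^{-\tau}).
\end{align*}

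The $s$-sum converges by \eqref{alpha}. The $\tau$-sum converges because $\kk<1$: $\sum_\tau 2^{\kk\tau}\min(2^{-m},2^{-\tau})\les 2^{-(1-\kk)m}$. It remains to sum $\sum_m e^{-c\ld2^{-m}}2^{-(1-\kk)m}\les \ld^{-(1-\kk)}$. For the other two terms of $\K_{\kk,\al}$, namely $1$ and $|t-t'|^{-\kk}$, the same computation is simpler and gives the same power $\ld^{-(1-\kk)}$ or $\ld^{-1}$, after integrating $e^{-\frac{\ld}{200}|t-t'|}|t-t'|^{-\kk}$ over $t'$.

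The main obstacle I expect is making the $t'$-measure gain compatible with the Kahane/Riemann-sum step \eqref{I1thetacomp1}. There, the indicator in $t'$ depends on $y$ through $|x-y|$. One must check that after the comparison the time integral factorizes as a deterministic bound $\sup_y|\{t'\}|$ times $\E[\mathcal{M}_N(1,\T^2)^p]$. I would handle this by bounding the indicator by its supremum in $y$ before applying Lemma~\ref{LEM:Kahane}, using that the weight $e^{-\frac{\ld}{200}|t-t'|}$ does not depend on $y$.
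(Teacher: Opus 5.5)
Your argument is correct and has the same skeleton as the paper's proof. The paper also just reruns the proof of \eqref{I1theta}: dyadic decomposition, the Kahane comparison with the time-independent field $\Psi_N(1,y)+g_s$ from \eqref{logs}, and Lemma~\ref{LEM:GMCmeas}. The only new input is then a deterministic bound on the weighted $t'$-integral, and that bound is where you differ.

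The paper applies H\"older with $\frac1q=\kk+\eps_0$ to get $2^{\kk\tau}\sup_a\int\ind_{\{|t-t'-a|\sim 2^{-\tau}\}}e^{-\ld|t-t'|}dt'\les 2^{-\eps_0\tau}\ld^{-(1-\kk-\eps_0)}$. This takes one line, but it loses $\ld^{\eps_0}$. The resulting per-$\ld$ bound exactly cancels the weight $\ld^{1-\kk-\eps_0}$, so summing over $\ld\in 2^{\N_0}$ really requires rerunning it with $\eps_0/2$ in place of $\eps_0$.

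Your extra decomposition $|t-t'|\sim 2^{-m}$, with the measure bound $\min(2^{-m},2^{-\tau})$, gives the sharp rate $\ld^{-(1-\kk)}$, so the $\ld$-sum closes at once. The $m$-decomposition is not even needed. For fixed $y$, the $t'$-integral is $\les\min(2^{-\tau},\ld^{-1})$, and $\sum_\tau 2^{\kk\tau}\min(2^{-\tau},\ld^{-1})\les\ld^{-(1-\kk)}$ for $0<\kk<1$.

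There are two corrections.

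First, the factorization you flag as the main obstacle is automatic, but your proposed fix is in the wrong order. If you replace $\ind_{\{||t-t'|-|x-y||\sim 2^{-\tau}\}}$ by its supremum over $|x-y|\sim 2^{-s}$ before applying Kahane, the surviving $t'$-set has measure $\ges 2^{-s}$. That destroys the $2^{-\tau}$ gain, and the $\tau$-sum diverges. Instead, keep the indicators and the weight $e^{-\frac{\ld}{200}|t-t'|}$ inside the nonnegative coefficients of Lemma~\ref{LEM:Kahane}. Because the comparison field does not depend on $t'$, Fubini lets you integrate in $t'$ first for each fixed $y$. You then take the supremum over $y$ of that deterministic integral, exactly as the paper does after \eqref{logs}.

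Second, the display \eqref{I1thetaLD} has $\ld^{p(1-\kk-\eps_0)}$ multiplying the unpowered norm. Since $\E\|\mathcal{P}_{\kk,\al;\ld}[\Dr^a_N]\|_{L^p_{t,x}}\ges\ld^{-(1-\kk)}$, that literal version fails for $p>1$ and small $\eps_0$. You tacitly used the weight $\ld^{1-\kk-\eps_0}$ instead, which is fine. Your per-$\ld$ bound also gives the $\ell^p$ form $\sum_\ld\ld^{p(1-\kk-\eps_0)}\|\cdot\|^p_{L^p}$, which is the version actually used in \eqref{Enorm}.
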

\begin{proof}
For \eqref{I1thetaLD}, we run the exact same argument as for proving \eqref{I1theta}, and end up needing to show that
\begin{align*}
2^{\kk \tau} \sup_{a\in \R}\int_{0}^{1\wedge t} \ind_{\{ | t-t'-a|\sim 2^{-\tau} \}}  e^{-\ld |t-t'|}dt' \les 2^{-\eps_0 \tau} \ld^{-(1-\kk-\eps_0)}.
\end{align*}
This follows readily from H\"{o}lder's inequality with $1=\frac{1}{q}+\frac{1}{q'}$ and $\frac{1}{q}= \kk+\eps_0 \in (0,1)$.
\end{proof}

\section{Well-posedness}\label{sec:wp}

\subsection{A deterministic local well-posedness result}

Here we study the system:
\begin{align}
\begin{split}
X(t)  & = \Phi_{1}(X,Y)  := - \iota \tfrac{1}{2} \be \I_{\textup{wave}} \big(  f^{+}(z+X+Y) \Dr^{+}-f^{-} (z+X+Y) \Dr^{-} \big)  \\
Y(t)  & = \Phi_{2}(X,Y)  := -\iota \tfrac{1}{2} \be \I_{\textup{KG} - \textup{wave}} \big(  f^{+}(z+X+Y) \Dr^{+}-f^{-}(z+X+Y) \Dr^{-} \big),
\end{split} \label{liouville6}
\end{align}
where $\mathcal{I}_{\text{wave}}$ and $\mathcal{I}_{\text{KG-wave}}$ are defined in \eqref{duhamels}, $f^{\pm}$ are defined in \eqref{fcfs}, $\{\Dr^{a}\}_{a\in \{\pm\}}$ are fixed positive distributions satisfying certain regularity and integrability conditions given below in \eqref{Drcond},
$z=\mathcal{D}(t)(v_0,v_1)\in C([0,T];\H^{s_3}(\T^2))$ for some $s_3>1$, and  both $X$ and $Y$ have zero initial data and belong to the space
\begin{align*}
\mc Z^{s_1,s_2}([0,T])  :=  \mathcal{X}^{s_1,s_2}(T)\times \mathcal{Y}^{s_2}(T)
\end{align*} where
\begin{align*}
 \mathcal{X}^{s_1,s_2}(T) & : = \Ld^{s_1,b}_{p}(T) \cap C([0,T];H^{s_2}(\T^2)) \cap C^{1}([0,T];H^{s_2-1}(\T^2)), 
  \\
 \mathcal{Y}^{s_2}(T) & : =  C([0,T];H^{s_2+1}(\T^2)) \cap C^{1}([0,T];H^{s_2-1}(\T^2)).  
\end{align*}
We abbreviate $\mc Z^{s_1,s_2}([0,T])$ as $\mc Z^{s_1,s_2}(T)$.

\begin{proposition}\label{PROP:LWP}
Let $0<\be^{2} < \tfrac{6 \pi}{5}$ and $\iota\in \R \setminus \{0\}$.
There exists $\eps_0>0$ such that if 
\begin{align}
p=6+6\eps, \quad s_1=\tfrac{1}{3}-\tfrac{1}{6}\eps, \quad b=\tfrac{1}{6}-\tfrac{1}{12}\eps, \quad \al=\tfrac{3}{2}-\eps
\label{params}
\end{align}
for any $0<\eps<\eps_0$, $0<s_2<1-\frac{\be^2}{4\pi}$, and $s_3>1$, the following holds true: 
Let $\{\Dr^{a}\}_{a\in \{\pm\}}$ be two positive distributions satisfying
\begin{align}
\max_{a\in \{\pm\}} \big(  \| \Dr^{a}\|_{L^{2}([0,1];H^{s_2-1}_{x}(\T^2))}  + \|\mathcal{P}_{ \frac{1}{2}+s_1+b+\frac{1}{8}\eps, \al}[\Dr^{a}]  \|_{L^{p}(\R\times \T^2)} \big) \leq R \label{Drcond}
\end{align}
for any $R\geq 1$, and $(v_0,v_1)\in \H^{s_3}(\T^2)$ with 
\begin{align*}
\| (v_0,v_1)\|_{\H^{s_3}(\T^2)} \leq K
\end{align*} 
for $K>0$, then for any $0 <r<1$, there exists $0<\dl<1$ and $c_0>0$ such that for
\begin{align*}
T = T(R,K)= (CR)^{-\frac{1}{\dl}}e^{-c_0 K}
\end{align*}
there is a unique solution $(X,Y)\in \mc Z^{s_1,s_2}(T)$ to \eqref{liouville6} with 
\begin{align*}
\| (X,Y)\|_{\mc Z^{s_1,s_2}(T)} \leq 1.
\end{align*}
Furthermore, given $\Ta^{+}, \Ta^{-}$ positive distributions satisfying \eqref{Drcond}, 
suppose that there exists sequences $\{\Dr^{a}_{N}\}_{N\in \N}$ for $a\in\{\pm\}$, satisfying the following properties: for every $N\in \N$, $\Dr_{N}$ is a smooth non-negative function and
there exists $\eta, A>0$, uniform in $N$, such that
\begin{align}
\max_{a\in \{\pm\}} \| \Dr^{a}_{N} - \Dr^{a} \|_{L_{t}^{2}([0,1];H^{s_2-1}_{x}(\T^2))}& \leq AN^{-\eta} \quad \text{for any} \quad N\geq 1, \label{drN1}\\
\max_{a\in \{\pm\}}\sup_{N\in \N \cup\{\infty\}} \big\| \mathcal{P}_{\frac 12+s_1+b+\frac{1}{8}\eps, \al}[\Dr^{a}_{N}]\big\|_{L_{t,x}^{p}(\R\times \T^2)}& \leq A,\label{drN2}
\end{align}
where we have abused notation by defining for $N=\infty$, $\Dr^{a}_{\infty}:=\Dr^{a}$, for $a\in \{\pm\}$. 
Then, denoting by $(X_{N},Y_{N})$ the solutions to \eqref{liouville6} with forcing terms $\{\Dr^{a}_{N}\}_{a\in \{\pm\}}$, and $(X,Y)$ the solutions to \eqref{liouville6} with forcing terms $\{\Dr^{a}\}_{a\in \{\pm\}}$, we have 
 \begin{align}
\lim_{N\to \infty} \big( \| X_{N}-X\|_{\mathcal{X}^{s_1,s_2}(T)} +\|Y_{N}-Y\|_{\mathcal{Y}^{s_2}(T)}  \big)=0. \label{convXY}
\end{align}
\end{proposition}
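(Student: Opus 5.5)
We prove Proposition~\ref{PROP:LWP} by a contraction argument for the map $\Phi=(\Phi_1,\Phi_2)$ on the unit ball $B_1$ of $\mc Z^{s_1,s_2}(T)$. The parameters \eqref{params} give $s_1>\frac2p$ and $b>\frac1p$, so Lemma~\ref{LEM:Sobolev} yields $\|X\|_{L^\infty_{T,x}}\les \|X\|_{\Ld^{s_1,b}_p(T)}$. Since $s_2+1>1$, Sobolev embedding gives $\|Y\|_{L^\infty_{T,x}}\les\|Y\|_{\mc Y^{s_2}(T)}$. Since $s_3>1$, also $\|z\|_{L^\infty}\les K$. Hence on $B_1$ we have the pointwise bound
\[
0<f^\pm(z+X+Y)\le e^{|\be|(CK+C)},
\]
which is the source of the factor $e^{-c_0K}$ in $T$.

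\textbf{Step 1: the $X$-component.} We extend $F^\pm:=f^\pm(z+X+Y)\Dr^\pm$ by zero outside $[0,T]$ and replace $\I_{\textup{wave}}$ by its localized version $\I^\star_{\textup{wave}}$; the two agree on $[0,T]$, so the restriction norm is controlled. We then apply \eqref{iso} with $\s=s_1+b$, Proposition~\ref{prop:kery}, and \eqref{Rsbd}. Because $f^\pm\ge0$ and $\Dr^\pm\ge0$, the sublinear operator satisfies
\[
\mc A_\s(F^\pm)\le e^{C K+C}\,\mc P_{\frac12+\s+\frac18\eps,\al}[\ind_{[0,T]}\Dr^\pm].
\]
This is the key use of positivity, and it replaces any derivative of $e^{\be u}$. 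To gain a small power of $T$, I would prove the bound in $\Ld^{s_1,b'}_p$ with $b<b'$ still satisfying $s_1+b'+\frac18\eps<\frac12$, or even directly at $\kk$ slightly larger. Then \eqref{timegain2} applies, since $X(0)=0$, and gives a factor $T^{b'-b}=:T^\dl$. The $C_TH^{s_2}\cap C^1_TH^{s_2-1}$ parts of the norm follow from \eqref{basic}, Lemma~\ref{LEM:pos}, and Hölder in time: with $f^\pm\in L^\infty$, we get $\|f\Dr\|_{L^1_TH^{s_2-1}}\le T^{1/2}e^{CK}R$. Here Lemma~\ref{LEM:pos} requires $f\in W^{s,q}$ or $f$ continuous. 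This holds because $X\in C(I\times\T^2)$ by Lemma~\ref{LEM:Sobolev}, $Y$ and $z$ are continuous, and $1-s_2<1$.

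\textbf{Step 2: the $Y$-component.} Lemma~\ref{LEM:diffprop} gives two derivatives of smoothing, so
\[
\|\Phi_2\|_{\mc Y^{s_2}}\les T^{1/2}\,e^{CK}\max_\pm\|\Dr^\pm\|_{L^2_TH^{s_2-1}}\le T^{1/2}e^{CK}R,
\]
again using Lemma~\ref{LEM:pos} for the product.

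\textbf{Step 3: contraction and uniqueness.} For the difference estimate we write
\[
f^\pm(a)-f^\pm(a')=(a-a')\int_0^1 (f^\pm)'\bigl(\tau a+(1-\tau)a'\bigr)\,d\tau .
\]
The prefactor is bounded pointwise by $\|a-a'\|_{L^\infty}e^{CK}$, so the same positivity argument gives a difference bound of the form $\les T^\dl e^{CK}R\,\|(X,Y)-(X',Y')\|$. Choosing $T=(CR)^{-1/\dl}e^{-c_0K}$ makes $\Phi$ a contraction on $B_1$, which gives existence and uniqueness in $B_1$.

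\textbf{Step 4: stability \eqref{convXY}.} We subtract the two equations. The terms $f(u_N)\Dr_N-f(u)\Dr$ split into $(f(u_N)-f(u))\Dr_N$ and $f(u)(\Dr_N-\Dr)$. The first is absorbed by the contraction estimate, using the uniform bound \eqref{drN2}, which ensures the time $T$ can be chosen uniformly in $N$.

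\textbf{Main obstacle.} The second term is the difficulty. In the $\mc Y$ and $C_TH^{s_2}$ norms it is controlled directly by \eqref{drN1}. In the $\Ld^{s_1,b}_p$ norm, however, $\Dr_N-\Dr$ is not positive, so the positivity route is unavailable and no quantitative rate is available there. I would first try an interpolation: get convergence in a weaker $\Ld^{s_1-\theta,b}_p$-type norm, using \eqref{drN1} and the smoothing of the wave kernel (via \eqref{basic} and Sobolev in time). Combined with the uniform bound from \eqref{drN2} in a slightly stronger norm, obtained by choosing the parameters in \eqref{params} with a little room, this would give convergence in $\Ld^{s_1,b}_p$. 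If that fails, I would fall back on convergence in $L^\infty_{T,x}$, which would suffice to close a Gronwall-free fixed-point difference estimate.
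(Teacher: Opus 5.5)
Your Steps 1--3 are the paper's argument: $L^\infty$ control of $z+X+Y$ on the unit ball, positivity of $\Theta^\pm$ to dominate $\mathcal{A}_\sigma$ by the modified GMC, and the time gain from \eqref{timegain2}. Lemma~\ref{LEM:diffprop} and Lemma~\ref{LEM:pos} then handle the $Y$-part and the energy part of the $X$-norm, and the fundamental theorem of calculus gives the contraction.

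One bookkeeping point: \eqref{Drcond} controls $\mathcal{P}_\kappa$ only at $\kappa=\frac12+s_1+b+\frac18\varepsilon$ and, by positivity, below it. So the increment $b'-b$ and the loss in \eqref{Rsbd} must share that $\frac18\varepsilon$, as in the paper's choice $b'=b+\frac18\varepsilon$; you cannot move to a larger $\kappa$.

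In Step 4, your reduction to $\Phi(X,Y;\vec\Theta_N)-\Phi(X,Y;\vec\Theta)$ matches the paper, and so does the closing interpolation against a uniform bound in a slightly stronger norm. But the step that carries the proof is missing: you never say how to get a rate for $\mathcal{I}_{\textup{wave}}\bigl(f^\pm(z+X+Y)(\Theta^\pm_N-\Theta^\pm)\bigr)$ in any norm, and the tools you name do not give one.

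\begin{itemize}
\item To use \eqref{drN1} through \eqref{basic}, you must bound $f^\pm(z+X+Y)(\Theta^\pm_N-\Theta^\pm)$ in some $L^1_TH^\sigma_x$ by $\|\Theta^\pm_N-\Theta^\pm\|_{L^2_TH^{s_2-1}_x}$.
\item Since the difference is not positive, Lemma~\ref{LEM:pos} is unavailable.
\item Lemma~\ref{LEM:prod}(ii) needs roughly $1-s_2$ derivatives on $f^\pm(z+X+Y)$. You only control $X$ in $L^\infty_TW^{s_1,p}_x\cap C_TH^{s_2}_x$ with $s_1\approx\frac13$, and for small $s_2$ you would need almost a full derivative.
\item The $L^\infty_{T,x}$ fallback does not help. \eqref{convXY} asks for convergence in $\mathcal{X}^{s_1,s_2}(T)$, and any attempt to obtain or upgrade such convergence leads back to the same term $\Phi_1(X,Y;\vec\Theta_N)-\Phi_1(X,Y;\vec\Theta)$.
\end{itemize}

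The missing idea is to Littlewood--Paley decompose the coefficient rather than the distribution: write $f^\pm(X)=\sum_{N_1}\mathbf{P}_{N_1}f^\pm(X)$ and $f^\pm(z+Y)=\sum_{N_2}\mathbf{P}_{N_2}f^\pm(z+Y)$.

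\begin{itemize}
\item When $N_1\vee N_2\ge N^{\theta_1}$, estimate the $\Theta_N$- and $\Theta$-contributions separately. Each is handled by positivity, $|\mathbf{P}_{N_1}f^\pm(X)\,\mathbf{P}_{N_2}f^\pm(z+Y)\,\Theta|\le\|\mathbf{P}_{N_1}f^\pm(X)\|_{L^\infty}\|\mathbf{P}_{N_2}f^\pm(z+Y)\|_{L^\infty}\,\Theta$, together with the uniform bound \eqref{drN2}. Bernstein and the fractional chain rule give $\|\mathbf{P}_Mf^\pm(X)\|_{L^\infty}+\|\mathbf{P}_Mf^\pm(z+Y)\|_{L^\infty}\lesssim M^{-\delta}$, which yields both summability and a rate $N^{-\delta\theta_1/2}$.
\item When $N_1\vee N_2<N^{\theta_1}$, the coefficient is smooth, so the non-positive difference can be handled by Bernstein and \eqref{drN1}. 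The loss $(N_1\vee N_2)^{O(1)}\le N^{O(\theta_1)}$ is beaten by $N^{-\eta}$ once $\theta_1$ is small.
\end{itemize}

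For the $\Lambda^{s_1,b}_p$-component, the paper extracts this low-frequency rate only in $L^p_{T,x}$, by splitting the output frequency $N_3$. For $N_3\ge N^{\theta_1}$ it uses Bernstein and the uniform bound; for $N_3<N^{\theta_1}$ it uses the energy estimate in $L^\infty_TL^2_x$ plus Bernstein. Only then does it interpolate with the uniform bound in a slightly stronger $\Lambda$-norm.
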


\begin{proof}
We note that for $s_1,b$ as in the statement of Proposition~\ref{PROP:LWP}, it holds that 
\begin{align*}
\tfrac 12 + s_1+ b + \tfrac{1}{8}\eps <1.
% \label{gamma1}
\end{align*}
Let $0<T<\frac 34$
and $B\subset \mathcal{Z}^{s_1,s_2}_{T}$ denote the ball of radius $1$ in $\mc Z^{s_1,s_2}_{T}$.   
Let $b'=b+\frac{1}{8}\eps$.
We set 
$K=\| (v_0,v_1)\|_{\H^{s_3}}$ and
\begin{align*}
R:= \max_{a\in \{\pm\}}\big( \| \mathcal{P}_{\frac{1}{2}+s_1+b', \al}[\Dr^{a}]\|_{L^{p}(\R\times \T^2)} + \| \Dr^{a}\|_{L^{2}([0,1]; H^{s_2-1}(\T^2)}\big).
\end{align*}

We first show the following mapping property:
\begin{align}
(X,Y) \in  B \mapsto \Phi(X,Y) : = (\Phi_{1}(X,Y), \Phi_{2}(X,Y)) \in B \label{ball2ball}
\end{align}
provided that $T$ is chosen sufficiently small, where $(\Phi_1,\Phi_2)$ are defined in \eqref{liouville6}.
Note that for $(X,Y)\in B$, Lemma~\ref{LEM:Sobolev}, the boundedness of $\mathcal{D}(t)$ on $\H^{s_3}$ for every $t\in \R_{+}$, and the Sobolev embedding imply $X,Y,z \in C([0,T]\times \T^2)$ and satisfy
\begin{align}
\| X\|_{L^{\infty}_{T,x}}+ \|Y\|_{L^{\infty}_{T,x}} + \|z\|_{L^{\infty}_{T,x}} \leq  C(1+ K). \label{Linftycontrol}
\end{align}

%Next, we give a general bilinear estimate that will use often. Combining \eqref{iso}, Proposition~\ref{prop:kery}, \eqref{Rsbd}, we have
%\begin{align}
%\| \Phi_{1}(X,Y)\|_{
%\end{align}

Note that for $0<T<\frac 34$, $\mathcal{I}_{\text{wave}}[F]=\mathcal{I}^{\star}_{\text{wave}}[\ind_{[0,T]}F]$ on $[0,T]$.
Thus, by \eqref{timegain2}, \eqref{iso}, Proposition~\ref{prop:kery}, \eqref{Rsbd}, \eqref{Linftycontrol}, and the positivity of $\Dr^{\pm}$, we have
\begin{align}
\| \Phi_{1}(X,Y)\|_{\Ld^{s_1,b}_{p}(T)}  & \les T^{\frac{\eps}{8}} \|\Phi_{1}(X,Y)\|_{\Ld^{s_1,b'}_{p}(T)} \notag \\
& \les T^{\frac{\eps}{8}} \Big( \| \mc I^{\star}_{\text{wave}} [ \ind_{[0,T]} f^{+}(z+X+Y)\Dr^{+}  ]\|_{\Ld^{s_1,b'}_{p}(\R\times \T^2)} \notag \\
& \qquad \quad  +\| \mc I^{\star}_{\text{wave}} [ \ind_{[0,T]} f^{-}(z+X+Y)\Dr^{-}  ]\|_{\Ld^{s_1,b'}_{p}(\R\times \T^2)}    \Big) \notag \\
 & \les T^{\frac{\eps}{8}} \big\{ \| f^{+} (z+X+Y)\|_{L^{\infty}_{T,x}} \| \mathcal{P}_{\frac 12+s_1+b', \al}[\Dr^{+}]\|_{L^{p}_{t,x}} \notag \\
& \hphantom{XXXX} + \| f^{-} (z+X+Y)\|_{L^{\infty}_{T,x}} \| \mathcal{P}_{\frac 12+s_1+b', \al}[\Dr^{-}]\|_{L^{p}_{t,x}}  \big\} \notag  \\
& \leq C T^{\frac{\eps}{8}}e^{C\be(1+K)}R. \label{bdX}
\end{align}
 Next, by Lemma~\ref{LEM:diffprop} (both (i) and (ii)), the Cauchy-Schwarz inequality, Lemma~\ref{LEM:pos}, and \eqref{Linftycontrol}, 
\begin{align}
\| \Phi_{2}(X,Y)\|_{\mathcal{Y}^{s_2}(T)} & \les \| f^{+}(z+X+Y) \Dr^{+}\|_{L^{1}_{T}H^{s_2-1}_{x}} +\| f^{-}(\be(z+X+Y) \Dr^{-}\|_{L^{1}_{T}H^{s_2-1}_{x}} \notag \\
& \les T^{\frac 12} \big\{  \| f^{+} (z+X+Y)\|_{L^{\infty}_{T,x}}\| \Dr^{+}\|_{L^{2}_{T}H^{s_2-1}_{x}} \notag   \\
&\hphantom{XXXX} +\| f^{-}(z+X+Y)\|_{L^{\infty}_{T,x}}\| \Dr^{-}\|_{L^{2}_{T}H^{s_2-1}_{x}} \big)\notag\\
& \leq C T^{\frac 12} e^{C\be (1+K)} R. \label{bdY}
\end{align}
Finally, arguing as in \eqref{bdY} we also have
\begin{align}
\begin{split}
\|\Phi_1(X,Y)\|_{C_{T}H^{s_2}_{x}\cap C^{1}_{T} H^{s_2-1}_{x}}& \les \sum_{a\in \{\pm\}} \| f^{a}(z+X+Y)\Dr^{a}\|_{L^1_{T}H^{s_2-1}_{x}}  \\
&\les T^{\frac 12} e^{\be\|z+X+Y\|_{L^{\infty}_{T,x}}}\max_{a\in \{\pm\}}\|\Dr^{a}\|_{L^{2}_{T}H^{s_2-1}_{x}}.
\end{split} \label{XC1part}
\end{align}
The mapping property \eqref{ball2ball} now follows from \eqref{bdX},~\eqref{bdY} and~\eqref{XC1part} and by choosing $T=T(R,K)>0$ sufficiently small.

Now we establish that the map $\Phi(X,Y)$ defined in \eqref{ball2ball} is a contraction by reducing $T$, if necessary. Let $(X_1,Y_1), (X_2,Y_2)\in B$. Then, by the triangle inequality,
\begin{align}
\|& \Phi(X_1,Y_1) -\Phi(X_2,Y_2)\|_{\mc Z^{s_1,s_2}(T}) \notag  \\
& \leq  \| \Phi_1(X_1,Y_1) -\Phi_1(X_2,Y_1)\|_{\mathcal{X}^{s_1,s_2}(T)}+ \| \Phi_1(X_2,Y_1) -\Phi_1(X_2,Y_2)\|_{\mathcal{X}^{s_1,s_2}(T)} \label{diff1} \\
& \hphantom{X}  + \| \Phi_2(X_1,Y_1) -\Phi_2(X_2,Y_1)\|_{\mathcal{Y}^{s_2}(T)}+ \| \Phi_2(X_2,Y_1) -\Phi_2 (X_2,Y_2)\|_{\mathcal{Y}^{s_2}(T)} . \label{diff2}
\end{align}
We now estimate \eqref{diff1} and \eqref{diff2}. 
By the fundamental theorem of calculus, we have 
\begin{align}
\begin{split}
f^{a}(v_1)-f^{a}( v_2)   =\be (v_1-v_2) \int_{0}^{1} (f^{a})'( \ta v_1 + (1-\ta) v_2) d\ta, \quad a\in \{\pm\}.
 \label{expdiff}
\end{split}
\end{align}
Thus, proceeding as in \eqref{bdX} and additionally using \eqref{expdiff}, we have
\begin{align}
 \| &\Phi_1(X_1,Y_1) -\Phi_1(X_2,Y_1)\|_{\Ld^{s_1,b}_{p}(T)} \notag \\
 & \les T^{\frac{\eps}{8}} \big(  \|f^{+}(z+X_1+Y_1)-f^{+}(z+X_2+Y_1)\|_{L^{\infty}_{T,x}} \| \mathcal{P}_{\frac{1}{2}+s_1+b', \al}[\Dr^{+}] \|_{L^{p}(\R\times \T^2)} \notag  \\
 & \hphantom{XXXX} + \|f^{-}(z+X_1+Y_1)-f^{-}(z+X_2+Y_1)\|_{L^{\infty}_{T,x}} \| \mathcal{P}_{\frac{1}{2}+s_1+b', \al}[\Dr^{-}] \|_{L^{p}(\R\times \T^2)} \big) \notag \\
 & \les T^{\frac{\eps}{8}} e^{C\be(1+K)} R\| X_1 -X_2\|_{L^{\infty}_{T,x}} \notag \\
 & \leq CT^{\frac{\eps}{8}} e^{C\be(1+K)} R \| X_1-X_2\|_{\Ld_{p}^{s_1,b}(T)}. \label{Phi1Xdiff}
\end{align}
Similarly, we obtain
\begin{align}
 \| &\Phi_1(X_2,Y_1) -\Phi_1(X_2,Y_2)\|_{\Ld^{s_1,b}_{p}(T)} \leq CT^{\frac{\eps}{8}} e^{C\be(1+K)} R \| Y_1-Y_2\|_{L^{\infty}_{T} H^{s_2+1}_{x}} \label{Phi1Ydiff}
\end{align}
For the terms \eqref{diff2}, we follow the arguments leading to \eqref{bdY} which then yield
\begin{align}
\eqref{diff2} \leq CT^{\frac 12} e^{C\be (1+K)}R\big\{ \| X_1-X_2\|_{\Ld^{s_1,b}_{p}(T)}+ \|Y_1-Y_2\|_{L^{\infty}_{T}H^{s_2+1}_{x}}\big\}. \label{Phi2diff}
\end{align}
Moreover, by following the computations for \eqref{XC1part}, we have 
\begin{align}
& \| \Phi_1(X_1,Y_1) -\Phi_1(X_2,Y_1)\|_{C_{T}H^{s_2}_{x}\cap C^{1}_{T} H^{s_2-1}_{x}}+ \| \Phi_1(X_2,Y_1) -\Phi_1(X_2,Y_2)\|_{C_{T}H^{s_2}_{x}\cap C^{1}_{T} H^{s_2-1}_{x}} \notag \\
& \les CT^{\frac 12} e^{C\be (1+K)}R\big\{ \| X_1-X_2\|_{\Ld^{s_1,b}_{p}(T)}+ \|Y_1-Y_2\|_{L^{\infty}_{T}H^{s_2+1}_{x}}\big\}. \label{Phi1C1}
\end{align}
Combining \eqref{Phi1Xdiff}, \eqref{Phi1Ydiff}, \eqref{Phi2diff}, \eqref{Phi1C1}, \eqref{diff1} and \eqref{diff2}, establishes that 
\begin{align*}
\|  \Phi(X_1,Y_1) -\Phi(X_2,Y_2)\|_{\mc Z^{s_1,s_2}(T)} \leq CT^{\ta}e^{C\be(1+K)}R\|(X_1,Y_1)-(X_2,Y_2)\|_{\mc Z^{s_1,s_2}(T)}
\end{align*}
for some $\ta>0$. Thus by reducing $T=T(K,R)>0$ if necessary, we obtain
\begin{align}
\|  \Phi(X_1,Y_1) -\Phi(X_2,Y_2)\|_{\mc Z^{s_1,s_2}_{T}} \leq \tfrac{1}{2}\|(X_1,Y_1)-(X_2,Y_2)\|_{\mc Z^{s_1,s_2}_{T}} \label{diffcontract}
\end{align}
which shows that $\Phi$ is a contraction on the ball $B\subset \mc Z^{s_1,s_2}(T)$. By the Banach fixed point theorem, we obtain a unique solution $(X,Y)\in \mc Z^{s_1,s_2}(T)$ to \eqref{liouville6} up to the random stopping time $T=T(K,R)>0$.  By standard arguments, the uniqueness of the solution $(X,Y)$ can be extended from the ball $B$ to the whole space $\mc Z^{s_1,s_2}(T)$. We omit the details.

It remains to establish the convergence property for the solution map $\Phi(X,Y; \Dr)$ in terms of  $\Dr$, namely~\eqref{convXY}.
We begin with some preliminary estimates, which hold for each fixed $t\in [0,T]$. Define $\wt{Y}:=z+Y$ and $\s_0 = \min(s_2,s_3)$.
Let $M\in \N$ and 
\begin{align}
0<\dl < \min( \tfrac{1}{100}\eps, \tfrac{\s_0}{2}). \label{dl}
\end{align}
$0<\dl<\s_0/2<1$. 
By the Bernstein inequality,  Lemma~\ref{LEM:Lip} and Lemma~\ref{LEM:prod} (i), we have
\begin{align}
\|& \P_{M} f^{a}( \wt{Y})\|_{L^{\infty}_{x}} \notag\\
&  \les  M^{-\dl} \| f^{a}( \wt{Y})\|_{H^{1+\dl}_{x}} \notag\\
& \les M^{-\dl}\big(  \| f^{a}( \wt{Y})\|_{L^2_x} + \|\nb( f^{a}( \wt{Y}))\|_{H^{\dl}_{x}}  \big)  \notag\\
& \les M^{-\dl}\big( \|f^a(\wt Y)\|_{L_x^2} + \| (\nb \wt{Y})  (f^{a})'( \wt{Y})\|_{H^{\dl}_{x}}\big)\notag \\
& \les M^{-\dl}\big( e^{\be \|\wt{Y}\|_{L^{\infty}_{x}}} + \|(f^{a})'( \wt{Y})\|_{L^{\frac{2(2+\dl)}{\dl}}_{x}} \|\nb \wt{Y}\|_{W^{\dl,2+\dl}_{x}} + \|(f^{a})'( \wt{Y})\|_{W^{\dl,{\frac{2(2+\dl)}{\dl}}}_{x}}  \| \nb \wt{Y}\|_{L^{2+\dl}_{x}}  \big) \notag \\
& \les M^{-\dl} e^{\be\|\wt{Y}\|_{L^{\infty}_{x}}}(1+\|\wt{Y}\|_{H^{1+\s_0}_{x}}), \label{YPN}
\end{align}
for any $a\in\{\pm\}$.
 $q>p$ be finite and $\dl>0$ be as in \eqref{dl}. Then, by the Bernstein inequality, Lemma~\ref{LEM:Lip}, and Sobolev embedding, we have 
\begin{align}
\| \P_{M}f^{a}(X)\|_{L^{\infty}_{x}}& \les M^{-\dl}\|f^{a}(X)\|_{W^{\frac{2}{q}+\dl,q}_{x}} \notag \\
& \les M^{-\dl} \big( e^{\be \|X\|_{L^{\infty}_{x}}} + \| |\nb|^{\frac{2}{q}+\dl}(f^{a}(X))\|_{L^{q}}\big) \notag \\
& \les M^{-\dl}e^{\be \|X\|_{L^{\infty}_{x}}} \big( 1 + \| X\|_{W^{\frac{2}{q}+\dl, q(1+\dl)}_{x}} \big)\notag
\\
&\les M^{-\dl}e^{\be \|X\|_{L^{\infty}_{x}}} \big( 1 + \| X\|_{W^{\frac{2}{q}+\dl+ \frac{2}{p}-\frac{2}{q(1+\dl)}, p}_{x}} \big) \notag \\
& \les M^{-\dl}e^{\be \|X\|_{L^{\infty}_{x}}} \big( 1 + \| X\|_{W^{s_1,p}_{x}}), 
\label{XPN}
\end{align}
for any $a\in \{\pm\}$, since, recalling \eqref{params} and \eqref{dl}, we have
\begin{align*}
\tfrac{2}{q}+\dl+ \tfrac{2}{p}-\tfrac{2}{q(1+\dl)} = \tfrac{2}{p} + \dl\big( \tfrac{1}{1+\dl}+\tfrac{2}{q}\big) \leq \tfrac{2}{p} + 3\dl  \leq s_1.
\end{align*}
 We now prove the convergence property \eqref{convXY}. Let $(X,Y)\in B\subset \mathcal{Z}_{T}^{s_1,s_2}$ be the solution with noise terms $\vec{\Dr}=(\Dr^{+},\Dr^{-})$ and $(X_{N},Y_{N})\in B$ be the solution with noise term $\vec{\Dr}_{N}=(\Dr_{N}^{+},\Dr_{N}^{-})$. 
Note that the assumption \eqref{drN1} implies that
\begin{align*}
\max_{a\in \{\pm\}}\sup_{N\in \N} \|\Dr^{a}_N\|_{L^{2}([0,1];H^{s_2-1})} \leq A+R =: A_0.
\end{align*}
By the triangle inequality and \eqref{diffcontract} (reducing $T$ if necessary), we have 
\begin{align*}
\|(X_N, Y_N) -(X,Y)\|_{\mc Z^{s_1,s_2}(T)} 
&=\| \Phi(X_N,Y_N,\vec{\Dr}_{N}) - \Phi(X,Y,\vec{\Dr})\|_{\mc Z^{s_1,s_2}(T)}  \\
&\leq   \| \Phi(X_N,Y_N,\vec{\Dr}_{N}) - \Phi(X,Y,\vec{\Dr}_{N})\|_{\mc Z^{s_1,s_2}(T)} \\
& \hphantom{XX}+  \| \Phi(X,Y,\vec{\Dr}_{N}) - \Phi(X,Y,\vec{\Dr})\|_{\mc Z^{s_1,s_2}(T)} \\
& \leq \tfrac{1}{2}\|(X_N, Y_N) -(X,Y)\|_{\mc Z^{s_1,s_2}(T)} \\
&\hphantom{XX}+ \| \Phi(X,Y,\vec{\Dr}_{N}) - \Phi(X,Y,\vec{\Dr})\|_{\mc Z^{s_1,s_2}(T)}.
\end{align*}
It then remains to control the second term on the right-hand side.
In the following, we prove: there exists $\ta_0 =\ta_0(s,b,p,s_1,s_2, \eta)>0$ such that
\begin{align} 
\begin{split}
\|\Phi_{1}(X,Y;\vec{\Dr}_N)-\Phi_{1}(X,Y;\vec{\Dr})\|_{\Ld^{s_1,b}_{p}(T)} & \leq C e^{C\be (1+K)}A_0 N^{-\ta_0}  \\ 
\|\Phi_{1}(X,Y;\vec{\Dr}_N)-\Phi_{1}(X,Y;\vec{\Dr})\|_{C_{T}H^{s_1}_{x}\cap C^{1}_{T} H^{s_2-1}_{x}} & \leq Ce^{C\be (1+K)}A_0 N^{-\ta_0} \\ 
\|\Phi_{2}(X,Y;\vec{\Dr}_N)-\Phi_{2}(X,Y;\vec{\Dr})\|_{C_{T}H^{s_2+1}_{x}} & \leq Ce^{C\be (1+K)}A_0 N^{-\ta_0} , \label{convergence}
\end{split}
\end{align}
for $N\in \N$ and $\Dr^{+}, \Dr^{-}$ positive distributions satisfying  \eqref{Drcond} and, where $\{\vec{\Dr}_{N}\}_{N\in \N}$ is a sequence of positive distributions associated to $\vec{\Dr}$ satisfying \eqref{drN1} and \eqref{drN2}. We will prove \eqref{convergence} by an interpolation argument. We will omit the arguments required for obtaining the second bound in \eqref{convergence} as these essentially follows from those same arguments used for the third bound.
First, by repeating the estimates in \eqref{bdX} using the positivity of $\vec{\Dr}_N$ and Lemma~\ref{LEM:Sobolev}, we have the uniform bound
\begin{align}
\begin{split}
\sup_{N\in \N}\|\Phi_{1}&(X,Y;\vec{\Dr}_N)-\Phi_{1}(X,Y;\vec{\Dr})\|_{\Ld^{s_1+\frac{1}{8}\eps,b}_{p}(T)} \\
& \leq \sup_{N \in \N}\|\Phi_{1}(X,Y;\vec{\Dr}_N)\|_{\Ld^{s_1+\frac{1}{8}\eps,b}_{p}(T)}  +\|\Phi_{1}(X,Y;\vec{\Dr})\|_{\Ld^{s_1+\frac{1}{8}\eps,b}_{p}(T)} \leq CA_0.
\end{split}
 \label{conv1}
\end{align}

Given $N_1,N_2 \in 2^{\N_0}$ dyadic, we define 
\begin{align*}
\Phi_{1; N_1,N_2}&(X,Y;\vec{\Dr}) \\
& : =  -\iota \tfrac{1}{2} \be \mathcal{I}_{\text{wave}} \Big[ \P_{N_1}(f^{+}(X)) \P_{N_2}(f^{+}(\wt{Y}))    \Dr^{+} -\P_{N_1}(f^{-}(X)) \P_{N_2}(f^{-}(\wt{Y}))\Dr^{-}  \Big] \\
\Phi_{2; N_1,N_2}&(X,Y;\vec{\Dr})&   \\
 &: =-\iota\tfrac{1}{2} \be \mathcal{I}_{\text{KG-wave}}  \Big[ \P_{N_1}(f^{+}(X)) \P_{N_2}(f^{+}(\wt{Y}))    \Dr^{+} -\P_{N_1}(f^{-}(X)) \P_{N_2}(f^{-}(\wt{Y}))\Dr^{-}  \Big] 
\end{align*}
so that we have the decomposition
\begin{align*}
\Phi_{j}(X,Y;\vec{\Dr}) = \sum_{ N_{1},N_{2}\in 2^{\N}} \Phi_{j; N_1,N_2}(X,Y;\vec{\Dr}) 
\end{align*}
for $j=1,2$. 

Let $\ta_1=\ta_1(p,\dl,\eta)>0$ be small enough so that 
\begin{align}
\ta_1 \max(  \tfrac 2p  + 1-\dl, 1-s_2+\dl) < \tfrac{\eta}{4}. \label{ta1}
\end{align}

\smallskip
\noi
$\bullet$ \textbf{Case 1:} $N_1 \vee N_2  \geq N^{\ta_1}$

\noi
By repeating the arguments we used to prove \eqref{bdX}, and using \eqref{YPN}, \eqref{XPN}, and Lemma~\ref{LEM:Sobolev}, we have 
\begin{align}
\|& \Phi_{1; N_1,N_2}(X,Y;\vec{\Dr}_{N})- \Phi_{1; N_1,N_2}(X,Y;\vec{\Dr})  \|_{\Ld^{s_1,b}_{p}(T)}  \notag \\
& \les\| \Phi_{1; N_1,N_2}(X,Y;\vec{\Dr}_{N})\|_{\Ld^{s_1,b}_{p}(T)} +\| \Phi_{1; N_1,N_2}(X,Y;\vec{\Dr})  \|_{\Ld^{s_1,b}_{p}(T)}   \notag \\
& \les \sum_{a\in \{\pm\}}  \| \P_{N_1}f^{a}(X) \|_{L^{\infty}_{T,x}} \| \P_{N_2}f^{a}(\wt{Y}) \|_{L^{\infty}_{T,x}} \sup_{N\in \N\cup\{\infty\}}  \|\mathcal{P}_{\frac 12 +s_1+b, \al}[\Dr^{a}_{N}]\|_{L^{p}(\R\times \T^2)} \notag \\
& \les (N_1 \vee N_2)^{-\dl} Ce^{C\be(1+K)} A_0 \notag \\
& \les (N_1 \vee N_2)^{-\frac{\dl}{2}} N^{-\frac{\dl \ta_1}{2}}Ce^{C\be(1+K)} A_0 . \label{conv12}
\end{align}
The first term allows us to perform the dyadic summations over $N_1,N_2\in 2^{\N}$, while the second term provides a convergence rate in $N$.  Similarly, using the argument for \eqref{bdY} and \eqref{YPN}, and \eqref{XPN}, we have 
\begin{align}
\| &\Phi_{2; N_1,N_2}(X,Y;\vec{\Dr}_{N})  - \Phi_{2; N_1,N_2}(X,Y;\vec{\Dr})  \|_{C_{T}H^{s_2+1}_{x}}  \notag \\
& \les\| \Phi_{2; N_1,N_2}(X,Y;\vec{\Dr}_{N})\|_{C_{T}H^{1+s_2}_{x}} +\| \Phi_{2; N_1,N_2}(X,Y;\vec{\Dr})  \|_{C_{T}H^{s_2+1}_{x}}  \notag  \\
& \les   \sum_{a\in \{\pm\}}  \| \P_{N_1}f^{a}(X) \|_{L^{\infty}_{T,x}} \| \P_{N_2}f^{a}(\wt{Y}) \|_{L^{\infty}_{T,x}} \sup_{N\in \N\cup\{\infty\}}  \|\Dr^{a}_N \|_{L^{1}([0,1];H^{s_2-1}_{x})}   \notag\\
& \les (N_1 \vee N_2)^{-\frac{\dl}{2}} N^{-\frac{\dl \ta_1}{2}}Ce^{C\be(1+K)} A_0. \label{ctyPhi21}
\end{align}

\smallskip
\noi
$\bullet$ \textbf{Case 2:} $N_1 \vee N_2  \leq N^{\ta_1}$

\noi
In this case, we need to make use of the assumption \eqref{drN1} and thus can no longer exploit positivity to handle the distributional term $\vec{\Dr}_N-\vec{\Dr}$. Nonetheless, the assumption in this case allows us to proceed crudely for $\Phi_{2}$. First, by Lemma~\ref{LEM:prod} (ii), Bernstein inequality, \eqref{drN1}, and \eqref{ta1} we have 
\begin{align}
\|&  \Phi_{2; N_1,N_2}(X,Y;\vec{\Dr}_{N})  - \Phi_{2; N_1,N_2}(X,Y;\vec{\Dr})  \|_{C_{T}H^{s_2+1}_{x}} \notag \\
&  = \| \Phi_{2; N_1,N_2}(X,Y;\vec{\Dr}_{N}-\vec{\Dr})  \|_{C_{T}H^{s_2+1}_{x}}  \notag\\
& \les \sum_{a\in \{\pm\}} \| \jb{\nb_x}^{1-s_2}\big( \P_{N_1}(f^{a}(X)) \P_{N_2}(f^{a}( \wt{Y})) \big) \|_{L^{\infty}_{T,x}} \| \Dr_{N}^{a} -\Dr^{a}\|_{L^{1}([0,1];H^{s_2-1}_{x})} \notag \\
& \les  (N_1\vee N_2)^{1-s_2} N^{-\eta} e^{C\be(1+K)} A_0  \notag \\
& \les N^{-\frac{\eta}{2}} (N_1 \vee N_2)^{-\dl} e^{C\be(1+K)}A_0.\label{ctyPhi22}
\end{align}
Combining \eqref{ctyPhi21} and \eqref{ctyPhi22} establishes the estimate for $\Phi_{2}$ in \eqref{convergence}. 
Unfortunately, such an argument does not suffice for $\Phi_{1}$ as we need to control the time derivatives $\jb{\dt}^{b}$ coming from the norm $\Ld_p^{s_1,b}$.
We establish the following bound: there exists $\eta_0>0$ sufficiently small such that for all $0<\eta\leq \eta_0$, we have
\begin{align}
\| \Phi_{1; N_1,N_2}(X,Y;\vec{\Dr}_{N})  - \Phi_{1; N_1,N_2}(X,Y;\vec{\Dr})  \|_{L^{p}_{T,x}} \leq C e^{C\be(1+K)}A_0 N^{-\frac{\eta}{2}} (N_1\vee N_2)^{-\dl}.  \label{conv2}
\end{align}
Interpolating \eqref{conv2} with \eqref{conv1}, combining this result with \eqref{conv12} and performing the dyadic summations over $N_1,N_2\in 2^{\N_0}$, we arrive at the first estimate in \eqref{convergence}.

We have 
\begin{align*}
\| &\Phi_{1; N_1,N_2}(X,Y;\vec{\Dr}_{N})  - \Phi_{1; N_1,N_2}(X,Y;\vec{\Dr})  \|_{L^{p}_{T,x}}  \\
& \leq  \sum_{N_3 \in 2^{\N_0}} \| \P_{N_3}\big[ \Phi_{1; N_1,N_2}(X,Y;\vec{\Dr}_{N})  - \Phi_{1; N_1,N_2}(X,Y;\vec{\Dr})  \big] \|_{L^{p}_{T,x}},
\end{align*}
and we split this summation into two cases. If $N_3 \geq N^{\ta_1}$, then by the Bernstein inequality, and repeating the arguments for \eqref{bdX}, we have
\begin{align*}
\|& \P_{N_3}\big[ \Phi_{1; N_1,N_2}(X,Y;\vec{\Dr}_{N})  - \Phi_{1; N_1,N_2}(X,Y;\vec{\Dr})  \big] \|_{L^{p}_{T,x}} \\
& \les N_{3}^{-s_1} \big\{ \| \Phi_{1; N_1,N_2}(X,Y;\vec{\Dr}_{N})\|_{\Ld^{s_1,b}_{p}(T)} +  \|  \Phi_{1; N_1,N_2}(X,Y;\vec{\Dr})\|_{\Ld^{s_1,b}_{p}(T)} \big\} \\
& \les N^{-\frac{\ta_1 s_1}{2}}  N_3^{-\frac{s_1}{2}} (N_1\vee N_2)^{-\dl}e^{C\be(1+K)}A_0.
\end{align*} 
We can then sum this bound over the dyadics $N_1,N_2,N_3 \in 2^{\N_0}$ and have the final rate $N^{-\frac{\ta_1 s_1}{2}}$. 
Now assume that $N_3 <N^{\ta_1}$. By Sobolev embeddings, Lemma~\ref{LEM:prod} (ii)
\begin{align*}
\| \P_{N_3}& \big[\Phi_{1; N_1,N_2}(X,Y;\vec{\Dr}_{N})  - \Phi_{1; N_1,N_2}(X,Y;\vec{\Dr}) \big]  \|_{L^p_{T,x}} \\
& \les  N_{3}^{\frac{2}{p}}  \| \Phi_{1; N_1,N_2}(X,Y;\vec{\Dr}_{N}-\Dr)  \|_{L^{\infty}_{T}L^{2}_{x}} \\
& \les N_{3}^{\frac{2}{p}} \sum_{a\in \{\pm\}} \| \jb{\nb_x}\big[ \P_{N_1}(f^{a}(X)) \P_{N_2}( f^{a}(\wt{Y})) \|_{L^{\infty}_{T,x}} \| \Dr^{a}_{N}-\Dr^{a_3}\|_{L^{1}_{T}H^{-1}_{x}} \\
& \les N_{3}^{\frac{2}{p}} (N_1\vee N_2)^{1-\dl} N^{-\eta} e^{C\be(1+K)}A_0\\
& \les (N_1 \vee N_2 \vee N_3)^{-\frac{1}{\ta_1}( \ta_1 (\frac 2p +1-\dl)-\frac{\eta}{4})}N^{-\frac{\eta}{4}} e^{C\be(1+K)}A_0,
\end{align*}
which is a negative power of $N_1 \vee N_2 \vee N_3$ in view of \eqref{ta1}.  This now completes the proof of \eqref{conv2} and thus the claimed convergence property of the map $\vec{\Dr} \mapsto \Phi(X,Y;\vec{\Dr})$.
\end{proof}

\begin{remark}\rm
In our approach, we crucially use the positivity of the distributions $\Dr^{+}$ and $\Dr^{-}$ and as such
we are not able to obtain continuous dependence with respect to these inputs. However, with the above interpolation argument, we only require a quantitative rate of convergence for $\Dr^{\pm}$ and not for $\mathcal{P}_{\frac 12+s_1+b+\frac{1}{8}\eps,\al}[\Dr_{N}^{\pm}]$; compare \eqref{drN1} and \eqref{drN2}.
\end{remark}

We now prove Theorem~\ref{thm:0}.

\begin{proof}The proof is a straightforward consequence of the deterministic local well-posedness result in Proposition~\ref{PROP:LWP}. See for instance~\cite[Proof of Theorems 1.2 and 1.6]{ORTz} for a similar argument.
\end{proof}

\subsection{Proof of Theorems~\ref{thm:main} and~\ref{thm:2}} \label{subsec:gwp}

In the following, we fix $\iota>0$.
%We adapt the arguments from \cite[Section 5]{ORTz}.
The Gaussian free field $\vec{\mu}_1=\mu_{1} \otimes \mu_0$ and hence the (truncated) Gibbs measure \eqref{gibbsN}, are independent of the law of the space-time white noise $\xi$. Thus, we may decompose the probability space as $\O=\O_1 \times \O_2$, where for $\o\in \O$ we write $\o=(\o_1,\o_2)$ and the Gaussian random Fourier series in \eqref{series} depends upon $\o_1$, while the Wiener process $W$ depends only upon $\o_2$. We have a corresponding decomposition of the underlying probability measure $\PP$ as $\PP=\PP_1 \otimes \PP_2$.

We then write 
\begin{align*}
\Psi(t; \vu_0, \o_2) = \dt \D(t) u_0 + \D(t) (u_0+u_1) +\sqrt{2} \int_{0}^{t} \D(t-t')  d\mathcal{B}(t',\o_2)
\end{align*}
for $\vu_0 \in \H^{-\eps}(\T^2)$ and $\o_2 \in \O_2$.  Given $N\in \N$, we then set 
\begin{align*}
\Psi_{N}(t;\vu_0, \o_2) = \Pii_{N}\Psi(t; \vu_0, \o_2) .
\end{align*}
By the fact that $\mathcal{D}(0)=0$ (see \eqref{prop1}), it can be shown that $\Psi_{N}(t;\vu_0, \o_2)$ is continuously differentiable in time\footnote{More precisely, $\dt \Psi_N \in C(\R; H^{-1-\eps})$ for any $\eps >0$.} and has time derivative 
\begin{align}
\begin{split}
\dt \Psi_{N}(t; \vu_0, \o_2) =& \dt^2 \D(t) \Pi_{N} u_0 + \dt \D(t) \Pi_{N}(u_0+ u_1) \\
&+\sqrt{2} \Pi_{N}\int_{0}^{t} (\dt\D)(t-t')  d\mathcal{B}(t',\o_2).
\end{split} \label{dtPsiN}
\end{align}
Next, we define the truncated Gaussian multiplicative chaos as in \eqref{GMC}:
\begin{align*}
\Dr_{N}^{\pm}(t; \be,\vu_0, \o_2) =  \g_{N}(\be)e^{\pm \be \Psi_{N}(t;\vu_0, \o_2)}.
\end{align*}
We set $\vec{\Dr}_{N} = (\Dr_{N}^{+}, \Dr_{N}^{-})$.
For $0\leq \kk<1$, we then define the sequence of weighted-in-time hyperbolic GMC object:
\begin{align*}
\mathcal{P}_{\kk, \al, \ld}[\Dr^{a}_{N}](t; \vu_0,\o_2) = \int_{\T^2}\int_{0}^{1}  e^{-\frac{\ld}{200} |t-t'|}   \K_{\kk,\al}(t,t',x-y) \Dr^{a}_{N}(t',y;\vu_0,\o_2) dt' dy 
\end{align*}
 where $a\in \{\pm\}$, $\ld\in 2^{\N_0}\cup \{0\}$, $\al=\frac 32 -\eps$ and $\K$ is as in \eqref{K}. Note that when $\ld=0$, 
\begin{align*}
\mathcal{P}_{\kk,\al, 0}[\Dr^{a}_{N}]= \mathcal{P}_{\kk, \al}[\Dr^{a}_{N}] 
\end{align*} 
where $\mathcal{P}_{\kk, \al}[\Dr^{a}_{N}] $ is the unweighted in time stochastic object from \eqref{HGMC} which was used in the general local well-posedness result (Proposition~\ref{PROP:LWP}). 
In the following, we will use the following single value of $\kk$: 
\begin{align}
\kk = \tfrac{1}{2}+s_1+b+\tfrac{\eps}{8} + \tfrac{\eps}{32} = \tfrac{1}{2}+\s+\tfrac{\eps}{8} +2\dl=1-\tfrac{\eps}{16},  \label{kkchoice}
%\label{g}
\end{align}
where we have chosen $\dl:=\frac{\eps}{64}$ for future application of \eqref{kerygoalld}.
%We will need the more general process $\wt{\mathcal{P}}_{\kk, \frac 32, 0}[\Dr^{a}_{N}]$ for the stability argument as well as increased regularity for $\Dr^{a}$ when controlling the $Y$-part.
With these definitions in hand, we define the (truncated) enhanced data set:
\begin{align}
\Xi_{N}(\vu_0; \o_2)  = \Big( \Psi_{N}, \vec{\Dr}_{N},\{ \mathcal{P}_{\kk,\al, \ld}[\Dr^{a}_N]\}_{\ld \in \{0\}\cup 2^{\N_0}, a\in \{\pm\}}\Big) \label{XiN}
\end{align}
We next define the (untruncated) enhanced data set 
\begin{align}
\Xi(\vu_0; \o_2)  = \Big( \Psi, \vec{\Dr}, \{ \mathcal{P}_{\kk,\al, \ld}[\Dr^{a}]\}_{\ld \in \{0\}\cup 2^{\N_0}, a\in \{\pm\}}\Big) \label{Xi}
\end{align}
where the components are well-defined as the $\rhoo \otimes \PP_{2}$ almost sure limits, guaranteed by Proposition~\ref{PROP:I1theta} and Lemma~\ref{LEM:I1thetaLd}, of the truncated enhanced data set \eqref{XiN}.
We also define 
the space
\begin{align*}
\mathcal{E}^{s_1,s_2}(T) = C_{T}H^{s_0}_x \times ( L^{2}_{T}H^{-1+s_2+\frac{1}{8}\eps}_{x} \times L^{2}_{T}H^{-1+s_2+\frac{1}{8}\eps}_{x})\times \l_{\ld}^{p,1-\kk-\frac{\eps}{32}}L^{p}_{t,x}
\end{align*}
where $\l^{p,s}( \{0\}\cup 2^{\N_0})$ denotes the sub-space of the sequence space $\l^p (\{0\}\cup 2^{\N_0})$ for which $a(\ld)\in\l^{p,s}( \{0\}\cup 2^{\N_0})$ if and only if  $\jb{\ld }^{s}a(\ld)\in \l^p (\{0\}\cup 2^{\N_0})$. We equip $\mathcal{E}^{s_1,s_2}(T) $ with the norm
\begin{align}
\begin{split}
\|\Xi(\vu_0; \o_2) \|_{\mathcal{E}^{s_1,s_2}(T)}   =& \| \Psi\|_{C_{T}H^{s_0}_x} + \max_{a\in \{\pm\}}\| \Dr^{a}\|_{L^{2}_{T}H^{-1+s_2+\frac{1}{8}\eps}_{x}} \\
&  + \max_{a\in \{\pm\}}\bigg(\sum_{\ld\in \{0\}\cup 2^{\N_0}}\jb{\ld}^{p (1-\kk-\frac{\eps}{32})}\| \mathcal{P}_{\kk, \al, \ld}[\Dr^{a}]\|_{L^{p}_{t,x}}^{p} \bigg)^{\frac 1p}.
\end{split} \label{Enorm}
\end{align}
Note that on any event for which $\|\Xi(\vu_0; \o_2) \|_{\mathcal{E}^{s_1,s_2}(T)} <\infty$, it follows from \eqref{kkchoice} and \eqref{Enorm} that 
\begin{align}
 \max_{a\in \{\pm\}}\| \mathcal{P}_{\kk,\al,\ld}[\Dr^{a}]\|_{L^{p}_{t,x}} \les  \jb{\ld}^{-\frac{\eps}{32}}, 
 \label{Pldgain}
\end{align}
with implicit constant uniform in $\ld \in \{0\}\cup 2^{\N_0}$.

We first establish good long time bounds for a finite-dimensional truncated flow and invariance of this truncated flow with respect to the corresponding truncated Gibbs measure. 
Given $N\in 2^{\N_0}$, we define the truncated dynamics: 
\begin{align}
\begin{cases}
\dt^2 u_{N} + \dt u_{N} + (1- \Dl)  u_{N}   + \iota \be \g_{N} \Pii_{ N}\sinh(\be \Pii_{ N} u) = \sqrt{2}\xi\\
(u_{N}, \dt u_{N}) |_{t = 0} = (u_0, u_1).
\end{cases}
\label{liouvilleNM}
\end{align}
Notice that we do not truncate the initial data.

\begin{proposition} \label{PROP:PhiNM}
For any $N\in \N$ and $s_0<0$, \eqref{liouvilleNM} defines a flow map
\begin{align}
\Phi^{N}(t) : \H^{s_0}(\T^2) \times \Omega \mapsto \H^{s_0}(\T^2) \label{PhiNM}
\end{align}
which is $\muu_1 \otimes \PP$-almost surely globally well-posed. Moreover, the truncated Gibbs measure $\rhoo_{N}$ in \eqref{gibbsN} 
is invariant under the flow \eqref{PhiNM} in the sense that 
\begin{align}
\E_{\rhoo_N \otimes \PP}\big[ F(\Phi^{N}(t)(\vu_0;\o))\big] = \E_{\rhoo_{N} }[ F(\vu_0)]  \label{invariance}
\end{align}
for any $F\in C_{b}(\H^{s_0}(\T^2))$ and $t\geq 0$.
\end{proposition}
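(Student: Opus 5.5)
We split $u_N$ into its low-frequency part $\Pii_N$-image and the high-frequency remainder, and treat the two separately. Since the nonlinearity $\iota\be\g_N\Pii_N\sinh(\be\Pii_N u_N)$ only sees frequencies $|n|\le N$, we write $u_N = w_N + \Psi^{\perp}_N$. Here $\Psi^\perp_N$ solves the linear damped stochastic Klein--Gordon equation with data $(u_0,u_1)$, restricted to the frequencies where $\chi_N\neq1$, and no nonlinear coupling enters. More precisely, the modes with $\chi_N(n)=0$ evolve linearly. The modes with $0<\chi_N(n)<1$ are still finitely many, so all coupled modes live in the finite-dimensional space $E_N=\operatorname{span}\{e_n:|n|\le N\}$. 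On $E_N$, equation \eqref{liouvilleNM} becomes a finite-dimensional system of SDEs for the Fourier coefficients $(\ft u_n,\partial_t\ft u_n)_{|n|\le N}$, with smooth (hence locally Lipschitz) drift and additive noise. Local existence and pathwise uniqueness follow from standard SDE theory.

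The complementary modes $|n|>N$ solve decoupled linear Ornstein--Uhlenbeck-type equations. Their solution $\Psi^\perp_N$ is globally defined and lies in $C(\R_+;\H^{s_0})$ almost surely by the argument of Lemma~\ref{LEM:stick}. Measurability of $\Phi^N(t)$ in $(\vu_0,\o)$ comes from the construction.

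For global existence of the finite-dimensional part, we use an energy/Lyapunov argument. Set
\[
E_N(u,v)=\tfrac12\|v\|_{L^2}^2+\tfrac12\|u\|_{H^1}^2+\iota\g_N\int_{\T^2}\cosh(\be\Pii_N u)\,dx ,
\]
restricted to $E_N$. Since $\iota>0$, this is coercive: $\cosh\ge 1$, so $E_N\ge \frac12\|(u,v)\|^2$. The symmetry of $\Pii_N$ gives $\frac{d}{du}\int\cosh(\be\Pii_N u)=\be\Pii_N\sinh(\be\Pii_N u)$. Applying It\^o's formula, the nonlinear terms cancel and we obtain
\[
dE_N=-\|v\|_{L^2}^2dt+C_N\,dt+\sqrt2\langle v,d\B_N\rangle ,
\]
where $C_N$ is the It\^o correction, equal to the number of modes. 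Hence $\E[E_N(t\wedge\tau_R)]\le E_N(0)+C_Nt$, and the stopping times $\tau_R$ (first exit from $\{E_N\le R\}$) tend to infinity almost surely. This gives global well-posedness. One subtlety is that the modes with $0<\chi_N(n)<1$ are partially coupled, but the energy computation is unaffected. In the focusing case this step would fail, consistent with Remark~\ref{RMK:focusing}.

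For invariance \eqref{invariance}, the key point is that the finite-dimensional dynamics is a Langevin (damped Hamiltonian) system whose generator splits as $\mathcal L=\mathcal L_{\rm Ham}+\mathcal L_{\rm OU}$. The Hamiltonian part preserves the measure $e^{-E_N}\,du\,dv$ by Liouville's theorem, since the vector field $(v,-\nabla_u E_N)$ is divergence-free. The OU part $-v\cdot\nabla_v+\Delta_v$ preserves the Gaussian in $v$. Consequently $\mathcal L^*\big(e^{-E_N}\big)=0$, which gives invariance of the finite-dimensional marginal of $\rhoo_N$. The high modes are invariant under their linear OU flows with respect to the Gaussian marginal of $\muu_1$. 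This holds because the stochastic damped linear wave equation preserves $\mu_1\otimes\mu_0$, checked mode by mode via covariance computations. Independence of the two components, together with $\rhoo_N$ being a product of the low-mode Gibbs density and the high-mode Gaussian (the density $R_N$ depends only on $\Pii_Nu$), yields \eqref{invariance} for test functions $F\in C_b$.

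The main obstacle is rigorously justifying $\mathcal L^*\rho=0\Rightarrow$ invariance. Non-explosion together with standard arguments (for instance Echeverría's criterion, or Kolmogorov equations with the Lyapunov bound above) handle this. I would follow \cite{ORTz} and cite it for these routine details.
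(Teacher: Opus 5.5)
Your proposal is correct. For invariance it follows the paper's route: the paper also splits $u_N$ into a high-frequency piece solving the linear stochastic damped Klein--Gordon equation and a finite-dimensional low-frequency piece, viewed as a Hamiltonian flow superposed with an Ornstein--Uhlenbeck process. The paper cuts with the sharp projection $\mathbf S_{2N}$; since $\Pii_N$ has Fourier support in $\{|n|\le N\}$, your cut at $|n|\le N$ works equally well. Like you, it defers the rigorous details to \cite[Proposition 5.1]{ORTz}.

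You genuinely differ on global existence. The paper works pathwise with the remainder $v_N$ left after subtracting the stochastic convolution. It differentiates $\frac12\|v_N\|_{H^1}^2+\frac12\|\dt v_N\|_{L^2}^2$ in time and integrates by parts in time to turn the $\sinh$ term into $\cosh$ terms; this needs $\Psi_N\in C^1_t$, which comes from $\D(0)=0$. It then closes with Gronwall in terms of $\|\dt\Pii_N\Psi_N\|_{L^\infty_x}$. You instead apply It\^o's formula to the full low-mode Hamiltonian. There the nonlinear terms cancel exactly and only the constant It\^o correction survives, and non-explosion follows from $\E[E_N(t\wedge\tau_R)]\le E_N(0)+C_Nt$.

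What each buys:
\begin{itemize}
\item Your route is shorter and needs no time regularity of $\Psi_N$. It also uses the same function $E_N$ whose exponential is the invariant density. The non-explosion it yields is exactly the input needed for the Echeverr\'{\i}a- or Kolmogorov-type passage from $\mathcal L^*\rho=0$ to invariance.
\item The paper's route gives an $\omega$-by-$\omega$ bound and is phrased directly for the remainder. The paper reuses this immediately to show that $X_N,Y_N$ in \eqref{liouvilleNM3} exist globally; your argument gives this too, since the remainder is then global.
\end{itemize}

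One small point: your first paragraph initially puts the modes with $0<\chi_N(n)<1$ in the linear part before you correct yourself. State the splitting cleanly: $|n|\le N$ is the nonlinear finite-dimensional part, and $|n|>N$ is linear.
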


\begin{proof}
We first discuss the global existence of the flow \eqref{PhiNM} associated to \eqref{liouvilleNM}.
 We write 
\begin{align*}
u_{N} = \Psi_{N}+ v_{N}
\end{align*}
with $v_{N}$ the solution to 
\begin{align}
\dt^2 v_{N}   + \dt v_{N}  +(1-\Dl)  v_{N}  = - \iota \be \g_{N}  \Pii_{ N} \sinh( \be \Psi_N +\be \Pii_{ N} v_{N}),
\label{liouvilleNM2}
\end{align} 
with $(v_{N},\dt v_{N})|_{t=0}=(v_0,v_1)\in \H^{1}$. Here we have included general initial data $(v_0,v_1)$ in order to perform an iteration later on. Note that 
\begin{align*}
(\text{Id}-\Pii_{2N})v_{N}(t) = (\text{Id}-\Pii_{2N})[ \dt \mathcal{D}(t)v_0 + \mathcal{D}(t)(v_0+v_1)],
\end{align*}
which is the solution to the linear damped Klein-Gordon equation with initial data $( (\text{Id}-\Pii_{2N}) v_0,( (\text{Id}-\Pii_{2N})v_1)$.

By a simple contraction mapping argument, we see that \eqref{liouvilleNM2} is unconditionally locally-well posed in $C_{T_{N}}\H^{1}$, where  $T_{N}(\vu_0, \vec{v}_0, \o)>0$ is some stopping time.
Therefore, the flow map
\begin{align*}
\Phi^{N}(t) : (\vec{u}_0, \o) \mapsto ( \Psi(\vu_0, \o) + v_{N}(t), \dt \Psi(\vu_0, \o) +  \dt v_{N}(t)) 
%\label{PhiNMmap}
\end{align*}
is well-defined almost surely on $[0,T_{N}]$ 
We now establish the global well-posedness for the map $\Phi^{N}(t)$.
To this end, we define the functional
\begin{align*}
E(t) = \tfrac{1}{2}\|v_{N}(t) \|_{H^{1}}^{2} + \tfrac{1}{2}\| \dt v_{N}(t)\|_{L^2}^{2}. 
%\label{Efunc}
\end{align*}
Using \eqref{liouvilleNM2}, we compute 
\begin{align*}
\frac{d}{dt} E(t) & = \int_{\T^2} \dt v_{N} ( \dt^2 v_{N}+(1-\Dl)v_{N}) dx \\
&   =- \| \dt v_{N}(t)\|_{L^{2}}^{2} -\iota \int_{\T^2} \dt v_{N} \be \Pii_{N}\big[  
\g_{N} \sinh(\be \Psi_{N}+\be \Pii_{N} v_{N}) \big]dx  \\
& \leq -\iota\int_{\T^2} \dt v_{N} \be  \Pii_{N}\big[ \g_{N} \sinh(\be \Psi_{N}+\be\Pii_{N}v_N) \big]dx.
\end{align*}
 Integrating in time then gives
\begin{align*}
E(t) 
& \leq E(0) -\iota \int_{0}^{t} \int_{\T^2} \partial_{t'}\Pii_{N} v_{N}(t') \cdot\be \g_{N} \sinh(\be \Psi_{N}(t')+\be\Pii_{N}v_{N}(t'))dx dt' \\
 & =E(0)-\iota \frac{1}{2} \int_{0}^{t} \int_{\T^2} \partial_{t'}( f^{+}(\Pii_{N}v_{N}(t'))) \Dr^{+}_{N}(t')+\partial_{t'}( f^{-}(\Pii_{N}v_{N}(t'))) \Dr^{-}_{N}(t') dx dt'.
\end{align*}
Recalling from \eqref{dtPsiN} that $\Psi_{N}$ is $C^1$ in time a.s. with $\Psi_{N}(0)=0$, integration by parts in time yields 
\begin{align}
\begin{split}
E(t)  & \leq E(0)  -\iota \int_{\T} \g_{N} \cosh(\be (\Pii_{N} v_N (t) +\Psi_{N}(t))dx  + \iota \g_{N}\int_{\T^2}\cosh(\be \Pi_{N}v_0)dx  \\
& \hphantom{X}  +\iota \be \int_{0}^{t} \int_{\T^2} \partial_{t'} \big(  \Pii_{N}\Psi_{N} \big) \g_{N}\sinh( \be \Psi_{N}(t') +\be \Pii_{N}v_{N}(t')) dx dt'.
\end{split} \label{ENMbd}
\end{align}
By setting 
\begin{align*}
Q(t) : = E(t) + \iota \int_{\T^2}\g_{N}\cosh(\be \Psi_{N}(t)+\be v_{N}(t)) dx
\end{align*}
and noting that from $\iota>0$ and the trivial inequality $\sinh(x) \leq \cosh(x)$ for any $x\in \R$, 
\begin{align}
\max\Big( E(t), \iota \int_{\T} \g_{N} \sinh( \be \Psi_{N}(t)+\be \Pii_{N}v_{N}(t))  dx \Big) \leq  Q(t) \label{EQ}
\end{align}
 for every $t>0$. Now from \eqref{ENMbd} we obtain
\begin{align*}
Q(t) \leq E(0)+ \iota \g_{N}\int_{\T^2} \cosh(\be \Pi_{N}v_0)dx+ \iota \be \int_{0}^{t}  \| \dt  \Pii_{N}\Psi_{N}\|_{L^{\infty}_{x}} Q(t')dt'.
\end{align*}
Gronwall's inequality and \eqref{EQ} then imply that 
\begin{align}
\sup_{t\in [0, T_{N})} E(t) \leq C(N,T_{N})<\infty.
\label{vNMbd2}
\end{align}
%We write $v_{N} = \Pii_{2N}v_{N}+(\text{Id}-\Pii_{2N})v_{N}$, and note that (i) $ \Pii_{2N}v_{N}$ is supported on frequencies $\{|n|\leq 4N\}$, and (ii) $(\text{Id}-\Pii_{2N})v_{N}=$, Bernstein's inequality then implies 
%\begin{align}
%\sup_{t\in [0, T_{N})}\| v_{N}(t)\|_{L^{\infty}_{x}}  \les N\sup_{t\in [0, T_{N})} E(t) <\infty \label{vNMbd2}
%\end{align}
%almost surely on the event $\{ T_{N}<\infty\}$. 
Thus, we can iterate the local well-posedness result for $v_{N}$ and establish that $v_N$, and hence $u_N$, exist globally in time. With $(v_0,v_1)=(0,0)$, we define 
\begin{align}
\begin{split}
\wt{X}_{N}(t)  &= - \iota\tfrac{1}{2} \be \I_{\textup{wave}} \big(   \Pii_{N} [ f^{+}(\Pii_{N}v_N) \Ta^{+}_{N}-f^{-}(\Pii_{N}v_N) \Ta^{-}_{N}  ]\big)  \\
\wt{Y}_{N}(t)  & = -\iota \tfrac{1}{2} \be \I_{\textup{KG} - \textup{wave}} \big(  \Pii_{N}[ f^{+}(\Pii_{N}v_N) \Ta^{+}_{N}-f^{-}(\Pii_{N}v_{N}) \Ta^{-}_{N}] \big),
\end{split} \label{liouvilleNM32}
\end{align}
which are globally well-defined and satisfy $\wt{X}_{N}(t) + \wt{Y}_{N}(t)=v_N (t)$, where $v_N$ is the unique global solution to \eqref{liouvilleNM2} with zero initial data.

Meanwhile, a simple contraction mapping argument establishes the unconditional local well-posedness for the system $(X_{N},Y_{N})$:
\begin{align}
\begin{split}
X_{N}(t)  &= - \iota\tfrac{1}{2} \be \I_{\textup{wave}} \big(   \Pii_{N} [ f^{+}(\Pii_{N} (X_N+Y_N) ) \Ta^{+}_{N}-f^{-}(\Pii_{N}(X_N+Y_N)) \Ta^{-}_{N}  ]\big)  \\
Y_{N}(t)  & = -\iota \tfrac{1}{2} \be \I_{\textup{KG} - \textup{wave}} \big(  \Pii_{N}[ f^{+}(\Pii_{N}(X_N+Y_N)) \Ta^{+}_{N}-f^{-}(\Pii_{N}(X_N+Y_N)) \Ta^{-}_{N}] \big),
\end{split} \label{liouvilleNM3}
\end{align}
in $C_{T'_{N}}\H^{1}\times C_{T'_{N}}\H^{1}$ for some almost surely positive stopping time $T'_{N}>0$. Then, we see that $X_N+Y_N$ solves \eqref{liouvilleNM32} with zero initial data and hence $X_N+Y_N=v_N$ on $[0,T_{N}']$. In particular, by comparing the formulas \eqref{liouvilleNM32}
and \eqref{liouvilleNM3}, we conclude that $X_{N}=\wt{X}_{N}$ and $Y_{N}=\wt{Y}_N$, and hence $(X_N,Y_N)$ exists globally-in-time.

We now discuss the invariance of $\rhoo_{N}$ under the truncated flow $\Phi^{N}$.
Let $\mathbf{S}_{N}$ be the sharp Fourier truncation to frequencies $\{ |n|\leq N\}$. We then decompose 
\begin{align*}
u_{N}=\mathbf{S}_{2N}u_{N}+(\text{Id}-\mathbf{S}_{2N})u_{N}=u^{(1)}+u^{(2)},
\end{align*}
and note that $u^{(1)}$ solves \eqref{liouvilleNM} with the noise $\xi$ replaced by $\mathbf{S}_{2N}\xi$ while $u^{(2)}$ solves the linear stochastic damped wave equation:
\begin{align*}
(\dt^{2} +\dt +1-\Dl)u^{(2)} = \sqrt{2} (\text{Id}-\mathbf{S}_{2N})\xi.
\end{align*}
As $ (\text{Id}-\mathbf{S}_{2N})_{\ast} \rhoo_{N}=  (\text{Id}-\mathbf{S}_{2N})_{\ast} \muu_{1}$, the high frequency part $u^{(2)}$ preserves $(\text{Id}-\mathbf{S}_{2N})_{\ast} \rhoo_{N}$. The low frequency part then preserves $(\mathbf{S}_{2N})_{\ast} \rhoo_{N}$ as it is formally given as the superposition of the deterministic nonlinear wave equation:
\begin{align*}
\dt^{2}u^{(1)} + (1-\Dl)u^{(1)} +\iota \be \Pii_{ N} \sinh(\be \Pii_{ N} u^{(1)})=0
\end{align*}
and the Ornstein-Uhlenbeck process for $\dt u^{(1)}$: 
\begin{align*}
\dt( \dt u^{(1)})+\dt u^{(1)} = \sqrt{2}\mathbf{S}_{2N}\xi.
\end{align*}
See for example \cite[Proposition 5.1]{ORTz} for similar details.
\end{proof}

\begin{lemma}[Control on enhanced data set] \label{LEM:enhanced}
Let $T>0$, $0<\be^{2}<\frac{6 \pi}{5}$ and $(s_1,s_2,b, \eps_0, p)$ be as in Proposition~\ref{PROP:LWP} and fix $0<\eps<\eps_0$. Then, we have 
\begin{align}
\sup_{N\in 2^{\N_0}} \int \E_{\mathbb{P}_{2}}\Big[ \| \Xi_{N}(\vu_0,\o_2)\|_{\mathcal{E}^{s_1,s_2}(T)}^p \Big] d\rhoo_{N}(\vu_0)& \leq C(T), \label{unifrhoN1}\\
\sup_{N\in 2^{\N_0}}  (\vec{\rho}\otimes \mathbb{P}_{2})\big(\max_{a\in \{\pm\}} N^{\eta}\| \Dr^{a}-\Dr^{a}_{N}\|_{L^{2}_{T}H^{-1+s_2+\frac{\eps}{8}}_{x}} >A \big) &\leq (C(T)A)^{-p}. \label{unifrhoN2}
\end{align}
In particular, 
$\vec{\Dr}_{N}(\vu_0,\o_2)$ converges to $\vec{\Dr}(\vu_0,\o_2)$ almost surely and in probability with respect to $\rhoo\otimes \mathbb{P}_{2}$.

\end{lemma}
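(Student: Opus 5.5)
The plan is to bound the densities of $\rhoo_N$ and $\rhoo$ by $1$, reduce everything to Gaussian computations under $\muu_1\otimes\PP_2$, and then invoke the moment bounds of Section~\ref{sec:sto}.

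\emph{Reduction to Gaussian estimates.} By Lemma~\ref{lem:gibbs} (with $\iota>0$), the densities $Z_N^{-1}R_N$ of $\rhoo_N$ with respect to $\muu_1$ satisfy $0\le R_N\le 1$. Moreover, $Z_N=\int R_N\,d\muu_1$ is bounded below uniformly in $N$, because $R_N\to R$ in $L^1(\muu_1)$ and $R>0$ $\muu_1$-a.s. The same holds for $\rhoo$. Hence
\[
\int \E_{\PP_2}[G]\,d\rhoo_N\les \E_{\muu_1\otimes\PP_2}[G]
\]
for any nonnegative functional $G$, uniformly in $N$. Under $\muu_1\otimes\PP_2$, the process $\Psi_N$ is exactly the Gaussian stochastic convolution of Section~\ref{subsec:stochconv}, which is stationary in time. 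Consequently \eqref{unifrhoN1} reduces to bounding the $p$-th moments of the three pieces of the norm \eqref{Enorm} under the Gaussian measure.

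\emph{The three pieces.}
\begin{itemize}
\item[(a)] The term $\E\|\Psi_N\|_{C_TH^{s_0}_x}^p\le C(T)$ follows from Lemma~\ref{LEM:stick}.
\item[(b)] For $\E\|\Dr_N^a\|^p_{L^2_TH^{-1+s_2+\eps/8}_x}$, use Minkowski to move to $L^p_T$ (since $p\ge2$) and then Lemma~\ref{LEM:GMC}(i) with $L^2$-based Sobolev. Here $1-s_2-\eps/8$ must exceed $\frac{\be^2}{4\pi}$, the $p=2$ threshold in \eqref{alpha}; this is guaranteed by choosing $s_2$ slightly smaller than $1-\frac{\be^2}{4\pi}$ as allowed in Proposition~\ref{PROP:LWP}, at the cost of adjusting $s_2$. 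Stationarity in time extends Lemma~\ref{LEM:GMC} from $[0,1]$ to $[0,T]$ with constant $C(T)$.
\item[(c)] For the weighted sum $\sum_\ld \jb{\ld}^{p(1-\kk-\eps/32)}\|\mathcal P_{\kk,\al,\ld}[\Dr^a_N]\|^p_{L^p}$, exchange expectation and sum and apply Lemma~\ref{LEM:I1thetaLd} with $\eps_0=\eps/32<1-\kk=\eps/16$. The exponent $\jb{\ld}^{p(1-\kk-\eps/32)}$ then matches the decay $\ld^{-p(1-\kk-\eps_0)}$, so the sum over $\ld\in 2^{\N_0}$ is controlled. The $\ld=0$ term is Lemma~\ref{LEM:I1theta}, whose numerology requires $p\approx6$, $\al=\frac32-\eps$, $\kk<1$, and $\frac{(p-1)\be^2}{4\pi}<\al$; this is exactly $\be^2<\frac{6\pi}{5}$ for $\eps$ small.
\end{itemize}
This gives \eqref{unifrhoN1}.

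\emph{Proof of \eqref{unifrhoN2}.} Again pass to $\muu_1\otimes\PP_2$ using $d\rhoo/d\muu_1\les1$. Then use Chebyshev at order $p$:
\[
\PP\bigl(N^\eta\|\Dr^a-\Dr^a_N\|>A\bigr)\le A^{-p}N^{\eta p}\,\E\|\Dr^a-\Dr^a_N\|^p.
\]
Lemma~\ref{LEM:GMC}(ii) gives Cauchy-ness, but with a rate needed here. For the rate, repeat the proof of \eqref{I1thetadiff}: the covariance bound \eqref{covcov0} gives an $L^2(\O)$ rate $N^{-\theta}$ in $H^{-1+s_2+\eps/8}$, since the spare regularity $\eps/8$ makes the kernel integrable against $|y_1-y_2|^{-\be^2/2\pi-2\eps}$. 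Interpolating with a uniform $L^{p+}(\O)$ bound from Lemma~\ref{LEM:GMC}(i) in a slightly weaker norm yields $\E\|\Dr^a-\Dr^a_N\|^p\les N^{-\theta_0 p}$. Choosing $\eta<\theta_0$ gives \eqref{unifrhoN2}.

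\emph{Convergence and main obstacle.} Taking $A=N^{\eta'}$ with $\eta'$ small and applying Borel--Cantelli over dyadic $N$ gives almost sure convergence, hence convergence in probability. The main obstacle is this last quantitative difference estimate: $p$ is close to the intermittency threshold, so the interpolation must be done carefully in both moment and regularity to retain a positive rate.
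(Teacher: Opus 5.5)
Your reduction is exactly the paper's argument: bound $d\rhoo_N/d\muu_1$ and $d\rhoo/d\muu_1$ uniformly, then work under $\muu_1\otimes\PP_2$ with the stationary Gaussian $\Psi_N$. The paper then just cites Proposition~\ref{PROP:I1theta} and says nothing about the $\Dr$-component. Your step (b) for that component does not work, and the defect carries over to your proof of \eqref{unifrhoN2}.

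Lemma~\ref{LEM:GMC}(i) ties the moment exponent to the regularity threshold. At exponent $2$ it only bounds $\E\|\Dr_N\|^2_{L^2_TH^{-\sigma}_x}$, and no Minkowski step in $t$ turns a second moment into a $p$-th moment. Used at exponent $p=6+6\eps$ (with $W^{-\sigma,p}\subset H^{-\sigma}$), it requires $\sigma=1-s_2-\frac{\eps}{8}>\frac{(p-1)\be^2}{4\pi}$. This is impossible with $s_2>0$ once $\be^2\ge\frac{4\pi}{5}$.

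Worse, your claimed threshold is false in part of the range. Write $\|\Dr_N\|^2_{L^2_TH^{-\sigma}_x}=\int_0^T\iint G_{2\sigma}(x-y)\Dr_N(t,x)\Dr_N(t,y)\,dx\,dy\,dt$, where the Bessel kernel is positive with $G_{2\sigma}(z)\gtrsim|z|^{-2+2\sigma}$. Keep only $\sim r^{-3}$ disjoint space-time cylinders $I\times B$ of size $r\gg N^{-1}$. Then use:
\begin{itemize}
\item Cauchy--Schwarz in $t$;
\item the inequality $(\sum_i a_i)^{p/2}\ge\sum_i a_i^{p/2}$;
\item the lower bound $\E[(\int_I\mathcal M_N(t,B)\,dt)^p]\gtrsim r^{3p-\frac{\be^2}{4\pi}p(p-1)}$, which follows from \eqref{cov1} and Lemma~\ref{LEM:Kahane} by running the comparison \eqref{logs} in the opposite direction.
\end{itemize}
This gives $\E\|\Dr_N\|^p_{L^2_TH^{-\sigma}_x}\gtrsim r^{p(\sigma+\frac32-\frac{(p-1)\be^2}{4\pi})-3}$. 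So the uniform-in-$N$ $p$-th moment is infinite when $\sigma<\frac{(p-1)\be^2}{4\pi}-\frac32+\frac3p$. This exceeds $\frac{\be^2}{4\pi}$ as soon as $\be^2>\pi$, and then taking $s_2$ just below $1-\frac{\be^2}{4\pi}$ is not enough.

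The same issue affects your rate. Interpolating the $L^2(\O)$ rate against a uniform $L^{p+}(\O)$ bound in a \emph{weaker} norm only gives a rate in an intermediate weaker norm, not in $L^2_TH^{-1+s_2+\frac{\eps}{8}}_x$.

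The fix is to notice that the lemma is only used through tail bounds: Lemma~\ref{LEM:unifNMbds} and the proof of Theorem~\ref{thm:main} only need the exceptional probabilities to be small. So treat the $\Dr$-component with second moments. For $s_2<1-\frac{\be^2}{4\pi}-\frac{\eps}{8}$, integrating \eqref{cov1} and \eqref{covcov0} against $G_{2\sigma}(y_1-y_2)$ gives a uniform bound on $\E\|\Dr_N\|^2_{L^2_TH^{-\sigma}_x}$. It also gives $\E\|\Dr_{N_1}-\Dr_{N_2}\|^2_{L^2_TH^{-\sigma}_x}\les_T(N_1\wedge N_2)^{-\theta}$. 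Hence you get \eqref{unifrhoN2} with $A^{-2}$ in place of $A^{-p}$ (for $\eta<\frac{\theta}{2}$), and your Borel--Cantelli argument over dyadic $N$ then yields almost sure convergence. Keep $p$-th moments only for the $\Psi$- and $\mathcal P$-components. Keeping $p$-th moments for $\Dr$ would require shrinking $s_2$ and a genuine multi-point moment computation that Lemma~\ref{LEM:GMC} does not supply.

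There is also a smaller slip in (c). With $\eps_0=\frac{\eps}{32}$, each term $\jb{\ld}^{p(1-\kk-\frac{\eps}{32})}\E\|\mathcal P_{\kk,\al,\ld}[\Dr^a_N]\|^p_{L^p}$ is only $O(1)$, so the sum over $\ld\in 2^{\N_0}$ diverges. Take $\eps_0<\frac{\eps}{32}$, which Lemma~\ref{LEM:I1thetaLd} allows.
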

\begin{proof}
In view of Lemma~\ref{lem:gibbs}, for any $F$ integrable with respect to $\rhoo\otimes \mathbb{P}_{2}$, we have
\begin{align*}
\int \E_{\mathbb{P}_{2}}[F(\vu_0 ,\o_2)] d\rhoo_{N}(\vu_0) \les \int \E_{\mathbb{P}_{2}}[F(\vu_0 ,\o_2)] d\muu_{1}(\vu_0)
\end{align*}
uniformly in $N\in \N$. Now \eqref{unifrhoN1} and \eqref{unifrhoN1} follow from Proposition~\ref{PROP:I1theta}. The convergence property of $\vec{\Dr}_{N}(\vu_0,\o_2)$ is then a consequence of the convergence of $\rhoo_{N}$ to $\rhoo$ in total variation. 
\end{proof}

The goal now is to prove uniform in $N$ bounds on the solutions $(X_N, Y_N)$ to the truncated equation \eqref{liouvilleNM3}. This is an improvement over the bounds obtained via \eqref{vNMbd2} which are not uniform in $N$. We rely on the invariance of $\rho_{N}$ under the flow $\Phi^{N}(t)$. In the case of the hyperbolic sinh-Gordon equation \eqref{liouville}, we need to establish control on $X_{N}\in L^{\infty}_{T,x}$ \emph{without} using an iteration argument. This is responsible for the worse restriction $\be^{2}<\frac{2\pi}{11}$ as compared to the restriction for local well-posedness of $\be^{2}<\frac{6\pi}{5}$ in Proposition~\ref{PROP:LWP}.

\begin{lemma}\label{LEM:PuN}
Let $T\geq 1$, $0<\be^{2}<\frac{2\pi}{11}$, and $\iota>0$. Let $u_{N}$ be the solution to the truncated system \eqref{liouvilleNM} on $[0,T]$ with respect to the truncated enhanced data $\Xi_{N}(\vu_0,\o)$, and let $u_{N} = \Psi_{N} +X_{N}+Y_{N}$, where $(X_N,Y_N)$ solves \eqref{liouvilleNM3} on $[0,T]$.
 Then, given any $\eta>0$, there exists $C_0=C_0 (T,\eta)\gg 1$ such that 
\begin{align}
\rhoo_{N}\otimes \PP \big( \|X_{N} \|_{L^{\infty}_{T,x}} >C_0\big) <\eta \label{unifPuN}
\end{align}
uniformly in $N\in 2^{\N_0}$.
 \end{lemma}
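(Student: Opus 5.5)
The idea is to never solve for $X_N$ and instead read it off the full truncated solution $u_N$. Then the invariance of $\rhoo_N$ under $\Phi^N(t)$ (Proposition~\ref{PROP:PhiNM}) replaces the Gaussian computation used in the local theory.

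\emph{Rewriting the equation.} Since $\Psi_N=\Pii_N\Psi$, we have $f^{\pm}(\Pii_N v_N)\Dr_N^{\pm}=\g_N e^{\pm\be(\Psi_N+\Pii_N v_N)}$. Up to the harmless outer projection this is $\g_N e^{\pm\be \Pii_N u_N}$ (modulo the mismatch $\Pii_N\Psi$ versus $\Pii_N^2\Psi$, which I would absorb by the same argument). Hence, by \eqref{liouvilleNM32},
\[
X_N=-\iota\tfrac12\be\,\Pii_N\I_{\textup{wave}}\big(F_N^+-F_N^-\big),\qquad F_N^\pm(t')=\g_N e^{\pm\be\Pii_N u_N(t')} .
\]
Each $F_N^\pm(t')$ is a nonnegative functional of $u_N(t')$ alone. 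By \eqref{invariance}, $\Law(u_N(t'))=\rhoo_N$ for every $t'$, and by Lemma~\ref{lem:gibbs} the density $R_N\le 1$. So any moment of a functional of $F_N^\pm(t')$ at a single time is bounded by $Z_N^{-1}$ times the corresponding Gaussian moment, where $\Pii_N u_0$ has law $\mu_1$ and $\g_N e^{\pm\be \Pii_N u_0}$ is exactly the GMC $\Dr_N^\pm(0)$.

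\emph{Reduction to an $L^q$ bound.} To reach $L^\infty_{T,x}$, I would cover $[0,T]$ by $O(T)$ unit intervals, cut off with the time-localised kernel $K^\star$, and use the isotropic Sobolev embedding $W^{\s,q}(\R\times\T^2)\embeds L^\infty$ for $\s>3/q$, together with Proposition~\ref{prop:kery}. By Chebyshev, it then suffices to show
\[
\sup_N\E_{\rhoo_N\otimes\PP}\big[\|\mc A_\s(F_N^{\pm})\|_{L^q_{t,x}}^q\big]\le C(T)
\]
for some even integer $q$ and some $3/q<\s<\frac12$. I would take $q=12$ and $\s=\frac14+$. The crucial step is to apply Minkowski's inequality in $L^q(\O)$ to pull the $t'$-integral outside the expectation:
\[
\big\|\mc A_\s(F)(t,x)\big\|_{L^q_\o}\le\int\Big\|\int_{\T^2}\mc R_\s(t,t',x-y)F(t',y)\,dy\Big\|_{L^q_\o}dt'.
\]
Each inner term involves only one time $t'$, so invariance reduces it to $\E\big[(\int\mc R_\s(t,t',x-y)\Dr_N(0,y)dy)^q\big]$, a purely spatial GMC moment. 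Kahane's inequality is not available here because $u_N$ is not Gaussian, which is why this reduction is needed.

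\emph{Main obstacle: the spatial GMC moment.} The hyperbolic singularity $||t-t'|-|x-y||^{-\frac12-\s}$ now has to be absorbed by the spatial integral alone, since the time integral has already been spent. I would decompose dyadically into $|x-y|\sim2^{-k}$ and $||t-t'|-|x-y||\sim2^{-j}$. Each piece is a GMC mass on an annulus of radius $\sim2^{-k}$ and width $\lesssim\min(2^{-j},2^{-k})$, and Lemma~\ref{LEM:GMCannuli}, which needs $\be^2<\frac{4\pi}{q-1}$, gives
\[
\E\big[\mc M_N(A)^q\big]^{1/q}\les\big(2^{-k}2^{-j}\big)^{1-\frac{\be^2(q-1)}{8\pi}} .
\]
Against the weight $2^{k/2}2^{j(\frac12+\s)}$, I expect the $j$-sum to converge iff $1-\frac{\be^2(q-1)}{8\pi}>\frac12+\s$. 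With $q=12$ and $\s\downarrow\frac14$, this is exactly $\be^2<\frac{2\pi}{11}$. The $k$-sum is then fine, and the remaining parts of $\mc R_\s$ (the $|t-t'|^{-\frac12-\s}|x|^{-\frac12}$ and bounded terms) follow from Lemma~\ref{LEM:GMCmeas}. The resulting bound is uniform in $(t',x,N)$. Finally I integrate in $t'$ (the factor $\jb{t}^{-3-\s}$ handles large $t$), sum over the $O(T)$ subintervals, and apply Chebyshev to obtain \eqref{unifPuN}.

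The step I expect to be delicate is checking that the logarithmic factors and the $\jb{\log|x|}^3$ weight in $\mc R_\s$ cost only $\eps$-losses, and that the annulus bound is valid in the regime $2^{-j}\gg2^{-k}$, where one uses balls instead via \eqref{GMCmeas2}.
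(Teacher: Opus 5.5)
Your proposal is correct and follows essentially the same route as the paper: you write $X_N$ as the wave Duhamel integral of $\g_N e^{\pm\be\Pii_N u_N}$, reduce the $L^\infty_{T,x}$ bound to an $L^{12}$ bound on the kernel majorant, pull the $t'$-integral out of the expectation by Minkowski, use invariance to replace $u_N(t')$ by the Gaussian data, and absorb the hyperbolic singularity with the annulus moment bound of Lemma~\ref{LEM:GMCannuli}, which gives exactly $\be^2<\frac{2\pi}{11}$ at $q=12$. The differences are cosmetic: the paper works in $\Ld^{s_q,b_q}_q$ via \eqref{iso} where you use the isotropic $W^{\s,q}$ embedding (the numerology $\s=\frac3q+$ is the same), and the paper also uses the $t'$-integration to gain $2^{-\tau}$ or $2^{-\l}$ in its dyadic sums, whereas you bound each fixed-$t'$ spatial moment directly — with that choice the $|t-t'|^{-\frac12-\s}|x-y|^{-\frac12}$ piece is only integrable in $t'$ rather than uniform, which is harmless since $\frac12+\s<1$.
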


\begin{proof}
From \eqref{liouvilleNM3}, we see that $X_{N}$ satisfies 
\begin{align}
X_{N} = -\iota \be \mathcal{I}_{\text{wave}}[ \g_{N}\sinh(\be \Pii_{N}u_{N})] . \label{unifXN1}
\end{align}
Let $q\geq  6$ be an even integer to be chosen later,  set $s_q=\frac{2}{q}+\dl$ and $b_{q}=\frac{1}{q}+\dl$ for some sufficiently small $\dl>0$ so that $\kk_q:= \frac{1}{2}+s_q+b_q$ satisfies that $\kk_q<1$. Similarly to \eqref{bdX}, by \eqref{sobolev2}, \eqref{iso}, Proposition~\ref{prop:kery}, and \eqref{unifXN1},  we have
\begin{align*}
\|X_{N}\|_{L^{\infty}_{T,x}} & \les \|X_{N} \|_{\Ld^{s_q,b_q}_{q}(T)}   \les  \sum_{a\in  \{\pm\}} \| \mathcal{P}_{\kk_q, \al}[ \g_{N}e^{ a\be u_{N}}]\|_{L^{q}_{t,x}}.
\end{align*}
where we recall that $\al=\frac{3}{2}+\eps$.
Then, in order to prove \eqref{unifPuN}, it suffices to combine Chebyshev's inequality with control on the moments
\begin{align}\E[ \| \mathcal{P}_{\kk_q,\al}[ \g_{N}e^{a\be \Pii_{N}u_N}]\|_{L^{q}_{t,x}(\R\times \T^2)}^{q}]. \label{unifXN2}
\end{align} 
for $a\in \{\pm\}$.
 Without loss of generality, we fix $a=+$. Then, recalling \eqref{HGMC}, we have
\begin{align*}
\eqref{unifXN2}
 &= \int_{\R} \int_{\T^2} \frac{1}{\jb{t}^{3q}} \E \bigg[  \bigg(  \int_{0}^{1} \int_{\T^2}  \Big[ 1+\frac{1}{|t-t'|^{\kk_q}}+ \frac{1 }{  | |t-t'|-|x-y||^{\kk_q}}  \Big] \frac{\g_{N}e^{\be \Pii_{N}u_N(t',y)}}{|x-y|^{2-\al}} dydt'   \bigg)^{q}     \bigg] dx dt.
\end{align*}
We heavily rely on the invariance of the truncated measure $\vec{\rho}_{N}\otimes \PP$ with respect to the truncated flow $\Phi^N$ in \eqref{invariance}.  We first reduce to an essential part. 

By Jensen's inequality and invariance, we have 
\begin{align}
 \int_{\R} \int_{\T^2} \frac{1}{\jb{t}^{3q}} &\E \bigg[  \bigg(  \int_{0}^{1} \int_{\T^2}  \frac{\g_{N}e^{\be \Pii_{N}u_N(t',y)}}{|x-y|^{2-\al}} dydt'   \bigg)^{q}     \bigg] dx dt \notag \\
 & \les \sup_{x\in \T^2}  \int_{0}^{1} \E \bigg[  \bigg( \int_{\T^2}  \frac{\g_{N}e^{\be \Pii_{N}u_N(t',y)}}{|x-y|^{2-\al}} dy   \bigg)^{q}     \bigg]  dt' \notag \\
 &  \les  \sup_{x\in \T^2}  \int_{0}^{1} \E \bigg[  \bigg( \int_{\T^2}  \frac{\g_{N}e^{\be \Pii_{N}u_0(y)}}{|x-y|^{2-\al}} dy   \bigg)^{q}     \bigg]  dt'  <\infty \label{PuN1}
 \end{align}
where we noted that $\g_{N}e^{\be \Pii_{N}u_0(y)}= \Dr_{N}(0,y)$ to obtain the uniform in $N$ boundedness.

Now we consider the contribution of
\begin{align*}
\int_{\R} \int_{\T^2} \frac{1}{\jb{t}^{3q}} \E \bigg[  \bigg(  \int_{0}^{1} \int_{\T^2}  \Big[ \frac{1}{|t-t'|^{\kk_q}}+ \frac{1 }{  | |t-t'|-|x-y||^{\kk_q}}  \Big] \frac{\g_{N}e^{\be \Pii_{N}u_N(t',y)}}{|x-y|^{2-\al}} dydt'   \bigg)^{q}     \bigg] dx dt.
\end{align*}
If $|t|\gg 1$, then $|t-t'|\sim |t|$ and $||t-t'|-|x-y||\sim |t|$, so we have
\begin{align}
\int_{|t|\gg 1} \int_{\T^2}& \frac{1}{\jb{t}^{(3+\kk_q)q}} \E \bigg[  \bigg(  \int_{0}^{1} \int_{\T^2}   \frac{\g_{N}e^{\be \Pii_{N}u_N(t',y)}}{|x-y|^{2-\al}} dydt'   \bigg)^{q}     \bigg] dx dt
\end{align}
and we then argue as in \eqref{PuN1}.
 Thus it remains to consider the case when $|t|\les 1$. By a change of variables and that $|t|\les 1$, we have 
\begin{align*}
\sup_{\substack{ |t|\les 1 \\ x\in \T^2  }} \,  &\E \bigg[  \bigg(  \int_{0}^{t} \int_{\T^2}  \Big[ \frac{1}{|t-t'|^{\kk_q}}+ \frac{1 }{  | |t-t'|-|x-y||^{\kk_q}}  \Big] \frac{\g_{N}e^{\be \Pii_{N}u_N(t',y)}}{|x-y|^{2-\al}} dydt'   \bigg)^{q}     \bigg] \\
&=\sup_{\substack{ |t|\les 1 \\ x\in \T^2  }} \E \bigg[  \bigg(  \int_{0}^{t} \int_{\T^2}  \Big[ \frac{1}{|t'|^{\kk_q}}+ \frac{1 }{  | |t'|-|x-y||^{\kk_q}}  \Big] \frac{\g_{N}e^{\be \Pii_{N}u_N(t',y)}}{|x-y|^{2-\al}} dydt'   \bigg)^{q}     \bigg] \\
&\les\sup_{\substack{ |t|\les 1 \\ x\in \T^2  }}\E \bigg[  \bigg(  \int_{0}^{10} \int_{\T^2}  \Big[ \frac{1}{|t'|^{\kk_q}}+ \frac{1 }{  | |t'|-|x-y||^{\kk_q}}  \Big] \frac{\g_{N}e^{\be \Pii_{N}u_N(t-t',y)}}{|x-y|^{2-\al}} dydt'   \bigg)^{q}     \bigg].
\end{align*}
For the term with $|t'|^{-\kk_q}$, we use Jensen's inequality, noting that since $\kk_q<1$, $\frac{dt}{|t|^{\kk_q}}$ can be normalised to be a probability measure on $(0,10]$, and then the invariance property leading to the same term as in \eqref{PuN1}.

It remains to control the term with the ``hyperbolic singularity".  
Fix $x\in \T^2$. Then, by Minkowski's inequality and the translation invariance of the law of $\Dr_{N}(0,\cdot)$, we have
\begin{align*}
\E& \bigg[  \bigg( \int_{0}^{10}  \int_{\T^2}  \frac{1 }{  | |t'|-|x-y||^{\kk_q}} \frac{\g_{N}e^{\be \Pii_N u_N(t-t',y)}}{|x-y|^{2-\al}} dy dt'   \bigg)^{q}     \bigg] \\
&\les  \E \bigg[ \bigg( \sum_{\tau,\l=1}^{\infty} 2^{(2-\al)\l} 2^{\kk_q \tau}  \int_{0}^{10}  \int_{\T^2} \ind_{\{ |x-y|\sim 2^{-\l }\}} \ind_{\{ ||t'|-|x-y||\sim 2^{-\tau}  \}}\g_{N}e^{\be \Pii_N u_N(t-t',y)}dy \bigg)^q \bigg] \\
&\les \bigg(  \sum_{\tau,\l=1}^{\infty} 2^{(2-\al)\l} 2^{\kk_q \tau} \int_{0}^{10}  \E\bigg[ \bigg(  \int _{\T^2}\ind_{\{ |x-y|\sim 2^{-\l }\}} \ind_{\{ ||t'|-|x-y||\sim 2^{-\tau}  \}} \g_{N}e^{\be \Pii_N u_N(t-t',y)}dy  \bigg)^{q}\bigg]^{\frac{1}{q}}  dt' \bigg)^{q} \\
&\les \bigg(  \sum_{\tau,\l=1}^{\infty} 2^{(2-\al)\l} 2^{\kk_q \tau} \int_{0}^{10}  \E\bigg[ \bigg(  \int_{\T^2} \ind_{\{ |x-y|\sim 2^{-\l }\}} \ind_{\{ ||t'|-|x-y||\sim 2^{-\tau}  \}}\Dr_{N}(0,y)dy  \bigg)^{q}\bigg]^{\frac{1}{q}}  dt' \bigg)^{q} \\
&\sim \bigg(  \sum_{\tau,\l=1}^{\infty} 2^{(2-\al)\l} 2^{\kk_q \tau} \int_{0}^{10}  \E\bigg[ \bigg(  \int_{\T^2} \ind_{\{ |y|\sim 2^{-\l }\}} \ind_{\{ ||t'|-|y||\sim 2^{-\tau}  \}} \Dr_{N}(0,y)dy  \bigg)^{q}\bigg]^{\frac{1}{q}} dt' \bigg)^{q}.
\end{align*} 
Consider the inner integral over $y$.
If $|t'| \gg |y|$ or $|t'|\ll |y|$, then we must have that $|t'|\les 2^{-\tau}$. Thus, for these contributions, it follows from \eqref{GMCmeas2} that 
\begin{align*}
&\bigg(  \sum_{\tau,\l=1}^{\infty} 2^{(2-\al)\l} 2^{\kk_q \tau} \int_{|t'|\les 2^{-\tau}}  \E\bigg[ \bigg(  \int_{\T^2} \ind_{\{ |y|\sim 2^{-\l }\}} \ind_{\{ ||t'|-|y||\sim 2^{-\tau}  \}} \Dr_{N}(0,y)dy  \bigg)^{q}\bigg]^{\frac{1}{q}} dt' \bigg)^{q}  \\
& \les \bigg(  \sum_{\tau,\l=1}^{\infty} 2^{(2-\al)\l} 2^{\kk_q \tau} \int_{|t'|\les 2^{-\tau}}  \E\bigg[ \mathcal{M}_{N}(0,B(0,2^{-\l}))^{q} \bigg]^{\frac{1}{q}} dt' \bigg)^{q} \\
&\les  \bigg(  \sum_{\tau,\l=1}^{\infty} 2^{(2-\al)\l} 2^{(\kk_q-1) \tau} (2^{-\l})^{ (2+\frac{\be^2}{4\pi}) - \frac{\be^2}{4\pi}q} \bigg)^{q} <\infty,
\end{align*}
provided that $(q-1)\frac{\be^2}{4\pi}<\al$ which holds for $\be^{2}<\frac{6\pi}{5}$.
Now consider the contribution from $|t'|\sim |y|$. In this case, we have $|t'|\sim 2^{-\l}$. 
If $\tau \leq \l+5$, then we can proceed as in the previous case since the gain $2^{-\l}$ from the $t'$-integral can be transferred to gain a factor $2^{- \tau}$.

It remains to consider the case when $\l+ 5<\tau$. 
As $|t'|\sim |y| \gg 2^{-\tau}$, we can write $\{ 2^{-\tau-1}<| |t'|-|y||< 2^{-\tau+1}\}$ as $|t'| + 2^{-\tau-1} < |y| < |t'|+2^{-\tau+1}$ or $|t'| - 2^{-\tau+1} < |y| < |t'|-2^{-\tau-1}$, depending on if $|y|>|t'|$ or $|y|<|t'|$. In either case, we obtain a (spatial) annular region which has an area $\approx 2^{-\tau-\l}$.
Thus applying \eqref{GMCann}, we have
\begin{align*}
&\bigg(  \sum_{\substack{\tau, \l \ge 1 \\ \tau\geq \l+5}} 2^{(2-\al)\l} 2^{\kk_q \tau} \int_{|t'|\les 2^{-\l}}  \E\bigg[ \bigg(  \int_{\T^2} \ind_{\{ ||t'|-|y||\sim 2^{-\tau}  \}} \Dr_{N}(0,y)dy  \bigg)^{q}\bigg]^{\frac{1}{q}} dt' \bigg)^{q}  \\ 
& \les \bigg(  \sum_{\substack{\tau, \l \ge 1 \\ \tau\geq \l+5}} 2^{-\al\l} 2^{\kk_q \tau}  2^{-(1-\frac{\be^{2}}{8\pi} (q-1))\tau} 2^{ \frac{\be^2}{8\pi}(q-1)\l}     \bigg)^{q} 
 <\infty,
\end{align*}
provided that  $(q-1)\frac{\be^2}{8\pi} < \min(1-\kk_q,1, \al)=1-\kk_q$, since $\kk_q=\frac{1}{2}+\frac{3}{q}+2\dl$.
Thus, we need 
\begin{align*}
\be^{2} <\frac{4\pi (q-6)}{q(q-1)}=:h(q),
\end{align*}
where $\dl>0$ was chosen sufficiently small.
The function $h(q)$ has a maximum around $q\approx 11.48$ and the even integer which gives the largest range of $\be^2$ is $q=12$ for which we get 
\begin{align*}
\be^{2} < \frac{2\pi}{11} \approx 0.182\pi.
\end{align*}
 Putting the cases together then establishes \eqref{unifPuN}. 
\end{proof}

%\justin{ $\frac{2}{11}= 0.181818...$ while if we evaluate $h(q)$ at its maximum (for $q>0$) we get  $q_{\ast}=6+\sqrt{30}\approx 11.477$ and $h(q_{\ast})= 44-8\sqrt{30}\approx  0.1822$, which is not that different from just using $q=12$.    BTW if we put $q=10$, we get $h(q)=\frac{8}{45} \approx  0.17777$, which is worse than $q=12$.  Also no point asking for $q=11$ as $h(11)=\frac{2}{11}=h(12)$! }

Using Lemma~\ref{LEM:PuN}, we can now establish the uniform in $N$ bounds on $(X_N,Y_N)$ in the norm $\mathcal{Z}^{s_1,s_2}(T)$ for long times $T\geq 1$.

\begin{lemma}\label{LEM:unifNMbds}
Let $T\geq 1$, $0<\be^{2}<\frac{2\pi}{11}$, $s_1,s_2$ be as in Proposition~\ref{PROP:LWP} and $\iota>0$. Then, given any $\eta>0$, there exists $C_0=C_0 (T,\eta)\gg 1$ such that 
\begin{align}
\rhoo_{N}\otimes \PP \big( \| (X_{N},Y_{N})\|_{\mc Z^{s_1,s_2}(T)} >C_0\big) <\eta \label{unifNMXY}
\end{align}
uniformly in $N\in 2^{\N_0}$, where $(X_{N},Y_{N})$ is the solution to the truncated system \eqref{liouvilleNM3} on $[0,T]$ with respect to the truncated enhanced data $\Xi_{N}(\vu_0,\o)$. 
\end{lemma}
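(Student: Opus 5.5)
We bound the three pieces of the $\mc Z^{s_1,s_2}(T)$-norm of $(X_N,Y_N)$ separately. In each case the argument reduces to two inputs: a moment bound on a stochastic object involving the full (non-Gaussian) solution $u_N$, which we handle by invariance of $\rhoo_N\otimes\PP$ under $\Phi^N$ (Proposition~\ref{PROP:PhiNM}); and an $L^\infty_{T,x}$ bound on $u_N-\Psi_N=X_N+Y_N$. We then conclude with Chebyshev's inequality and a union bound. The $L^\infty$ control of $X_N$ comes from Lemma~\ref{LEM:PuN}, which is exactly where the restriction $\be^2<\frac{2\pi}{11}$ enters.

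\emph{Step 1: $Y_N$.} From \eqref{liouvilleNM3} we have $Y_N=-\iota\be\,\I_{\textup{KG}-\textup{wave}}[\g_N\Pii_N\sinh(\be\Pii_Nu_N)]$. By Lemma~\ref{LEM:diffprop} and Minkowski's inequality,
\[
\|Y_N\|_{C_TH^{s_2+1}\cap C^1_TH^{s_2-1}}\les \sum_{a\in\{\pm\}}\int_0^T \|\g_Ne^{a\be\Pii_Nu_N(t')}\|_{H^{s_2-1}}\,dt'.
\]
Taking the expectation under $\rhoo_N\otimes\PP$ and using invariance, each time slice has the law of $\g_Ne^{a\be\Pii_Nu_0}=\Dr_N^{a}(0)$ under $\rhoo_N$. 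Since $\rhoo_N\les\muu_1$ uniformly in $N$ (Lemma~\ref{lem:gibbs}), Lemma~\ref{LEM:GMC} gives $\E\|Y_N\|\les T$. By Sobolev embedding ($s_2+1>1$), this also controls $\|Y_N\|_{L^\infty_{T,x}}$ with high probability.

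\emph{Step 2: $X_N$ in $\Ld^{s_1,b}_p(T)$.} We write $e^{a\be\Pii_Nu_N}=e^{a\be\Pii_N(X_N+Y_N)}e^{a\be\Psi_N}$ and argue as in \eqref{bdX}, using \eqref{timegain2}, \eqref{iso}, Proposition~\ref{prop:kery}, \eqref{Rsbd} and the positivity of $\Dr^{a}_N$. This gives
\[
\|X_N\|_{\Ld^{s_1,b}_p(T)}\les \sum_{a\in\{\pm\}} e^{C(\|X_N\|_{L^\infty_{T,x}}+\|Y_N\|_{L^\infty_{T,x}})}\,\|\mathcal{P}_{\kk,\al}[\ind_{[0,T]}\Dr^{a}_N]\|_{L^p_{t,x}}.
\]
The operator $\mathcal{P}$ was defined with the time integral over $[0,1]$; for $T\ge1$ we either tile $[0,T]$ into unit intervals or replace $\jb t^{-3}$ by its translates. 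The GMC factor is then controlled by Lemma~\ref{LEM:enhanced}, i.e.\ by \eqref{unifrhoN1} applied on each unit interval. Alternatively, we can avoid splitting off $\Psi_N$ altogether and estimate $\|\mathcal{P}_{\kk,\al}[\g_Ne^{a\be\Pii_Nu_N}]\|_{L^p}$ directly by invariance, exactly as in the proof of Lemma~\ref{LEM:PuN}. This is valid since $p\approx 6<12$, and it leaves only the exponentials of $Y_N$ to handle via Step 1. The factor $e^{C\|X_N\|_{L^\infty}}$ is bounded with high probability by Lemma~\ref{LEM:PuN}.

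\emph{Step 3: $X_N$ in $C_TH^{s_2}\cap C^1_TH^{s_2-1}$.} This follows from the energy-type estimate \eqref{XC1part} together with Lemma~\ref{LEM:pos}. The sup norm is already handled by Steps 1 and 2. Collecting the three events, each of probability at least $1-\eta/3$ for $C_0$ large, yields \eqref{unifNMXY}.

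The main obstacle is Step 2. The difficulty is to control the exponential of the non-Gaussian part uniformly in $N$ over the whole long interval $[0,T]$ at once, with no iteration over short intervals. The plan is to use the direct invariance-based moment bound for $\mathcal{P}[\g_Ne^{\be\Pii_Nu_N}]$ (Jensen's inequality on the time integral, followed by invariance), which reduces everything to the $L^\infty$ bound on $X_N$ supplied by Lemma~\ref{LEM:PuN}.
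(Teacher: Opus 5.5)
Your primary route in Steps 1--3 is essentially the paper's proof. For $Y_N$ you use invariance plus Minkowski. For the $\Ld^{s_1,b}_p(T)$ part of $X_N$ you split $e^{a\be\Pii_Nu_N}=e^{a\be\Pii_N(X_N+Y_N)}\Dr^a_N$, control the exponential prefactor on a high-probability event by Lemma~\ref{LEM:PuN} and Step~1, and control $\mathcal{P}_{\kk,\al}[\Dr^a_N]$ by the enhanced-data bound \eqref{unifrhoN1} and Chebyshev. One small difference: for $X_N$ in $C_TH^{s_2}\cap C^1_TH^{s_2-1}$, the paper applies invariance directly to $\g_N\sinh(\be\Pii_Nu_N)$, exactly as for $Y_N$, rather than using \eqref{XC1part} with the $L^\infty$ bound. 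Either works.

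\textbf{The alternative in Step~2 fails.} Your closing paragraph seems to adopt it as ``the plan''. Bounding $\|\mathcal{P}_{\kk,\al}[\g_Ne^{a\be\Pii_Nu_N}]\|_{L^p_{t,x}}$ directly by invariance, as in Lemma~\ref{LEM:PuN}, is not ``valid since $p\approx 6<12$''; the monotonicity runs the other way.
\begin{itemize}
\item Since $u_N$ is not Gaussian, invariance only gives fixed-time marginals. Minkowski must therefore move the $t'$-integral outside the expectation.
\item The hyperbolic singularity $||t'|-|y||^{-\kk}$ must then be absorbed in space by the annulus bound \eqref{GMCann}. Summing over annulus widths $2^{-\tau}$ requires $\frac{\be^2}{8\pi}(q-1)<1-\kk$.
\item So the size of $1-\kk$ is what matters, not the size of $q$. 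In Lemma~\ref{LEM:PuN}, $\kk_q=\frac12+\frac3q+2\dl$, and the admissible range $h(q)=\frac{4\pi(q-6)}{q(q-1)}$ vanishes as $q\downarrow 6$.
\item For the $\Ld^{s_1,b}_p$ component of $\mc Z^{s_1,s_2}$, the parameters \eqref{params} give $\kk=\frac12+s_1+b+\frac{\eps}{8}=1-\frac{\eps}{8}$. This is necessarily close to $1$, since $s_1>\frac2p$, $b>\frac1p$ and $p\approx 6$.
\item The condition therefore becomes $\be^2<\frac{\pi\eps}{5+6\eps}$, i.e.\ $\be^2=O(\eps)$, far below $\frac{2\pi}{11}$.
\end{itemize}
The direct invariance bound is thus usable only inside Lemma~\ref{LEM:PuN}, for the weaker norm $\Ld^{2/q+\dl,1/q+\dl}_q$ with $q=12$, where $1-\kk_q\approx\frac14$. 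For $\kk$ close to $1$ you need the Gaussian factor split off, as in your first route. Kahane's inequality (Proposition~\ref{PROP:I1theta}, via the comparison \eqref{logs} with a time-independent field) then lets the hyperbolic singularity be integrated in $t'$ inside the expectation. Note also that the direct route would leave no ``exponentials of $Y_N$'' to handle, since $u_N$ already contains $X_N+Y_N$; this suggests the two routes are being conflated.

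\textbf{On $T\ge 1$.} You point out that $\mathcal{P}_{\kk,\al}$ only sees $t'\in[0,1]$ while $T\ge1$. The paper's proof does not spell this out: it says ``arguing similar to \eqref{bdX}'', and \eqref{bdX} was derived for $T<\frac34$. If you fill this in, decompose the forcing in time while keeping the full Duhamel integral, rather than restarting at integer times. Restarting is not available here, since $\mc S(t)$ is not expected to be bounded on $\Ld^{s_1,b}_p$ (Remark~\ref{rmk:space}). You then have to handle the periodized kernel $\mc W_{\T^2}(t-t',\cdot)$ for $|t-t'|\ge\pi$, where it no longer coincides with $\mc W_{\R^2}$.
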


\begin{proof}
Let $(u_{N},\dt u_N) = \Phi^{N}(t)(\vec{u}_0 ,\o_2)$ be a global solutions constructed in Proposition~\ref{PROP:PhiNM}, which in particular satisfies the invariance property \eqref{invariance}. 
We write $v_{N}:= u_{N}-\Psi_{N}$ which solves \eqref{liouvilleNM2} with zero initial data,
and decompose $v_{N}= X_{N}+Y_{N}$, where $(X_{N},Y_{N})$ solve \eqref{liouvilleNM3}.
We rewrite  the equation for $Y_N$ as follows:
\begin{align*}
Y_{N}(t) = -\iota \be \mathcal{I}_{\text{KG-wave}}[ \Pii_{N} \g_{N}\sinh( \be \Pii_{N} u_{N})] 
\end{align*}
Then, as in \eqref{bdY}, by Minkowski's inequality and the invariance in \eqref{invariance}, we have
\begin{align*}
\big\| \|Y_{N}\|_{L^{\infty}_{T}H^{s_2+1}_x} \big\|_{L^{p}_{\vec{u}_0, \o}(\vec{\rho}_{N}\otimes \PP)}   & \les  T^{\frac 12} \big\|  \| \g_{N}\sinh(\be \Pii_{N} u_{N}(t))\|_{L^{2}_{T}H^{s_2-1}_x}   \big\|_{L^{p}_{\vec{u}_0, \o}(\vec{\rho}_{N}\otimes \PP)}  \\
& \les T^{\frac 12}  \Big\| \|  \jb{\nb_x}^{-1+s_2} \g_{N}\sinh(\be \Pii_{N} u_{N}) (t,x) \big\|_{L^{p}_{\vec{u}_0, \o}(\vec{\rho}_{N}\otimes \PP)  }   \Big\|_{L^2_{T,x}} \\
& \les T^{\frac 12}  \Big\| \| \jb{\nb_x}^{-1+s_2} \g_{N}\sinh(\be \Pii_{N}u_0)(x) \big\|_{L^{p}_{\vec{u}_0, \o}(\vec{\rho}_{N}\otimes \PP)  }   \Big\|_{L^2_{T,x}} \\
&\les C(T,p) <\infty,
\end{align*}
uniformly in $N\in 2^{\N_0}$. A similar computation applies for $\| \dt Y_{N}\|_{L^{\infty}_{T}H^{s_2}_x}$.
Therefore, it follows by Chebyshev's inequality, that 
\begin{align}
\rhoo_{N}\otimes \PP\big(  \|Y_{N}\|_{L^{\infty}_{T}H^{s_2+1}_x}+\|\dt Y_{N}\|_{L^{\infty}_{T}H^{s_2}_x} > C_{1}  \big) < \frac{\eta}{4}. \label{YNprob}
\end{align}
where $C_{1}:=(8 C(T,p))^{\frac 1p} (\eta)^{-\frac 1p}$.  
Similarly, by using the formula
\begin{align}
X_{N}(t) = -\iota \be \mathcal{I}_{\text{wave}}[ \Pii_{N} \g_{N}\sinh( \be \Pii_{N} u_{N})], 
\label{XNeq0}
\end{align}
and arguing as in the first inequality in \eqref{XC1part}, and by the invariance \eqref{invariance}, there exists $C_2=C_2(T,\eta)>0$, uniform in $N\in 2^{\N_0}$, such that 
\begin{align}
\rhoo_{N}\otimes \PP\big(\|X_{N}\|_{L^{\infty}_{T}H^{s_2}_x}+ \|\dt X_{N}\|_{L^{\infty}_{T}H^{s_2-1}_x}  > C_{2}  \big) < \frac{\eta}{4}. \label{XNprob0}
\end{align}
It remains to show that $X_{N}\in \Ld^{s_1,b}_{p}(T)$ with high-probability.
By Proposition~\ref{PROP:I1theta} and Chebyshev's inequality, there exists $c(T)>0$ such that
\begin{align}
\rhoo_{N}\otimes \PP \big( \| \Xi_{N}(\vu_0,\o)\|_{\mathcal{E}^{s_1,s_2}([0,T])} >A\big) \leq \frac{c(T)}{A^{p}} <\frac{\eta}{4}. \label{unifNM1}
\end{align}
where the second inequality follows by choosing $A=A(\eta,T)$ so that $A^{p}=16c(T)(\eta)^{-1}$, and where $c(T)$ and hence $A$ are uniform in $N\in 2^{\N_0}$.

Using Lemma~\ref{LEM:PuN}, there exists $C_3=C_3(T,\eta)>0$, uniform in $N\in 2^{\N_0}$, such that 
\begin{align}
\rhoo_{N}\otimes \PP\big(\|X_{N}\|_{L^{\infty}_{T,x}}  > C_{3}  \big) < \frac{\eta}{4}. \label{XNprob}
\end{align}
We now work on the event that 
\begin{align}
\| X_{N}\|_{L^{\infty}_{T,x}} \leq C_3, \quad \|Y_N\|_{L^{\infty}_{T,x}} \leq C_1, \quad \| \Xi_{N}(\vu_0,\o)\|_{\mathcal{E}^{s_1,s_2}([0,T])} \leq A. 
\label{XNprob2}
\end{align}
Then, by \eqref{unifXN1} and arguing similar to \eqref{bdX} with \eqref{XNprob2}, we have
\begin{align*}
\|X_{N}\|_{\Ld^{s_1,b}_{p}(T)}  \leq C e^{\be( \|X_{N}\|_{L^{\infty}_{T,x}} + \|Y_{N}\|_{L^{\infty}_{T,x}})}    \| \mathcal{P}_{\kk,\al}[ \g_{N}\cosh(\be \Psi_{N} )]\|_{L^{p}_{t,x}} &\leq C  e^{\be(C_3+C_1)} A. 
\end{align*}
Thus, on the event \eqref{XNprob2}, we control $X_{N}$ in $\Ld^{s_1,b}_{p}(T)$ uniformly in $N$.
Finally, by combining \eqref{YNprob},~\eqref{XNprob0}, \eqref{XNprob}, and \eqref{XNprob2}, and choosing $C_0=\frac{1}{3}\max(C_1,C_2,C_3, Ce^{\be(C_2+C_3)}A)$, we conclude the proof of \eqref{unifNMXY}.
\end{proof} 
\begin{remark}[On the defocusing Liouville model]\rm \label{rmk:liouvilleUS}
For the defocusing Liouville model, the analogue of the result of Lemma~\ref{LEM:unifNMbds} holds in the larger region $\be^2 <\frac{6\pi}{5}$, matching our local well-posedness result in Proposition~\ref{PROP:LWP}. 
Let us explain the modifications and differences. 
We consider the smoothed equation \eqref{liouvilleUS} and the decomposition $u_N = \Psi_{N}+X_N+Y_N$ where $(X_N,Y_N)$ satisfy \eqref{XYorw}. The main difference with the sinh-case \eqref{liouville} is that we no longer need Lemma~\ref{LEM:PuN} which was responsible for the further restriction $\be^2<\frac{2\pi}{11}$.

By the same arguments using invariance, we obtain both \eqref{YNprob} and \eqref{XNprob}. It remains to bound $X_{N}$ in $\Ld^{s_1,b}_{p}(T)$. 
In particular, on the event that
\begin{align}
 \|Y_{N}\|_{L^{\infty}_{T}H^{s_2+1}_x}  \leq C_{1}, \label{Yprob1}
\end{align}
 Sobolev embedding implies
\begin{align}
 \|Y_{N}\|_{L^{\infty}_{T,x}}  \leq C_{\text{sob}}C_{1} \label{Yprob2}
\end{align}
for some constant $C_{\text{sob}}>0$, where $C_{\text{sob}}C_{1}$ is uniform in $N\in 2^{\N_0}$. 

We now work on the intersection of the events for which \eqref{Yprob1} holds and for which we have 
\begin{align}
\| \Xi_{N}(\vu_0,\o)\|_{\mathcal{E}^{s_1,s_2}([0,T])} \leq A; \label{databd}
\end{align}
see \eqref{unifNM1}.
Due to the positivity of $\Q_{N}e^{\be \Q_{N} u_{N}}$ and the positivity of the kernel of $\mathcal{I}_{\text{wave}}$ in \eqref{S}, we see that $\be X_{N} \leq 0$ a.s. Thus, for $F$ a smooth bounded function such that $F(x)= e^{x}$ for $x\leq 0$ and $F\vert_{\R_{+}}\in C^{\infty}(\R_+,\R_+)$, we may write
\begin{align*}
X_{N}& = -\iota \be \Q_{N} \mathcal{I}_{\text{wave}}[e^{\be\Q_{N}(X_N +Y_N)} \Dr_{N} ] = -\iota \be \Q_{N} \mathcal{I}_{\text{wave}}[ F(\be \Q_{N}X_N)  e^{\be\Q_{N}Y_N}\Dr_{N} \big].
\end{align*}
Then, by arguing similar to \eqref{bdX} and using \eqref{Yprob2} and \eqref{databd}, we have 
\begin{align*}
\|X_{N}\|_{\Ld^{s_1,b}_{p}(T)}& \les \| F(\be \Q_{N}X_N)e^{\be \Q_{N}Y_N}\|_{L^{\infty}_{T,x}} \|\mathcal{P}_{\kk,\al}[\Dr_{N}]\|_{L^{p}_{t,x}} \\
&\les e^{\be \| Y_N \|_{L^{\infty}_{T,x}}}A \\
&\les e^{\be C_{\text{sob}}C_1}A,
\end{align*}
which is uniform in $N\in 2^{\N_0}$. We thus obtain \eqref{unifNMXY} without having to use Lemma~\ref{LEM:PuN}.
\end{remark}

For the stability statements, we need to define a slightly enlarged enhanced data set which takes into account the exponentially weighted in time object $\mathcal{P}_{\kk,\al;\ld}[\Dr]$.

\begin{lemma}[Stability] \label{LEM:stab}
Let $T\geq 1$, $K\geq 0$ and $C_0 \gg 1$. 

\noi
\textup{(a)}
There exist $N_0 (T,K,C_0)\in 2^{\N}$ such that the following statements hold:
suppose that for some $N\geq N_0$, we have
\begin{align}
\| \Xi_{N}(\vu_0, \o_2)\|_{\mathcal{E}^{s_1,s_2}(T)} \leq K \label{Stab1}
\end{align}
and
\begin{align}
\| (X_{N},Y_{N})\|_{\mc Z^{s_1,s_2}(T)} \leq C_0, \label{Stab2}
\end{align}
which is the solution to $(X_{N},Y_{N})$ to the truncated system \eqref{liouvilleNM3} with respect to the enhanced data set $\Xi_{N}(\vu_0;\o_2)$.
Furthermore, suppose there exists $\eta>0$ and a constant $A>0$ such that
\begin{align}
\max_{a\in \{\pm\}}\| \Dr^{a} -\Dr^{a}_{N}\|_{L^2([0,T]; H^{s_2-1}_{x})} \leq AN^{-\eta}  \label{Stab3}
\end{align}
Then, there exists a solution $(X,Y)$ to the truncated system on $[0,T]$ with zero initial data and the enhanced data set $\Xi(\vu_0;\o_2)$, satisfying 
\begin{align*}
\| (X,Y)\|_{\mc Z^{s_1,s_2}(T)} \leq C_0+1.
\end{align*}

\noi
\textup{(b)} Suppose that 
\begin{align*}
\| \Xi(\vu_0, \o_2)\|_{\mathcal{E}^{s_1,s_2}(T)} \leq K 
%\label{Stab4}
\end{align*}
and that the untruncated system \eqref{liouville6} with zero initial data and enhanced data set $\Xi(\vu_0; \o_2)$ has a solution $(X,Y)$ on $[0,T]$ satisfying 
\begin{align*}
\| (X,Y)\|_{\mc Z^{s_1,s_2}(T)} \leq C_0.
\end{align*}
Assume also that \eqref{Stab3} holds true. Then, there exists $N_0=N_0(C_0,K,T)\gg 1$ such that for all $N\geq N_0$, there is a solution $(X_{N},Y_{N})$ to the truncated system \eqref{liouvilleNM3} on $[0,T]$
with enhanced data set $\Xi_{N}(\vu_0;\o_2)$ and such that
\begin{align}
\| (X_{N},Y_{N})-(X,Y)\|_{\mc Z^{s_1,s_2}(T)} \leq C(K,C_0,T) N^{-\ta} \label{Stab5}
\end{align}
for some $\ta>0$ and $C(K,C_0,T)>0$. 
\end{lemma}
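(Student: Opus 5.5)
The plan is to run a single-step perturbation argument on the whole interval $[0,T]$, in the spirit of \cite{OOTol, OOTol2}. Instead of iterating over short subintervals, we measure the difference between the truncated and untruncated solutions in the exponentially weighted norms \eqref{Ldweight}. For part (a), write $(X,Y)=(X_N,Y_N)+(w_1,w_2)$. The unknown $(w_1,w_2)$ then satisfies a difference equation. Its forcing splits into two kinds of terms: differences of nonlinearities such as $f^{\pm}(z+X_N+Y_N+w_1+w_2)-f^{\pm}(z+X_N+Y_N)$, multiplied by $\Dr^{\pm}$, and perturbation terms in which $(X_N,Y_N)$ is plugged in but $\Dr^{\pm}_N$ is replaced by $\Dr^{\pm}$. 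The projections $\Pii_N$ and $\Psi_N-\Psi$ also enter the perturbation terms. We solve this equation by a contraction on the ball of radius $1$ in a weighted version of $\mc Z^{s_1,s_2}(T)$, with weight $e^{-\ld|t|}$ and with $\ld=\ld(T,K,C_0)$ a large dyadic number to be fixed. Lemma~\ref{LEM:ldup} converts the weighted bound back into the unweighted estimate $\|(X,Y)\|_{\mc Z^{s_1,s_2}(T)}\le C_0+1$, at the cost of a factor $\ld e^{2\ld T}$. This is why the perturbation terms must be $O(N^{-\ta})$ with $N\ge N_0(T,K,C_0)$ chosen so that $\ld e^{2\ld T}N^{-\ta}\ll1$.

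The key steps are as follows.

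\begin{itemize}
\item[(i)] \textbf{Linear part, $X$-component.} By \eqref{ldnorm}, \eqref{iso}, \eqref{timegain2} and Proposition~\ref{prop:keryld}, together with the positivity of $\Dr^{\pm}$ as in \eqref{bdX}, the $\Ld^{s_1,b}_{p,\ld}(T)$ norm of the $X$-part of the difference-of-nonlinearities term is bounded by
\[
e^{C\be(1+K+C_0)}\,\|w_1+w_2\|_{L^\infty_{T,x}}\,\max_{a}\|\mathcal P_{\kk,\al,\ld}[\Dr^a]\|_{L^p_{t,x}}
\]
plus the $\ld^{-1/4}$ and $\ld^{-\dl}$ tails coming from \eqref{Rsld}. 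Here $\kk$ is chosen as in \eqref{kkchoice}, which is exactly what absorbs the extra $2\dl$ in the last term of \eqref{Rsld}. These tails reduce to unweighted modified-GMC objects, which are bounded by $K$.
\item[(ii)] \textbf{Smallness factor.} By \eqref{Pldgain}, the factor $\|\mathcal P_{\kk,\al,\ld}[\Dr^a]\|_{L^p_{t,x}}$ is $\les K\jb{\ld}^{-\eps/32}$. The $L^\infty$ norm of $w$ is controlled by the weighted $\Ld$ norm using Lemma~\ref{LEM:Sobolev} applied to $e^{-\ld|t|}w$, after moving the weight onto the exponential factor. One subtlety is that $e^{-\ld|t|}f(\cdot)\ne f(e^{-\ld|t|}\cdot)$. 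I plan to handle this by applying the mean-value formula \eqref{expdiff} pointwise in $t$, so that the weight sits only on the linear factor $w$.
\item[(iii)] \textbf{$Y$-component.} The $Y$-part, and the $C_TH^{s_2}\cap C^1_TH^{s_2-1}$ components, gain through the time integral. Using Lemma~\ref{LEM:diffprop}, one has $\int_0^t e^{-\ld(t-t')}\,dt'\les\ld^{-1}$, which gives a factor $\ld^{-1/2}$ after Cauchy–Schwarz.
\end{itemize}

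Choosing $\ld$ so large that $e^{C\be(1+K+C_0)}K\ld^{-\eps/32}\ll1$ makes the map a contraction. The perturbation terms are then estimated exactly as in the proof of \eqref{convXY} in Proposition~\ref{PROP:LWP}: the dyadic splitting with $\P_{N_1},\P_{N_2},\P_{N_3}$, \eqref{YPN}, \eqref{XPN}, interpolation with the uniform bound \eqref{conv1}, and \eqref{Stab3} in place of \eqref{drN1}. The remaining errors, from $\Pii_N$ and from $\Psi_N-\Psi$ inside $f^\pm$, are handled by Bernstein in the smooth variables. Altogether this yields a rate $N^{-\ta}$ with constants depending on $K,C_0,T$.

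Part (b) is symmetric: we expand around the untruncated $(X,Y)$ and solve for $(X_N,Y_N)-(X,Y)$ with the same contraction. Since the contraction constant and the forcing size are quantified, we directly obtain \eqref{Stab5} with constant $C\ld e^{2\ld T}$.

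\textbf{Main obstacle.} I expect the main difficulty to be step (i)–(ii): obtaining a genuine small factor in $\ld$, uniformly on $[0,T]$, for the $\Ld^{s_1,b}_{p,\ld}$ estimate of the $X$-difference. This requires checking that every term of $\mc R_{\s,\ld}$ in \eqref{Rsld} is dominated by $\K_{\kk,\al}$ times either $e^{-\ld|t-t'|/200}$ or a negative power of $\ld$. It also requires correctly commuting the weight with fractional time derivatives and with the restriction norm, as in the proof of Lemma~\ref{LEM:ldup}. If the $\ld^{-\eps/32}$ gain from \eqref{Pldgain} proves insufficient against the polynomial-in-$\ld$ losses there, I would fall back to iterating the contraction over $O(T)$ unit intervals, using the uniform bound \eqref{Stab2} to restart on each one.
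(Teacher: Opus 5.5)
Your overall strategy is the paper's. You run a single contraction for the difference in $e^{-\ld|t|}$-weighted norms and split the forcing into nonlinear-difference, $\text{Id}-\Pii_N$ and $\Dr^\pm_N-\Dr^\pm$ pieces. You use the $\ld^{-\eps/32}$ gain from \eqref{Pldgain} for the $X$-part and the $\ld^{-1/2}$ gain for the $Y$-part. The perturbation terms go through the proof of \eqref{convXY} with \eqref{Stab3}, and part (b) is symmetric. Your handling of the $\ld^{-1/4}$ and $\ld^{-\dl}$ tails of $\mc R_{\s,\ld}$ is also fine.

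The step that fails is running the contraction on the ball of radius $1$ in the weighted norm. There you only know $\|e^{-\ld t}w\|_{L^\infty_{T,x}}\les 1$, so $\|w\|_{L^\infty_{T,x}}$ can be as large as $e^{\ld T}$. The factors $(f^{\pm})'\big(\ta(X_N+Y_N+w)+(1-\ta)(X_N+Y_N)\big)$ produced by \eqref{expdiff} are then only bounded by $\exp\big(C\be(C_0+e^{\ld T})\big)$. They are not bounded by the $\ld$-independent factor $e^{C\be(1+C_0)}$ you use in (i). The Lipschitz constant becomes $K\ld^{-\eps/32}\exp\big(C\be e^{\ld T}\big)$, which grows with $\ld$, so taking $\ld$ large does not give a contraction. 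The weight cannot be pushed through the exponential nonlinearity. (Also, $K$ should not appear in that exponent: here $z=0$, and $K$ bounds the enhanced data, not the solution.)

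The missing idea is to separate the two roles of the ball. First prove the self-map and Lipschitz bounds only for differences in the \emph{unweighted} unit ball $B_1$ of $\mc Z^{s_1,s_2}(T)$. There $\|(X_N,Y_N)+(w_1,w_2)\|_{\mc Z^{s_1,s_2}(T)}\le C_0+1$, so every exponential factor is bounded independently of $\ld$, and the weight enters only linearly through the difference. Then run the fixed-point argument on the weighted ball $\{\|(w_1,w_2)\|_{\mc Z^{s_1,s_2}_{\ld}(T)}\le r\}$ with $r\ll(\ld e^{2\ld T})^{-1}$. Lemma~\ref{LEM:ldup} places this ball inside $B_1$. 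The order of choices is:
\begin{itemize}
\item first $\ld=\ld(C_0,K,T)$, so that the $\ld$-dependent Lipschitz factor is at most $\frac12$;
\item then $r=r(\ld,T)$;
\item then $N_0$, so that the $O(N_0^{-\ta})$ forcing is at most $\frac r2$.
\end{itemize}
Your condition $\ld e^{2\ld T}N^{-\ta}\ll1$ is this last step, but it has to define the ball rather than be checked afterwards.

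Two smaller points. First, no $\Psi_N-\Psi$ sits inside $f^{\pm}$: the stochastic convolution enters only through $\Dr^\pm$, and since it is a distribution, Bernstein could not handle it anyway. That error is entirely the $\Dr^\pm_N-\Dr^\pm$ term controlled by \eqref{Stab3}. Second, your fallback of restarting on unit intervals does not fit this framework, which the paper notes does not iterate. Restarting needs data in $\H^{s_3}$ with $s_3>1$ to put the free evolution in $L^\infty$, whereas $X(t_k)$ is only in $H^{s_2}$ with $s_2<1$. So the weighted argument has to close by itself, which it does once the ball is chosen as above.
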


\begin{proof}
We adapt the proof of \cite[Proposition 6.3.1]{OOTol2}, beginning with part (a). Fix $T\geq 1$. For $\ld\geq 1$ to be chosen later, we define the space $\mc Z_{\ld}^{s_1,s_2}(T)$ by 
\begin{align*}
\| (X,Y)\|_{\mc Z^{s_1,s_2}_{\ld}(T)}= \| (e^{-\ld t}X,e^{-\ld t}Y)\|_{\mc Z^{s_1,s_2}(T)}.
\end{align*}
To simplify the notation, we write $Z_{N}=(X_{N},Y_{N})$, $Z=(X,Y)$, $\vec{\Dr}_{N}=\vec{\Dr}_{N}(\vu_0,\o_2)$, and $\vec{\Dr}=\vec{\Dr}(\vu_0,\o_2)$. We assume the bounds \eqref{Stab1}, \eqref{Stab2} and \eqref{Stab3}. In particular, from \eqref{Stab1}, we have
\begin{align*}
\max_{a\in \{\pm\}}\| \Dr^{a}(\vu_0,\o_2)\|_{L^{2}_{T}H^{-1+s_2+\frac{\eps}{8}}} \leq K + AN^{-\eta} \leq K+1=:K_0.
\end{align*} 
Thus, from \eqref{Stab1} again, we have
\begin{align*}
\| \Xi(\vu_0,\o_2)\|_{\mathcal{E}^{s_1,s_2}(T)} \leq K_0.
\end{align*}
We write the Duhamel formulation of the system \eqref{liouvilleNM3}
 with zero data 
 \begin{align}
 \begin{split}
X_{N}(t)  & = \G_{1}^{N}(X_{N},Y_{N}, \vec{\Dr}_{N}) \\
&  :=  -\iota \tfrac{1}{2} \be \I_{\textup{wave}} \big(  \Pii_{N} [f^{+}(\Pii_{N}(X_N+Y_N)) \Dr^{+}_{N} -f^{-}(\Pii_{N}(X_N+Y_N)) \Dr^{-}_{N} ]\big)  \\
Y_{N}(t)  & =\G_{2}^{N}(X_{N},Y_{N}, \vec{\Dr}_{N}) \\
& := -\iota \tfrac{1}{2} \be \I_{\textup{KG} - \textup{wave}} \big(   \Pii_{N}[f^{+}(\Pii_{N}(X_N+Y_N)) \Dr^{+}_{N} -f^{-}(\Pii_{N}(X_N+Y_N)) \Dr^{-}_{N}] \big),
\end{split} \label{Gsyst}
\end{align}
We want to construct a solution $Z=(X,Y)$ to the system \eqref{Gsyst} when $N=+\infty$:
 \begin{align}
 \begin{split}
X(t)  & = \G_{1}^{\infty}(X,Y, \vec{\Dr})   :=  -\iota\tfrac{1}{2} \be \I_{\textup{wave}} \big(   [f^{+}(X+Y) \Dr^{+} -f^{-}(X+Y) \Dr^{-} ]\big)  \\
Y(t)  & =\G_{2}^{\infty}(X,Y, \vec{\Dr}) := -\iota \tfrac{1}{2}\be \I_{\textup{KG} - \textup{wave}} \big(   f^{+}(X+Y) \Dr^{+} -f^{-}(X+Y) \Dr^{-}] \big),
\end{split} \label{Gsyst2}
\end{align}
This amounts to constructing the difference $Z-Z_{N}=(X-X_{N}, Y-Y_{N})$. We write $\dl Z_{N}=Z-Z_N$, $\dl X_{N}=X-X_N$ and $\dl Y_{N} = Y-Y_{N}$, with 
\begin{align*}
Z=\dl Z_{N} + Z_N, \quad X=\dl X_{N} + X_{N}\quad \text{and} \quad Y = \dl Y_{N} +Y_{N}.
\end{align*}
We thus consider the following system in the variables $(\dl X_{N}, \dl Y_{N})$: 
\begin{align}
\begin{split}
\dl X_{N} = \wt{\G}_{1}(\dl X_{N} , \dl Y_{N})=\G_{1}^{\infty}(\dl X_{N} + X_{N},\dl Y_{N} + Y_{N}, \vec{\Dr}) - \G_{1}^{N}(X_{N},Y_{N}, \vec{\Dr}_{N}), \\
\dl Y_{N} =\wt{\G}_{2}(\dl X_{N} , \dl Y_{N})=\G_{2}^{\infty}(\dl X_{N} + X_{N},\dl Y_{N} + Y_{N}, \vec{\Dr}) - \G_{2}^{N}(X_{N},Y_{N}, \vec{\Dr}_{N}).  
\end{split} 
\label{deltasyst}
\end{align}
Our goal is to prove that the map $\wt{\G}=(\wt{\G}_{1}, \wt{\G}_{2})$ is a contraction on a small ball in $\mc Z^{s_1,s_2}_{\ld}(T)$ by choosing $\ld=\ld(K_0,C_0, T)\gg 1$.
As in \cite{OOTol2}, it is convenient first to establish bounds on $\wt{\G}$ in $\mc Z^{s_1,s_2}_{\ld}(T)$ for $\dl Z_{N}$ in a closed ball $B_{1}$ of radius $1$ in the $\mc Z^{s_1,s_2}(T)$-norm. 
Under the assumption that $\dl Z_{N} \in B_1$, \eqref{Stab2} implies 
\begin{align*}
\| Z\|_{\mc Z^{s_1,s_2}(T)} \leq \| \dl Z_{N} \|_{\mc Z^{s_1,s_2}(T)} + \| Z_{N}\|_{\mc Z^{s_1,s_2}(T)} \leq 1+ C_0.
\end{align*}

We essentially repeat the computations from the proof of Proposition~\ref{PROP:LWP} but taking into account the presence of the time weight $e^{-\ld |t|}$ and the difference structure in \eqref{deltasyst}. 
We begin by establishing bounds for the first equation in \eqref{deltasyst}. As the arguments for the $C_{T}H^{s_2}_{x}\cap C^{1}_{T}H^{s_2-1}_{x}$ parallel the arguments for establishing the control on $\wt{\G}_{2}$, we omit them. Thus, we only prove the bounds on $\wt{\G}_1$ in the space $\Ld^{s_1,b}_{p,\ld}(T)$.
 Notice that the operator $\wt{\G}_{1}(\dl X_{N} , \dl Y_{N})$ can be decomposed into three pieces: 
\begin{align}
\wt{\G}_{1}(\dl X_{N} , \dl Y_{N}) = \sum_{j=1}^{3}\wt{\G}_{1,j}(\dl X_{N} , \dl Y_{N}), \label{G1}
\end{align}
where: the $j=1$ terms correspond to the terms where there appears the difference $\text{Id}-\Pii_{N}$, the $j=2$ terms correspond to a difference in the noise terms $\vec{\Dr}_{N}$ and $\vec{\Dr}$, and the $j=3$ terms correspond to the remaining terms where there is at least one factor of $\dl X_{N}$ or $\dl Y_{N}$ arising from the difference in \eqref{deltasyst} (and not from $Z =\dl Z_{N} + Z_{N}$).

We begin with the $j=1$ case. The main point is that by losing a small amount of spatial regularity we can use the difference $(\text{Id}-\Pii_{N})$ to gain a negative power of $N$.
We consider the representative term:
\begin{align*}
\I_{\textup{wave}} \big( f^{+}(Z_{N}) \Dr^{+}_{N})- \I_{\textup{wave}} \big(  \Pii_{N} [f^{+}(\Pii_{N}(Z_{N})) \Dr^{+}_{N}]).
\end{align*}
We write this as
\begin{align*}
\I_{\textup{wave}} \big( [\text{Id}-\Pii_{N}]  f^{+}(Z_{N}) \Dr^{+}_{N}) + \I_{\textup{wave}} \big(  \Pii_{N}[(f^{+}(Z_{N})-f^{+}(\Pii_{N}Z_{N})) \Dr^{+}_{N}]) =: I_{1} + I_{2}.
\end{align*}
By Bernstein's inequality,  \eqref{Ldweight}, \eqref{ldnorm}, \eqref{iso}, Proposition~\ref{prop:keryld}, the positivity of $\Dr_{N}^{+}$, \eqref{Rsbd}, \eqref{HMCld}, Remark~\ref{RMK:gamma}, \eqref{Pldgain} and \eqref{Stab1}, we get 
\begin{align*}
\| I_{1}\|_{\Ld^{s_1,b}_{p,\ld}(T)}&  \les N^{-\frac{\eps}{32}} \| I_{1}\|_{\Ld^{s_1+\frac{\eps}{32},b}_{p,\ld}(T)}   \\
& \les N^{-\frac{\eps}{32}} \| \mathcal{I}^{\star}_{\text{wave}}[ \ind_{[0,T]} f^{+}(Z_N)\Dr^{+}_{N}] \|_{\Ld^{s_1+\frac{\eps}{32}, b}_{p,\ld}} \\
& \les N^{-\frac{\eps}{32}} \| \mathcal{I}^{\star}_{\text{wave},\ld}[ \ind_{[0,T]} e^{-\ld t} f^{+}(Z_N)\Dr^{+}_{N}] \|_{\Ld^{s_1+\frac{\eps}{32}, b}_{p}} \\
& \les  N^{-\frac{\eps}{32}}     \big(  \| \mathcal{A}_{0,\ld}( \Dr^{+}_{N})\|_{L^{p}_{t,x}}+\| \mathcal{A}_{s_1+\frac{\eps}{32} +b,\ld}( \Dr^{+}_{N})\|_{L^{p}_{t,x}}  \big)   \|e^{-\ld t}f^{+}(Z_{N})\|_{L^{\infty}_{T,x}}   \\
& \les N^{-\frac{\eps}{32}}   \| \mathcal{P}_{s_1+\frac{\eps}{32}+b, \al, \ld}[\Dr_{N}^{+}]\|_{L^{p}_{t,x}}  \|e^{-\ld t}f^{+}(Z_{N})\|_{L^{\infty}_{T,x}} \\
& \les N^{-\frac{\eps}{32}}   \| \mathcal{P}_{\kk, \al, \ld}[\Dr_{N}^{+}]\|_{L^{p}_{t,x}}  \|e^{-\ld t}f^{+}(Z_{N})\|_{L^{\infty}_{T,x}}  \\
& \leq CC_0 N^{-\frac{\eps}{32}} \ld^{-\frac{\eps}{32}}K_0.
\end{align*}
Note that we simply bounded the decaying weight attached to the term $f^{+}(Z_{N})$ above by $1$ and used the assumption that $Z_{N}\in B_1$. We will do the same for the other $j=1$ terms and the $j=2$ terms.
For the term $I_{2}$, we use \eqref{ldnorm}, \eqref{iso}, Proposition~\ref{prop:keryld}, \eqref{expdiff}, Remark~\ref{RMK:gamma}, \eqref{Pldgain}, and Lemma~\ref{LEM:Sobolev}, to get
\begin{align}
\begin{split}
&\| I_{2}\|_{\Ld^{s_1,b}_{p,\ld}(T)}  \\
&\les \| \mathcal{P}_{s_1+b, \al, \ld}[\Dr_{N}^{+}]\|_{L^{p}_{t,x}}  \|f^{+}(Z_{N})-f^{+}(\Pii_{N}Z_{N})\|_{L^{\infty}_{T,x}}  \\
& \les C_0 K_{0}\ld^{-\frac{\eps}{32}} \| (\text{Id}-\Pii_{N})Z_{N}\|_{L^{\infty}_{T,x}} \\
& \les C_0 K_{0}\ld^{-\frac{\eps}{32}} \big( \| (\text{Id}-\Pii_{N})X_{N}\|_{\Ld^{s_1 -\frac{\eps}{8}, b}_{p}(T)}
+ \|  (\text{Id}-\Pii_{N})Y_{N} \|_{L^{\infty}_{T}H^{1+s_{2}-\frac{\eps}{8}}_{x}}\big) \\
& \les C_0 K_{0}\ld^{-\frac{\eps}{32}}N^{-\frac{\eps}{8}}.
\end{split} \label{lambdagain}
\end{align}
These arguments suffice to control all other terms and we find:
\begin{align}
\| \wt{\G}_{1,1}(\dl X_{N}, \dl Y_{N})\|_{\Ld^{s_1,b}_{p,\ld}(T)} \leq C C_0 K_0 N^{-\frac{\eps}{32}} \ld^{-\frac{\eps}{32}}. \label{G11}
\end{align}

We move onto the $j=2$ terms. As we have a difference in the enhanced data set, we gain a negative power of $N$ from the assumption \eqref{Stab3}. We repeat the argument in proving the first inequality in \eqref{convergence} with any reference to using Proposition~\ref{prop:kery} replaced by using Proposition~\ref{prop:keryld} instead. This procedure yields
\begin{align}
\| \wt{\G}_{1,2}(\dl X_{N}, \dl Y_{N})\|_{\Ld^{s_1,b}_{p,\ld}(T)} \leq C C_0 K_0 N^{-\ta}, \label{G12}
\end{align}
for some $\ta>0$, depending on $\eta$.

Now we consider the $j=3$ terms, where there is at least one of the differences $\dl X_{N}$ or $\dl Y_{N}$ appearing, not coming from $Z=\dl Z_{N}+Z_{N}$. In this case, we need to keep the decaying weight $e^{-\ld t}$ appearing on the right hand side of the function \eqref{ldnorm}. This weight is associated to the difference term after using \eqref{expdiff}, and as in \eqref{lambdagain}, we can recover a small negative power of $\ld$ (but no negative power of $N$). These considerations lead to the bound:
\begin{align}
\| \wt{\G}_{1,3}(\dl X_{N}, \dl Y_{N})\|_{\Ld^{s_1,b}_{p,\ld}(T)} \leq C C_0 K_0  \ld^{-\frac{\eps}{32}} \| \dl Z_{N}\|_{\mc Z^{s_1,s_2}_{\ld}(T)}. \label{G13}
\end{align}
We move onto considering the second equation in \eqref{deltasyst}. In this case, by Minkwoski's inequality,  Lemma~\ref{LEM:diffprop}, and H\"{o}lder's inequality,
\begin{align}
\| e^{-\ld t} \I_{\textup{KG} - \textup{wave}} [ F]\|_{C_{T}H^{1+s_{2}}_{x}} & \leq \bigg\| \int_{0}^{t}  e^{-\ld(t-t')} \| e^{-\ld t'}F(t')\|_{H^{-1+s_{2}}_{x}} dt' \bigg\|_{L^{\infty}_{T}} \notag \\
& \les \ld^{-\frac 12} \| e^{-\ld t} F\|_{L^{2}_{T}H^{-1+s_2}_{x}}. \label{Hld}
\end{align}
We decompose $\wt{\G}_{2}$ into the same three types of terms as in \eqref{G1}, and use \eqref{Hld} to control the norms of each of these. We then obtain
\begin{align}
\|e^{-\ld t} \wt{\G}_{2}(\dl X_{N},\dl Y_{N}) \|_{C_{T}H^{1+s_{2}}_{x}} \leq C(C_0,K_0) \ld^{-\frac 12}(N^{-\ta}+ \| \dl Z_{N}\|_{\mc Z^{s_{1},s_{2}}_{\ld}(T)}). \label{G2bd}
\end{align}
Combining \eqref{G1}, \eqref{G11}, \eqref{G12}, \eqref{G13}, and \eqref{G2bd}, we have shown that 
\begin{align}
\| \wt{\G}(\dl Z_{N})\|_{\mc Z^{s_1,s_2}_{\ld}(T)} \leq C(C_0,K_0,T) \big[ N^{-\ta_1} +\ld^{-\ta_2} \|\dl Z_{N}\|_{\mc Z^{s_1,s_2}_{\ld}(T)} \big], \label{FP1}
\end{align}
for some $\ta_j=\ta_j(\eta,\eps)>0$, $j=1,2$ and any $\dl Z_{N}\in B_1 \subset Z^{s_1,s_2}(T)$. 

For the contraction estimate, we take any $\dl Z^{(1)}_{N}, \dl Z^{(2)}_{N}\in B_1 \subset \mc Z^{s_1,s_2}(T)$, and estimate the terms  
\begin{align*}
\G_{j}^{\infty}(\dl X_{N}^{(1)} + X_{N},\dl Y_{N}^{(1)} + Y_{N}, \vec{\Dr}) -\G_{j}^{\infty}(\dl X_{N}^{(2)} + X_{N},\dl Y_{N}^{(2)} + Y_{N}, \vec{\Dr}), \,\, j\in \{1,2\}
\end{align*}
Then arguing as in the estimate leading to \eqref{G13}, we have
\begin{align}
\| \wt{\G}(\dl Z_{N}^{(1)})-\wt{\G}(\dl Z_{N}^{(2)})\|_{\mc Z^{s_1,s_2}_{\ld}(T)} \leq C(C_0,K_0,T) \ld^{-\ta_2} \|\dl Z^{(1)}_{N}-\dl Z^{(2)}_{N}\|_{\mc Z^{s_1,s_2}_{\ld}(T)}, \label{FP2}.
\end{align}
for any $\dl Z^{(1)}_{N}, \dl Z^{(2)}_{N}\in B_1 \subset \mc Z^{s_1,s_2}(T)$.

Given $r>0$ to be chosen later, let $B_{r,\ld}=\{ \| (X,Y)\|_{\mc Z^{s_1,s_2}_{\ld}(T)}\leq r\}$. Then, it follows from Lemma~\ref{LEM:ldup}, that if $\dl Z_{N}\in B_{r,\dl}$, then by choosing $r=r(\ld, T)$ sufficiently small, we have $\dl Z_{N}\in B_{1}$. 
 Now, let $\dl Z_{N}\in B_{r,\ld}$, with $r$ just chosen. By choosing $\ld=\ld(C_0,K_0,T)\gg 1$ and $N_0 = N_{0}(C_0,K_0,T)\gg 1$, the estimates \eqref{FP1} and \eqref{FP2} show that $\wt{\G}$ is a contraction on $B_{r,\ld}\subset \mc Z^{s_1,s_2}_{\ld}(T)$, and thus has a unique fixed point for any $N\geq N_0$. We then set $Z=\dl Z_{N} +Z_{N}$ and observe that $Z=(X,Y)$ satisfies \eqref{Gsyst2}, since $Z_{N}$ satisfies \eqref{Gsyst}.

For part (b), we write $Z_{N}=Z-(Z-Z_{N})$ and consider the system:
\begin{align*}
\dl X_{N}& = \wt{\G}^{N}_{1}(\dl Z_N)=\G_{1}^{\infty}(X,Y, \vec{\Dr}) - \G_{1}^{N}(X-\dl X_{N},Y -\dl Y_N, \vec{\Dr}_{N}), \\
\dl Y_{N} &= \wt{\G}^{N}_{2}(\dl Z_N)=\G_{2}^{\infty}(X,Y, \vec{\Dr}) - \G_{2}^{N}(X-\dl X_{N},Y -\dl Y_N, \vec{\Dr}_{N}).
\end{align*}
By adapting the arguments for part (a), we see that by choosing $\ld=\ld(C_0,K_0,T)\gg 1$ sufficiently large, the map $(\wt{\G}^{N}_{1}, \wt{\G}^{N}_{2})$ has a unique fixed point in $\mc Z^{s_1,s_2}_{\ld}(T)$.
This constructs the difference $\dl Z_{N}$ and we see that $Z_{N}= Z-\dl Z_{N}$ satisfies \eqref{Gsyst}. 
The convergence property \eqref{Stab5} then follows from Lemma~\ref{LEM:ldup} since
\begin{align*}
\|Z-Z_{N}\|_{\mc Z^{s_1,s_2}(T)} \les \ld e^{2\ld T} \| (\wt{\G}^{N}_{1}, \wt{\G}^{N}_{2})(\dl Z_{N})\|_{\mc Z^{s_1,s_2}_{\ld}(T)} \les C(C_0,K_0,T) N^{-\ta}.
\end{align*}
This completes the proof of Lemma~\ref{LEM:stab}.
\end{proof}

\begin{proof}[Proof of Theorem~\ref{thm:main}]

We begin by proving the almost sure global well-posedness of the (hyperbolic) $\sinh$-Gordon model \eqref{liouville} for $\be^{2}<\frac{6\pi}{5}$.  It suffices to prove the following statement, known as ``almost" almost sure global well-posedness \cite{BO94}: given any $T>0$ and small $\dl_0>0$, there is a measurable event $\Sigma_{T,\dl_0} \subset \H^{s_0}(\T^2)\times \O_2$, with $(\rhoo \otimes \mathbb{P}_{2})(\Sigma_{T,\dl_0}^{c}) <\dl_0$ such that for each $(\vu_0,\o_2)\in \Sigma_{T,\dl_0}$, the solution $(X,Y)$ to the system \eqref{liouville6} with zero initial data and enhanced data set $\Dr(\vu_0,\o_2)$ exists on $[0,T]$.

We first discuss the proof of Theorem~\ref{thm:main}.

Assuming that ``almost" almost sure global well-posedness holds, we construct the global solution $(u,\dt u )\in C(\R_{+}; \H^{s_0}(\T^2))$ to \eqref{liouville}. For $N\in 2^{\N_0}$, let $(X_{N},Y_{N})$ be the (global) solution to the truncated system \eqref{liouvilleNM3} with zero initial data and enhanced data set $\Xi_{N}(\vu_0, \o_2)$ and set 
\begin{align*}
u_{N}(t) = \Psi(\vu_0; \o_2) + X_{N}(t) + Y_{N}(t). 
\end{align*} 
Then, $u_{N}$ solves \eqref{liouvilleNM} with initial data $\vu_0$ and noise $\xi=\xi(\o_2)$. By Lemma~\ref{LEM:enhanced}, the truncated GMCs $\Dr^{a}_{N}(\vu_0,\o_2)$ converges almost surely to $\Dr^{a}(\vu_0,\o_2)$. By part (b) of Lemma~\ref{LEM:stab}, we conclude that $(u_{N},\dt u_N)(\vu_0,\o_2)$ converges to $(u,\dt u)(\vu_0,\o_2)$ in $C([0,T];\H^{s_0}(\T^2))$ for $(\vu_0,\o_2)\in \Sigma_{T,\dl_0}$, where $u$ is defined as
\begin{align}
u(t) =  \Psi(\vu_0; \o_2) + X(t) + Y(t). \label{udecomp}
\end{align}
We then set
\begin{align*}
\Sigma  = \bigcup_{k=1}^{\infty} \bigcap_{j=1}^{\infty} \Sigma_{2^{j},2^{-j}\frac{1}{k}},
\end{align*}
which satisfies 
$$(\rhoo \otimes \mathbb{P}_{2})(\Sigma^{c})=  \lim_{k\to \infty} \sum_{j=1}^{\infty} \frac{1}{2^j k } = 0.$$
Thus, for each $(\vu_0,\o_2)\in \Sigma$, $(u_{N},\dt u_{N})(\vu_0,\o_2)$ converges in $C(\R_{+};\H^{s_0}(\T^2))$ to $(u,\dt u)$.

We now prove the ``almost" almost sure global well-posedness claim. Fix $T>0$ and $\dl_0>0$. Let $\Xi= (\Xi_1, \Xi_{2}, \Xi_3) \in \mathcal{E}^{s_1,s_2}_{T}$ be enhanced data as in \eqref{Xi}, and $(X,Y)[\Xi]$ be the corresponding solution to the system \eqref{liouville6} with enhanced data $\Xi$. Let $C_0 (T,\frac{\dl_0}{3})>0$ be as in Lemma~\ref{LEM:unifNMbds}.  By Lemma~\ref{LEM:enhanced}, there exists $K=K(T,\dl_0)>0$ and $A=A(T,\dl_0)>0$ sufficiently large such that 
\begin{align*}
(\rhoo_{N}\otimes \mathbb{P}_{2})\big(  \| \Xi_{N}(\vu_0, \o_2)\|_{\mathcal{E}^{s_1,s_2}(T)} >K\big) \leq \tfrac{\dl_0}{3},   \\
(\vec{\rho}\otimes \mathbb{P}_{2})\big(\max_{a\in \{c,s\}} N^{\eta}\| \Dr^{a}-\Dr^{a}_{N}\|_{L^{2}_{T}H^{-1+s_2+\frac{\eps}{8}}_{x}} >A \big) \leq \tfrac{\dl_0}{3} ,
\end{align*}
for any $N\in 2^{\N_0}$. Now, let $N_0(T,K,C_0) \in 2^{\N}$ be as in Lemma~\ref{LEM:stab}.
Define the event $\Sigma_{C_0, K, A} \subset \mathcal{E}^{s_1,s_2}(T)$ such that for any $\Xi\in \Sigma^{ K, C_0, A}$, the solution $(X,Y)[\Xi]$ to \eqref{liouville6} with zero initial data and enhanced data set $\Xi$ exists on $[0,T]$ and satisfies 
\begin{align*}
\| (X,Y)[\Xi]\|_{\mc Z^{s_1,s_2}(T)} \leq C_0+1.
\end{align*}
Let $N\geq N_{0}$ and define 
\begin{align*}
B^{K,C_0,A}_{N} = \big\{ \Xi\in \mathcal{E}^{s_1,s_2}(T) \,:& \, \|\Xi_{N}\|_{ \mathcal{E}^{s_1,s_2}(T)} \leq K, \, \| (X_{N},Y_{N})[\Xi_{N}]\|_{\mc Z^{s_1,s_2}(T)}\leq C_0,   \\
&\hphantom{XX} \, N^{\eta} \| (\Xi_{2})_{N} -\Xi_{2}\|_{L^{2}_{T}H^{-1+s_2+\frac{\eps}{8}}_{x}\times L^{2}_{T}H^{-1+s_2+\frac{\eps}{8}}_{x}}\leq A\big\}.
\end{align*}
It follows from Lemma~\ref{LEM:stab} (a) that $B_{N}^{K,C_{0},A} \subseteq \Sigma^{K,C_{0},A}$. Then, 
 \begin{align*}
&(\rhoo\otimes \mathbb{P}_{2})( (\Sigma^{K,C_{0},A})^{c})   \\ 
& \leq (\rhoo\otimes \mathbb{P}_{2})( (B_{N}^{K,C_{0},A})^{c})  \\
& \leq \lim_{N\to \infty} \Big[ (\rhoo_{N}\otimes \mathbb{P}_{2})\big( \| \Xi_{N}\|_{\mathcal{E}^{s_1,s_2}(T)} >K\big) +(\rhoo_{N}\otimes \mathbb{P}_{2})\big( \|(X_{N},Y_{N})[\Dr_{N}]\|_{\mc Z^{s_1,s_2}(T)}>C_0\big)  \Big] +\tfrac{\dl_0}{3} \\
& \leq \tfrac{\dl_0}{3}+\tfrac{\dl_0}{3}+\tfrac{\dl_0}{3}=\dl_0.
\end{align*}
Now by setting 
\begin{align*}
\Sigma_{T,\dl_0}  = \big\{ (\vu_0 ,\o_2)\in \H^{s_0}(\T^2)\times \O_2\,:\, \Xi(\vu_0,\o_2)\in \Sigma^{K,C_0,A}\big\}
\end{align*}
 we complete the verification of the ``almost" almost sure global well-posedness. Thus, the limit $u=u(\vu_0,\o_2)$ is a global-in-time solution to \eqref{liouville} with the decomposition \eqref{udecomp} which exists almost surely with respect to $\rhoo\otimes \O_{2}$.
 
Finally, the invariance of $\rhoo\otimes \O_{2}$ is a simple consequence of the (i) convergence of $\rhoo_{N}$ to $\rhoo$ in total variation, (ii) the almost sure convergence of $u_{N}(t)$ to $u(t)$, (iii) the invariance of $\rhoo_{N}\otimes \mathbb{P}_{2}$ under the flow $\Phi^{N}$ in \eqref{PhiNM}. See for instance \cite[p. 214]{ORTz}.
\end{proof}

Lastly, we deal with the proof of Theorem~\ref{thm:2}.

\begin{proof}[Proof of Theorem~\ref{thm:2}] The proof of Theorem~\ref{thm:2} follows that of Theorem~\ref{thm:main}, except that we can bypass the use of Lemma~\ref{LEM:PuN}, as discussed in Remark~\ref{rmk:liouvilleUS}. This yields the improved numerology $\be^2<\frac{6\pi}{5}$ stated in Theorem~\ref{thm:2}. We omit details.
\end{proof}

\begin{ackno}\rm
J.F.~was partially supported by the ARC project FT230100588. Y.Z.~was funded by the chair of probability and PDEs at EPFL. 
\end{ackno}

\end{document}